\newlength{\mywidth}
\newtheorem{thm}{Theorem}[section]
\newtheorem{prop}[thm]{Proposition}
\newtheorem{lem}[thm]{Lemma}
\newtheorem{cor}[thm]{Corollary}
\newtheorem{principle}{Principle}
\newtheorem*{cor*}{Corollary}
\DeclareFontFamily{U}{tipa}{}
\DeclareFontShape{U}{tipa}{m}{n}{<->tipa10}{}
\newcommand{\arc@char}{{\usefont{U}{tipa}{m}{n}\symbol{62}}}
\newcommand{\arc}[1]{\mathpalette\arc@arc{#1}}
\newcommand{\arc@arc}[2]{
  \sbox0{$\m@th#1#2$}
  \vbox{
    \hbox{\resizebox{\wd0}{\height}{\arc@char}}
    \nointerlineskip
    \box0
  }
}
\theoremstyle{definition}
\newtheorem{definition}[thm]{Definition}
\newtheorem{example}[thm]{Example}
\newtheorem{thmx}{Theorem}
\renewcommand*\env@matrix[1][\arraystretch]{
  \edef\arraystretch{#1}
  \hskip -\arraycolsep
  \let\@ifnextchar\new@ifnextchar
  \array{*\c@MaxMatrixCols c}}
\theoremstyle{remark}
\newtheorem{rem}{Remark}
\theoremstyle{question}
\newcommand{\R}{\mathbb{R}}  
\newcommand{\D}{\mathbb{D}}
\newcommand{\Q}{\mathbb{Q}}
\newcommand{\C}{\mathbb{C}} 
\newcommand{\Z}{\mathbb{Z}}  
\newcommand{\N}{\mathbb{N}}
\DeclareMathOperator{\interior}{int}
\DeclareMathOperator{\re}{Re}
\DeclareMathOperator{\im}{Im}
\tikzset{
  mynode/.style={fill,circle,inner sep=1pt,outer sep=0pt}
}
\begin{document}

\title[Univalent Polynomials and Hubbard Trees]{Univalent Polynomials and Hubbard Trees}

\author[K. Lazebnik]{Kirill Lazebnik}
\address{Department of Mathematics, University of Toronto, Toronto, Canada}
\email{kylazebnik@gmail.com}

\author[N. G. Makarov]{Nikolai G. Makarov}
\address{Department of Mathematics, California Institute of Technology, Pasadena, California 91125, USA}
\email{makarov@caltech.edu}

\author[S. Mukherjee]{Sabyasachi Mukherjee}
\address{School of Mathematics, Tata Institute of Fundamental Research, 1 Homi Bhabha Road, Mumbai 400005, India}
\email{sabya@math.tifr.res.in}

\date{\today}

\maketitle

\begin{abstract} 

We study rational functions $f$ of degree $d+1$ such that $f$ is univalent in the exterior unit disc, and the image of the unit circle under $f$ has the maximal number of cusps ($d+1$) and double points $(d-2)$. We introduce a bi-angled tree associated to any such $f$. It is proven that any bi-angled tree is realizable by such an $f$, and moreover, $f$ is essentially uniquely determined by its associated bi-angled tree. This combinatorial classification is used to show that such $f$ are in natural 1:1 correspondence with anti-holomorphic polynomials of degree $d$ with $d-1$ distinct, fixed critical points (classified by their Hubbard trees). 

\end{abstract}

\setcounter{tocdepth}{1}

\tableofcontents

\section{Introduction}
\label{introduction}

\subsection{Background}

Let $p(z)$, $q(z)$ be complex analytic polynomials of degrees $n$, $m$, respectively, where $m< n$. In \cite{MR1443416} it was proven that there are at most $n^2$ zeros of the harmonic polynomial $p(z)-\overline{q(z)}$. While this upper bound is sharp when $m=n-1$, it was conjectured in \cite{MR1443416} that when $m=1$ (i.e., $q(z)=z$), an upper bound is given by $3n-2$. We refer to this conjecture as \emph{Wilmshurst's conjecture}.

Wilmshurst's conjecture was proven for $n=3$ in joint work of Crofoot and Sarason described in the letter \cite{Sarason_written_comm}. Therein, it was observed that if one could prove that $p(z)$ has at most $n-1$ non-repelling fixed points, then the $3n-2$ bound would follow by an argument principle for harmonic functions. The $n-1$ bound, in turn, was shown to be equivalent to a norm inequality involving a certain linear operator acting on $\mathbb{C}^n$, which is proven in \cite{Sarason_written_comm} for $n=3$, but not $n>3$.

A closely related conjecture, which we will refer to as \emph{Crofoot--Sarason's conjecture}, was given in \cite{Sarason_written_comm}: \emph{for each $n>1$, there exists a polynomial $p(z)$ of degree $n$ with $n-1$ distinct critical points, each of which is fixed by $z\mapsto\overline{p(z)}$.} Such polynomials are termed \emph{Crofoot--Sarason polynomials} in the present work. Their existence implies sharpness of the upper bound in Wilmshurst's conjecture (see Remark \ref{lefschetz_count_rem}). In degree $n=3$, an example is given in \cite{Sarason_written_comm} by the polynomial \[ p(z)=\frac{3z-z^3}{2}\textrm{ with } \textrm{critical points } \pm1 \textrm{, both fixed by } z\mapsto\overline{p(z)}. \]  The points $0\textrm{, } \frac{1}{2}(\pm\sqrt{7}\pm i)$ are also fixed by $\overline{p}$, yielding a total of $7=3\cdot3-2$ fixed points of $\overline{p}$ and thus verifying sharpness of Wilmshurst's conjecture for $n=3$.  Some further explicit examples of Crofoot--Sarason polynomials in degrees $4, 5, 6, 8$ were given in \cite{MR2081663}.

Wilmshurst's conjecture was proven for all $n>3$  in \cite{KhSw} using complex-dynamical methods (see also \cite{10.2307/26315487} for an overview of more recent work in this area). Crofoot--Sarason's conjecture was later proven in \cite{Ge} via Thurston's topological characterization of rational maps: a topological polynomial is defined which is then shown to have no Thurston obstructions by a criterion of \cite{MR2634168}. We now briefly describe a different approach to Crofoot--Sarason's conjecture later given in \cite{2014arXiv1411.3415L}.

Therein, the space of (external) polynomials \[  \Sigma_d^* := \left\{ f(z)= z+\frac{a_1}{z} + \cdots +\frac{a_d}{z^d} : a_d=-\frac{1}{d}\textrm{ and } f|_{\widehat{\mathbb{C}}\setminus\overline{\mathbb{D}}} \textrm{ is conformal.}\right\}  \] is considered in relation to the problem of the topological connectivity of unbounded quadrature domains. We remark that the space $\Sigma_d^*$ is closely related to a space of polynomials studied by Suffridge in  \cite{MR0235107}, \cite{MR294609} in connection with coefficient-body problems for univalent functions (see also Remark \ref{notation_remark}). Let $\mathbb{T}$ denote the unit circle in $\mathbb{C}$. It is proven in  \cite{2014arXiv1411.3415L} that for $p\in\Sigma_d^*$, the curve $p(\mathbb{T})$ can have at most $d-2$ double points (see Table \ref{table_1}). If $p\in \Sigma_d^*$ is such that $p(\mathbb{T})$ has $d-2$ double points, we refer to $p$ as a \emph{Suffridge polynomial}. The existence of Suffridge polynomials in all degrees can be deduced from the Krein--Milman theorem. Moreover, it is proven that a perturbation of the Schwarz reflection map associated with a Suffridge polynomial can be extended to a map of the plane which is quasiconformally equivalent to a Crofoot--Sarason polynomial. This provides an alternative proof of Crofoot--Sarason's conjecture.

The Krein--Milman approach to existence of Suffridge polynomials yields only a single Suffridge polynomial in each degree. In the present work we construct, by a different method, \emph{all} Suffridge polynomials. Furthermore, we show that they admit a simple combinatorial classification by \emph{bi-angled trees}. Remarkably, the same bi-angled trees also classify Crofoot--Sarason polynomials. We use this connection to show that Suffridge polynomials are in 1:1 correspondence with Crofoot--Sarason polynomials, and conjecture as to a deeper connection between the two classes. We emphasize that while the original proof of the existence of Crofoot--Sarason polynomials required an appeal to Thurston's topological characterization of rational maps, the present paper proves the existence of Suffridge polynomials by developing a more elementary technique of \emph{pinching} in the class $\Sigma_d^*$: relying mainly on quasiconformal deformation methods and the compactness of $\Sigma_d^*$.

\subsection{Overview and Statement of Main Results}

In this paper, our main purpose is to establish a canonical correspondence between the following three classes of objects:

\begin{enumerate}

\item\label{Sigma_d^*} Extremal functions in the class $\Sigma_d^*$.

\item\label{bi-angled_trees} Bi-angled trees embedded in $\mathbb{C}$ with $d-1$ vertices (edges are straight line segments, meeting at angles $2\pi/3$ or $4\pi/3$).  

\item\label{crofoot-sarason} Anti-holomorphic polynomials of degree $d$ with $d-1$ distinct, fixed critical points. 
\end{enumerate}

\noindent This correspondence is in fact one-to-one given appropriate equivalence relations on (\ref{Sigma_d^*}), (\ref{bi-angled_trees}), (\ref{crofoot-sarason}). We will discuss the main objects of study (\ref{Sigma_d^*}) - (\ref{crofoot-sarason}).

Given an (external) polynomial $f\in\Sigma_d^*$, the non-zero critical points of $f$ are all distinct and lie on $\mathbb{T}$, so that $f(\mathbb{T})$ is a curve with $d+1$ cusps (see Proposition~\ref{crit_points_on_circle}). We call $f$ a \emph{Suffridge polynomial}, or refer to $f$ as \emph{extremal} if, furthermore, the curve $f(\mathbb{T})$ has the maximal number $d-2$ of self-intersections (see Table \ref{table_1} and Section~\ref{prelim_3}). Given such an extremal $f$, the interior of the curve $f(\mathbb{T})$ consists of $d-1$ topological triangles, and hence there is a natural bi-angled tree structure $\mathcal{T}(f)$ associated to $f$ (see  Table \ref{table_1} or Figure \ref{fig:tree}) given by assigning a vertex to each topological triangle and connecting two vertices if the associated triangles share a common boundary point (see Definition \ref{associated_tree}). Thus there is a natural map (\ref{Sigma_d^*}) $\mapsto$ (\ref{bi-angled_trees}). In fact, we prove that any bi-angled tree (up to isomorphism) is realized by some extremal polynomial, and, moreover, any extremal polynomial is essentially uniquely determined by its associated bi-angled tree  (see Theorem \ref{theorem_A} below).

We term the class of objects (\ref{crofoot-sarason}) as \emph{Crofoot--Sarason anti-polynomials} (CS anti-polynomials in short) of degree $d$: they are also in fact encoded by the combinatorial information given in (\ref{bi-angled_trees}). Let $p$ be a CS anti-polynomial. We prove that the \emph{angled Hubbard tree} $\mathcal{T}(p)$ of $p$ has a natural bi-angled tree structure (see Section~\ref{crofoot_sarason_sec}), giving a map (\ref{crofoot-sarason}) $\mapsto$ (\ref{bi-angled_trees}). The bijective correspondence (\ref{bi-angled_trees}) $\leftrightarrow$  (\ref{crofoot-sarason}) is then given by invoking a classification of Hubbard trees of anti-holomorphic polynomials \cite{Poi3}. 

We will now state our main result after introducing the appropriate equivalence relations on (\ref{Sigma_d^*}) - (\ref{crofoot-sarason}). There is a natural $\mathbb{Z}_{d+1}$-action on $\Sigma_d^*$ given by conjugating any $f\in\Sigma_d^*$ by multiplication by a $(d+1)^{\textrm{st}}$ root of unity. The appropriate equivalence relation $\sim$ on (\ref{bi-angled_trees}) is given by the notion of bi-angled tree isomorphism as in Definition \ref{isom_def}, and for (\ref{crofoot-sarason}) we consider two anti-polynomials to be equivalent if they are conjugate by an affine map $z\mapsto az+b$.

\begin{thmx}\label{theorem_A} Let $d\geq2$. There is a canonical bijection between:

\begin{align} \left\{ f \in \Sigma_d^* : f(\mathbb{T}) \textrm{ has } d-2 \textrm{ double points} \right\} \big/ \hspace{1mm} \mathbb{Z}_{d+1} \phantom{} \label{Suffridge_part} \\ \left\{ \textrm{Bi-angled trees with } d-1 \textrm{ vertices}\right\}  \big/ \sim \label{tree_part} \phantom{} \\ \left\{ \textrm{CS anti-polynomials of degree } d\right\} \big/ \textrm{Aut}(\mathbb{C}) \label{CS_polynomial_part}. \end{align}

\end{thmx}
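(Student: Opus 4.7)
The plan is to establish the two bijections (1) $\leftrightarrow$ (2) and (2) $\leftrightarrow$ (3) separately; composing them yields the full claim. The natural maps in both directions have been indicated in the introduction: $f \mapsto \mathcal{T}(f)$ sends an extremal Suffridge polynomial to the bi-angled tree dual to the triangulation of the bounded complementary component of $f(\mathbb{T})$, and $p \mapsto \mathcal{T}(p)$ sends a CS anti-polynomial to its angled Hubbard tree. The content of the theorem is that these maps descend to bijections on the stated equivalence classes.

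For (1) $\leftrightarrow$ (2), I would first check well-definedness: a direct Euler count on the planar graph cut out by $f(\mathbb{T})$ (with $d+1$ cusps and $d-2$ double points) shows the bounded complementary region decomposes into exactly $d-1$ topological triangles, and the rotation $f(z) \mapsto \zeta f(z/\zeta)$ with $\zeta^{d+1}=1$ induces an isomorphism of bi-angled trees. Injectivity would follow from a rigidity argument: given two extremal $f_1, f_2$ with isomorphic bi-angled trees, one pieces together a triangle-by-triangle conformal identification (each triangle being uniformized with its three cusps as distinguished boundary points), extends it across common double points using the matched bi-angled data, and glues it along $\mathbb{T}$ to the exterior uniformizations in $\Sigma_d^*$. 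The resulting self-map of $\widehat{\mathbb{C}}$ is quasiconformal, conformal off a removable set, hence Moebius; the $\Sigma_d^*$ normalization then pins it down to a $(d+1)$-st root of unity rotation, placing $f_1$ and $f_2$ in a common $\mathbb{Z}_{d+1}$-orbit.

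The hardest step is surjectivity of (1) $\to$ (2). This is where the \emph{pinching} technique announced in the introduction plays the central role. The strategy is to start from any extremal Suffridge polynomial, whose existence in each degree is guaranteed by the Krein--Milman argument of \cite{2014arXiv1411.3415L}, and to deform within $\Sigma_d^*$ by quasiconformal surgery that pinches prescribed pairs of arcs on $\mathbb{T}$ together, creating double points of $f(\mathbb{T})$ in any specified combinatorial pattern. Compactness of $\Sigma_d^*$ supplies subsequential limits; one must then show that these limits still carry the maximal number of cusps and double points, so that extremality is preserved under the deformation, and that the combinatorics of pinching can be steered to realize any prescribed bi-angled tree. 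This last point, controlling the global topology of the resulting tree rather than just the local moves, is expected to be the main technical obstacle. A natural approach is induction on the number of internal edges of the target tree, reducing the problem to producing one additional double point in a controlled location at each step.

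For (2) $\leftrightarrow$ (3), I would first verify that the angled Hubbard tree of a CS anti-polynomial $p$ is a bi-angled tree in the sense of the present paper: its vertex set is exactly the $d-1$ fixed critical points (no extra Hubbard vertices are needed since the critical orbit is pointwise fixed), each critical point has local degree $2$, and the local dynamics of $\bar p$ at each fixed critical point forces the cyclic order of incident edges to correspond to angles $2\pi/3$ and $4\pi/3$. Injectivity modulo $\operatorname{Aut}(\mathbb{C})$ then follows from Thurston rigidity for postcritically finite anti-polynomials with equivalent angled Hubbard trees. Surjectivity is obtained by invoking Poirier's realization theorem \cite{Poi3}: every abstract admissible angled tree is realized by a postcritically finite anti-polynomial, so it suffices to check that every bi-angled tree in the present sense satisfies Poirier's admissibility criterion. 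Since every vertex is a fixed critical point, admissibility reduces to ruling out Levy cycles, which follows purely combinatorially from the tree structure.
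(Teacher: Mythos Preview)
Your outline for (2) $\leftrightarrow$ (3) is essentially what the paper does: Propositions~\ref{sarason_bi-angled} and~\ref{connectedness_prop} establish the bi-angled structure of $\mathcal{T}(p)$, and then Poirier's theorem \cite{Poi3} handles both realization and uniqueness. (One small correction: Poirier's admissibility is phrased in terms of an ``expanding'' condition, not Levy cycles; here it is vacuous since every vertex is a fixed critical point, hence of Fatou type.)

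For (1) $\leftrightarrow$ (2), however, your injectivity argument has a genuine gap. You propose to build a conformal map triangle-by-triangle on the droplet and glue it to the exterior conformal identification $f_2\circ f_1^{-1}$ on $\Omega$, obtaining a global homeomorphism conformal off a removable set. But these two maps have no reason to agree on $\partial\Omega$: the boundary values of the triangle-by-triangle Riemann maps are not the same as those induced by $f_1,f_2$. So there is no gluing. Even if you interpolate quasiconformally in a collar, the resulting map is only conformal off a set of positive area, and conformal removability of the limit set $\partial T^\infty(\sigma)$ is not known for $d\ge 3$ (the paper says this explicitly in Remark~\ref{conf_remove_rem}). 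The paper's substitute is a dynamical pullback: one extends the droplet conformal map to a single global $K$-quasiconformal map (Lemmas~\ref{asymp_linear}, \ref{global_qc}), then lifts it repeatedly via the Schwarz reflections $\widetilde\sigma,\sigma$ to push the conformality from the droplet out through the entire tiling set, and separately uses the B\"ottcher coordinate on $\mathcal{B}_\infty$. The limit is then conformal off $\mathcal{L}(\sigma)$, which has measure zero (Proposition~\ref{limit_schwarz_zero_area}), and Weyl's lemma finishes.

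Your surjectivity plan also differs from the paper in a way worth flagging. You propose to start from a Krein--Milman extremal and deform within $\Sigma_d^*$ to change the tree. The paper instead starts from the \emph{non}-extremal hypocycloid $f_0(z)=z-1/(dz^d)$, which has no double points, and pinches one pair of arcs at a time (Theorem~\ref{pinching_theorem}), reading off from the target tree exactly which pairs to pinch (Definition~\ref{definition_of_S}). This avoids the problem you identify as the ``main technical obstacle'': there is no need to undo or rearrange existing double points, only to add them in the prescribed pattern.
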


For each $f\in\Sigma_d^*$, the unbounded, simply connected domain $f(\widehat{\C}\setminus\overline{\D})$ admits an anti-meromorphic map $\sigma$ that continuously extends to the identity map on the boundary of the domain. Such a map is called a \emph{Schwarz reflection map}, and a domain that admits a Schwarz reflection map is called a \emph{quadrature domain} (see Subsection~\ref{prelim_1} for precise definitions). With this terminology, the notion of a Suffridge polynomial is essentially equivalent to that of an unbounded, \emph{extremal quadrature domain} (see Proposition~\ref{suffridge_extremal_qd_equiv_prop}). The bijective correspondence (\ref{Sigma_d^*}) $\leftrightarrow$ (\ref{bi-angled_trees}) stated in Theorem~\ref{theorem_A} can thus be rephrased as a classification theorem for unbounded, extremal quadrature domains (see Theorem~\ref{bijection_thm}), and it is this perspective that we will maintain through most of this work.

The established combinatorial link between (\ref{Sigma_d^*}), (\ref{crofoot-sarason}) is remarkable, and its existence is only the tip of the iceberg. In fact, in a sequel to this work, we will explain a deeper connection between (\ref{Sigma_d^*}), (\ref{crofoot-sarason}): for $f\in\Sigma_d^*$ with $d-2$ double points on $f(\mathbb{T})$, the dynamics of the Schwarz reflection map (associated to the unbounded extremal quadrature domain $f(\widehat{\C}\setminus\overline{\D})$) on its limit set is topologically conjugate to the dynamics of the corresponding CS anti-polynomial on its Julia set.

\begin{table}
\begin{adjustwidth}{-.65in}{-.65in} 
\begin{center}
\begin{tabular}{ |c|c|c|c|c| } 
\hline
$d$ & Bi-angled tree & Droplet & Suffridge Polynomial & Generators of Lamination \\
\hline

    2 & \parbox[c]{4em}{\scalebox{.11}{
\begingroup%
  \makeatletter%
  \providecommand\color[2][]{%
    \errmessage{(Inkscape) Color is used for the text in Inkscape, but the package 'color.sty' is not loaded}%
    \renewcommand\color[2][]{}%
  }%
  \providecommand\transparent[1]{%
    \errmessage{(Inkscape) Transparency is used (non-zero) for the text in Inkscape, but the package 'transparent.sty' is not loaded}%
    \renewcommand\transparent[1]{}%
  }%
  \providecommand\rotatebox[2]{#2}%
  \ifx\svgwidth\undefined%
    \setlength{\unitlength}{286.88332625bp}%
    \ifx\svgscale\undefined%
      \relax%
    \else%
      \setlength{\unitlength}{\unitlength * \real{\svgscale}}%
    \fi%
  \else%
    \setlength{\unitlength}{\svgwidth}%
  \fi%
  \global\let\svgwidth\undefined%
  \global\let\svgscale\undefined%
  \makeatother%
  \begin{picture}(1,1.02253522)%
    \put(0,0){\includegraphics[width=\unitlength,page=1]{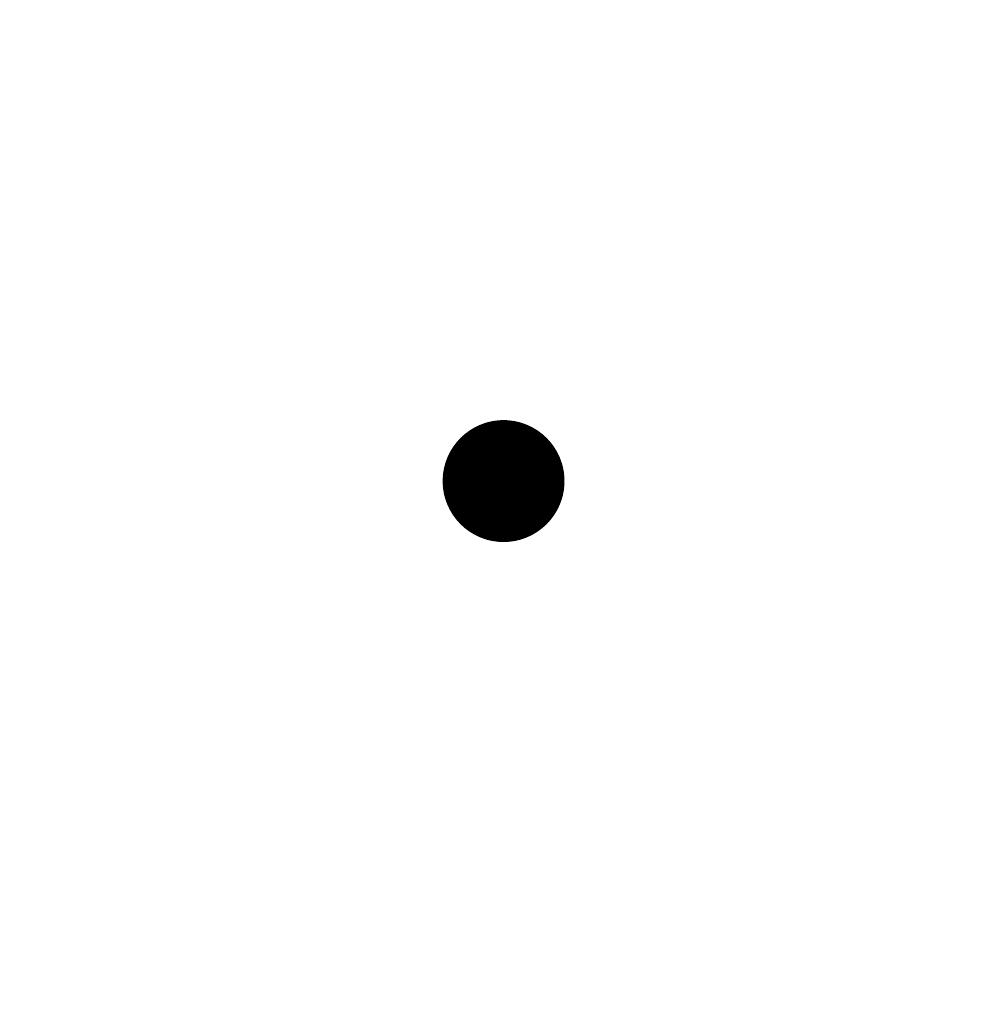}}%
  \end{picture}%
\endgroup%
}} & \parbox[c]{1em}{\includegraphics[width=0.04\textwidth]{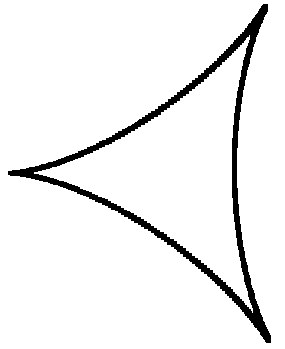}} & $z-\frac{1}{2z^2}$ & $\emptyset$ \\

    3 & \parbox[c]{4em}{\scalebox{.11}{
\begingroup%
  \makeatletter%
  \providecommand\color[2][]{%
    \errmessage{(Inkscape) Color is used for the text in Inkscape, but the package 'color.sty' is not loaded}%
    \renewcommand\color[2][]{}%
  }%
  \providecommand\transparent[1]{%
    \errmessage{(Inkscape) Transparency is used (non-zero) for the text in Inkscape, but the package 'transparent.sty' is not loaded}%
    \renewcommand\transparent[1]{}%
  }%
  \providecommand\rotatebox[2]{#2}%
  \ifx\svgwidth\undefined%
    \setlength{\unitlength}{286.88332625bp}%
    \ifx\svgscale\undefined%
      \relax%
    \else%
      \setlength{\unitlength}{\unitlength * \real{\svgscale}}%
    \fi%
  \else%
    \setlength{\unitlength}{\svgwidth}%
  \fi%
  \global\let\svgwidth\undefined%
  \global\let\svgscale\undefined%
  \makeatother%
  \begin{picture}(1,1.02253522)%
    \put(0,0){\includegraphics[width=\unitlength,page=1]{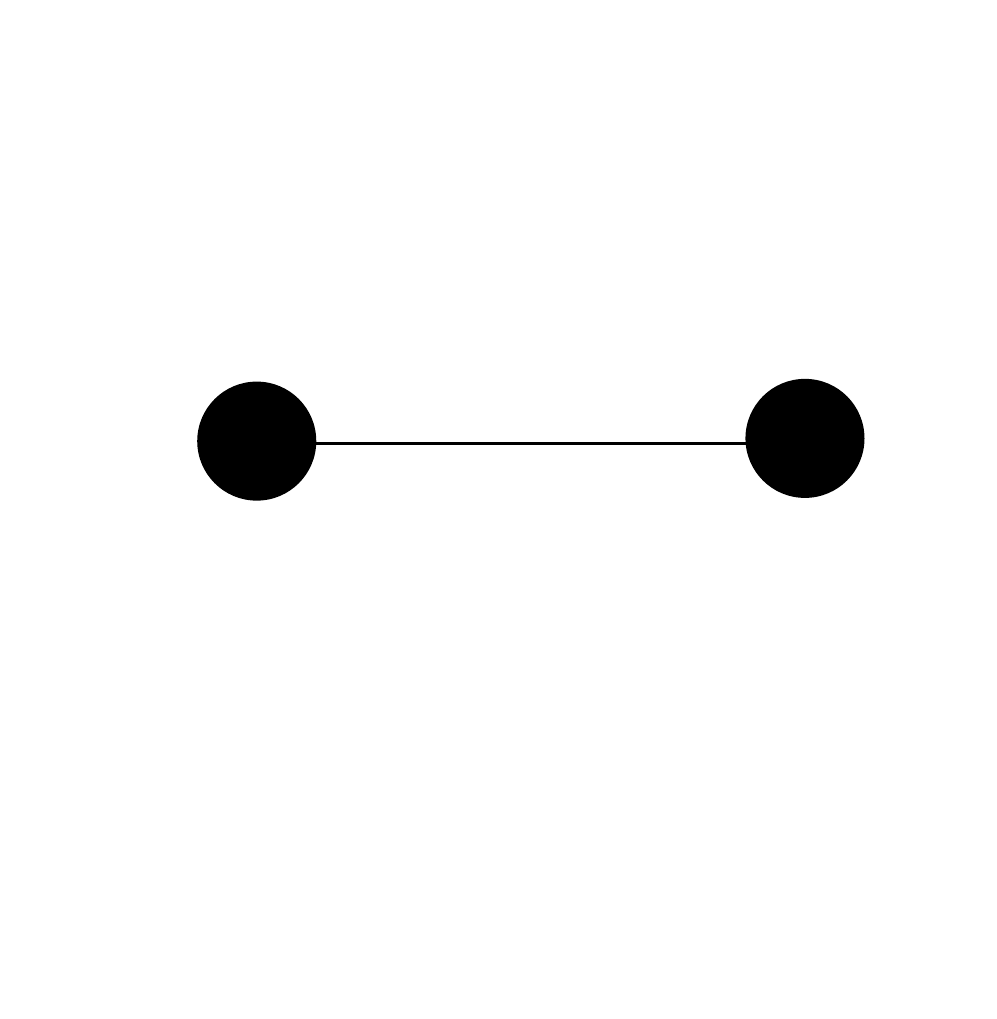}}%
  \end{picture}%
\endgroup%
}} & \parbox[c]{3.75em}{\includegraphics[scale=0.06]{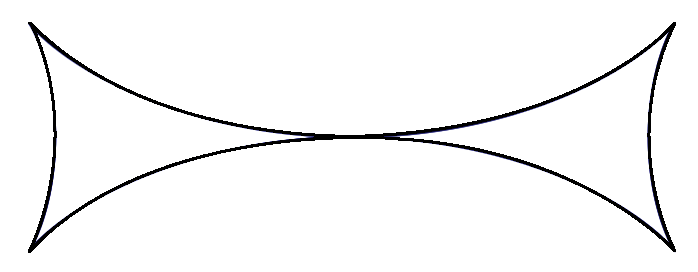}} & $z+\frac{2}{3z}-\frac{1}{3z^3}$ & $\{\frac18, \frac58\}$ \\

    4 & \parbox[c]{2.5em}{\scalebox{.11}{
\begingroup%
  \makeatletter%
  \providecommand\color[2][]{%
    \errmessage{(Inkscape) Color is used for the text in Inkscape, but the package 'color.sty' is not loaded}%
    \renewcommand\color[2][]{}%
  }%
  \providecommand\transparent[1]{%
    \errmessage{(Inkscape) Transparency is used (non-zero) for the text in Inkscape, but the package 'transparent.sty' is not loaded}%
    \renewcommand\transparent[1]{}%
  }%
  \providecommand\rotatebox[2]{#2}%
  \ifx\svgwidth\undefined%
    \setlength{\unitlength}{252.9421901bp}%
    \ifx\svgscale\undefined%
      \relax%
    \else%
      \setlength{\unitlength}{\unitlength * \real{\svgscale}}%
    \fi%
  \else%
    \setlength{\unitlength}{\svgwidth}%
  \fi%
  \global\let\svgwidth\undefined%
  \global\let\svgscale\undefined%
  \makeatother%
  \begin{picture}(1,1.38338663)%
    \put(0,0){\includegraphics[width=\unitlength,page=1]{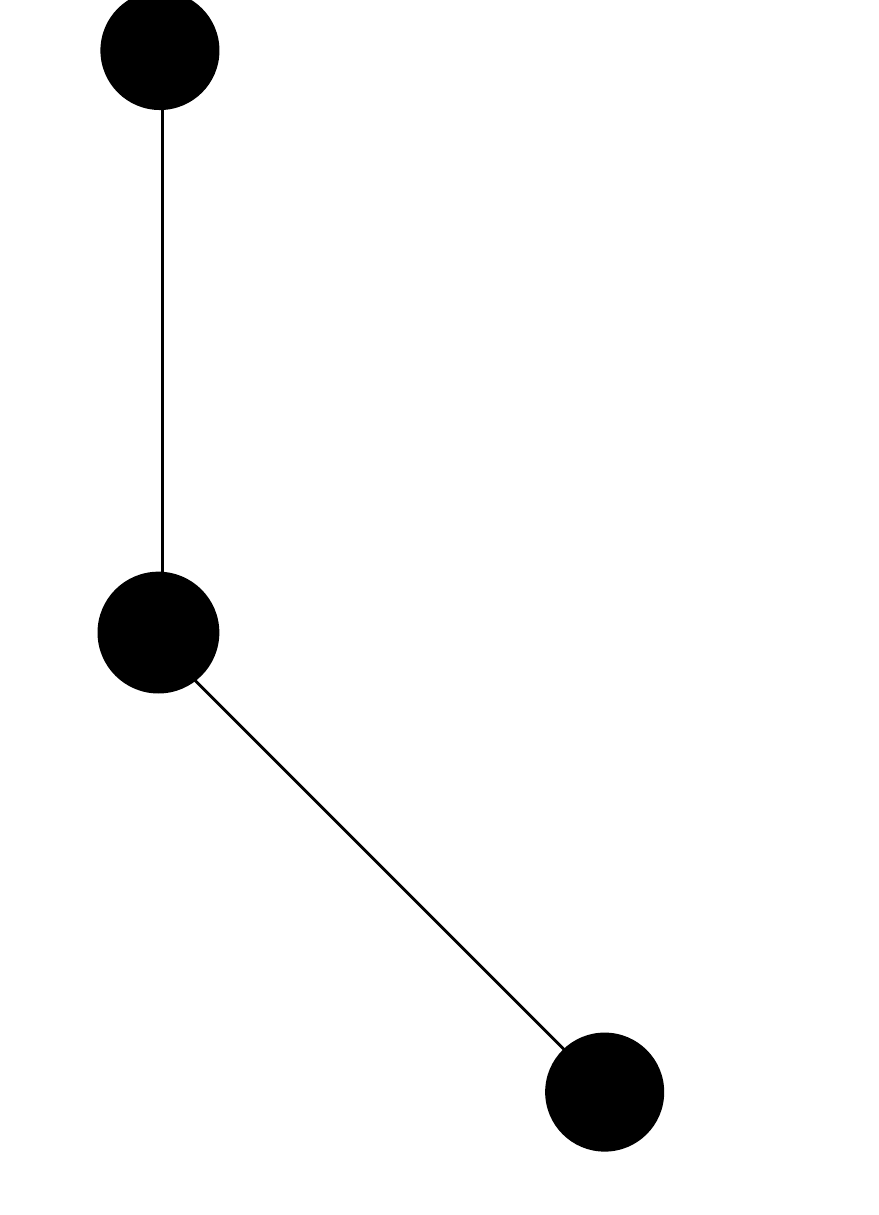}}%
  \end{picture}%
\endgroup%
}} & \parbox[c]{3.em}{\includegraphics[scale=0.032]{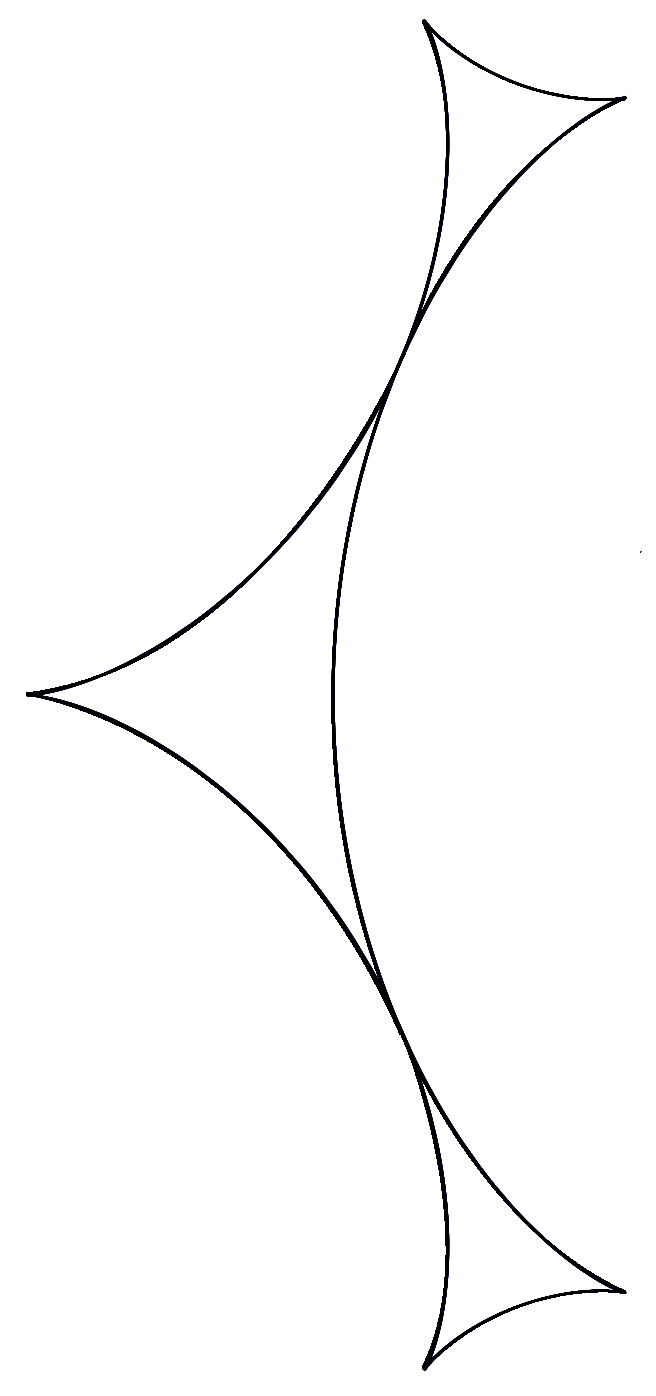}} & $z-\frac{5}{8z} -\frac{5}{16z^2}-\frac{1}{4z^4}$ & $\{\{\frac{1}{15},\frac{11}{15}\}, \{\frac{2}{15},\frac{7}{15}\}\}$ \\

    5 & \parbox[c]{4em}{\scalebox{.11}{
\begingroup%
  \makeatletter%
  \providecommand\color[2][]{%
    \errmessage{(Inkscape) Color is used for the text in Inkscape, but the package 'color.sty' is not loaded}%
    \renewcommand\color[2][]{}%
  }%
  \providecommand\transparent[1]{%
    \errmessage{(Inkscape) Transparency is used (non-zero) for the text in Inkscape, but the package 'transparent.sty' is not loaded}%
    \renewcommand\transparent[1]{}%
  }%
  \providecommand\rotatebox[2]{#2}%
  \ifx\svgwidth\undefined%
    \setlength{\unitlength}{336.97875496bp}%
    \ifx\svgscale\undefined%
      \relax%
    \else%
      \setlength{\unitlength}{\unitlength * \real{\svgscale}}%
    \fi%
  \else%
    \setlength{\unitlength}{\svgwidth}%
  \fi%
  \global\let\svgwidth\undefined%
  \global\let\svgscale\undefined%
  \makeatother%
  \begin{picture}(1,1.05036101)%
    \put(0,0){\includegraphics[width=\unitlength,page=1]{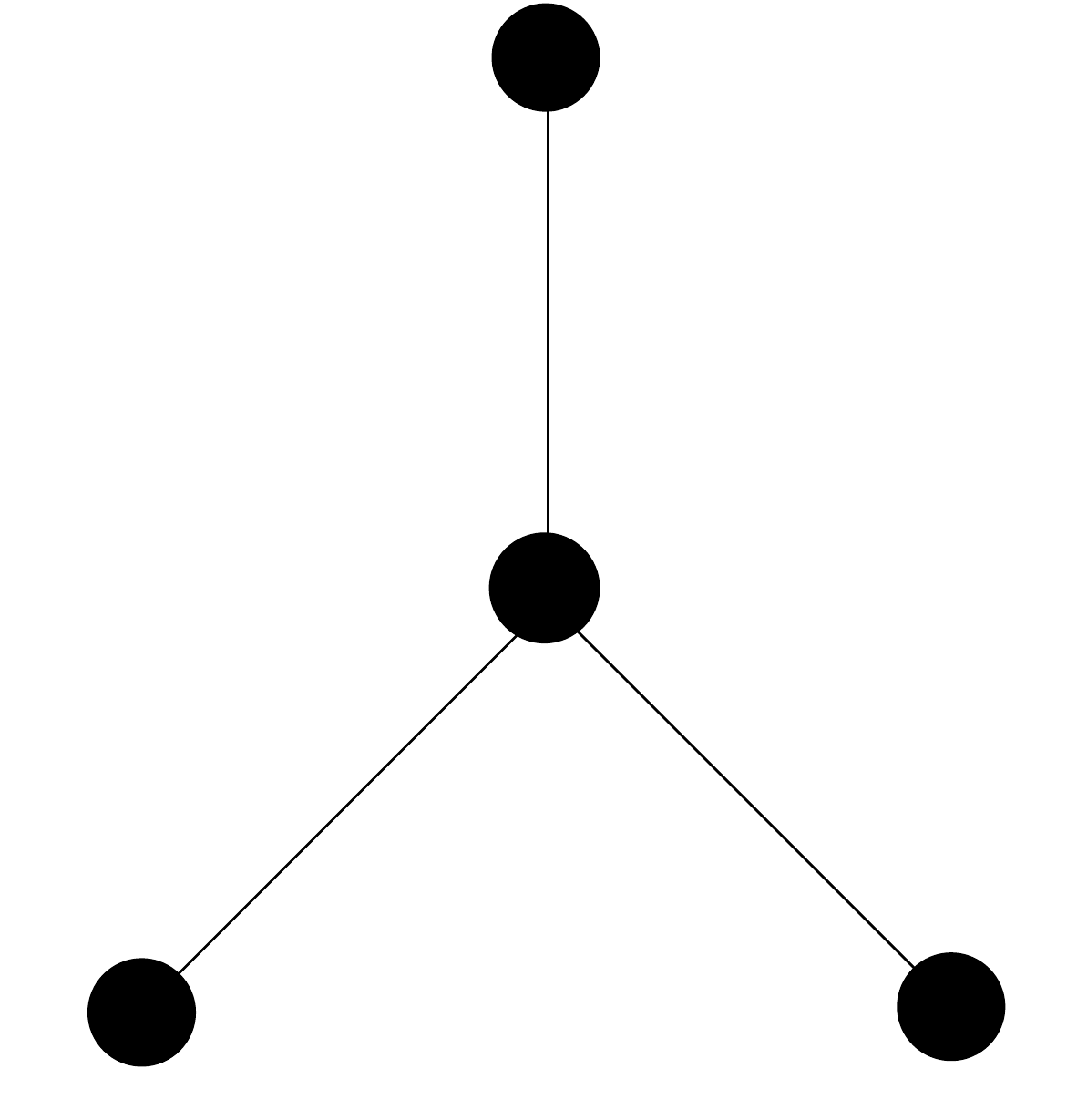}}%
  \end{picture}%
\endgroup%
}} & \parbox[c]{3.em}{\includegraphics[scale=0.1]{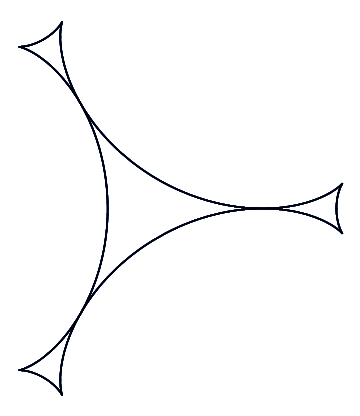}} & $z+\frac{2\sqrt{2}}{5z^2}-\frac{1}{5z^5}$ & $\{\{\frac{5}{24},\frac{23}{24}\}, \{\frac{7}{24},\frac{13}{24}\},\{\frac{15}{24},\frac{21}{24}\}\}$ \\

    5 & \parbox[c]{3em}{\scalebox{.11}{
\begingroup%
  \makeatletter%
  \providecommand\color[2][]{%
    \errmessage{(Inkscape) Color is used for the text in Inkscape, but the package 'color.sty' is not loaded}%
    \renewcommand\color[2][]{}%
  }%
  \providecommand\transparent[1]{%
    \errmessage{(Inkscape) Transparency is used (non-zero) for the text in Inkscape, but the package 'transparent.sty' is not loaded}%
    \renewcommand\transparent[1]{}%
  }%
  \providecommand\rotatebox[2]{#2}%
  \ifx\svgwidth\undefined%
    \setlength{\unitlength}{243.23661264bp}%
    \ifx\svgscale\undefined%
      \relax%
    \else%
      \setlength{\unitlength}{\unitlength * \real{\svgscale}}%
    \fi%
  \else%
    \setlength{\unitlength}{\svgwidth}%
  \fi%
  \global\let\svgwidth\undefined%
  \global\let\svgscale\undefined%
  \makeatother%
  \begin{picture}(1,2.05651365)%
    \put(0,0){\includegraphics[width=\unitlength,page=1]{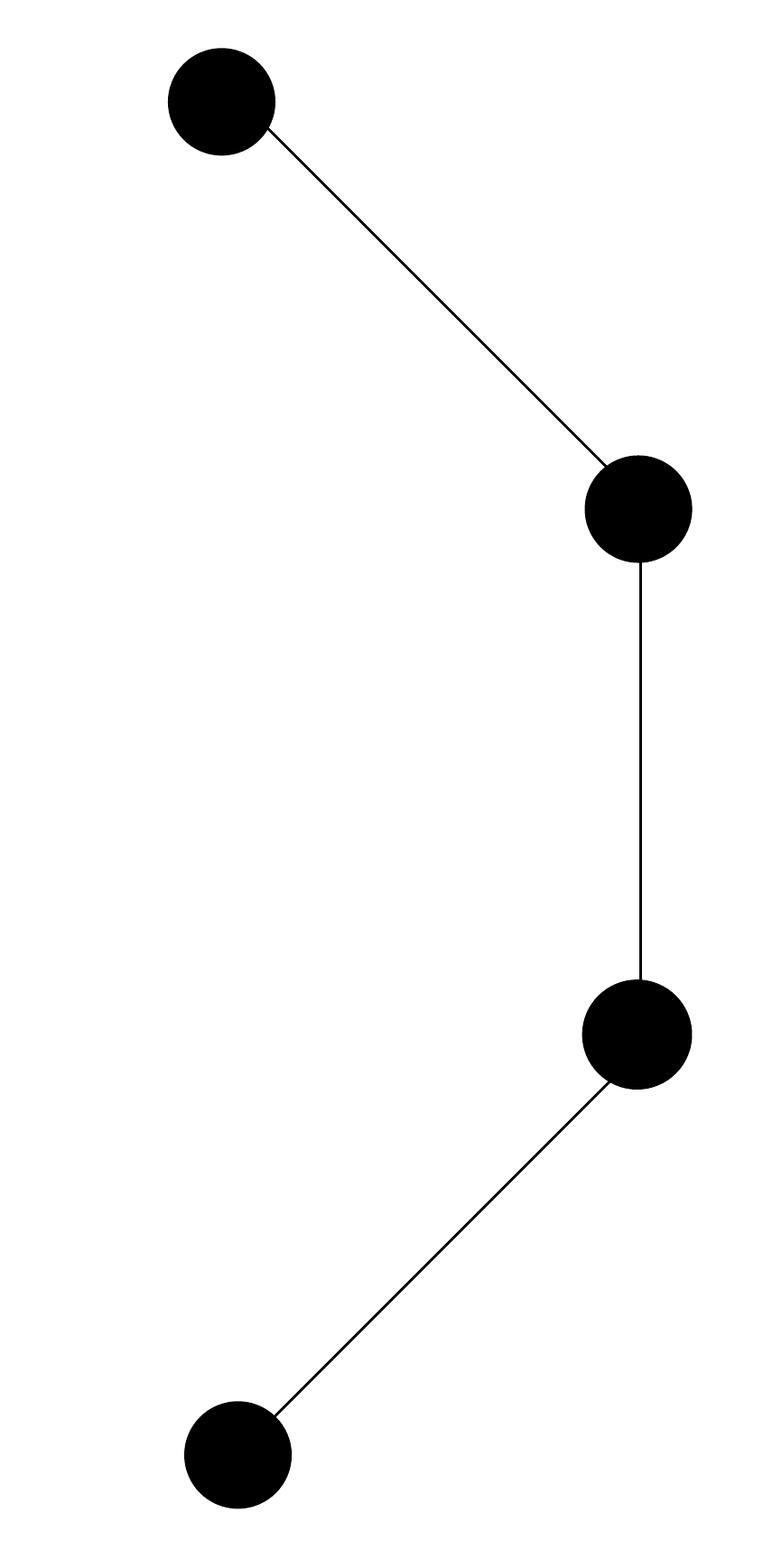}}%
  \end{picture}%
\endgroup%
}} & \parbox[c]{3.em}{\includegraphics[scale=0.12]{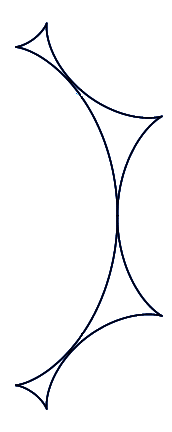}} & $z-\frac{0.6}{z}+\frac{0.3}{z^2}-\frac{0.6}{3z^3}-\frac{1}{5z^5}$ & $\{\{\frac{5}{24},\frac{23}{24}\}, \{\frac{7}{24},\frac{13}{24}\},\{\frac{6}{24},\frac{18}{24}\}\}$\\

    5 & \parbox[c]{4.75em}{\scalebox{.11}{
\begingroup%
  \makeatletter%
  \providecommand\color[2][]{%
    \errmessage{(Inkscape) Color is used for the text in Inkscape, but the package 'color.sty' is not loaded}%
    \renewcommand\color[2][]{}%
  }%
  \providecommand\transparent[1]{%
    \errmessage{(Inkscape) Transparency is used (non-zero) for the text in Inkscape, but the package 'transparent.sty' is not loaded}%
    \renewcommand\transparent[1]{}%
  }%
  \providecommand\rotatebox[2]{#2}%
  \ifx\svgwidth\undefined%
    \setlength{\unitlength}{533.37143785bp}%
    \ifx\svgscale\undefined%
      \relax%
    \else%
      \setlength{\unitlength}{\unitlength * \real{\svgscale}}%
    \fi%
  \else%
    \setlength{\unitlength}{\svgwidth}%
  \fi%
  \global\let\svgwidth\undefined%
  \global\let\svgscale\undefined%
  \makeatother%
  \begin{picture}(1,0.62502678)%
    \put(0,0){\includegraphics[width=\unitlength,page=1]{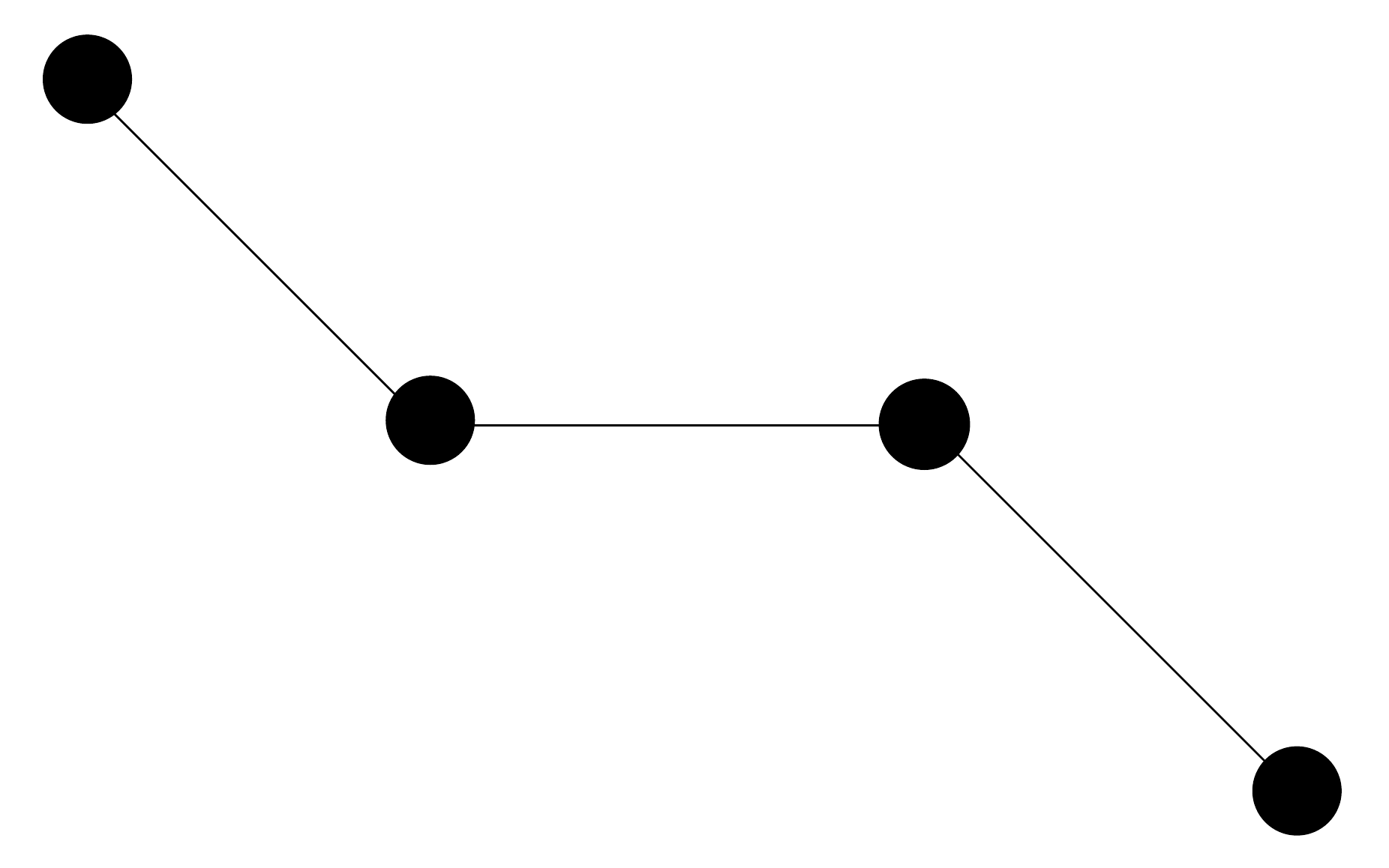}}%
  \end{picture}%
\endgroup%
}} & \parbox[c]{3.em}{ \includegraphics[scale=0.1]{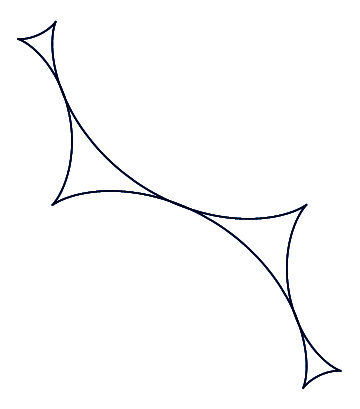}} & $z-\frac{0.71i}{z}+\frac{0.71i}{3z^3}-\frac{1}{5z^5}$ & $\{\{\frac{1}{24},\frac{19}{24}\}, \{\frac{7}{24},\frac{13}{24}\},\{\frac{6}{24},\frac{18}{24}\}\}$\\

    5 & \parbox[c]{4.75em}{\scalebox{.11}{
\begingroup%
  \makeatletter%
  \providecommand\color[2][]{%
    \errmessage{(Inkscape) Color is used for the text in Inkscape, but the package 'color.sty' is not loaded}%
    \renewcommand\color[2][]{}%
  }%
  \providecommand\transparent[1]{%
    \errmessage{(Inkscape) Transparency is used (non-zero) for the text in Inkscape, but the package 'transparent.sty' is not loaded}%
    \renewcommand\transparent[1]{}%
  }%
  \providecommand\rotatebox[2]{#2}%
  \ifx\svgwidth\undefined%
    \setlength{\unitlength}{533.37143785bp}%
    \ifx\svgscale\undefined%
      \relax%
    \else%
      \setlength{\unitlength}{\unitlength * \real{\svgscale}}%
    \fi%
  \else%
    \setlength{\unitlength}{\svgwidth}%
  \fi%
  \global\let\svgwidth\undefined%
  \global\let\svgscale\undefined%
  \makeatother%
  \begin{picture}(1,0.62502678)%
    \put(0,0){\includegraphics[width=\unitlength,page=1]{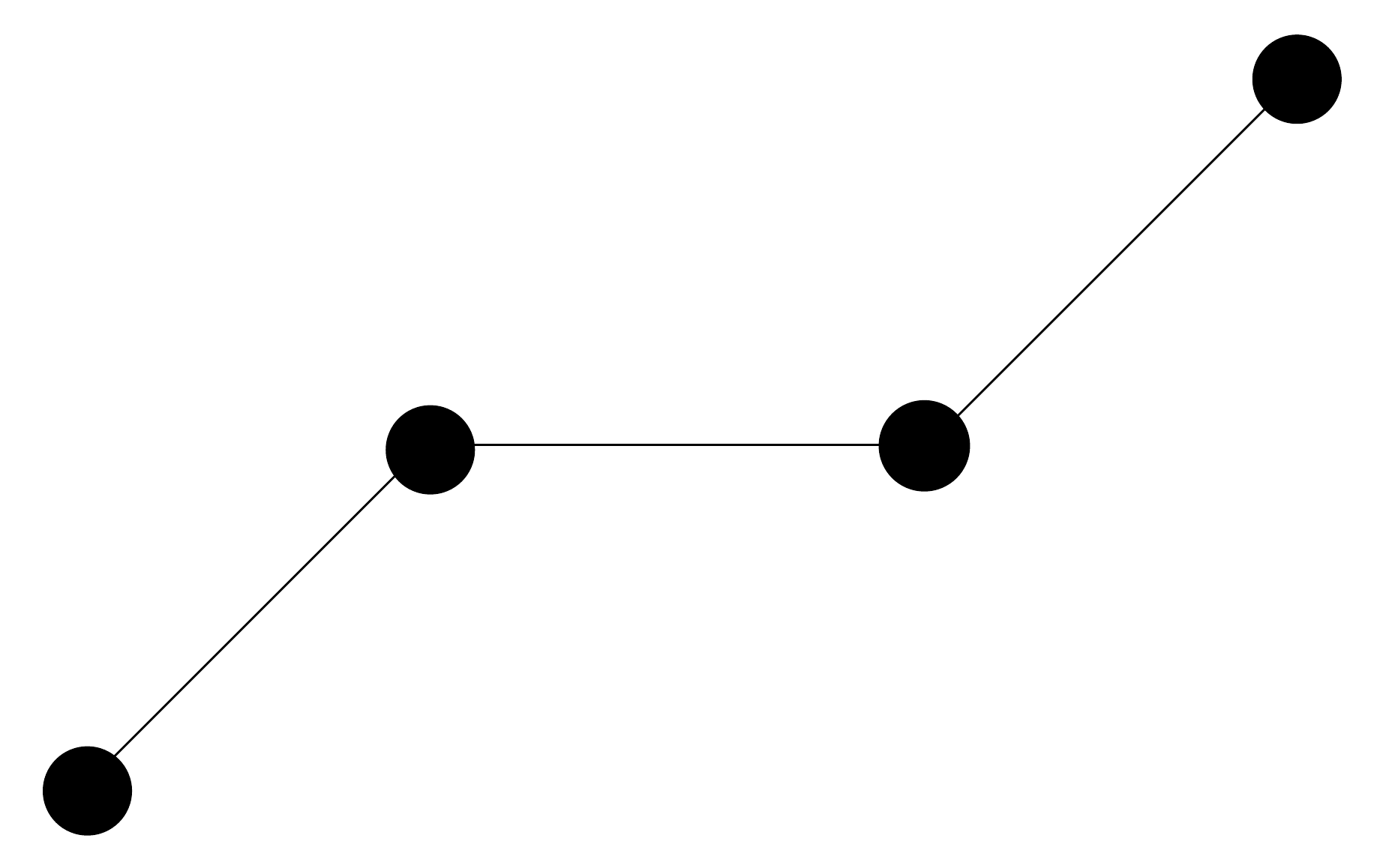}}%
  \end{picture}%
\endgroup%
}} & \parbox[c]{3.em}{ \includegraphics[scale=0.1]{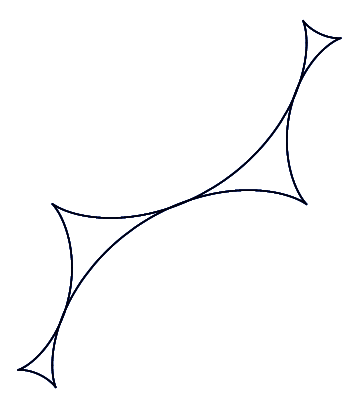}} & $z+\frac{0.71i}{z}-\frac{0.71i}{3z^3}-\frac{1}{5z^5}$ & $\{\{\frac{5}{24},\frac{23}{24}\}, \{\frac{11}{24},\frac{17}{24}\},\{\frac{6}{24},\frac{18}{24}\}\}$ \\

& & & & \\

\hline
\end{tabular}
\end{center}
\end{adjustwidth}
\caption{This table displays all possible bi-angled trees with $d-1$ vertices (up to isomorphism) for $d=2,3,4,5$, and the corresponding droplets (i.e., complement of the corresponding extremal unbounded quadrature domains of order $d$) along with the Suffridge polynomials of degree $d+1$ uniformizing the extremal unbounded quadrature domains (the last three are numerical approximations of the actual Suffridge polynomials). The last column shows the angles of the pairs of external rays (which are radial lines in the B{\"o}ttcher coordinate of the basin of infinity of the Schwarz reflection map, see Subsection~\ref{rigidity_thm_subsec}) landing at the double points of the droplet; these angle pairs generate an equivalence relation on $\R/\Z$ which yields a topological model of the limit set of the corresponding Schwarz reflection map (and also of the Julia set of the corresponding CS anti-polynomial).}
\label{table_1}
\end{table}

The classes $\Sigma_d^*$ are closely related to the more classical spaces of univalent polynomials: \[ S_d^* := \left\{ f(z)= z+a_2{z^2} + \cdots +a_dz^d : a_d=\frac{1}{d}\textrm{ and } f|_{\mathbb{D}} \textrm{ is conformal.}\right\}.   \] We say that $f \in S_d^*$ is a \emph{Suffridge polynomial}, or refer to $f$ as \emph{extremal}, if the curve $f(\mathbb{T})$ has $d-2$ self-intersections. Extremal points among $S_d^*$ have a natural rooted binary tree structure (see Table \ref{table_2} in Section~\ref{S_d^*_section}), and we establish the following existence and uniqueness result for the class $S_d^*$ in Section~\ref{S_d^*_section}:

\begin{thmx}\label{theorem_B} Let $d\geq2$. There is a canonical bijection between

\begin{align} \left\{ f \in S_d^* : f \textrm{ has } d-2 \textrm{ double points} \right\} \big/ \hspace{1mm} \mathbb{Z}_{d-1}  \phantom{} \label{Suffridge_bounded_part} \\ \left\{ \textrm{Rooted binary trees with } d-2 \textrm{ vertices}\right\}. \label{tree_part_S}  \end{align}

\end{thmx}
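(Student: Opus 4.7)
The plan is to adapt the arguments used to prove Theorem~\ref{theorem_A} from the exterior (unbounded) setting of $\Sigma_d^*$ to the interior (bounded) setting of $S_d^*$. The proof would proceed in three stages: defining the associated rooted binary tree, establishing uniqueness up to the $\mathbb{Z}_{d-1}$-action, and establishing existence via pinching.

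To define the associated tree $\mathcal{T}(f)$, I would first verify (in a bounded analogue of Proposition~\ref{crit_points_on_circle}) that for extremal $f \in S_d^*$ the $d-1$ critical points of $f$ all lie on $\mathbb{T}$, so that $f(\mathbb{T})$ is a closed curve with $d-1$ cusps together with $d-2$ transverse double points. An Euler characteristic computation identical to the one in the $\Sigma_d^*$ setting then shows that $\widehat{\mathbb{C}} \setminus f(\mathbb{T})$ has exactly $d$ complementary components: one unbounded, and $d-1$ bounded. Univalence of $f$ on $\mathbb{D}$ forces $f(\mathbb{D})$ to equal precisely one of the bounded components, namely the \emph{principal} one containing $f(0) = 0$; the remaining $d-2$ bounded components are empty teardrops glued to $\partial f(\mathbb{D})$ at the $d-2$ double points. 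Collapsing the principal component to a root vertex and recording the planar incidence pattern of the teardrops yields a rooted planar tree with $d-2$ vertices, whose binary character reflects that each cusp is incident to at most two topological disks.

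For injectivity, if two extremal $f_1, f_2 \in S_d^*$ satisfy $\mathcal{T}(f_1) \cong \mathcal{T}(f_2)$, I would piece together a tree-preserving quasiconformal homeomorphism between their image domains component by component, pull it back to a self-homeomorphism of $\overline{\mathbb{D}}$, and then invoke the measurable Riemann mapping theorem (with trivial Beltrami coefficient coming from the conformality of the pieces) to promote it to a conformal automorphism of $\mathbb{D}$. The normalization $a_d = 1/d$ together with univalence then forces this automorphism to be multiplication by a $(d-1)$-th root of unity. For surjectivity, given a rooted binary tree $T$ with $d-2$ vertices, I would construct a realizing $f \in S_d^*$ by a pinching deformation parallel to the one for Theorem~\ref{theorem_A}: starting from a base polynomial with fewer double points and iteratively pinching pairs of boundary arcs of $\mathbb{T}$ according to the combinatorics of $T$, using quasiconformal surgery to remain in $S_d^*$ and extracting a limit via compactness of $S_d^*$ among normalized univalent polynomials.

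The main obstacle is surjectivity: one must verify that the pinching limit is genuinely extremal, that is, all $d-2$ prescribed double points materialize without coalescence of critical points and without creating unintended extra double points, while the limit remains univalent on $\mathbb{D}$ and preserves the normalization $a_d = 1/d$. This requires moduli estimates along the deformation parallel to those developed for $\Sigma_d^*$, which I expect to transfer with only technical modifications to the bounded setting.
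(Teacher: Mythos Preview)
Your surjectivity sketch is in the right spirit and close to the paper's approach. One difference worth noting: the paper does not start the pinching from a ``vanilla'' base polynomial analogous to $z - 1/(dz^d)$. Instead it begins with Suffridge's explicit polynomials $P(z;d,1)$, which already have roughly $d/2$ real double points; a small self-dual perturbation (following \cite{2014arXiv1411.3415L}) separates all but one of these, producing an $f\in S_d^*$ with exactly one double point and all $d-1$ cusps on the boundary of a single bounded droplet component. The pinching of Section~\ref{mainthm_proof} then applies inside that component. Your vaguer ``base polynomial with fewer double points'' would need to be made concrete along these lines.

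Your injectivity argument, however, has a genuine gap. You propose to piece together conformal maps on the components of $\widehat{\mathbb{C}}\setminus f_i(\mathbb{T})$ and then, since the Beltrami coefficient is ``trivial on the pieces'', conclude the global map is conformal. The problem is the boundary matching: a Riemann map $f_1(\mathbb{D})\to f_2(\mathbb{D})$ induces one homeomorphism of $f_1(\mathbb{T})$ onto $f_2(\mathbb{T})$, while the conformal maps between corresponding droplet components induce another, and there is no reason these agree. Without such agreement you cannot glue them to a single homeomorphism of the sphere, let alone a conformal one. (Equivalently: your argument tacitly assumes that the curve $f(\mathbb{T})$ together with its component decomposition determines the conformal structure on both sides compatibly, which is exactly what needs to be proved.)

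The paper's injectivity proof (Theorem~\ref{rigidity_extremal_bounded_qd_thm}) avoids this entirely by using the \emph{dynamics} of the Schwarz reflection map $\sigma$, which you do not invoke. One first builds a conformal map $\mathbf{\Psi}$ between the droplets $\widetilde{T}\to T$ respecting the tree combinatorics, checks it is asymptotically linear at cusps and double points (bounded analogues of Lemmas~\ref{asymp_linear} and~\ref{global_qc}), and extends it quasiconformally over $\Omega$. One then iteratively lifts this map via $\widetilde{\sigma}$ and $\sigma$; each lift is conformal on one more layer of tiles without increasing dilatation. Since in the bounded case the tiling set is dense in $\widehat{\mathbb{C}}$ (Proposition~\ref{dynamical_partition_schwarz_S_d}) and its boundary has zero area, the limit of the lifts is conformal a.e., hence a M{\"o}bius map fixing $\infty$, hence affine. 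This pullback argument is the substantive content you are missing.
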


\noindent The enumeration of rooted binary trees is elementary, and hence we deduce:

\begin{cor*}\label{counting_extremal_bounded_qds} Let $d\geq2$. Then \begin{equation}\label{counting_formula} \# \bigg( \big\{ f \in S_d^* : f \emph{ has } d- 2 \emph{ double points} \big\}\big/ \hspace{1mm} \mathbb{Z}_{d-1}\bigg) = \frac{1}{d-1}{2(d-2) \choose d-2}.   \nonumber \end{equation}\end{cor*}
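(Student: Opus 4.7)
The plan is to reduce the cardinality in the corollary to a standard enumerative identity, using the bijection already supplied by Theorem~\ref{theorem_B}. By that theorem, the equivalence classes on the left-hand side are in canonical bijection with isomorphism classes of rooted binary trees on $d-2$ vertices. It therefore suffices to show that the number of such trees is $C_{d-2} := \frac{1}{d-1}\binom{2(d-2)}{d-2}$, the $(d-2)$-th Catalan number. This is a classical fact, so the content of the corollary is essentially the translation supplied by Theorem~\ref{theorem_B} together with a textbook counting argument.

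To carry out the counting, let $T_n$ denote the number of rooted binary trees on $n$ vertices (with the convention $T_0 = 1$). Decomposing such a tree at its root produces an ordered pair of (possibly empty) rooted binary subtrees whose vertex counts $k$ and $n-1-k$ sum to $n-1$; this yields the recursion
$$ T_n \; =\; \sum_{k=0}^{n-1} T_k\, T_{n-1-k}, \qquad n\geq 1. $$
The Catalan numbers $C_n$ satisfy exactly the same recursion with $C_0=1$, so by induction $T_n = C_n$ for all $n \geq 0$. Setting $n=d-2$ gives the claimed formula. Alternatively, one could invoke a bijection between rooted binary trees and Dyck paths (or balanced parenthesizations) of the appropriate length, but the recursive argument is the shortest.

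There is no substantive obstacle here: once Theorem~\ref{theorem_B} has identified the combinatorial model, the enumeration is entirely standard, and the only point requiring care is confirming that the notion of ``rooted binary tree'' used in Theorem~\ref{theorem_B} (as described in the forthcoming Section~\ref{S_d^*_section}) is the one for which the generating series satisfies the Catalan recursion, that is, the one in which the left and right children of each vertex are distinguished.
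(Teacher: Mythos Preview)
Your proposal is correct and follows essentially the same approach as the paper: invoke the bijection of Theorem~\ref{theorem_B} to reduce to counting rooted binary trees on $d-2$ vertices, then identify this count as the Catalan number $C_{d-2}$. The paper simply cites the Catalan enumeration as classical, while you supply the standard recursive argument; your closing remark about checking that left and right children are distinguished is apt and matches the paper's Definition~\ref{rooted_binary_tree}.
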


Sections~\ref{preliminaries} and~\ref{general_dynamics_sec} are devoted to establishing the terminology for, and basic properties of, the objects (\ref{Sigma_d^*}) - (\ref{crofoot-sarason}). Section~\ref{mainthm_proof} is devoted to proving the existence of Suffridge polynomials with prescribed bi-angled tree structure (this is equivalent to surjectivity of the map (\ref{Suffridge_part})$\rightarrow$(\ref{tree_part}) in Theorem \ref{theorem_A}). In Section~\ref{rigidity_qd_sec}, we prove that a Suffridge polynomial is uniquely determined (up to $\mathbb{Z}_{d+1}$) by its bi-angled tree structure (this is equivalent to injectivity of the map (\ref{Suffridge_part})$\rightarrow$(\ref{tree_part}) in Theorem \ref{theorem_A}). In Section ~\ref{crofoot_sarason_sec} we study Crofoot--Sarason anti-polynomials and finish the proof of Theorem \ref{theorem_A}. Lastly, Section~\ref{S_d^*_section} is devoted to the study of $S_d^*$ and a sketch of the proof of Theorem \ref{theorem_B}.

\subsection{Discussion of Proof Methods}

We will now highlight briefly some methods used in the proofs of Theorems \ref{theorem_A} and \ref{theorem_B}. In Section~\ref{mainthm_proof}, we prove the existence of Suffridge polynomials with prescribed bi-angled tree structure (see Theorem \ref{mainthm_qd_terminology}). This is done via the development of a technique of ``pinching'' for the class $\Sigma_d^*$ (see Theorem \ref{pinching_theorem}). One begins with a standard base point $f_0(z)=z-1/dz^d$ in $\Sigma_d^*$ (see Figure \ref{fig:unit_disc}), and successively ``pinches'' edges in $f_0(\mathbb{T})$ to create double points. More precisely, the Schwarz reflection map associated with $f_0$ allows us to employ a quasiconformal deformation argument to increase the moduli of certain carefully chosen topological quadrilaterals. The quasiconformal deformations of $f_0$ stay within the compact family $\Sigma_d^*$, so that we are able to take a limit in $\Sigma_d^*$ (despite the degeneration of the corresponding quasiconformal maps' dilatation), whence we prove that the limiting function has the desired double point structure. Given a tree $\mathcal{T}$, one reads off (from the combinatorics of $\mathcal{T}$) which edges of $f_0(\mathbb{T})$ should be pinched, and carries out the above described pinching procedure step by step. It is worth mentioning that the technique of producing new univalent polynomials from old ones using quasiconformal deformation of the associated Schwarz reflection maps is a novelty of this paper.

The next Section~\ref{rigidity_qd_sec} is devoted to proving that a Suffridge polynomial is uniquely determined (up to $\mathbb{Z}_{d+1}$) by its associated bi-angled tree (see Theorem \ref{rigidity_extremal_qd_thm}). The proof of this theorem is carried out in various steps. We first show that if two extremal unbounded quadrature domains admit isomorphic bi-angled trees, then the corresponding droplets (i.e., the complements of the quadrature domains) are conformally equivalent. Our knowledge of the geometric properties of the cusps and double points on the boundaries of the droplets (see Propositions~\ref{cusp_geometry} and~\ref{double_geometry}) allows us to show that the conformal map between the two droplets is asymptotically linear near the singular points (see Lemma~\ref{asymp_linear}). This is then used to extend the above conformal map to a quasiconformal map of the whole sphere such that it conjugates the Schwarz reflection maps of the two extremal unbounded quadrature domains near the super-attracting fixed point at $\infty$. This is the content of Lemma~\ref{global_qc}. We conclude the proof of Theorem~\ref{rigidity_extremal_qd_thm} by employing a ``pullback argument'' from holomorphic dynamics. More precisely, we lift the quasiconformal homeomorphism of Lemma~\ref{global_qc} by iterates of the Schwarz reflection maps (such that the dilatations do not change under lifting), and pass to a limit which we show to be a conformal conjugacy between the Schwarz reflection maps of the two extremal unbounded quadrature domains.

The aim of Section~\ref{crofoot_sarason_sec} is to prove the Classification Theorem~\ref{crofoot_sarason_bi-angled_bijection_thm} for Crofoot--Sarason polynomials (see \cite{Ge} for a classification of \emph{real-symmetric} Crofoot--Sarason polynomials). We start by investigating some topological properties of the immediate basins (of the finite critical points) of CS anti-polynomials (Propositions~\ref{sarason_bi-angled} and~\ref{connectedness_prop}). Propositions~\ref{sarason_bi-angled} and~\ref{connectedness_prop} also show that the angled Hubbard tree of a CS anti-polynomial can be naturally viewed as a bi-angled tree. A simple application of Poirier's classification of Hubbard trees of anti-polynomials then yields the desired bijection statement in Theorem~\ref{crofoot_sarason_bi-angled_bijection_thm}.
\bigskip

\noindent\textbf{Acknowledgements.} The third author was supported by the Institute for Mathematical Sciences at Stony Brook
University, an endowment from Infosys Foundation, and SERB research grant SRG/2020/000018 during parts of the work on this project. He also thanks Caltech for their support towards the pro ject.

\section{Preliminaries}
\label{preliminaries}

In this Section, we will introduce the notions of quadrature domains and Schwarz reflection maps which are crucial to the proofs of Theorems~\ref{mainthm_qd_terminology} and~\ref{rigidity_extremal_qd_thm}. Subsection~\ref{prelim_1} contains the related definitions of quadrature domain and Schwarz reflection map. Subsection~\ref{prelim_2} discusses singularities on the boundaries of the quadrature domains considered in this paper. In Subsection~\ref{sigma_d_subsec}, we prove some elementary properties about the space $\Sigma_d^*$. Subsection~\ref{prelim_3} introduces the extremal functions in $\Sigma_d^*$, and Subsection~\ref{prelim_4} discusses the bi-angled trees appearing in the statement of Theorem~\ref{theorem_A}. Crofoot--Sarason anti-polynomials are introduced in Subsections~\ref{prelim_5}, and the associated angled Hubbard trees are described in Subsection~\ref{prelim_5.5}. Lastly, Subsection~\ref{prelim_6} recalls the notion of topological quadrilaterals which we will use throughout the course of the proof of Theorem~\ref{mainthm_qd_terminology}.

\vspace{2mm}

\textbf{Notation.} We will denote reflection in the unit circle by $\eta$. An \emph{affine map} $A$ (on the complex plane) is defined as $A(z)=az+b$, for some $a\in\C^*$ and $b\in\C$.

Throughout the paper, we will assume that $d\geq 2$.

\subsection{Quadrature Domains}\label{prelim_1}

\begin{definition}\label{schwarz_func_def}
Let $\Omega\subsetneq\widehat{\C}$ be a domain such that $\infty\notin\partial\Omega$ and $\interior{\overline{\Omega}}=\Omega$. A \emph{Schwarz function} of $\Omega$ is a meromorphic extension of $(z\mapsto\overline{z})\vert_{\partial\Omega}$ to all of $\Omega$. More precisely, a continuous function $S:\overline{\Omega}\to\widehat{\C}$ of $\Omega$ is called a Schwarz function of $\Omega$ if it satisfies the following two properties:
\begin{enumerate}
\item $S$ is meromorphic on $\Omega$,

\item $S(z)=\overline{z}$ for $z\in\partial \Omega$.
\end{enumerate}
\end{definition}

It is easy to see from the definition that a Schwarz function of a domain (if it exists) is unique.

\begin{definition}\label{qd_def}
A domain $\Omega\subsetneq\widehat{\C}$ with $\infty\notin\partial\Omega$ and $\interior{\overline{\Omega}}=\Omega$ is called a \emph{quadrature domain} if $\Omega$ admits a Schwarz function.
\end{definition}

Therefore, for a quadrature domain $\Omega$, the map $\sigma:=(z\mapsto\overline{z})\circ S:\overline{\Omega}\to\widehat{\C}$ is an anti-meromorphic extension of the local reflection maps with respect to $\partial\Omega$ near its non-singular points (the reflection map fixes $\partial\Omega$ pointwise). We will call $\sigma$ the \emph{Schwarz reflection map of} $\Omega$.

Simply connected quadrature domains are of particular interest, and these admit a simple characterization. We only state the result for unbounded simply connected quadrature domains (such that $\infty$ lies in the interior of the domain). For a subset $A$ of the Riemann sphere, we denote its complement $\widehat{\C}\setminus A$ by $A^c$.

\begin{prop}\label{s.c.q.d.}
An unbounded simply connected domain $\Omega\subsetneq\widehat{\C}$ with $\infty\notin\partial\Omega$ and $\interior{\overline{\Omega}}=\Omega$ is a quadrature domain if and only if the Riemann uniformization $f:\widehat{\C}\setminus\overline{\mathbb{D}}\to\Omega$ extends to a rational map on $\widehat{\C}$. In this case, the Schwarz reflection map $\sigma$ of $\Omega$ is given by $f\circ\eta\circ(f\vert_{\widehat{\C}\setminus\overline{\mathbb{D}}})^{-1}$, and if $\deg{f}\geq 2$, we have $\sigma(\overline{\Omega})=\widehat{\C}$. 

Moreover, if the degree of the rational map $f$ is $d+1$, then $\sigma:\sigma^{-1}(\Omega)\to\Omega$ is a branched covering of degree $d$, and $\sigma:\sigma^{-1}(\interior{\Omega^c})\to\interior{\Omega^c}$ is a branched covering of degree $d+1$.
\end{prop}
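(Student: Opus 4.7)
The plan is to prove the biconditional in both directions, and then to read off the surjectivity and degree statements from the resulting formula for $\sigma$.

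\emph{The ``if'' direction.} Assume the Riemann map $f\colon \widehat{\C}\setminus\overline{\mathbb{D}}\to\Omega$ extends to a rational map on $\widehat{\C}$. I would verify that $\sigma:=f\circ\eta\circ(f|_{\widehat{\C}\setminus\overline{\mathbb{D}}})^{-1}$ defines a Schwarz reflection map of $\Omega$ by lifting through $f$. The auxiliary map $\tilde{\sigma}:=f\circ\eta\colon\widehat{\C}\setminus\mathbb{D}\to\widehat{\C}$ is continuous, and since $\eta$ fixes $\partial\mathbb{D}$ pointwise one has $\tilde{\sigma}|_{\partial\mathbb{D}}=f|_{\partial\mathbb{D}}$. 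Because $f\colon\widehat{\C}\setminus\mathbb{D}\to\overline{\Omega}$ is a quotient map that is injective on the open complement of $\overline{\mathbb{D}}$, and because $\tilde{\sigma}$ is constant on each fiber of $f$ (the only nontrivial fibers lie on $\partial\mathbb{D}$, where $\tilde{\sigma}=f$), the map $\tilde{\sigma}$ descends to a continuous map $\sigma\colon\overline{\Omega}\to\widehat{\C}$ that restricts to the identity on $\partial\Omega$. On $\Omega$ the map $\sigma$ is anti-meromorphic (a composition of the holomorphic $(f|_{\widehat{\C}\setminus\overline{\mathbb{D}}})^{-1}$, the anti-holomorphic $\eta$, and the holomorphic $f$), so $S:=\overline{\sigma}$ is the Schwarz function of $\Omega$.

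\emph{The ``only if'' direction.} Assume $\Omega$ is a quadrature domain with Schwarz reflection $\sigma$, and let $f\colon\widehat{\C}\setminus\overline{\mathbb{D}}\to\Omega$ be the Riemann uniformization with $f(\infty)=\infty$. I would extend $f$ to $\widehat{\C}$ by
\begin{equation*}
\tilde{f}(z):=\sigma\bigl(f(\eta(z))\bigr),\qquad z\in\mathbb{D}\setminus\{0\},
\end{equation*}
and $\tilde{f}(0):=\sigma(\infty)$, which is well-defined since $\infty\in\Omega$ and $\sigma$ is continuous on $\overline{\Omega}$. A direct Cauchy--Riemann computation shows that $\tilde{f}$ is holomorphic on $\mathbb{D}\setminus\{0\}$ (an even number of anti-holomorphic factors), so $\tilde{f}$ is meromorphic on $\widehat{\C}\setminus\partial\mathbb{D}$. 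The key step is gluing across $\partial\mathbb{D}$. For this I would use that $\sigma|_{\partial\Omega}=\mathrm{id}$ together with the standard fact that the cluster set of the Riemann map $f$ at any $z_0\in\partial\mathbb{D}$ is contained in $\partial\Omega$: this forces the cluster sets of $\tilde{f}$ at $z_0$ from inside and outside $\mathbb{D}$ to coincide, whence a Schwarz-reflection/Morera argument (or equivalently an application of Privalov's uniqueness theorem to matching non-tangential boundary values) shows that $\tilde{f}$ is meromorphic on all of $\widehat{\C}$, hence rational. The explicit formula $\sigma=f\circ\eta\circ(f|_{\widehat{\C}\setminus\overline{\mathbb{D}}})^{-1}$ is built into the definition of $\tilde{f}|_{\mathbb{D}}$.

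\emph{Surjectivity and branched cover degrees.} Assume $\deg f=d+1\geq 2$. From the formula, $\sigma(\overline{\Omega})=f(\eta(\widehat{\C}\setminus\mathbb{D}))=f(\overline{\mathbb{D}})$. Since $f$ is univalent on $\widehat{\C}\setminus\overline{\mathbb{D}}$ with image $\Omega$, every $w\in\widehat{\C}$ has at most one $f$-preimage outside $\overline{\mathbb{D}}$ and therefore at least $d\geq 1$ preimages inside $\overline{\mathbb{D}}$, proving $f(\overline{\mathbb{D}})=\widehat{\C}$. The same preimage count yields the branched cover degrees: for $w\in\Omega$ one has $\#(f^{-1}(w)\cap\mathbb{D})=d$ (with multiplicity), and for $w\in\mathrm{int}(\Omega^c)$ one has $\#(f^{-1}(w)\cap\mathbb{D})=d+1$ (no preimages on $\partial\mathbb{D}$, since $f(\partial\mathbb{D})=\partial\Omega\not\ni w$, and none outside $\overline{\mathbb{D}}$). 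Via the bijection $u\mapsto f(\eta(u))$ from $\mathbb{D}$ to $\Omega$, these counts translate into the claimed branched cover degrees of $\sigma$.

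\emph{Main obstacle.} The principal technical point is the gluing step in the ``only if'' direction. Without a priori control on the boundary behavior of the Riemann map (the boundary $\partial\Omega$ need not be locally connected in general), one cannot directly appeal to continuous boundary values. Instead, one must use the continuity of $\sigma$ on $\overline{\Omega}$ and the cluster-set theory of Riemann maps to ensure the two meromorphic pieces of $\tilde{f}$ glue into a single meromorphic function across $\partial\mathbb{D}$; this is where the quadrature-domain hypothesis is essentially used.
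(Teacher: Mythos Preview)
Your argument is correct. The paper itself does not actually prove this proposition: it cites \cite[Theorem~1]{AS} (Aharonov--Shapiro) for the biconditional and the formula $\sigma=f\circ\eta\circ(f\vert_{\widehat{\C}\setminus\overline{\D}})^{-1}$, and then simply points to the commutative diagram in Figure~\ref{comm_diag_schwarz} for the branched-covering degrees. What you have written is essentially a self-contained reconstruction of the Aharonov--Shapiro argument together with the elementary preimage count encoded in that diagram, so the two treatments agree in substance; yours is just more explicit.

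Two small remarks. First, in the ``if'' direction the quotient-map argument is cleaner than you make it sound: once $f$ is rational, $f$ is automatically continuous (indeed real-analytic) on $\partial\D$, so there is no boundary-regularity issue at all on that side. Second, your identification of the gluing across $\partial\D$ as the crux of the ``only if'' direction is exactly right, and your proposed route (matching non-tangential limits a.e.\ using continuity of $\sigma$ on $\overline{\Omega}$, then a Privalov/Morera argument) is the standard way to carry it out; the hypothesis $\interior\overline\Omega=\Omega$ is what guarantees the radial limits of $f$ land on $\partial\Omega$ rather than in $\Omega$.
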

\begin{proof}
The first part is the content of \cite[Theorem~1]{AS}. The statements about covering properties of $\sigma$ follow from the commutative diagram in Figure~\ref{comm_diag_schwarz}.
\end{proof}

\begin{definition}\label{order_qd_def}
An unbounded simply connected quadrature domain that arises as the univalent image of $\widehat{\C}\setminus\overline{\mathbb{D}}$ under a rational map of degree $d+1$ is said to have \emph{order} $d$.
\end{definition}

\begin{figure}[ht!]
\begin{tikzpicture}
\node[anchor=south west,inner sep=0] at (5.6,4) {\includegraphics[width=0.25\textwidth]{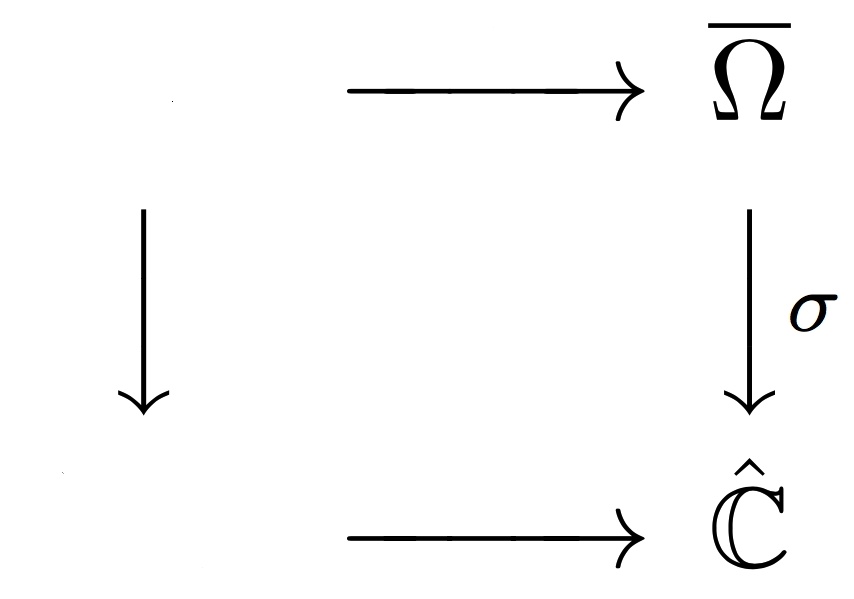}};
\node at (6.32,6.1) {$\widehat{\C}\setminus\overline{\D}$};
\node at (6.2,4.28) {$\D$};
\node at (6,5.2) {$\eta$};
\node at (7.6,6.4) {$f$};
\node at (7.6,4.6) {$f$};
\end{tikzpicture}
\caption{The rational map $f$ semi-conjugates the reflection map $\eta$ of $\D$ to the Schwarz reflection map 
$\sigma$ of $\Omega$.}
\label{comm_diag_schwarz}
\end{figure}

\subsection{Cusps and Double Points}\label{prelim_2}

\begin{definition} Let $\Omega\subset\mathbb{C}$ be an open set. A boundary point $p\in\partial\Omega$ is called \emph{regular} if there is a disc $B=B(p,\varepsilon):=\{z\in\C:\vert z-p\vert<\varepsilon\}$ such that $\Omega\cap B$ is a Jordan domain and $\partial\Omega\cap B$ is a simple non-singular real analytic arc; otherwise $p$ is a \emph{singular} point.
\end{definition}

Note that if the rational map $f:\widehat{\C}\setminus\overline{\D}\to\Omega$ (appearing in Proposition~\ref{s.c.q.d.}) has a critical point $\xi_0\in\mathbb{T}$, then the critical value $\zeta_0=f(\xi_0)$ is a singular point of $\partial\Omega$. We will refer to such a singular point as a \emph{cusp}. Unlike a double point singularity (see below), a cusp $\zeta_0$ has the property that for sufficiently small $\varepsilon>0$, the intersection $B(\zeta_0,\varepsilon)\cap\Omega$ is a Jordan domain. Moreover, by conformality of $f|_{\widehat{\C}\setminus\overline{\D}}$, each cusp on $\partial\Omega$ points in the inward direction towards $\Omega$.

We now define the \emph{type} of a cusp according to the Taylor series expansion of $f$ at $\xi_0$. For reasons that will be clear in Proposition~\ref{crit_points_on_circle}, we will only be concerned with the case where $\xi_0$ is a simple critical point of $f$. After an affine change of coordinates in the domain and the range (if necessary), we can assume that $\xi_0=1$, $\zeta_0=0$, and $$u(e^{it})=t^2+\cdots,\ v(e^{it})=ct^\nu+\cdots,$$ where $f=u+iv$, $\nu\geq3$ is an integer, $t\in(-\delta,\delta)$ for $\delta>0$ sufficiently small, and $c$ is a non-zero constant. By definition, we say then that the cusp at $\zeta_0=0$ is of the type $(\nu,2)$.

\begin{example} Here is an example of a $(\nu,2)$-cusp, with $\nu>3$, arising from a simple critical point of a rational map. Consider the map $f(z):=z^3-3z+2$, which is univalent on the disk $\{z\in\C:\vert z-4\vert<3\}$. Let us parametrize the boundary circle $\{z\in\C:\vert z-4\vert=3\}$ by $z(t)=4-3 e^{it}$, where $-\pi\leq t\leq\pi$. A direct computation now shows that $$\re(f(z(t)))=-27t^2+o(t^2),\ \im(f(z(t)))=-27t^5+o(t^5),\ \textrm{as}\ t\to 0.$$ Therefore, according to the above definition, the $f$-image of the circle $\{z\in\C:\vert z-4\vert=3\}$ (passing through the critical point $\xi_0=1$ of $f$) has a $(5,2)$ cusp at the corresponding critical value $\zeta_0=0$.
\end{example}

Since the domains $\Omega$ considered in this paper are univalent images of $\widehat{\C}\setminus\overline{\D}$ under a rational map, the only non-cusp singular points of $\partial\Omega$ are \emph{double points}: a point $\zeta_0\in\partial\Omega$ is said to be a \emph{double point} if for all sufficiently small $\varepsilon>0$, the intersection $B(\zeta_0,\varepsilon)\cap\Omega$ is a union of two Jordan domains, and $\zeta_0$ is a non-singular boundary point of each of them. In particular, two distinct non-singular (real-analytic) local branches of $\partial\Omega$ intersect tangentially at a double point $\zeta_0$. One can further classify such double points according to the order of contact of these two real-analytic branches.

\subsection{The Space $\Sigma_d^*$}\label{sigma_d_subsec}

In this Subsection we study the class $\Sigma_d^*$ (defined in the Introduction).

\begin{rem}\label{notation_remark} Classical objects in the field of geometric function theory are the classes: \begin{align} S := \{ f(z) = z +\sum_{n=2}^\infty a_nz^n : f \textrm{ is injective in } \mathbb{D}\}  \end{align} of normalized holomorphic functions univalent in the interior of the unit circle, and the related class \begin{align} \Sigma := \{ f(z) = z +\sum_{n=1}^\infty a_n/z^n : f \textrm{ is injective in } \widehat{\mathbb{C}}\setminus\overline{\mathbb{D}}\}  \end{align} of normalized holomorphic functions univalent in the exterior of the unit circle (see \cite[Section~4.7]{MR708494}). Those elements of $S$ defined by a finite sum are polynomials, and so we refer to those elements of $\Sigma$ defined by a finite sum as \emph{external polynomials}. The notations $S_d$ and $\Sigma_d$ classically denote the subspaces of $S$, $\Sigma$ (respectively) consisting of those elements with $a_n=0$, for $n > d$, and the $*$ notation specifies the normalization $-1/d$ of the coefficient for the $z^d$ or $z^{-d}$ term. 

\end{rem}

\noindent Given $f \in \Sigma_d^*$, \cite[Lemma 2.6]{2014arXiv1411.3415L} implies that \[f'\left(\frac1z\right)=z^{d+1}\overline{f'(\overline{z})}.\] A computation then yields that, if $d=2n+1$ is odd, \[ f(z)= z + \sum_{k=1}^{n-1} \frac{a_k}{z^k} + \frac{a_n}{z^n} +  \sum_{k=n+1}^{2n-1} \frac{(2n-k)\overline{a}_{2n-k}}{kz^{k}}-\frac{1}{dz^d}\textrm{ for } a_1, \cdots, a_{n-1}\in\mathbb{C} \textrm{ and }a_n\in\mathbb{R},\] and if $d=2n$ is even, \[ f(z)= z + \sum_{k=1}^{n-1} \frac{a_k}{z^k} + \sum_{k=n}^{2n-2} \frac{(2n-1-k)\overline{a}_{2n-1-k}}{kz^{k}} -  \frac{1}{dz^d} \textrm{ for } a_1, \cdots, a_{n-1}\in\mathbb{C}.\] It follows that the real dimension of $\Sigma_d^*$ is $d-2$.

Let $\omega$ be a primitive $(d+1)$-st root of unity, and $M_{\omega^i}$ be the affine map defined as $M_{\omega^i}(z)=\omega^i z$. There is a natural $\Z_{d+1}\cong\langle\omega\rangle$ action on $\Sigma_d^*$ given by $$f \mapsto M_{\omega^i}\circ f\circ M_{\omega^i}^{-1}.$$

\begin{prop}\label{crit_points_on_circle}
Let $f\in \Sigma_d^*$. Then, $f$ has $d+1$ distinct simple critical points on $\mathbb{T}$.
\end{prop}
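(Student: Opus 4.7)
The plan is to combine a Riemann--Hurwitz multiplicity count, the inversion symmetry of $f'$ recorded just above the statement, and a local injectivity argument on $\mathbb{T}$.

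First, I would view $f$ as a rational map $\widehat{\C}\to\widehat{\C}$ of degree $d+1$ and count critical points globally. Such a map has $2d$ critical points counted with multiplicity; the fixed point $\infty$ is non-critical since $f(z)=z+O(1/z)$ there, and $z=0$ is a pole of order $d$ contributing critical multiplicity $d-1$. This leaves $d+1$ critical multiplicities in $\mathbb{C}^{*}$. Equivalently, $g(z):=z^{d+1}f'(z)$ is monic of degree $d+1$ with nonzero constant term $-da_d=1$, and its roots in $\mathbb{C}^{*}$ are precisely the critical points of $f$.

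Next, I would locate them on $\mathbb{T}$. The identity $f'(1/z)=z^{d+1}\,\overline{f'(\overline z)}$ displayed in Remark~\ref{notation_remark} shows that the zero set of $f'$ in $\mathbb{C}^{*}$ is invariant under the inversion $\eta(z)=1/\overline z$. Conformality of $f\vert_{\widehat{\C}\setminus\overline{\D}}$ rules out critical points there, and $\eta$-symmetry then rules them out in $\D\setminus\{0\}$ as well; hence all $d+1$ critical multiplicities sit on the fixed circle $\mathbb{T}$ of $\eta$.

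Finally, I would argue each critical point on $\mathbb{T}$ is simple. If $\xi_0\in\mathbb{T}$ were a critical point of multiplicity $k$, pass to a local conformal chart $\phi$ with $\phi(\xi_0)=0$ in which $f-f(\xi_0)=\phi^{k+1}$. Since $\mathbb{T}$ is a smooth real-analytic curve, a small neighborhood of $\xi_0$ inside $\widehat{\C}\setminus\overline{\D}$ is taken by $\phi$ onto a topological half-disk at $0$. Univalence of $f$ on $\widehat{\C}\setminus\overline{\D}$ then forces $w\mapsto w^{k+1}$ to be injective on this half-disk, which fails as soon as the angular image $(k+1)\pi$ exceeds $2\pi$, i.e.\ as soon as $k\geq 2$. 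Thus $k=1$ at every critical point, and the $d+1$ critical multiplicities split into $d+1$ distinct simple critical points on $\mathbb{T}$.

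The only step needing care is the last one: one must verify that the local conformal chart $\phi$ genuinely carries the exterior side of $\mathbb{T}$ to a half-disk, so that the half-plane model for $w\mapsto w^{k+1}$ is legitimate; after this, the angular-width obstruction is immediate. The Riemann--Hurwitz count and the use of the inversion symmetry of $f'$ are then routine.
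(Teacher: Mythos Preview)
Your proof is correct and follows essentially the same three-step strategy as the paper: a Riemann--Hurwitz count leaving $d+1$ critical multiplicities in $\mathbb{C}^*$, an argument placing them on $\mathbb{T}$, and simplicity from local injectivity on the exterior side. The only variation is in the middle step---the paper uses Vieta's formulas (the product of the nonzero critical points has modulus $1$, since $z^{d+1}f'(z)$ is monic with constant term of modulus $1$) in place of your $\eta$-symmetry of the zero set of $f'$---and your explicit local-chart argument for simplicity is precisely what the paper's terse appeal to conformality of $f\vert_{\widehat{\C}\setminus\overline{\D}}$ is invoking.
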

\begin{proof}
Note that $f$ has a critical point of multiplicity $d-1$ at the origin. Since a rational map of degree $d+1$ has $2d$ critical points (counting multiplicity), it follows that $f$ has $d+1$ non-zero critical points (counting multiplicity). A straightforward computation using the derivative of $f$ and Vieta's formulas show that the absolute value of the product of all the non-zero critical points of $f$ is $1$. Since $f$ is conformal on $\widehat{\mathbb{C}}\setminus\overline{\mathbb{D}}$, $f$ cannot have a critical point in $\mathbb{C}\setminus\overline{\D}$. Hence, each non-zero critical point of $f$ must have absolute value $1$, and hence lies on $\mathbb{T}$. Conformality of $f\vert_{\widehat{\mathbb{C}}\setminus\overline{\mathbb{D}}}$ implies that each of these critical points of $f$ must be simple. Thus, $f$ has $d+1$ distinct simple critical points on $\mathbb{T}$.
\end{proof}

We will make use of a definition of conformal curvature from \cite{2014arXiv1411.3415L}:

\begin{definition}\label{conf_curv}
Let $f\in\Sigma_d^*$. Then, the \emph{conformal curvature} of $f(\mathbb{T})$ at $\zeta_0=f(e^{it_0})$ is defined as the product of the curvature of the curve $f(\mathbb{T})$ at $\zeta_0$ and the quantity $\vert f'(e^{it_0})\vert$.
\end{definition}

\begin{rem}\label{conf_curv_rem}
Definition \ref{conf_curv} will allow us to study the geometry of the boundary of an unbounded quadrature domain (arising from $\Sigma_d^*$) near its singular points. Observe that conformal curvature of $f(\mathbb{T})$ has the same sign as the usual curvature.
\end{rem}

\begin{prop}\label{cusp_geometry}
Let $f\in \Sigma_d^*$. Then, the $d+1$ cusps of $f(\mathbb{T})$ are of the type $(3,2)$.
\end{prop}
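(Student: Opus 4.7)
The plan is a direct computation. First I would parametrize the image curve near a simple critical point $\xi_0\in\mathbb{T}$ (whose existence and simplicity come from Proposition~\ref{crit_points_on_circle}) by $t\mapsto f(\xi_0 e^{it})$. A Taylor expansion at $t=0$, using $f'(\xi_0)=0$, gives
$$f(\xi_0 e^{it}) - f(\xi_0) = c_2 t^2 + c_3 t^3 + O(t^4),$$
with $c_2 = -\xi_0^2 f''(\xi_0)/2$ and $c_3 = -i\xi_0^2\bigl[3f''(\xi_0) + \xi_0 f'''(\xi_0)\bigr]/6$. Multiplication by $1/c_2$ is an affine change of coordinates in the target, precisely the freedom allowed in the definition of cusp type preceding the proposition; the normalized parametrization reads $t^2 + (c_3/c_2)t^3 + O(t^4)$ with
$$\frac{c_3}{c_2} = i\left(1 + \frac{\xi_0 f'''(\xi_0)}{3f''(\xi_0)}\right).$$
Reading off real and imaginary parts, the cusp at $f(\xi_0)$ is of type $(3,2)$ iff $\mathrm{Im}(c_3/c_2)\neq 0$, equivalently iff $\mathrm{Re}\bigl(\xi_0 f'''(\xi_0)/f''(\xi_0)\bigr)\neq -3$.

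The rest of the argument exploits the symmetry $f'(1/z) = z^{d+1}\overline{f'(\bar z)}$ satisfied by every $f\in\Sigma_d^*$ (recalled in Subsection~\ref{sigma_d_subsec} from Lemma~2.6 of \cite{2014arXiv1411.3415L}). Substituting $z = e^{-is}$ shows that $G(s):=e^{is(d+1)/2}f'(e^{is})$ is real-valued on $\R$, so every derivative of $G$ at every real $s$ is real. Writing $\xi_0 = e^{is_0}$, differentiating $G$ twice at $s_0$ and using $f'(\xi_0)=0$ yields
$$G'(s_0) = i\,e^{is_0(d+3)/2} f''(\xi_0),\qquad G''(s_0) = -e^{is_0(d+3)/2}\bigl[(d+2)f''(\xi_0) + \xi_0 f'''(\xi_0)\bigr].$$
Setting $A := e^{is_0(d+3)/2} f''(\xi_0)$ and $B := e^{is_0(d+3)/2}\xi_0 f'''(\xi_0)$, reality of $G'(s_0)$ says $A\in i\R$ while reality of $G''(s_0)$ says $(d+2)A + B\in\R$. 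Taking the quotient, $B/A = \xi_0 f'''(\xi_0)/f''(\xi_0)$ has real part exactly $-(d+2)$.

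Since $d\geq 2$, we have $-(d+2)\leq -4\neq -3$, so the criterion established in the first paragraph holds at every critical point on $\mathbb{T}$, and all $d+1$ cusps of $f(\mathbb{T})$ are of type $(3,2)$. I anticipate the main bookkeeping concern will be tracking phase factors through the derivatives of $G$; the algebraic pay-off is that the two reality constraints on $G'(s_0)$ and $G''(s_0)$ jointly pin down one real coordinate of $\xi_0 f'''(\xi_0)/f''(\xi_0)$ to a value that depends linearly on $d$ and only coincides with the forbidden value $-3$ at $d=1$, which lies outside our range.
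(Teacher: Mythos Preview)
Your proof is correct, and it takes a genuinely different route from the paper's.

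The paper's argument is curvature-based: after normalizing to $u=t^2+\cdots$, $v=ct^\nu+\cdots$, one computes that the conformal curvature behaves like $c_1 t^{\nu-3}$ near the cusp; but \cite[Corollary~2.8]{2014arXiv1411.3415L} says the conformal curvature of $f(\mathbb{T})$ is a nonzero constant at non-cusp points, so $\nu>3$ would force it to vanish in the limit, a contradiction. Your argument bypasses curvature entirely and instead exploits the functional equation $f'(1/z)=z^{d+1}\overline{f'(\bar z)}$ directly: the reality of $G(s)=e^{is(d+1)/2}f'(e^{is})$ and its first two derivatives at a critical point pins down $\mathrm{Re}\bigl(\xi_0 f'''(\xi_0)/f''(\xi_0)\bigr)$ to the explicit value $-(d+2)$, which equals the forbidden value $-3$ only at $d=1$.

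Both approaches ultimately rest on the same symmetry from \cite{2014arXiv1411.3415L} (the curvature constancy used in the paper is itself a consequence of that identity), but they package it differently. Your computation is more elementary and sharper---it gives the exact value rather than a contradiction, and it makes the role of the hypothesis $d\geq 2$ completely transparent. The paper's approach has the advantage of being uniform with its treatment of double points in the next proposition, where the same conformal-curvature machinery is reused.
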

\begin{proof}
Assume that $\xi_0=e^{it_0}\in\mathbb{T}$ is a critical point of $f$, and let $\zeta_0=f(\xi_0)\in f(\mathbb{T})$ denote the resulting cusp. One readily calculates the formula \begin{equation}\label{standard_curvature} \frac{1}{|f'(\xi)|}\textrm{Re}\left( 1+ \frac{\xi f''(\xi)}{f'(\xi)} \right) \end{equation} for the standard curvature of $f(\mathbb{T})$ at the point $f(\xi)$ where $|\xi|=1$ and $f'(\xi)\not=0$. Using (\ref{standard_curvature}), one readily calculates that the conformal curvature is unchanged by an affine change of coordinates in the domain and range. Thus, we may assume that $t_0=0$, $\zeta_0=0$, and 
$$
u(e^{it})=t^2+o(t^2),\ v(e^{it})=ct^\nu+o(t^\nu),
$$ where $f=u+iv$, $t\in(-\delta,\delta)$ for some sufficiently small $\delta>0$, $c\not=0$, and $\nu\geq3$.

We will show that $\nu=3$. A straightforward computation, using the curvature formula \[ \kappa=\frac{u'v''-v'u''}{({u'}^2+{v'}^2)^{3/2}} \] and Definition~\ref{conf_curv}, shows that the conformal curvature of $f(\mathbb{T})$ near the singularity $\zeta_0$ is of the form $$c_1 t^{\nu-3}+o(t^{\nu-3})\ \textrm{ as } t\rightarrow0 \textrm{ where}\ c_1\neq0.$$ Supposing by way of contradiction that $\nu>3$, it is then clear that the conformal curvature of $f(\mathbb{T})$ tends to $0$ as $f(\mathbb{T})\ni\zeta\to\zeta_0$, and this contradicts \cite[Corollary~2.8]{2014arXiv1411.3415L}: the conformal curvature of $f(\mathbb{T})$ is constant, negative at non-cusp points of $f(\mathbb{T})$. Thus $\nu=3$.
\end{proof}

\begin{prop}\label{double_geometry}
Let $f\in \Sigma_d^*$, and $\zeta_0$ be a double point on $f(\mathbb{T})$. Then, the two distinct non-singular local branches of $f(\mathbb{T})$ have non-zero curvature and distinct osculating circles at $\zeta_0$ (in particular, they have a contact of order $1$). Moreover, in suitable local conformal coordinates near $\zeta_0$, the two distinct non-singular local branches of $f(\mathbb{T})$ are of the form $(u_\pm(t),v_\pm(t))=(t+o(t^2),c_\pm t^2+o(t^3))$, for some $c_\pm\in\C^*$ with $c_{+}\neq c_{-}$.
\end{prop}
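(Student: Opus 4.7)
The plan is to combine the local conformality of $f$ at non-critical points of $\mathbb{T}$, the constancy of conformal curvature (Corollary~2.8 of \cite{2014arXiv1411.3415L}), and the univalence of $f$, in order to read off the local geometry at $\zeta_0$. First, by Proposition~\ref{crit_points_on_circle}, all critical points of $f$ on $\mathbb{T}$ produce cusps rather than double points; hence the two preimages $\xi_\pm\in\mathbb{T}$ of $\zeta_0$ are non-critical, so $f$ is locally conformal at each $\xi_\pm$, and each local branch of $f(\mathbb{T})$ at $\zeta_0$ is a smooth real-analytic arc. After a local conformal change of coordinates placing $\zeta_0$ at $0$ and the common tangent line along the real axis, each branch admits an arclength parametrization of the form $(u_\pm(t), v_\pm(t)) = (t + o(t^2), c_\pm t^2 + o(t^3))$, where $c_\pm$ equals half the signed curvature of the respective branch at $0$ (with the chosen positive-$t$ orientation).

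The central step is to establish that the $f$-induced tangent vectors $T_\pm := i\xi_\pm f'(\xi_\pm)$ at $\zeta_0$ are \emph{antiparallel}. Since $f$ is orientation-preserving, the counterclockwise orientation of $\mathbb{T}$ (with $\widehat{\C}\setminus\overline{\D}$ on its right) transports to an orientation on $\partial\Omega$ with the adjacent $\Omega$-pocket on its right. The univalence of $f$ on $\widehat{\C}\setminus\overline{\D}$ then forces the two local $\Omega$-pockets at $\zeta_0$ (images of disjoint exterior half-neighborhoods of $\xi_+$ and $\xi_-$) to be disjoint near $\zeta_0$. Were $T_+$ and $T_-$ to point in the same direction, both pockets would sit on the same side of the common tangent line at $\zeta_0$; then for all sufficiently small $\epsilon>0$, the point $\zeta_0+\epsilon N$ (with $N$ the common rightward unit normal) would lie in each pocket, contradicting disjointness. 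Hence $T_+$ and $T_-$ point in opposite directions.

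To conclude, I invoke Corollary~2.8 of \cite{2014arXiv1411.3415L}: at non-cusp points, the conformal curvature of $f(\mathbb{T})$ equals a nonzero (negative) constant $\kappa_0$, so the signed curvature of each branch at $\zeta_0$ (in the $f$-induced orientation) equals $\kappa_0/|f'(\xi_\pm)|\neq 0$. Because the two $f$-induced tangents are antiparallel, parametrizing both branches in the common positive-$t$ direction reverses the orientation of exactly one branch and flips the sign of its signed curvature. The two signed curvatures in the common orientation therefore have opposite (nonzero) signs, forcing $c_\pm\neq 0$ and $c_+\neq c_-$ simultaneously; equivalently, the two osculating circles lie on opposite sides of the common tangent line and so are distinct, and the order of contact is exactly $1$. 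The main obstacle is the antiparallel-tangent claim, which rests essentially on the orientation-preserving nature of the uniformization combined with the disjointness of $f$-preimages; once it is in hand, the rest follows from the constant-conformal-curvature machinery already used in the proof of Proposition~\ref{cusp_geometry}.
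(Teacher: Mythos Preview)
Your proof is correct and follows the same overall strategy as the paper: both use the constant negative conformal curvature from \cite[Corollary~2.8]{2014arXiv1411.3415L} to get nonzero curvature on each branch, and both use the geometry of the two disjoint $\Omega$-pockets at $\zeta_0$ to force the branches onto opposite sides of the common tangent line (hence distinct osculating circles).

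There is a mild difference in execution worth noting. The paper first writes each branch as $(t+o(t^2),\,c_\pm t^{\nu_\pm}+o(t^{\nu_\pm}))$ with $\nu_\pm\ge 2$ and then rules out $\nu_\pm>2$ by computing that the conformal curvature would vanish, mirroring the argument of Proposition~\ref{cusp_geometry}. You bypass this computation by invoking the standard arclength expansion of a smooth curve, which immediately gives the $t^2$ coefficient as half the signed curvature; nonvanishing then follows in one line from the nonzero conformal curvature. For the ``opposite sides'' step, the paper states tersely that negative curvature forces the two branches to lie on opposite sides of $\ell$; you unpack this via the antiparallel-tangent claim, using orientation preservation of $f$ and disjointness of the two $\Omega$-pockets. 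Your route is slightly more explicit and avoids the auxiliary asymptotic computation, while the paper's route keeps the argument parallel to the cusp case.
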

\begin{proof}
By \cite[Corollary~2.8]{2014arXiv1411.3415L}, the curvature of $f(\mathbb{T})$ is negative at non-cusp points of $f(\mathbb{T})$. It follows that near a double point $\zeta_0$, two distinct non-singular local branches of $f(\mathbb{T})$ lie on opposite sides of their common tangent line $\ell$ at $\zeta_0$. Moreover, since the curvatures of these non-singular branches do not vanish at $\zeta_0$, it follows that the corresponding osculating circles are different from $\ell$. Thus, the osculating circles to these two non-singular branches at $\zeta_0$ lie on opposite sides of the tangent line $\ell$; i.e., the osculating circles do not coincide. In particular, two distinct non-singular branches of $f(\mathbb{T})$ have a contact of order $1$ at the double point $\zeta_0$.

For the last statement, let us first map the common tangent line $\ell$ to the $x$-axis by an affine change of coordinates (which preserves conformal curvature). Then, the two distinct non-singular local branches of $f(\mathbb{T})$ at $\zeta_0$ are of the form $$(u_\pm(t),v_\pm(t))=(t+o(t^2),c_\pm t^{\nu_\pm}+o(t^{\nu_\pm})),$$ where $c_\pm\in\C^*$, $\nu_\pm\geq 2$, and $t\in(-\delta,\delta)$ for some sufficiently small $\delta>0$. A simple computation now shows that the conformal curvatures of these branches near $\zeta_0$ are of the form $$c'_\pm t^{\nu_\pm-2}+o(t^{\nu_\pm-2})\ \textrm{ as } t\rightarrow0 \textrm{ where}\ c'_\pm\neq0.$$ Since these two branches have non-zero conformal curvature at $\zeta_0$, we must have that $\nu_\pm=2$. Finally, since they have a contact of order $1$, it follows that $c_{+}\neq c_{-}$.
\end{proof}

\subsection{Extremal Unbounded Quadrature Domains, and Suffridge Polynomials}\label{prelim_3}

\begin{definition}\label{extremal_qd_def}
An unbounded (simply connected) quadrature domain $\Omega$ of order $d$ is said to be \emph{extremal} if $\infty$ is the only pole of its Schwarz reflection map $\sigma$, and there are $d+1$ cusps and $d-2$ double points on $\partial\Omega$.
\end{definition}

\begin{rem}\label{extremal_rem}
The term \emph{extremal} in Definition~\ref{extremal_qd_def} is justified by \cite[Theorem~A1]{MR3454377} (see also of \cite[Lemma~2.4 ]{2014arXiv1411.3415L}): the boundary of an unbounded (simply connected) quadrature domain $\Omega$ of order $d$ can have at most $d+1$ cusps and $d-2$ double points.
\end{rem}

Two extremal unbounded quadrature domains $\Omega_1$ and $\Omega_2$ are said to be \emph{affinely equivalent} if there exists an affine map $A$ such that $A(\Omega_1)=\Omega_2$. This induces an equivalence relation on the space of extremal unbounded quadrature domains. Note that if an extremal unbounded quadrature domain can be mapped to another by an affine map $A$, the Schwarz reflection maps of the two domains are conjugate via $A$.

By Proposition~\ref{s.c.q.d.}, for every $f\in\Sigma_d^*$, the image $f(\widehat{\mathbb{C}}\setminus\overline{\mathbb{D}})$ is a simply connected unbounded quadrature domain. By Proposition~\ref{crit_points_on_circle}, the boundary of $f(\widehat{\mathbb{C}}\setminus\overline{\mathbb{D}})$ has $d+1$ cusps. Moreover, it follows from the commutative diagram in Figure~\ref{comm_diag_schwarz} that $\infty$ is the only pole of the Schwarz reflection map of $f(\widehat{\mathbb{C}}\setminus\overline{\mathbb{D}})$.

\begin{definition}\label{Suffridge_polynomials} $f \in \Sigma_d^*$ is said to be a \emph{Suffridge polynomial of degree} $d+1$ if the associated quadrature domain $f(\widehat{\mathbb{C}}\setminus\overline{\mathbb{D}})$ is extremal (i.e., $\partial f(\widehat{\C}\setminus\overline{\D})$ has $d-2$ double points). The curve $f(\mathbb{T})$ is called a \emph{Suffridge curve}.
\end{definition}

The set of all Suffridge polynomials of degree $d+1$ are preserved under the $\Z_{d+1}$ action defined in Subsection~\ref{sigma_d_subsec} giving rise to equivalence classes of Suffridge polynomials. The following simple proposition shows that Suffridge polynomials and extremal unbounded quadrature domains are in fact in bijective correspondence.

\begin{prop}\label{suffridge_extremal_qd_equiv_prop}
There is a bijective correspondence between equivalence classes of Suffridge polynomials of degree $d+1$ and affine equivalence classes of extremal unbounded quadrature domains of order $d$.
\end{prop}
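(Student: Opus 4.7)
The natural map under consideration is $\Psi : f \mapsto f(\widehat{\mathbb{C}}\setminus\overline{\mathbb{D}})$. The first step is well-definedness: for $f\in\Sigma_d^*$ Suffridge, Proposition~\ref{s.c.q.d.} shows $f(\widehat{\mathbb{C}}\setminus\overline{\mathbb{D}})$ is an unbounded simply connected quadrature domain of order $d$; Proposition~\ref{crit_points_on_circle} gives the $d+1$ cusps on its boundary; the definition of Suffridge gives $d-2$ double points; and the commutative diagram of Figure~\ref{comm_diag_schwarz} (combined with the fact that $f$ has its only finite pole at $0$ while $\eta$ sends $\infty$ to $0$) shows that $\infty$ is the only pole of the associated Schwarz reflection map. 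Moreover, the $\mathbb{Z}_{d+1}$-action $f\mapsto M_\omega\circ f\circ M_{\omega^{-1}}$ sends $f(\widehat{\mathbb{C}}\setminus\overline{\mathbb{D}})$ to $\omega\cdot f(\widehat{\mathbb{C}}\setminus\overline{\mathbb{D}})$, so $\Psi$ descends to a map $\overline{\Psi}$ between the claimed quotients.

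For injectivity of $\overline{\Psi}$, suppose $f_1,f_2\in\Sigma_d^*$ are Suffridge with $A(f_1(\widehat{\mathbb{C}}\setminus\overline{\mathbb{D}}))=f_2(\widehat{\mathbb{C}}\setminus\overline{\mathbb{D}})$ for some affine $A(w)=aw+b$. Then $A\circ f_1$ and $f_2$ are both Riemann uniformizations of the same domain fixing $\infty$, so there exists a rotation $R(z)=\alpha z$ with $|\alpha|=1$ satisfying $A\circ f_1 = f_2\circ R$. Comparing the expansions at $\infty$ (both $f_i$ satisfy $f_i(z)=z+O(1/z)$) forces $a=\alpha$ and $b=0$, so $f_2(z)=\alpha f_1(z/\alpha) = M_\alpha\circ f_1\circ M_{\alpha^{-1}}(z)$. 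Comparing the coefficient of $z^{-d}$, which equals $-1/d$ on both sides but evaluates to $-\alpha^{d+1}/d$ from the formula, yields $\alpha^{d+1}=1$. Hence $f_1,f_2$ are related by the $\mathbb{Z}_{d+1}$-action.

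For surjectivity, given $\Omega$ extremal of order $d$, Proposition~\ref{s.c.q.d.} provides a Riemann uniformization $\phi$ extending to a rational map of degree $d+1$, and since $\infty$ is the only pole of the Schwarz reflection map of $\Omega$ we must have $\phi(0)=\infty$. Thus $\phi(z)=cz+c_0+c_1/z+\cdots+c_d/z^d$ with $c_d\neq 0$. After replacing $\Omega$ by the affinely equivalent domain $(\Omega-c_0)/c$, the Riemann map takes the form $\psi(z)=z+\tilde c_1/z+\cdots+\tilde c_d/z^d$. The main point is the following Vieta-type calculation: because $\Omega$ has $d+1$ cusps, $\psi$ has $d+1$ critical points lying on $\mathbb{T}$, and these are precisely the roots of $z^{d+1}\psi'(z)=z^{d+1}-\tilde c_1z^{d-1}-2\tilde c_2 z^{d-2}-\cdots-d\tilde c_d$. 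Since all roots lie on the unit circle, the absolute value of their product is $1$, while Vieta's formula gives this product as $\pm d\tilde c_d$, so $|\tilde c_d|=1/d$. Consequently we may choose $\alpha$ with $|\alpha|=1$ and $\alpha^{d+1}\tilde c_d=-1/d$, and the map $f(z):=\alpha\psi(z/\alpha)$ lies in $\Sigma_d^*$, maps $\widehat{\mathbb{C}}\setminus\overline{\mathbb{D}}$ to a domain affinely equivalent to $\Omega$, and inherits the $d-2$ double points, hence is Suffridge.

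The main obstacle is the Vieta computation that converts extremality (all critical points on $\mathbb{T}$) into the precise normalization $|\tilde c_d|=1/d$; everything else is bookkeeping of normalizations and the standard uniqueness of the Riemann map at infinity.
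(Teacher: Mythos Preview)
Your proof is correct and follows essentially the same approach as the paper: well-definedness via Propositions~\ref{s.c.q.d.} and~\ref{crit_points_on_circle}, surjectivity by normalizing the Riemann map (using that the Schwarz reflection's sole pole forces the Laurent form and then the Vieta/critical-point argument to get $|\tilde c_d|=1/d$), and injectivity by comparing two normalized Riemann maps and reading off $\alpha^{d+1}=1$ from the leading coefficients. The paper's writeup is nearly identical, differing only in minor expository choices (e.g.\ it phrases the $|a_d|=1/d$ step as ``a simple computation'' rather than spelling out Vieta explicitly).
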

\begin{proof}
By Proposition~\ref{s.c.q.d.}, the image of $\widehat{\C}\setminus\overline{\D}$ under a Suffridge polynomial of degree $d+1$ is an extremal unbounded quadrature domain of order $d$. Conjugating the Suffridge polynomial via multiplication by some $(d+1)$-st root of unity clearly produces an affinely equivalent extremal unbounded quadrature domain. Hence, we obtain a map from Suffridge polynomials of degree $d+1$ (up to conjugacy via multiplication by $(d+1)$-st roots of unity) to extremal unbounded quadrature domains of order $d$ (up to affine equivalence).

Let us now prove that this map is surjective. To this end, choose an extremal unbounded quadrature domain $\Omega$ of order $d$. By Proposition~\ref{s.c.q.d.}, there exists a rational map $g$ of degree $d+1$ such that $g:\widehat{\C}\setminus\overline{\D}\to\Omega$ is a conformal isomorphism. We can normalize $g$ (as a Riemann map) such that $g(\infty)=\infty$. Since $\partial\Omega$ has $d+1$ cusps, it follows that $g$ has $d+1$ critical points on $\mathbb{T}$. Moreover, the fact that $\infty$ is the only pole of the Schwarz reflection map $\sigma:=g\circ\eta\circ\left(g\vert_{\widehat{\C}\setminus\overline{\D}}\right)^{-1}:\Omega\to\widehat{\C}$ implies that $g(0)=\infty$, and $g$ has $d-1$ critical points at $0$. Since a rational map of degree $d+1$ has $2d$ critical points (counting multiplicity), it follows that $g$ has no other critical point. Setting $\Omega_1:=A_1(\Omega)$ and $g_1:=A_1\circ g$, where $A_1(z)=az+b$ (for some $a\in\C, b\in\C$), we can assume that $$g_1(z)= z+\frac{a_1}{z} + \cdots +\frac{a_d}{z^d}.$$ A simple computation (using the fact that all critical points of $g_1$ are on $\mathbb{T}$ and at the origin) now shows that $\vert a_d\vert=1/d$. Finally, setting $\Omega_2:=A_2(\Omega_1)$ and $g_2:=A_2\circ g_1\circ A_2^{-1}$, where $A_2(z)=\alpha z$ (for some $\alpha\in\mathbb{T}$), we can further assume that $g_2$ is of the form above with $a_d=-1/d$. Clearly, $g_2\in\Sigma_d^*$, and $\Omega_2=g_2(\widehat{\C}\setminus\overline{\D})$ is affinely equivalent to $\Omega$. In particular, $g_2(\mathbb{T})=\partial\Omega_2$ has $d+1$ cusps and $d-2$ double points; i.e., $g_2$ is the desired Suffridge polynomial of degree $d+1$. 

We now proceed to prove injectivity. Let $g_1$, $g_2$ be Suffridge polynomials of degree $d+1$, and $A$ an affine map such that $\Omega_1=g_1(\widehat{\C}\setminus\overline{\D})$, $\Omega_2=g_2(\widehat{\C}\setminus\overline{\D})$, and $A(\Omega_1)=\Omega_2$. Therefore, $$R:=\left(g_2\vert_{\widehat{\C}\setminus\overline{\D}}\right)^{-1}\circ A\circ g_1:\widehat{\C}\setminus\overline{\D}\to\widehat{\C}\setminus\overline{\D}$$ is a conformal map. Since $A$ is affine, it follows that $R$ fixes $\infty$, and thus is a rotation. Note that since $\left(g_2\vert_{\widehat{\C}\setminus\overline{\D}}\right)^{-1}(w)$ is of the form $w+O(\frac1w)$ near $\infty$, it follows that $A(w)= \alpha w$, for some $\alpha\in\C^*$. Therefore, $g_2=A\circ g_1\circ R^{-1}$. The fact that both $g_1$ and $g_2$ have derivative $1$ at $\infty$ implies that $A\equiv R$. Finally, as the coefficient in front of $1/z^d$ is $-\frac1d$ for both $g_1$ and $g_2$, it follows that $\alpha^{d+1}=1$. We conclude that $g_2=R\circ g_1\circ R^{-1}$, where $R$ is multiplication by a $(d+1)$-st root of unity.
\end{proof}

\begin{definition}\label{deltoid-like} A Jordan curve is \emph{deltoid-like} if it is smooth except for three outward cusps and if (at least) one of the arcs between the cusp points has negative curvature and the total variation of the tangent direction along this arc is less than $\pi$.
\end{definition}

According to \cite[Proposition~4.1]{2014arXiv1411.3415L}, the complement of an extremal unbounded quadrature domain $\Omega$ has $d-1$ interior components, each of which is a Jordan domain bounded by a deltoid-like curve.

\begin{rem}\label{krein_milman_rem} The space $\Sigma_d^*$ is a compact subset of the linear space of all rational functions of degree $d+1$ (given the topology of uniform convergence on compact subsets). We say $f\in\Sigma_d^*$ is an extreme point if $f$ has no representation of the form $f=tg+(1-t)h$ for $g$, $h \in \Sigma_d^*$ and $0<t<1$. By the Krein--Milman Theorem, $\Sigma_d^*$ is contained in the closed convex hull of its extreme points. It was proven in \cite[Theorem~3.6]{2014arXiv1411.3415L} that every extreme point of $\Sigma_d^*$ must be a Suffridge polynomial, however it is not known to us whether every Suffridge polynomial must be an extreme point of $\Sigma_d^*$. \end{rem}

\subsection{Bi-angled Trees}\label{prelim_4}

We now define bi-angled trees, and describe a procedure to assign a bi-angled tree to an extremal unbounded quadrature domain. We will usually denote a tree by $\mathcal{T}$, or $\mathcal{T}=(V,E)$ when we wish to emphasize the vertex set $V$, and the edge set $E$, of the tree $\mathcal{T}$. Given a vertex $v\in V$, we will call the degree of $v$, written $\textrm{deg}(v)$, as the number of edges in $E$ incident to $v$. 

\begin{definition}\label{tiles_def}
Let $\Omega$ be an extremal unbounded quadrature domain of order $d$, $T:=\mathbb{C}\setminus\Omega$ (called the \emph{droplet}), and $T^0:=T\setminus\{$Singular points on $\partial T\}$ (the desingularized droplet). The connected components $T_1, \cdots, T_{d-1}$ of $T^0$ are called \emph{fundamental tiles}. 
\end{definition}

\begin{rem}
The fundamental tiles are neither closed nor open.
\end{rem}

\begin{definition}\label{bi-angled_tree_def} 
A bi-angled tree $\mathcal{T}$ is defined as a tree each of whose vertices are of degree at most $3$, and that is equipped with an angle function $\angle$, defined on ordered pairs of edges incident at a common vertex, and taking values in $\{0, 2\pi/3, 4\pi/3\}$, satisfying the following conditions:
\begin{enumerate}\upshape
\item for each pair of distinct edges $e$ and $e'$ incident at a vertex $v$, we have $\angle_v(e,e')\in\{2\pi/3,4\pi/3\}$, while $\angle_v(e,e)=0$ for each edge $e$ incident at $v$,

\item $\angle_v(e,e')=-\angle_v(e',e)$ (mod $2\pi$), and 

\item $\angle_v(e,e')+\angle_v(e',e'')=\angle_v(e,e'')$ (mod $2\pi$), where $e, e',e''$ are edges incident at a vertex $v$.
\end{enumerate}
\end{definition}

\begin{rem}\label{planar_structure}
Since the function $\angle$ induces a cyclic order among the edges incident at each vertex, there is a preferred (isotopy class of) embedding of a bi-angled tree into the complex plane.
\end{rem}

\noindent We now introduce the notion of isomorphism of bi-angled trees (see also the definition of \emph{angled trees} as in \cite{Poi3}, \cite[Expos{\'e} VI, \S I.2]{orsay}), but first we recall the notion of a tree isomorphism.

\begin{definition}\label{tree_isomorphism} 
We say that two trees $\mathcal{T}_1=(V_1, E_1)$, $\mathcal{T}_2=(V_2, E_2)$ are \emph{isomorphic} if there exists a bijection $h: (V_1, E_1)\to (V_2, E_2)$ such that the vertices $v, w \in V_1$ are connected by an edge $e\in E_1$ if and only if the vertices $h(v)$, $h(w)\in V_2$ are connected by the edge $h(e)\in E_2$. 
\end{definition}

\begin{definition}\label{isom_def}
Two bi-angled trees $\mathcal{T}_1=(V_1, E_1)$ and $\mathcal{T}_2=(V_2, E_2)$ are said to be \emph{isomorphic} if there is a tree isomorphism $h:\mathcal{T}_1\to\mathcal{T}_2$ that is compatible with the corresponding angle functions; i.e., $\angle_{h(v)}(h(e),h(e'))=\angle_v(e,e')$, for each pair of distinct edges $e,e'\in E_1$ incident at a vertex $v\in V_1$. 
\end{definition}

\begin{rem}\label{bi-angled_equiv_def}
While the abstract Definitions~\ref{bi-angled_tree_def},~\ref{isom_def} will be used in Section~\ref{crofoot_sarason_sec} to establish the bijection between objects \eqref{bi-angled_trees} and \eqref{crofoot-sarason}, the following equivalent definition will suffice in Sections~\ref{mainthm_proof} and \ref{rigidity_qd_sec}.

Let $\mathcal{T}=(V,E)$ be a tree embedded in the plane. We say that $\mathcal{T}$ is \emph{bi-angled} if $\deg(v)\leq3$ for all $v \in V$, and $E$ consists of linear segments which meet at angles $2\pi/3$ or $4\pi/3$. In this setting, the angle function $\angle(e,e')$ is simply the Euclidean angle between $e$, $e'$ (measured counter-clockwise from $e$).

With this definition, two bi-angled trees $\mathcal{T}_1$, $\mathcal{T}_2$ are isomorphic if there exists a tree isomorphism $h: \mathcal{T}_1 \rightarrow \mathcal{T}_2$ with the property that if $e$, $e'$ are edges in $\mathcal{T}_1$ sharing a common endpoint, then the angle between $h(e)$, $h(e')$ (measured counterclockwise from $h(e)$) is equal to the angle between $e$, $e'$ (measured counterclockwise from $e$).

\end{rem}

\begin{definition}\label{associated_tree} For an extremal unbounded quadrature domain $\Omega$, we define a bi-angled tree $\mathcal{T}(\Omega)=(V_\Omega, E_\Omega)$ associated to $\Omega$ by the following recursive procedure (see also Figure \ref{fig:tree}). Denote by $T_1, \cdots, T_k$ the fundamental tiles of $\Omega$. Associate to the component $T_1$ a vertex at $0$, and denote by $c_1, c_2, c_3$ the cusp points of $\partial T_1$, ordered counter-clockwise. For $1\leq j \leq 3$, we include in $V_\Omega$ the point $\exp(i (2\pi j/3) )$ and in $E_\Omega$ the linear segment connecting $0$ to $\exp(i (2\pi j/3) )$ if and only if the cusp point $c_j$ is a boundary point of two fundamental tiles. This defines at most three new vertices and edges in $\mathcal{T}(\Omega)$. We perform a similar procedure for each new vertex, allowing for edges to decrease in length in successive steps of this procedure so as to avoid self-intersection. This recursively defines the bi-angled tree $\mathcal{T}(\Omega)$. 
\end{definition}

\begin{figure}
\centering
\scalebox{.12}{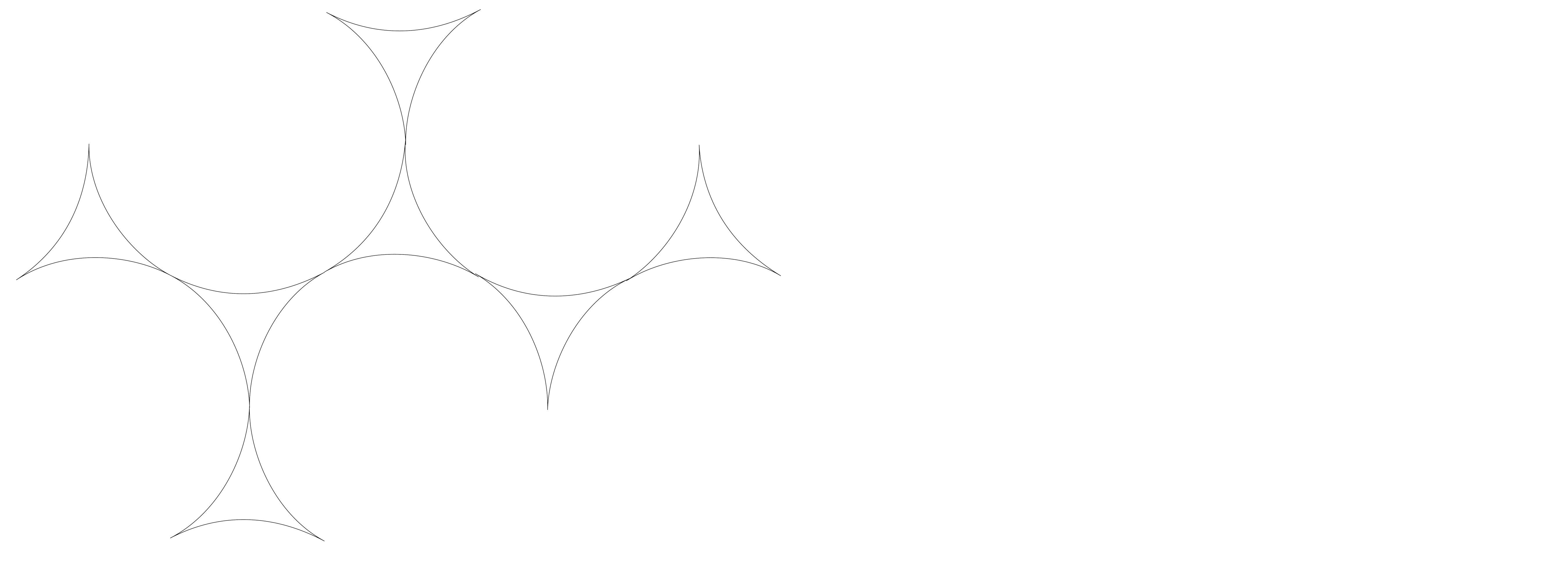}
\caption{Illustrated is Definition~\ref{associated_tree} in which a bi-angled tree $\mathcal{T}(\Omega)$ is associated to a given unbounded quadrature domain $\Omega$ by associating vertices to components of $\textrm{int}(\mathbb{C}\setminus\Omega)$ and connecting two vertices by an edge if and only if the corresponding components share a boundary point. We note that this figure is merely meant to illustrate Definition~\ref{associated_tree}: $\Omega$ was neither explicitly nor numerically computed.}
\label{fig:tree}      
\end{figure}

\subsection{Crofoot--Sarason Polynomials}\label{prelim_5}

\begin{definition}\label{sara_cro_def} Let $q(z)$ be a holomorphic polynomial of degree $d$. We say that $q$ is a \emph{Crofoot--Sarason polynomial} (\emph{CS polynomial} in short) if the $d-1$ critical points of $p(z):=\overline{q(z)}$ in $\C$ are distinct, and each critical point is fixed under $p$ (note that $p$ and $q$ have the same critical points). The anti-holomorphic polynomial $p$ is called a \emph{CS anti-polynomial}.
\end{definition}

\begin{rem}\label{CS_equiv_def_rem}
In \cite{KhSw}, CS polynomials were defined by the equivalent condition that all critical points of the polynomial (in $\C$) are distinct, and are mapped to their complex conjugates. It will be more convenient for us to use Definition~\ref{sara_cro_def} since the defining condition is dynamically natural. 
\end{rem}

The importance of CS polynomials stems from the fact that they realize the upper bound of the following theorem \cite{KhSw,Ge}.

\begin{thm}\label{KhSw} \cite{KhSw} Let $q(z)$, $\deg(q)=d>1$, be an analytic polynomial. Then 

\[ \#\left\{ z \in \mathbb{C} : \overline{q(z)}=z \right\} \leq 3d-2 \]

\end{thm}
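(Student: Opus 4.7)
The plan is to identify the set $\{z \in \mathbb{C} : \overline{q(z)} = z\}$ with the fixed-point set of the anti-holomorphic polynomial $p(z) := \overline{q(z)}$, and to combine a harmonic argument principle (which reduces the question to a dynamical count) with a complex-dynamical bound on the non-repelling fixed points of $p$. Equivalently, this set is the zero set of the harmonic polynomial $F(z) := q(z) - \overline{z}$. At a non-degenerate zero $z_0$, the Jacobian of $F$ viewed as a real map equals $\abs{q'(z_0)}^2 - 1$, so the zeros partition according to the multiplier $\abs{q'(z_0)}$ into $N_r$ (\emph{repelling}, $\abs{q'(z_0)} > 1$), $N_a$ (\emph{attracting}, $\abs{q'(z_0)} < 1$), and $N_i$ (\emph{indifferent}, $\abs{q'(z_0)} = 1$) fixed points of $p$.

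The first ingredient is a winding-number computation. On a large circle $\{\abs{z} = R\}$, $F$ is dominated by the holomorphic term $q$ (since $\deg q = d > 1$), hence winds $d$ times about the origin. By the Poincar\'e--Hopf / index formula for $F$ regarded as a map $\mathbb{R}^2 \to \mathbb{R}^2$, and after a generic perturbation of $q$ to eliminate degenerate zeros, one obtains the signed count
\[
N_r - N_a = d.
\]

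The central step is the dynamical inequality
\[
N_a + N_i \leq d - 1,
\]
i.e., $p$ has at most $d - 1$ non-repelling fixed points. Each attracting fixed point of $p$ lies in an invariant Fatou component whose basin must absorb a critical orbit; since $p$ has exactly $d - 1$ finite critical points (the zeros of $q'$), this handles the attracting case. Parabolic fixed points are handled via Fatou's flower theorem applied to the holomorphic second iterate $p^{\circ 2}$: each petal attached at a parabolic fixed point consumes a critical orbit of $p$. Irrationally indifferent fixed points are the most delicate, requiring an anti-holomorphic version of the Fatou--Shishikura inequality; the critical set of $p^{\circ 2}$, while of cardinality $d^2 - 1$, is structured as the critical set of $p$ together with its $p$-preimage, so critical classes accessible to a fixed point of $p$ reduce to the $d-1$ critical points of $p$ itself.

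Combining the two ingredients,
\[
N_r + N_a + N_i = 2N_a + N_i + d \leq N_a + (N_a + N_i) + d \leq 2(d-1) + d = 3d - 2,
\]
as required. The principal obstacle is the dynamical bound $N_a + N_i \leq d - 1$: a naive application of Fatou--Shishikura to $p^{\circ 2}$ yields only $N_a + N_i \leq d^2 - 1$, and the sharp bound genuinely exploits the anti-holomorphic symmetry of $p$ to identify critical orbits of $p^{\circ 2}$ in pairs. Making this precise (e.g.\ by a perturbation argument inside the space of anti-polynomials, combined with an analysis of the indifferent case) is the essential technical content, and constitutes the main innovation of Khavinson--Swiatek.
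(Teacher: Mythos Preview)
The paper does not prove this theorem; it is quoted from \cite{KhSw}, and only a sketch appears in Remark~\ref{lefschetz_count_rem}: the Lefschetz--Hopf count~\eqref{hyperbolic_count} gives $N=2N_a+d$ for an anti-polynomial with no neutral fixed point, and Fatou's classical bound on attracting cycles gives $N_a\le d-1$. Your outline is exactly this strategy, so on the level of ideas you are aligned with both the paper's sketch and the original argument of Khavinson--\'Swi\k{a}tek.

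There is, however, a logical slippage in how you treat indifferent fixed points. You invoke ``a generic perturbation of $q$ to eliminate degenerate zeros'' in order to obtain $N_r-N_a=d$; this perturbation makes $N_i=0$. But in the very next step you reinstate $N_i$ and aim for the stronger inequality $N_a+N_i\le d-1$. These two moves are incompatible. If you perturb (which is what Khavinson--\'Swi\k{a}tek actually do), then $N_i=0$ and only the elementary Fatou bound $N_a\le d-1$ is needed; the entire discussion of an anti-holomorphic Fatou--Shishikura inequality is superfluous. What then \emph{does} require care is the passage back from the perturbed polynomial to the original one: you must argue that having at least $3d-1$ distinct solutions to $\overline{q(z)}=z$ is an open condition in $q$, so that a violation for some $q_0$ persists for nearby generic $q$. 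Conversely, if you do \emph{not} perturb, then the identity $N_r-N_a=d$ is unjustified as stated (the local index of $F=q-\bar z$ at an indifferent fixed point is not a priori zero), and the bound $N_a+N_i\le d-1$ becomes a genuine Fatou--Shishikura statement for anti-polynomials which, as you concede, you have not proved. Either route can be made to work, but you should commit to one and carry it through cleanly.
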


\begin{rem}\label{lefschetz_count_rem}
While a complex polynomial of degree $d$ with no neutral fixed point has exactly $d$ distinct fixed points in the plane, the situation for anti-polynomials is quite different. In fact, it follows from an application of the Lefschetz-Hopf fixed point theorem that the number of planar fixed points of a degree $d$ anti-polynomial $p$ with no neutral fixed point depends on the number of planar \emph{attracting} fixed points of $p$ in the following way: \begin{equation}\label{hyperbolic_count}
\#\left\{ z \in \mathbb{C} : p(z)=z \right\}= 2\cdot \# \{ z \in \mathbb{C} : p(z)=z,\ \vert p'(z)\vert<1 \} + d.
\end{equation}
(See \cite[Lemma~6.1]{2014arXiv1411.3415L}.) The proof of Theorem~\ref{KhSw} combines (\ref{hyperbolic_count}) and Fatou's count of attracting fixed points (see \cite[\S 3.2]{CG1}). The formula (\ref{hyperbolic_count}) also explains why existence of CS polynomials demonstrates sharpness in the upper bound of Theorem \ref{KhSw} (see also \cite[Section~5]{KhSw}). Indeed, if $p=\overline{q}$ as in Definition \ref{sara_cro_def}, then the left hand-side of (\ref{hyperbolic_count}) is $3d-2$ as needed, where we note that there can be no neutral fixed point for $p$ as any neutral fixed point would necessarily imply the existence of an infinite critical orbit, but all critical points of $p$ are fixed.
\end{rem}

Following \cite[\S 1]{Ge}, let us now introduce a notion of equivalence for CS polynomials. 

\begin{definition}\label{sara_cro_equiv_def}
Two CS polynomials $q_1$ and $q_2$ (of the same degree) are said to be \emph{equivalent} if there exist affine maps $A_1$ and $A_2$ of the complex plane such that $A_1\circ q_1\circ A_2 = q_2$. 
\end{definition}

\begin{rem}\label{sara_cro_equiv_rem}
It is straightforward to check that the above definition produces an equivalence relation on the space of CS polynomials of a given degree.
\end{rem}

We conclude this subsection by interpreting the above equivalence relation on CS polynomials in terms of a conjugacy equivalence relation on CS anti-polynomials.

\begin{prop}\label{equiv_conj_sara_cro_prop}
Two CS polynomials $q_1$ and $q_2$ are equivalent if and only if the corresponding CS anti-polynomials $p_1$ and $p_2$ (respectively) are affinely conjugate.
\end{prop}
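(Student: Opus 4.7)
The plan is to prove both directions of the biconditional by a direct computation. The reverse implication is purely formal and does not use the CS property at all, while the forward implication is the substantive one and is where the fixedness of the critical points becomes essential. A low-degree case ($d=2$) will have to be handled separately.

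For the reverse direction, assume $p_2 = A\circ p_1\circ A^{-1}$ for some affine $A$. Writing $C(z)=\bar z$ so that $p_i = C\circ q_i$, and using the identity $C\circ A = \bar A\circ C$ for every affine $A$ (where $\bar A$ denotes the affine map obtained by conjugating the coefficients of $A$), I rewrite $C\circ q_2 = A \circ C \circ q_1 \circ A^{-1}$ in the form $q_2 = \bar A\circ q_1 \circ \overline{A^{-1}}$. This exhibits the equivalence of Definition~\ref{sara_cro_equiv_def} with $A_1 := \bar A$ and $A_2 := \overline{A^{-1}}$.

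For the forward direction, assume $q_2 = A_1\circ q_1\circ A_2$. Applying $C$ to both sides and pushing it through $A_1$ as above gives
\[
p_2 \;=\; C\circ A_1\circ q_1\circ A_2 \;=\; \bar A_1\circ p_1\circ A_2.
\]
The goal now is to upgrade the pair $(\bar A_1,A_2)$ to a pair of the form $(A,A^{-1})$, and this is where the CS property intervenes. Let $c_1,\dots,c_{d-1}$ denote the critical points of $q_1$; by hypothesis they are distinct and $q_1(c_i)=\bar c_i$. The critical points of $q_2$ are then exactly $A_2^{-1}(c_i)$, and the CS condition for $q_2$ forces $q_2(A_2^{-1}(c_i))=\overline{A_2^{-1}(c_i)}$; computing the same value directly from $q_2 = A_1\circ q_1\circ A_2$ yields
\[
A_1(\bar c_i) \;=\; \overline{A_2^{-1}(c_i)} \qquad \text{for } i=1,\dots,d-1.
\]

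For $d\ge 3$ the identity holds at two distinct points $\bar c_1\neq \bar c_2$, and since the right-hand side $w\mapsto \overline{A_2^{-1}(\bar w)}$ is also an affine map, the two affine maps must coincide globally, yielding $\bar A_1 = A_2^{-1}$ as affine maps on $\mathbb{C}$. Substituting back, $p_2 = A_2^{-1}\circ p_1\circ A_2$, exhibiting $p_1$ and $p_2$ as affinely conjugate via $A:=A_2^{-1}$. The single-critical-point case $d=2$, for which the identity at one point does not determine two affine maps, will be handled separately by normalization: after a translation moving the critical point to the origin, any CS polynomial of degree $2$ takes the form $\alpha z^2$, and a further linear conjugation absorbs $\alpha$, so every CS polynomial of degree $2$ is equivalent to $z^2$ and every CS anti-polynomial of degree $2$ is affinely conjugate to $\bar z^2$; in this case the biconditional is vacuously true. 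The main obstacle is essentially bookkeeping between complex conjugation and affine maps via the identity $C\circ A = \bar A\circ C$, together with the mild asymmetry that the CS condition produces $d-1$ pointwise constraints rather than an identity of maps, which is precisely why the $d=2$ case must be isolated and handled by normal-form reduction.
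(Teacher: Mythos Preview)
Your proof is correct and follows essentially the same approach as the paper: both isolate the $d=2$ case by normal-form reduction, and for $d\ge 3$ both pass from $q_2=A_1\circ q_1\circ A_2$ to $p_2=\bar A_1\circ p_1\circ A_2$ and then use the $d-1\ge 2$ fixed critical points to force $\bar A_1=A_2^{-1}$. One minor slip: in the reverse implication your identity $C\circ A=\bar A\circ C$ actually yields $q_2=\bar A\circ q_1\circ A^{-1}$, not $q_2=\bar A\circ q_1\circ\overline{A^{-1}}$; this is harmless since either way $A_2$ is affine and equivalence follows.
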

\textbf{Notation:} For an affine map $A$, we define $\tilde{A}(z):=\overline{A(\overline{z})}$. Clearly, $\tilde{A}$ is also an affine map.
\begin{proof}
Note that for $d=2$, the only CS polynomial (up to equivalence) is $z^2$, and the result is trivially true in this case. Thus, we will assume in the rest of the proof that $d\geq 3$. We will adopt the notation $\iota(z):=\overline{z}$ for this proof. 

Let us first suppose that $p_1(z)=\overline{q_1(z)}$ and $p_2(z)=\overline{q_2(z)}$ are conjugate via an affine map $A$; i.e., $A\circ p_1\circ A^{-1}=p_2$. Then, $\left(\iota\circ A\circ\iota\right)\circ\left(\iota\circ p_1\right)\circ A^{-1}=\iota\circ p_2$, and hence $\tilde{A}\circ q_1\circ A^{-1}= q_2$. It follows that $q_1$ and $q_2$ are equivalent in the sense of Definition~\ref{sara_cro_equiv_def}.

Conversely, let $A_1\circ q_1\circ A_2 = q_2$, where $A_1$ and $A_2$ are affine. Then, $\left(\iota\circ A_1\circ\iota\right)\circ\left(\iota\circ q_1\right)\circ A_2=\iota\circ q_2$; i.e., 
\begin{equation}
\tilde{A}_1\circ p_1\circ A_2=p_2. 
\label{equiv_rel_eqn}
\end{equation} 
It follows from \eqref{equiv_rel_eqn} that $c$ is a critical point of $p_1$ if and only if $A_2^{-1}(c)$ is a critical point of $p_2$. Moreover, since $p_1$ and $p_2$ fix each of their critical points, we also conclude from Relation~\eqref{equiv_rel_eqn} that $\tilde{A}_1(c)=A_2^{-1}(c)$, for every critical point $c$ of $p_1$. As $p_1$ has at least two distinct finite critical points, and $A_1, A_2$ are affine maps, we must have that $\tilde{A}_1\equiv A_2^{-1}$. Relation~\eqref{equiv_rel_eqn} now implies that $A_2^{-1}\circ p_1\circ A_2=p_2$; i.e., $p_1$ and $p_2$ are affinely conjugate.
\end{proof}

\subsection{Angled Hubbard Trees for CS Anti-polynomials}\label{prelim_5.5}

In this Subsection, we describe the construction of \emph{angled Hubbard trees} for CS anti-polynomials. This will be used in the proof of the Classification Theorem~\ref{crofoot_sarason_bi-angled_bijection_thm}.

Since each critical point of a CS anti-polynomial $p$ is fixed, it follows that each bounded Fatou component $U$ (of $p$) containing a critical point is invariant under $p$, and admits a \emph{B{\"o}ttcher coordinate} that conjugates $p\vert_U$ to the power map $\overline{z}^2\vert_{\D}$. The pre-images of radial lines in $\mathbb{D}$ under this B{\"o}ttcher coordinate are called \emph{internal rays} in $U$. Pulling the internal rays in $U$ back by iterates of $p$, we obtain internal rays in all bounded Fatou components of $p$. Note that since all critical points of $p$ (in $\C$) are fixed, the filled Julia set $K(p)$ (i.e., the set of all points in $\C$ that have bounded forward orbit under iteration of $p$) is connected (this follows, for instance, by applying \cite[Theorem~9.5]{MR2193309} on the holomorphic polynomial $p^{\circ 2}$). Moreover, the same property of $p$ also implies that $p^{\circ 2}$ is hyperbolic, and hence the filled Julia set of $p^{\circ 2}$, which is equal to $K(p)$, is a full continuum with locally connected boundary \cite[Lemma~9.4, Theorems~19.1, 19.2]{MR2193309}. Hence, there exists a unique arc (in $K(p)$) connecting any two points of $K(p)$ such that the intersection of the arc with every Fatou component is contained in the union of two internal rays (compare \cite[Expos{\'e}~II, \S 6, Proposition 6]{orsay}). Such arcs are called \emph{allowable}. The union of the allowable arcs connecting the post-critical set of $p$ in $\C$ (which is simply the set of all critical points of $p$ in $\C$) is a tree, and we call it the \emph{Hubbard tree} $\mathcal{T}(p)$ of the CS anti-polynomial $p$. The critical points of $p$ and the branch points of the tree $\mathcal{T}(p)$ are defined to be the vertices of the Hubbard tree (in fact, we will show later in Section~\ref{crofoot_sarason_sec} that all branch points of $\mathcal{T}(p)$ are critical points of $p$). 

Locally near each critical point $c$ (of $p$), the edges of $\mathcal{T}(p)$ meeting at $c$ are internal rays of the corresponding Fatou component, and we record the counter-clockwise angle between each pair of such internal rays measured in the B{\"o}ttcher coordinate. This endows the tree $\mathcal{T}(p)$ with an \emph{angle data}, and $\mathcal{T}(p)$ equipped with the angle data is called the \emph{angled Hubbard tree} of $p$. For a more detailed (and general) discussion of angled Hubbard trees, we refer the readers to \cite[Expos{\'e} VI]{orsay}, \cite{Poi3}.

We will also need an abstract counterpart of angled Hubbard trees. We collect the basic definitions of these objects here, and refer the readers to \cite[\S 5]{Poi3} for a more detailed account including a realization theorem for these abstract objects. 

Let $\mathcal{T}$ be a finite topological tree equipped with an angle function $\angle$ that assigns a number $\angle_v(e,e')\in\Q/2\pi\Z$ to each pair of edges $e$ and $e'$ incident at a vertex $v$. Suppose further that the angle function is skew-symmetric, non-degenerate (i.e., $\angle_v(e,e')=0$ if and only if $e=e'$), and additive. Then, the pair $(\mathcal{T},\angle)$ is called an \emph{angled tree}. Angled trees may be thought of as \emph{formal} angled Hubbard trees.

To model the dynamics of an anti-polynomial on its Hubbard tree, we need to define a notion of orientation-reversing angled tree maps. To this end, let 
$$
\textswab{d}:V(\mathcal{T})\to\N
$$ 
be a function, which we call a \emph{local degree} function, where $V(\mathcal{T})$ stands for the vertex set of $\mathcal{T}$. We caution the reader not to confuse $\textswab{d}(v)$ with the graph-theoretic degree of a vertex $v$, which is denoted by $\deg(v)$. A self-map $$\tau:\mathcal{T}\to\mathcal{T}$$ of the tree (which sends vertices to vertices, and is a homeomorphism of each edge onto its image) is called an \emph{orientation-reversing angled tree map} if it satisfies the condition 
$$
\angle_{\tau(v)}(\tau(e), \tau(e'))=-\textswab{d}(v)\angle_v(e,e')\ (\textrm{mod}\ 2\pi),
$$ 
for all edges $e, e'$ incident at $v\in V(\mathcal{T})$. The number $$d_\tau =1+\sum_{v\in V(\mathcal{T})}(\textswab{d}(v)-1)$$ is the \emph{degree} of the tree map $\tau$. We say that a vertex is \emph{critical} if $\textswab{d}(v)>1$. Finally, a vertex is said to be of \emph{Fatou type} if it maps to a periodic critical vertex under some iterate of $\tau$. It is a special case of the realization theorem \cite[Theorem~5.1]{Poi3} that an orientation-reversing angled tree map $\tau$ for which all vertices are of Fatou type can be realized as the dynamics associated to a postcritically finite anti-polynomial (i.e., an anti-polynomial whose critical points have finite forward orbits) of the same degree.

\subsection{Topological Quadrilaterals}\label{prelim_6}

We conclude this Section by summarizing some elementary facts about topological quadrilaterals that we will need in the course of the proof of Theorem~\ref{mainthm_qd_terminology} (for further details, see \cite[Chapter 1]{MR0344463}). 

\begin{definition} A \emph{quadrilateral} $Q(z_1, z_2, z_3, z_4)$ consists of a Jordan domain $Q\subset\widehat{\mathbb{C}}$ together with four distinct points $z_1, z_2, z_3, z_4 \in \partial Q$, oriented positively with respect to $Q$. The two subarcs $\arc{z_1z_2}$, $\arc{z_3z_4} \subset \partial Q$ are called the \emph{a-sides} of $Q$, and $\arc{z_2z_3}$, $\arc{z_4z_1}\subset \partial Q$ are called the \emph{b-sides} of $Q$. 
\end{definition}

\noindent Any topological quadrilateral $Q$ can be mapped conformally (preserving the four vertices) to some Euclidean rectangle $[0,a]\times[0,b]$ (see, for instance, \cite[\S I.2.4]{MR0344463}). The modulus $M(Q)$ of the quadrilateral is then defined to be $a/b$. Lastly, we recall a notion of convergence for quadrilaterals (see \cite[\S I.4.9]{MR0344463}). 

\begin{definition}\label{definition_of_quadrilateral_convergence} A sequence of quadrilaterals $Q_n$ (with a-sides $a_i^n$ and b-sides $b_i^n$, $i=1,2$, $n\in\mathbb{N}$) is said to converge to the quadrilateral $Q$ (with a-sides $a_i$ and b-sides $b_i$, $i=1,2$) if to every $\varepsilon>0$ there corresponds an $n_{\varepsilon}$ such that for $n\geq n_{\varepsilon}$, every point of $a_i^n$, $b_i^n$, $i=1,2$, and every interior point of $Q_n$ has a spherical distance of at most $\varepsilon$ from $a_i$, $b_i$, and $Q$, respectively. 
\end{definition}

\begin{thm}\label{convergence_of_quadrilaterals} If a sequence of quadrilaterals $Q_n$ converges to the quadrilateral $Q$, then \[ \lim_{n\rightarrow\infty} M(Q_n)=M(Q). \]
\end{thm}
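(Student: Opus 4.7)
The plan is to reduce the statement to a Carath\'eodory--kernel argument on the conformal uniformizations of the quadrilaterals. For each $n$, let $R_n := [0, M(Q_n)] \times [0, 1]$ and let $\psi_n : R_n \to Q_n$ denote the unique conformal map sending the corners of $R_n$, in cyclic order, to the distinguished boundary points $z_1^n, z_2^n, z_3^n, z_4^n$ of $Q_n$; analogously define $R := [0, M(Q)] \times [0,1]$ and $\psi : R \to Q$. Showing $M(Q_n) \to M(Q)$ is equivalent to showing that the aspect ratios of the rectangles $R_n$ converge to that of $R$.

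I would first establish uniform bounds $0 < c \leq M(Q_n) \leq C < \infty$ for all sufficiently large $n$. Such bounds follow from Definition~\ref{definition_of_quadrilateral_convergence} combined with the standard monotonicity of the modulus: for any $\varepsilon > 0$, the convergence of the four labelled boundary arcs forces $Q_n$ to both contain and be contained in quadrilaterals that $\varepsilon$-approximate $Q$ in shape, and comparison against these approximants bounds $M(Q_n)$ in terms of $M(Q)$. Given this boundedness, for any subsequence of $n$ one can extract a sub-subsequence $\{n_k\}$ with $M(Q_{n_k}) \to M_* \in [c,C]$. After a linear rescaling of the domain to a fixed reference rectangle, the maps $\psi_{n_k}$ become univalent functions on a fixed rectangle whose images lie in a uniformly bounded region of $\mathbb{C}$, and hence form a normal family. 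Passing to a further subsequence, $\psi_{n_k}$ converges locally uniformly to a univalent holomorphic map $\psi_* : \interior(R_*) \to \mathbb{C}$, where $R_* := [0, M_*] \times [0, 1]$. Using the kernel-type information encoded in Definition~\ref{definition_of_quadrilateral_convergence} --- interior points and individual boundary arcs of $Q_n$ are spherically $\varepsilon$-close to those of $Q$ --- I would then show that $\psi_*(\interior(R_*)) = Q$ and that $\psi_*$ extends continuously to $\partial R_*$, sending the four corners in cyclic order to $z_1, z_2, z_3, z_4$.

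By uniqueness of the conformal uniformization of $Q$ by a Euclidean rectangle with prescribed corner correspondence, we must then have $M_* = M(Q)$. Since every subsequence of $\{M(Q_n)\}$ has a sub-subsequence converging to $M(Q)$, the entire sequence converges to $M(Q)$. The main obstacle is the identification of the kernel limit: one needs to verify simultaneously that $\psi_{n_k}^{-1}(w)$ does not escape to $\partial R_*$ for interior $w \in Q$, and that the four boundary arcs of $R_*$ are sent to the \emph{correct} four arcs of $\partial Q$ --- a swap of $a$-sides and $b$-sides in the limit would replace the modulus by its reciprocal. Both follow from the labelled spherical convergence of the interior and of all four boundary arcs guaranteed by Definition~\ref{definition_of_quadrilateral_convergence}, but formalizing them requires careful control of the boundary behaviour of $\psi_{n_k}$.
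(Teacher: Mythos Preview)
The paper does not actually prove this theorem: it is stated in Subsection~\ref{prelim_6} as a preliminary fact, with a reference to \cite[\S I.4.9]{MR0344463} (Lehto--Virtanen), and no proof is given. So there is no ``paper's own proof'' to compare your proposal against.

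That said, your outline is the standard route to this result and is essentially the argument found in the cited reference: uniformize each $Q_n$ by a rectangle, use compactness of the moduli to pass to subsequences, apply Carath\'eodory kernel convergence to the uniformizing maps, and invoke uniqueness of the rectangle representation to identify the limiting modulus. Your acknowledged obstacles --- controlling boundary behaviour of the $\psi_{n_k}$ and ruling out an $a$-side/$b$-side swap --- are the genuine technical points, and they are handled in the reference precisely via the labelled spherical convergence in Definition~\ref{definition_of_quadrilateral_convergence}. One small remark: your a priori bound $0<c\leq M(Q_n)\leq C$ is more delicate than you suggest, since Definition~\ref{definition_of_quadrilateral_convergence} does not immediately give containment of $Q_n$ between approximating quadrilaterals; the cleaner route (and the one taken in \cite{MR0344463}) is to bound $M(Q_n)$ using the extremal-length characterization directly, via estimates of the type in Theorem~\ref{modulus_estimate}.
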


\noindent Lastly, inserting an appropriate mass function into the extremal length characterization of modulus yields the following theorem (see \cite[\S I.4.4]{MR0344463}):

\begin{thm}\label{modulus_estimate} Let $Q$ be a quadrilateral. Then: \begin{equation}\nonumber \frac{1}{\pi}\frac{(\log(1+2s_b/s_a))^2}{1+2\log(1+2s_b/s_a)} \leq M(Q) \leq \pi\frac{1+2\log(1+2s_a/s_b)}{(\log(1+2s_a/s_b))^2}, \end{equation} where $s_a$, $s_b$ denote the Euclidean path distance in $Q$ between the a-sides, b-sides, respectively, of the topological quadrilateral $Q$.
\end{thm}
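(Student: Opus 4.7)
The approach is via the extremal-length characterization of the modulus of a quadrilateral: $M(Q) = 1/\lambda(\Gamma_a)$, where $\Gamma_a$ is the family of locally rectifiable curves in $Q$ joining the two $a$-sides and $\lambda(\cdot)$ denotes extremal length. My first observation is that the two inequalities of the theorem are equivalent to each other by duality, since $M(Q(z_1,z_2,z_3,z_4))\cdot M(Q(z_2,z_3,z_4,z_1))=1$: cyclically relabeling the vertices swaps the $a$- and $b$-sides (and hence $s_a$ and $s_b$) while inverting the modulus, so the upper bound for $M(Q)$ is formally equivalent to the lower bound for $M$ of the dual quadrilateral. It therefore suffices to establish the upper bound.

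To upper bound $M(Q)$, I lower bound $\lambda(\Gamma_a)$ by constructing a test mass function $\rho\colon Q\to[0,\infty)$ and invoking $\lambda(\Gamma_a)\geq L_\rho(\Gamma_a)^2/A_\rho(Q)$, where $L_\rho(\Gamma_a) = \inf_{\gamma\in\Gamma_a}\int_\gamma \rho\,|dz|$ and $A_\rho(Q) = \iint_Q \rho^2\,dA$. The particular form of the bound $\pi(1+2L)/L^2$ with $L := \log(1+2s_a/s_b)$ dictates a logarithmic-radial choice: pick a base point $p$ on (or near) one of the $b$-sides and define
$$\rho(z) = \frac{1}{L\cdot\max(|z-p|,\,s_b/2)}\cdot \mathbf{1}_{B(p,\,s_b/2+s_a)}(z).$$
The length lower bound $L_\rho(\gamma)\geq 1$ would then follow by arranging that every $\gamma\in\Gamma_a$ traverses the annulus $\{s_b/2\leq|z-p|\leq s_b/2+s_a\}$ radially (with one $a$-side sitting in the inner disk $B(p,s_b/2)$ and the other outside $B(p,s_b/2+s_a)$), so that $\int_\gamma \rho\,|dz|\geq \int_{s_b/2}^{s_b/2+s_a} dr/(Lr) = 1$. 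The area $A_\rho(Q)$ is then bounded by a direct polar computation: the inner disk contributes $\pi/L^2$ and the annulus contributes $(2\pi/L^2)\cdot\log(1+2s_a/s_b) = 2\pi/L$, giving the required total $\pi(1+2L)/L^2$.

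I expect the main obstacle to be the geometric step of choosing $p$ so that every $\gamma\in\Gamma_a$ actually crosses the annulus around $p$ in the desired radial way. For an arbitrary topological quadrilateral, the $a$-sides may be irregular and a single base point on a $b$-side may not suffice; one likely needs to select $p$ as (say) an endpoint of a Euclidean path in $Q$ realizing the distance $s_a$ between the $a$-sides, combined with a geometric argument that separates the two $a$-sides by the annulus based at $p$ (using that points of the opposite $a$-side are at Euclidean distance at least $s_a$ from $p$, together with a modest adjustment of the inner radius). Once this separation step is in place, the radial length integration and the polar-coordinate area estimate are entirely routine, and the lower bound in the theorem follows immediately by the duality argument of the first paragraph.
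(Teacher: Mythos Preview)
The paper does not actually prove this statement; it records only that the bound follows by ``inserting an appropriate mass function into the extremal length characterization of modulus'' and cites Lehto--Virtanen, \S I.4.4. Your framework---the duality reduction to a single inequality, followed by a logarithmic-radial test metric centered at a base point $p$---is exactly that approach, and your area computation and duality argument are both correct.

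The genuine gap is in the length lower bound $L_\rho(\Gamma_a)\geq 1$. Your description of the geometric step is internally inconsistent: you place $p$ on a $b$-side yet ask that one entire $a$-side lie in $B(p,s_b/2)$ and the other lie outside $B(p,s_b/2+s_a)$; your fallback suggestion (take $p$ an endpoint of a shortest path between the $a$-sides) also fails to force this separation, since $s_a$ is a \emph{path} distance in $Q$. In fact the separation picture is simply false in general: for the rectangle $[0,10]\times[0,1]$ with horizontal $a$-sides one has $s_a=1$, $s_b=10$, and for any reasonable $p$ both $a$-sides lie well inside $B(p,s_b/2+s_a)=B(p,6)$.

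The correct setup is: let $\alpha$ be a (near-)shortest curve in $Q$ joining the $b$-sides and take $p$ to be its arc-length midpoint, so that $\alpha\subset\overline{B(p,s_b/2)}$. Every $\gamma\in\Gamma_a$ must cross $\alpha$, hence contains a point $q$ with $|q-p|\leq s_b/2$. If $\gamma$ ever leaves $B(p,s_b/2+s_a)$, the sub-arc from $q$ to the first exit crosses the full annulus and your radial integration gives $\int_\gamma\rho\,|dz|\geq 1$. If $\gamma\subset B(p,s_b/2+s_a)$, parametrize by arc length with $\gamma(0)=q$; since $|r'(s)|\leq 1$ and $r(0)\leq s_b/2$ one has $\max(r(s),s_b/2)\leq s_b/2+|s|$, and integrating over the two halves of $\gamma$ (lengths $\ell_1,\ell_2$ with $\ell_1+\ell_2\geq s_a$) yields
\[
\int_\gamma \rho\,|dz|\ \geq\ \frac{1}{L}\log\!\Big[\big(1+\tfrac{2\ell_1}{s_b}\big)\big(1+\tfrac{2\ell_2}{s_b}\big)\Big]\ \geq\ \frac{1}{L}\log\!\Big(1+\tfrac{2(\ell_1+\ell_2)}{s_b}\Big)\ \geq\ 1.
\]
With this correction your mass function gives exactly the stated upper bound, and the lower bound follows by duality as you wrote.
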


\section{Dynamics of Schwarz Reflections Arising from $\Sigma_d^*$}\label{general_dynamics_sec}

The function $f$ of Theorem~\ref{mainthm_qd_terminology} is obtained via a sequence of quasiconformal deformations applied to a canonical map in $\Sigma_d^*$. Subsection~\ref{Exteriors} introduces this canonical map, and Subsection~\ref{Dynamical_Partition} discusses the dynamical plane of a Schwarz reflection map associated to a function in $\Sigma_d^*$.

\subsection{Exteriors of Hypocycloids are Quadrature Domains}\label{Exteriors}

\begin{prop}\label{hypocycloid_quadrature} The map $f_0(z):=z-1/dz^d$ is injective on $\widehat{\mathbb{C}}\setminus\mathbb{D}$. In particular, $f_0\in\Sigma_d^*$.
\end{prop}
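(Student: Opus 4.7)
The claim has two parts: first, that $f_0$ is injective on $\widehat{\C}\setminus\D$, and second, that $f_0\in\Sigma_d^*$. The second part is essentially bookkeeping once the first is established: $f_0(z)=z+\sum_{k=1}^d a_k/z^k$ with $a_1=\cdots=a_{d-1}=0$ and $a_d=-1/d$, so once conformality on $\widehat{\C}\setminus\overline{\D}$ is known, $f_0$ meets the defining conditions of $\Sigma_d^*$. I would therefore focus on injectivity.

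My plan is a direct algebraic argument. Suppose $z_1,z_2\in\widehat{\C}\setminus\D$ with $z_1\neq z_2$ and $f_0(z_1)=f_0(z_2)$. Since $f_0(z)=z+O(1/z)$ near infinity, $f_0(\infty)=\infty$ and the only finite preimage of $\infty$ is $0\in\D$, ruling out the case where either $z_i=\infty$. Hence $|z_1|,|z_2|\geq 1$. Factor the identity
\[
0=f_0(z_1)-f_0(z_2)=(z_1-z_2)-\tfrac{1}{d}\!\left(\tfrac{1}{z_1^d}-\tfrac{1}{z_2^d}\right)
=(z_1-z_2)\!\left[1+\frac{\sum_{k=0}^{d-1}z_1^k z_2^{d-1-k}}{d\,z_1^d z_2^d}\right]\!.
\]
Dividing by $z_1-z_2$ and substituting $w_j:=1/z_j$ (so $|w_j|\leq 1$), the bracketed factor's vanishing rearranges to
\[
\sum_{j=1}^{d} w_1^{j}\,w_2^{d+1-j}=-d.
\]

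The key step is the sharp triangle inequality applied to this sum. Each of its $d$ summands has modulus $|w_1|^j|w_2|^{d+1-j}\leq 1$, so the left-hand side has modulus at most $d$, with equality forcing (i) each summand to have modulus exactly $1$ and (ii) all $d$ summands to have a common argument. From (i), taking $j=1$ and $j=d$ and multiplying yields $(|w_1||w_2|)^{d+1}=1$, which together with $|w_j|\leq 1$ forces $|w_1|=|w_2|=1$. From (ii), the ratio of consecutive summands $w_1^{j+1}w_2^{d-j}/(w_1^j w_2^{d+1-j})=w_1/w_2$ must equal $1$, so $w_1=w_2$, i.e. $z_1=z_2$, contradicting our assumption.

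I do not anticipate any real obstacle: the only mild subtlety is that the definition of $\Sigma_d^*$ demands conformality only on the open exterior, while the statement claims injectivity on the closed exterior $\widehat{\C}\setminus\D$; the argument above covers both simultaneously since the equality case of the triangle inequality is what pins down the boundary behavior. Conformality on the open exterior then follows because $f_0'(z)=1+z^{-(d+1)}$ is nonvanishing for $|z|>1$, completing the verification that $f_0\in\Sigma_d^*$.
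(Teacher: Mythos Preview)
Your argument is correct and shares the same algebraic kernel as the paper's: both factor $f_0(z_1)-f_0(z_2)$ and then apply the sharp triangle inequality to the resulting sum of $d$ unimodular-or-smaller terms to force $z_1=z_2$. The difference is in packaging. The paper first observes that $f_0$ has no critical points in $\widehat{\C}\setminus\overline{\D}$, then restricts the triangle-inequality computation to $z,w\in\mathbb{T}$ to conclude that $f_0(\mathbb{T})$ is a Jordan curve, and finally invokes a topological step (Jordan image of $\mathbb{T}$ plus absence of critical points implies global injectivity on the exterior via an inverse branch). You bypass that topological reduction entirely by running the same inequality directly on $|z_1|,|z_2|\geq 1$, letting the equality case simultaneously pin down $|w_1|=|w_2|=1$ and $w_1=w_2$. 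Your route is a bit more self-contained; the paper's route makes the geometric picture (boundary is Jordan, hence image is simply connected) explicit, which is useful context for what follows in the paper but not logically necessary for the proposition itself.
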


\begin{proof} 
Note that $f_0$ has a $(d-1)$-fold critical point at the origin, and $d+1$ simple critical points on $\mathbb{T}$. In particular, $f_0$ has no critical point in $\widehat{\mathbb{C}}\setminus\overline{\mathbb{D}}$. So it suffices to show that $f_0(\mathbb{T})$ is a Jordan curve (indeed, this implies that $f_0(\widehat{\mathbb{C}}\setminus\overline{\mathbb{D}})$ is simply connected, and hence uniqueness of analytic continuation yields an inverse branch of $f_0$ that maps $f_0(\widehat{\mathbb{C}}\setminus\overline{\mathbb{D}})$ onto $\widehat{\mathbb{C}}\setminus\overline{\mathbb{D}}$).

We now prove injectivity of $f_0\vert_{\mathbb{T}}$. To this end, let $z,w\in\mathbb{T}$ and note that:
\begin{align*}
f_0(z)=f_0(w) \implies \displaystyle\sum_{j=0}^{d-1} z^{d-1-j}w^j=-dz^dw^d\implies \left\vert \displaystyle \sum_{j=0}^{d-1}z^{d-1-j}w^j\right\vert=d.
\end{align*}
By the triangle inequality and the fact that $z,w\in\mathbb{T}$, we now conclude that all the complex numbers $z^{d-1-j}w^j$ (for $j\in\{0, \cdots, d-1\}$) have the same argument. But this implies that $z=w$, so that $f_0$ is injective on $\mathbb{T}$, and hence $f_0(\mathbb{T})$ is a Jordan curve.
\end{proof}

The Jordan curve $f_0(\mathbb{T})$ is a so-called hypocycloid curve (see Figure~\ref{fig:unit_disc}). Since $f_0$ commutes with multiplication by the $(d+1)-$st roots of unity, it follows that $\Omega_0:=f_0(\widehat{\C}\setminus\overline{\D})$ is symmetric under rotation by $\frac{2\pi}{(d+1)}$. Moreover, the $d+1$ simple critical points of $f_0$ on $\mathbb{T}$ produce $d+1$ cusps  of the type $(3,2)$ on the boundary $\partial\Omega_0$. 

\subsection{Dynamical Partition for Schwarz Reflections Arising from $\Sigma_d^*$}\label{Dynamical_Partition}

We will now study some basic dynamical properties of Schwarz reflection maps associated with an arbitrary $f\in\Sigma_d^*$. Recall that for such an $f$, the domain $\Omega:=f(\widehat{\C}\setminus\overline{\D})$ is an unbounded quadrature domain with associated Schwarz reflection map \[\sigma=f\circ\eta\circ\left(f\vert_{\widehat{\C}\setminus\overline{\D}}\right)^{-1}.\] 

The map $\sigma$ has a $d$-fold pole at $\infty$, and no other critical point in $\Omega$. It also follows from Proposition~\ref{s.c.q.d.} that $\sigma:\sigma^{-1}(\Omega)\to\Omega$ is a proper branched covering map of degree $d$ (branched only at $\infty$), and $\sigma:\sigma^{-1}(\interior{\Omega^c})\to\interior{\Omega^c}$ is a degree $d+1$ covering map.

As in Section~\ref{preliminaries}, we define $T=\widehat{\mathbb{C}}\setminus \Omega$, $T^0=T\setminus\{$The singular points on $\partial T\}$, and $$T^\infty(\sigma):=\bigcup_{n\geq0} \sigma^{-n}(T^0).$$ We will call $T^\infty(\sigma)$ the \emph{tiling set} of $\sigma$. For any $n\geq0$, the connected components of $\sigma^{-n}(T^0)$ are called \emph{tiles} of rank $n$. Note that two distinct tiles have disjoint interior. Let us denote the union of the tiles of rank $\leq k$ by $E^k(\sigma)$.

Observe furthermore that $\infty$ is a super-attracting fixed point of $\sigma$: more precisely, $\infty$ is a fixed critical point of $\sigma$ of multiplicity $d-1$. We denote the basin of attraction of $\infty$ by $\mathcal{B}_\infty(\sigma)$. Clearly, $$\mathcal{B}_\infty(\sigma)\subset \widehat\C\setminus\overline{T^\infty(\sigma)}.$$

The next proposition discusses some basic topological properties of the tiling set, and the basin of infinity of $\sigma$.

\begin{prop}\label{basin_topology}
1) The tiling set $T^\infty(\sigma)$ is open. Its closure $\overline{T^\infty(\sigma)}$ is a compact, connected set.

2) The basin of infinity $\mathcal{B}_\infty(\sigma)$ is a simply connected, completely invariant domain.
\end{prop}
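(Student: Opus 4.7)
The plan is to prove (1) and (2) in sequence. For the openness of $T^\infty(\sigma)$ in (1), I would exploit that $\sigma|_{\partial\Omega}=\textrm{id}$ and $\sigma$ extends as an anti-holomorphic Schwarz reflection across $\partial\Omega$ near each regular boundary point. Concretely, at $p\in\partial\Omega\setminus\{\textrm{sing. pts}\}$, a sufficiently small disc $U$ decomposes as $(U\cap T)\cup(U\cap\partial\Omega)\cup(U\cap\Omega)$; the first piece lies in $T^0$, while the local reflection sends the third piece into the first, so $U\cap\Omega\subset\sigma^{-1}(T^0)$. Hence $p$ has a full neighborhood in $T^0\cup\sigma^{-1}(T^0)\subset T^\infty(\sigma)$. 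For a general $z\in\sigma^{-n}(T^0)$, since $\sigma(\infty)=\infty\notin T^0$ forces $\infty\notin T^\infty(\sigma)$, the forward orbit $z,\sigma(z),\ldots,\sigma^{n-1}(z)$ avoids the unique critical point of $\sigma$, so $\sigma^n$ is a local homeomorphism at $z$. Pulling back the already-open neighborhood of $\sigma^n(z)\in T^0$ contained in $T^\infty(\sigma)$ (using $\sigma^{-1}(T^\infty(\sigma))\subset T^\infty(\sigma)$) yields a neighborhood of $z$ in $T^\infty(\sigma)$.

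For the connectedness of $\overline{T^\infty(\sigma)}$, compactness is automatic inside $\widehat{\C}$. I would begin from the observation that the droplet $T=\overline{T^0}=\widehat{\C}\setminus\Omega$ is connected, since $\Omega$ is simply connected in $\widehat{\C}$, and then inductively show that $\overline{E^n(\sigma)}$ is connected for every $n\geq 0$. In the inductive step, each rank-$(n+1)$ tile $R$ lies in $\overline\Omega$ with $\sigma(R)$ contained in a rank-$n$ tile $R'$; the common boundary between $R'$ and the rank-$\leq n-1$ structure contains a regular arc of $\sigma^{-(n-1)}(\partial\Omega\setminus\{\textrm{sing. pts}\})$, and the $\sigma$-preimage of this arc inside $\overline R$ is a non-trivial arc shared by $\overline R$ and $\overline{R'}$ via the local-reflection picture pulled back by $\sigma^n$ (which is a local homeomorphism away from $\infty$). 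Adjoining $R$ thus preserves the connectedness of $\overline{E^n(\sigma)}$. Since $\overline{T^\infty(\sigma)}$ coincides with the closure of the nested union $\bigcup_n \overline{E^n(\sigma)}$ of connected sets, it is connected.

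For (2), complete invariance $\sigma^{-1}(\mathcal B_\infty(\sigma))=\mathcal B_\infty(\sigma)$ is immediate from the definition: membership in $\mathcal B_\infty(\sigma)$ (existence of all forward iterates and convergence to $\infty$) is preserved under both $z\mapsto\sigma(z)$ and taking $\sigma$-preimages, and openness is standard for a super-attracting fixed point. The essential content is simple connectedness, which I would establish via a global B{\"o}ttcher coordinate. Since $\infty$ is a super-attracting fixed point of $\sigma$ of local degree $d$, the second iterate $\sigma^{\circ 2}$ is holomorphic near $\infty$ with $\sigma^{\circ 2}(z)\sim c\,z^{d^2}$, and B{\"o}ttcher's theorem yields a conformal map $\phi$ from a neighborhood of $\infty$ in $\mathcal B_\infty(\sigma)$ onto a round disc, conjugating $\sigma^{\circ 2}$ to $w\mapsto w^{d^2}$. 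I would then extend $\phi$ to all of $\mathcal B_\infty(\sigma)$ by the standard pullback construction. The extension encounters no obstruction because the only critical point of $\sigma^{\circ 2}$ in $\mathcal B_\infty(\sigma)$ is $\infty$ itself: the critical set of $\sigma$ in $\overline\Omega$ equals $\{\infty\}$ (from the discussion preceding the proposition), and $\sigma^{-1}(\infty)=\{\infty\}$ because $\sigma\colon\sigma^{-1}(\Omega)\to\Omega$ is a degree-$d$ branched cover totally ramified over $\infty$. The resulting conformal isomorphism $\mathcal B_\infty(\sigma)\cong\mathbb D$ gives simple connectedness.

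I expect the main obstacle to be the connectedness portion of (1), specifically verifying that each rank-$(n+1)$ tile genuinely attaches to the already-connected $\overline{E^n(\sigma)}$ along a non-trivial arc rather than being glued only at isolated preimages of singular points. This reduces to pulling the local arc-adjacency picture near $\partial\Omega$ back through $\sigma^{\circ n}$, and the key point is that $\sigma$ is unramified on $\overline\Omega\setminus\{\infty\}$ and $\infty$ never lies in $T^\infty(\sigma)$, so the non-trivial arcs separating adjacent tiles are preserved homeomorphically under all such pullbacks.
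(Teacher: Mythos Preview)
Your proposal is correct and follows essentially the same approach as the paper: for (1) you use the local reflection picture to get openness and the nested connected sets $\overline{E^n(\sigma)}$ to get connectedness, exactly as the paper does (though you supply more detail on why each new tile attaches along an arc); for (2) your B{\"o}ttcher-extension argument is equivalent to the paper's Riemann--Hurwitz argument on the pullbacks $\sigma^{-k}(U)$, both resting on the two key facts that $\infty$ is the unique critical point of $\sigma$ and $\sigma^{-1}(\infty)=\{\infty\}$. One small point of ordering: the paper first proves $\mathcal{B}_\infty(\sigma)$ is \emph{connected} (using $\sigma^{-1}(\infty)=\{\infty\}$ to rule out components other than the immediate basin) and only then proves simple connectedness, whereas your B{\"o}ttcher pullback a priori extends only over the immediate basin --- you have the needed ingredient $\sigma^{-1}(\infty)=\{\infty\}$ in hand, but you should invoke it explicitly to conclude connectedness before asserting the extension reaches all of $\mathcal{B}_\infty(\sigma)$.
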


\begin{figure}
\centering
\scalebox{.075}{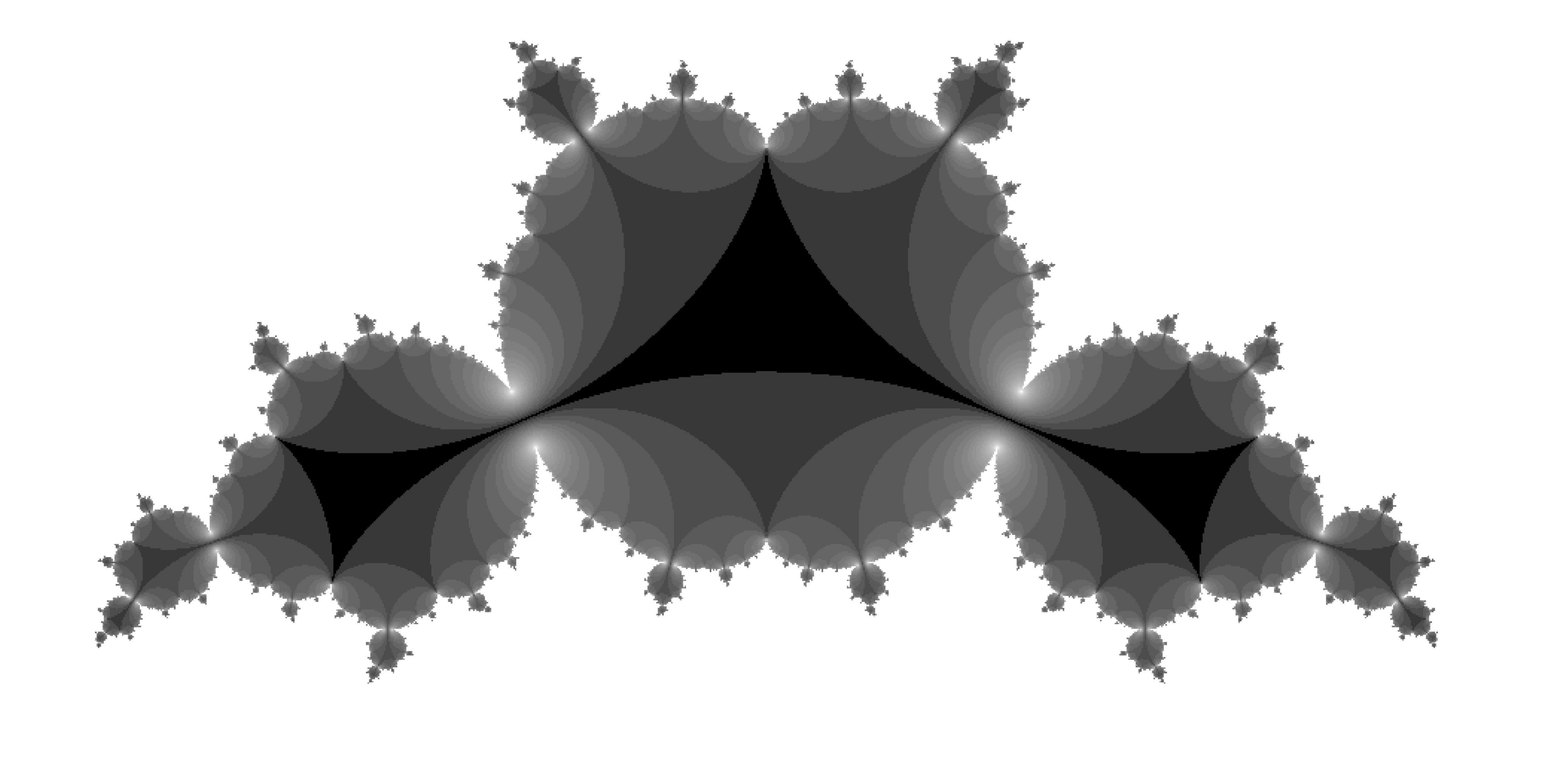}
\caption{ Pictured is the dynamical plane of the Schwarz reflection map arising from a Suffridge polynomial in $\Sigma_4^*$. The numbers denote the ranks of various tiles. }
\label{tiles_various_ranks_fig}
\end{figure}

\begin{proof}
1) If $z\in T^\infty(\sigma)$ belongs to the interior of a tile, then it clearly belongs to $\interior{T^\infty(\sigma)}$. On the other hand, if $z\in T^\infty(\sigma)$ belongs to the boundary of a tile of rank $k$ (here the boundary is taken in the relative topology of $T^\infty(\sigma)$), then $z$ lies in $\interior{E^{k+1}(\sigma)}\subset\interior{T^\infty(\sigma)}$ (see Figure~\ref{tiles_various_ranks_fig}). Hence, $T^\infty(\sigma)$ is open.

To see that $\overline{T^\infty(\sigma)}$ is connected, first note that $$\overline{T^\infty(\sigma)}=\overline{\bigcup_{k\geq0}\overline{E^k(\sigma)}}.$$ Now, $\overline{E^k(\sigma)}$ is a connected set for each $k\geq0$, and hence so is their increasing union $\bigcup_{k\geq0}\overline{E^k(\sigma)}$ (see Figure~\ref{tiles_various_ranks_fig}). Since the closure of a connected set is connected, we conclude that $\overline{T^\infty(\sigma)}$ is connected.

2) Since $\mathcal{B}_\infty(\sigma)$ is the basin of attraction of a super-attracting fixed point, it is necessarily open and completely invariant. 

Let $\mathcal{B}^\textrm{imm}_\infty(\sigma)$ be the connected component of $\mathcal{B}_\infty(\sigma)$ containing $\infty$ (this is usually called the immediate basin of attraction, which justifies the superscript `imm'). If $\mathcal{B}_\infty(\sigma)\setminus \mathcal{B}^\textrm{imm}_\infty(\sigma)\neq\emptyset$, then every connected component of $\mathcal{B}_\infty(\sigma)\setminus \mathcal{B}^\textrm{imm}_\infty(\sigma)$ would eventually map onto $\mathcal{B}^\textrm{imm}_\infty(\sigma)$ under some iterate of $\sigma$. Therefore, every connected component of $\mathcal{B}_\infty(\sigma)\setminus \mathcal{B}^\textrm{imm}_\infty(\sigma)$ must contain an iterated pre-image of $\infty$. But $\sigma^{-1}(\infty)=\{\infty\}$. Hence $\mathcal{B}^\textrm{imm}_\infty(\sigma)$ must be equal to $\mathcal{B}_\infty(\sigma)$. This shows that $\mathcal{B}_\infty(\sigma)$ is connected and completely invariant.

It remains to prove simple connectivity of $\mathcal{B}_\infty(\sigma)$. Let $U\subset \mathcal{B}_\infty(\sigma)$ be a small neighborhood of $\infty$. Clearly, $\mathcal{B}_\infty(\sigma)$ is the increasing union of the domains $\left\{\sigma^{-k}(U)\right\}_{k\geq0}$. Since $\infty$ is the only critical point of $\sigma$, it follows from the Riemann-Hurwitz formula that each $\sigma^{-k}(U)$ is simply connected. Thus, $\mathcal{B}_\infty(\sigma)$ is an increasing union of simply connected domains, and hence itself is such.
\end{proof}

We will now prove a proposition that will allow us to talk about quasiconformal deformations of the Schwarz reflection maps associated with quadrature domains arising from $\Sigma_d^*$. 

\begin{prop}\label{qc_def_prop}
Let $g\in\Sigma_d^*$, $\Omega:=g(\widehat{\C}\setminus\overline{\D})$, and $\sigma$ the Schwarz reflection map of $\Omega$. Further, let $\mu$ be a $\sigma$-invariant Beltrami coefficient on $\widehat{\C}$, and $\mathbf{\Phi}:(\widehat{\mathbb{C}},\infty)\rightarrow(\widehat{\mathbb{C}},\infty)$ be any quasiconformal map satisfying $\mathbf{\Phi}_{\overline{z}}/\mathbf{\Phi}_{z}=\mu$ a.e.. Then $\mathbf{\Phi}(\Omega)$ is a simply connected unbounded quadrature domain. There exists a normalization for $\mathbf{\Phi}$ (specified in the proof) with which we have $\mathbf{\Phi}(\Omega)=h(\widehat{\C}\setminus\overline{\D})$ for some $h\in\Sigma_d^*$, and $\mathbf{\Phi}\circ\sigma\circ\mathbf{\Phi}^{-1}$ is the Schwarz reflection map of $\mathbf{\Phi}(\Omega)$.
\end{prop}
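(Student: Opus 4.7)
The plan is to apply the Measurable Riemann Mapping Theorem and track how the Schwarz reflection transfers under quasiconformal conjugation, using the $\sigma$-invariance of $\mu$ to ensure the conjugate map is anti-meromorphic, and finally to use the affine freedom in $\mathbf{\Phi}$ to match the normalization of $\Sigma_d^*$.

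First, since $\mathbf{\Phi}$ is a homeomorphism of $\widehat{\C}$ fixing $\infty$, the set $\mathbf{\Phi}(\Omega)$ is open, simply connected and unbounded with $\infty$ in its interior. Define $F := \mathbf{\Phi}\circ\sigma\circ\mathbf{\Phi}^{-1}$ on $\mathbf{\Phi}(\overline{\Omega})$. Because $\sigma$ extends continuously to the identity on $\partial\Omega$, $F$ extends continuously to the identity on $\partial\mathbf{\Phi}(\Omega)$. The heart of the matter is to show $F$ is anti-meromorphic on $\mathbf{\Phi}(\Omega)$: once this is established, $z\mapsto\overline{F(z)}$ is a Schwarz function for $\mathbf{\Phi}(\Omega)$, proving it is a simply connected unbounded quadrature domain with Schwarz reflection $F$.

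To obtain anti-meromorphicity of $F$, I would argue as follows. Off a discrete exceptional set (the singular boundary points of $\Omega$ and the critical point of $\sigma$ at $\infty$), $\sigma$ is locally anti-holomorphic, so it preserves conformal classes of infinitesimal ellipses with orientation reversed. The $\sigma$-invariance of $\mu$ is, by definition, exactly the statement that the measurable ellipse field determined by $\mu$ is carried into itself by $\sigma$. Since $\mathbf{\Phi}$ straightens the $\mu$-ellipse field to the round field, the composition $F=\mathbf{\Phi}\circ\sigma\circ\mathbf{\Phi}^{-1}$ carries the round field into itself with reversed orientation; equivalently, a direct chain-rule computation yields $F_{z}=0$ almost everywhere on the regular set. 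Thus $F$ is orientation-reversing $1$-quasiconformal off a discrete set, and Weyl's lemma (applied to $\overline{F}$) together with a removable singularity argument at $\infty$ yields anti-meromorphicity of $F$ on all of $\mathbf{\Phi}(\Omega)$.

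Finally, for the $\Sigma_d^*$ normalization, Proposition~\ref{s.c.q.d.} applied to $\mathbf{\Phi}(\Omega)$ gives a Riemann map $\widetilde h:\widehat{\C}\setminus\overline{\D}\to\mathbf{\Phi}(\Omega)$ (with $\widetilde h(\infty)=\infty$) that extends to a rational map on $\widehat{\C}$. Since $F$ is topologically conjugate to $\sigma$ via $\mathbf{\Phi}$, it has the same mapping degree $d$, so $\mathbf{\Phi}(\Omega)$ is of order $d$ and $\deg\widetilde h=d+1$; moreover $F$ inherits from $\sigma$ the property that $\infty$ is its only pole, forcing $\widetilde h(0)=\infty$ with a critical point of multiplicity $d-1$ at $0$ and all remaining critical points of $\widetilde h$ on $\mathbb{T}$. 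Replacing $\mathbf{\Phi}$ by $A\circ\mathbf{\Phi}$ for a suitable affine $A$ (which leaves the Beltrami coefficient $\mu$ of $\mathbf{\Phi}$ unchanged, since $A$ is holomorphic) and following verbatim the two-step affine normalization in the surjectivity portion of Proposition~\ref{suffridge_extremal_qd_equiv_prop}, we can arrange $\widetilde h(z)=z+a_1/z+\cdots+a_d/z^d$ with $a_d=-1/d$; the resulting map $h:=\widetilde h$ lies in $\Sigma_d^*$ as required.

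The main obstacle is the ellipse-field / Beltrami computation showing $F$ is anti-holomorphic; this is conceptually clean but requires some care in the chain-rule bookkeeping because $\sigma$ is anti-holomorphic rather than holomorphic, so the invariance $\sigma^{\ast}\mu=\mu$ must be interpreted with the appropriate complex conjugations. Everything else — extension across singular points, identification of the Schwarz function, and the affine normalization — is essentially a repackaging of arguments already established in Propositions~\ref{s.c.q.d.} and~\ref{suffridge_extremal_qd_equiv_prop}.
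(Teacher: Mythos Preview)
Your proposal is correct and follows essentially the same route as the paper: establish that $\mathbf{\Phi}\circ\sigma\circ\mathbf{\Phi}^{-1}$ is anti-meromorphic via $\sigma$-invariance of $\mu$, then invoke Proposition~\ref{s.c.q.d.} and carry out the affine normalization. The paper is terser on the anti-meromorphicity step (simply asserting it from invariance) and writes out the two-step affine normalization explicitly rather than citing Proposition~\ref{suffridge_extremal_qd_equiv_prop}; conversely, the paper justifies the location of the non-zero critical points of $\widetilde h$ on $\mathbb{T}$ by noting that quasiconformal maps preserve cusps, a point you leave implicit.
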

\begin{proof}
Let $\mathbf{\Phi}_2$ be a quasiconformal map integrating $\mu$ satisfying $\mathbf{\Phi}_2(\infty)=\infty$. To prove that $\widehat{\Omega}:=\mathbf{\Phi}_2(\Omega)$ is an unbounded quadrature domain, it suffices to show that 
\begin{equation*} \widehat{\sigma}:=\mathbf{\Phi}_2 \circ \sigma \circ \mathbf{\Phi}_2^{-1} : \overline{\widehat{\Omega}} \rightarrow \widehat{\mathbb{C}} \end{equation*}
\noindent defines a Schwarz reflection map for the domain $\widehat{\Omega}$. Indeed, $\widehat{\sigma}$ is anti-meromorphic (on $\widehat{\Omega}$) by invariance of $\mu$ under the action of $\sigma$, and that $\widehat{\sigma}$ is the identity on the boundary of $\widehat{\Omega}$ follows since $\sigma|_{\partial \Omega}=\textrm{id}$. Since $\mathbf{\Phi}_2(\infty)=\infty$, we see that $\widehat{\Omega}$ is a simply connected unbounded quadrature domain.

Since $g\in\Sigma_d^*$, it follows that $\sigma=g\circ\eta\circ (g\vert_{\widehat{\C}\setminus\overline{\D}})^{-1}$ has a pole of order $d$ at $\infty$. As $\mathbf{\Phi}_2$ fixes $\infty$, the Schwarz reflection map $\widehat{\sigma}$ of $\widehat{\Omega}$ also has a pole of order $d$ at $\infty$.

The fact that $\Omega$ has order $d$, combined with Proposition~\ref{s.c.q.d.}, implies that $\widehat{\sigma}:\widehat{\sigma}^{-1}(\widehat{\Omega})\to\widehat{\Omega}$ is a branched cover of degree $d$. The same proposition now provides us with a rational map $h_2$ of degree $d+1$ such that $h_2(\widehat{\C}\setminus\overline{\D})=\widehat{\Omega}$, and $h_2$ is conformal on $\widehat{\C}\setminus\overline{\D}$. We may normalize $h_2$ so that $h_2(\infty)=\infty$. Since the Schwarz reflection map $\widehat{\sigma}$ of $\widehat{\Omega}$ has a pole of order $d$ at $\infty$, it follows from the commutative diagram in Figure~\ref{comm_diag_schwarz} that $h_2$ has a pole of order $d$ at the origin. Hence, $h_2$ is of the form 
\[h_2(z)=c_1z+c_0+\frac{c_{1}}{z}+\cdots+\frac{c_{d}}{z^d}. \] 
We now define an affine map $A: w\mapsto (w-c_0)/c_1$, and set $\mathbf{\Phi}_1:=A\circ\mathbf{\Phi}_2$. Then, $h_1:=A\circ h_2$ uniformizes the quadrature domain $\mathbf{\Phi}_1(\Omega)=A(\widehat{\Omega})$, and we have
\[h_1(z)=z+\frac{b_{1}}{z}+\cdots+\frac{b_{d}}{z^d}. \] 
Now recall that by Proposition~\ref{crit_points_on_circle}, the map $g$ has $d+1$ distinct critical points on $\mathbb{T}$, and hence $\partial\Omega$ has $d+1$ cusps. Since quasiconformal maps preserve cusps, $h_1$ must have $d+1$ critical points on $\partial\mathbb{D}$. The only remaining critical point is of multiplicity $d-1$ at the origin. Thus by considering the equation $h_1'(z)=0$, we see that since each non-zero critical point of $h_1$ has modulus $1$, it must be the case that $|b_{d}|=1/d$. Finally, we define a rotation $R: w\mapsto \lambda w$ (where $\lambda^{d+1}=-b_{d}/\vert b_{d}\vert$), and set $\mathbf{\Phi}:=R\circ\mathbf{\Phi}_1$, $h:=R\circ h_1\circ R^{-1}$. Then, $h(\widehat{\C}\setminus\overline{\D})=\mathbf{\Phi}(\Omega)$, where 
\[h(z)=z+\frac{a_{1}}{z}+\cdots-\frac{1}{d\cdot z^d}. \]
Hence, $h\in\Sigma_d^*$, and $\mathbf{\Phi}\circ\sigma\circ\mathbf{\Phi}^{-1}$ is the Schwarz reflection map of $\mathbf{\Phi}(\Omega)$.
\end{proof}

We will now complete the dynamical description of the Schwarz reflection map $\sigma$ by showing that the forward orbit of all points outside the closure of the tiling set of $\sigma$ converges to $\infty$.

\begin{prop}\label{dynamical_partition_schwarz} 
$\widehat{\C}=\overline{T^\infty(\sigma)}\sqcup\mathcal{B}_\infty(\sigma)$.
\end{prop}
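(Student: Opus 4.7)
The plan is to prove both inclusions. The containment $\mathcal{B}_\infty(\sigma) \subseteq \widehat{\C}\setminus \overline{T^\infty(\sigma)}$ is essentially immediate: if $z\in T^\infty(\sigma)$, then $\sigma^n(z)\in T^0\subset T$ for some $n\geq 0$, so the forward orbit of $z$ leaves $\overline{\Omega}$ and cannot converge to $\infty$. This gives $\mathcal{B}_\infty\cap T^\infty=\emptyset$, which by openness of $\mathcal{B}_\infty$ upgrades to $\mathcal{B}_\infty\cap\overline{T^\infty}=\emptyset$.

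For the reverse inclusion, I would set $W := \widehat{\C}\setminus\overline{T^\infty(\sigma)}$. Since $T\subset \overline{T^\infty}$, one has $W\subset\Omega$, so $\sigma$ is defined on $W$. First I would verify that $W$ is forward invariant under $\sigma$. For $z\in W$, the case $\sigma(z)\in T^\infty$ is ruled out by the backward invariance $\sigma^{-1}(T^\infty)\subseteq T^\infty$; and $\sigma(z)\in\partial T^\infty$ is ruled out using that $\sigma$'s only critical point is the super-attracting fixed point $\infty$ (where $\sigma$ is still open in its B{\"o}ttcher coordinate), so $\sigma$ is open at $z$ and sends a neighborhood of $z$ disjoint from $\overline{T^\infty}$ to a neighborhood of $\sigma(z)$ disjoint from $T^\infty$, contradicting $\sigma(z)\in\partial T^\infty$.

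Next, because every iterate $\sigma^n|_W$ avoids the non-empty open set $\interior(T_1)$ for any fundamental tile $T_1$, Montel's theorem applied to the meromorphic family $\{\sigma^{\circ 2n}|_W\}$ (and hence, via $\sigma^{\circ(2n+1)} = \sigma \circ \sigma^{\circ 2n}$, to the full family) shows that $\{\sigma^n|_W\}$ is normal. Let $V_\infty$ be the component of $W$ containing $\infty$. Any locally uniform subsequential limit $g = \lim_k \sigma^{\circ 2n_k}$ on $V_\infty$ satisfies $g(\infty)=\infty$, and since a neighborhood of $\infty$ is absorbing under $\sigma$, $g$ is identically $\infty$ on that neighborhood; by the identity principle, $g\equiv\infty$ on all of $V_\infty$. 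Thus for every $z\in V_\infty$ we have $\sigma^{\circ 2n_k}(z)\to\infty$, so eventually the forward orbit enters the absorbing neighborhood and converges to $\infty$. Hence $V_\infty \subseteq \mathcal{B}_\infty$, and by Proposition~\ref{basin_topology}(2) we obtain $V_\infty = \mathcal{B}_\infty$.

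The main obstacle will be to rule out any further component $V\neq V_\infty$ of $W$. Such a $V$ would be a bounded open subset of $\Omega\setminus\{\infty\}$ in which $\sigma$-orbits never escape to $\infty$; by normality, any locally uniform limit of $\sigma^{\circ 2n_k}|_V$ would be a meromorphic function with finite values, corresponding to a periodic attracting, parabolic, or irrationally neutral cycle for $\sigma^{\circ 2}$. I would invoke a Fatou-type classification argument for $\sigma^{\circ 2}$: the basin of any such cycle must absorb a critical orbit, but the only critical point of $\sigma$ is $\infty\in V_\infty$, not in $V$. Hence no such $V$ exists, yielding $W = V_\infty = \mathcal{B}_\infty$ and the desired partition $\widehat{\C} = \overline{T^\infty(\sigma)} \sqcup \mathcal{B}_\infty(\sigma)$.
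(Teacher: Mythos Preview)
Your argument handles the component $V_\infty$ containing $\infty$ correctly, and your use of Montel to obtain normality on $W$ is fine. The gap is in the last paragraph, where you rule out a bounded component $V\neq V_\infty$.

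You assert that any subsequential limit of $\sigma^{\circ 2n_k}|_V$ ``corresponds to a periodic attracting, parabolic, or irrationally neutral cycle for $\sigma^{\circ 2}$,'' and then invoke the standard fact that such a basin must contain a critical orbit. But this classification only applies to \emph{eventually periodic} components. Nothing you have written excludes the possibility that $V$ is a \emph{wandering} component of $W$: in that case subsequential limits are still constants, but the constant value lies on $\partial T^\infty(\sigma)$ and need not be a periodic point at all, so the ``basin must contain a critical orbit'' argument does not engage. For rational maps this possibility is excluded by Sullivan's theorem, but $\sigma$ (and $\sigma^{\circ 2}$) is not a rational map of $\widehat{\C}$; it is only defined on $\overline{\Omega}$, so the classical no-wandering theorem does not apply directly.

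This is exactly the step the paper works hardest on. The paper's proof adapts Sullivan's quasiconformal deformation argument: a putative wandering component would support an infinite-dimensional family of $\sigma$-invariant Beltrami coefficients, and Proposition~\ref{qc_def_prop} guarantees that each such deformation stays inside the finite-dimensional family $\Sigma_d^*$, yielding a contradiction. Only after wandering components are excluded does the paper invoke the Fatou-type classification (via \cite[Propositions~6.25, 6.26]{LLMM1}, which adapt the classical argument to Schwarz reflections) together with the fact that the unique critical point $\infty$ is already accounted for in $\mathcal{B}_\infty(\sigma)$. Your outline would be complete if you supplied a no-wandering argument of this kind; as written, the final step is unjustified.
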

\begin{proof}
The classical proof of non-existence of wandering Fatou components for rational maps \cite[Theorem~1]{Sul} can be adapted for the current setting to show that each component of $\widehat{\C}\setminus\overline{T^\infty(\sigma)}$ is eventually periodic. Indeed, if $\sigma$ were to have a wandering Fatou component (since $\overline{T^\infty(\sigma)}$ is connected, such a component would necessarily be simply connected), then one could construct an infinite-dimensional space of nonequivalent quasiconformal deformations of $\sigma$ (supported on the grand orbit of the wandering component), all of which would be Schwarz reflection maps associated with quadrature domains of the form $h(\widehat{\C}\setminus\overline{\D})$, where $h\in\Sigma_d^*$ (by Proposition~\ref{qc_def_prop}). Evidently, this would contradict the fact that the parameter space of $\Sigma_d^*$ and hence the parameter space of the resulting Schwarz reflection maps is finite-dimensional.

One can now use the arguments of \cite[Propositions~6.25, 6.26]{LLMM1} combined with the fact that $\sigma$ has no critical point other than the super-attracting fixed point $\infty$ to rule out the existence of bounded complementary components of $\overline{T^\infty(\sigma)}$. More precisely, such arguments would show that each periodic component of $\widehat{\C}\setminus\overline{T^\infty(\sigma)}$ other than $\mathcal{B}_\infty(\sigma)$ is either the immediate basin of attraction of a (super-)attracting or parabolic cycle, or a Siegel disk. However, the closure of each such component must intersect the closure of the post-critical set of $\sigma$. This is clearly impossible as the only critical point of $\sigma$ is in fact fixed, and lies in $\mathcal{B}_\infty(\sigma)$.

Therefore, $\widehat\C\setminus\overline{T^\infty(\sigma)}\ni\infty$ is an invariant domain, and hence $\left\{\sigma^{\circ n}\right\}_n$ forms a normal family there. It follows that the sequence of functions $\left\{\sigma^{\circ n}\right\}_n$ converges locally uniformly to $\infty$ on $\widehat\C\setminus\overline{T^\infty(\sigma)}$; i.e., $\mathcal{B}_\infty(\sigma)=\widehat\C\setminus\overline{T^\infty(\sigma)}$.
\end{proof}

\begin{cor}\label{tiling_set_full}
$\overline{T^\infty(\sigma)}$ is a full continuum.
\end{cor}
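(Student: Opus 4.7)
The plan is to deduce this corollary essentially immediately from the two preceding results, namely Proposition~\ref{basin_topology} and Proposition~\ref{dynamical_partition_schwarz}. Recall that by definition, a full continuum in $\widehat{\C}$ is a compact, connected set whose complement in $\widehat{\C}$ is also connected (equivalently, a compact connected set in $\C$ with no bounded complementary components).

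First, Proposition~\ref{basin_topology}(1) already tells us that $\overline{T^\infty(\sigma)}$ is compact and connected, so two of the three required properties are in hand. It remains to verify that $\widehat{\C}\setminus\overline{T^\infty(\sigma)}$ is connected. For this, I would invoke the dynamical partition from Proposition~\ref{dynamical_partition_schwarz}, namely
\[
\widehat{\C}=\overline{T^\infty(\sigma)}\sqcup\mathcal{B}_\infty(\sigma),
\]
which identifies the complement of $\overline{T^\infty(\sigma)}$ as exactly the basin of attraction $\mathcal{B}_\infty(\sigma)$. Then Proposition~\ref{basin_topology}(2) asserts that $\mathcal{B}_\infty(\sigma)$ is simply connected and in particular connected, which finishes the proof.

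There is no real obstacle here; the corollary is purely a bookkeeping consequence packaging together the topological content of the two preceding propositions into the single statement that $\overline{T^\infty(\sigma)}$ has the topological type of a full continuum (no ``holes''). The substantive work has already been done: the connectedness of $\overline{T^\infty(\sigma)}$ rests on the fact that the tiles of rank $\leq k$ form an increasing nested family of connected sets, while the connectedness of the complement rests on the quasiconformal-deformation/no-wandering-domains argument used to establish the dichotomy $\widehat{\C}=\overline{T^\infty(\sigma)}\sqcup\mathcal{B}_\infty(\sigma)$.
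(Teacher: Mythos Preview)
Your proposal is correct and matches the paper's approach exactly: the paper states this corollary with no proof, treating it as immediate from Proposition~\ref{basin_topology} and Proposition~\ref{dynamical_partition_schwarz}, which is precisely the argument you spell out.
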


\begin{rem}
By Proposition~\ref{cusp_geometry} (respectively, Proposition~\ref{double_geometry}), the cusps (respectively, the double points) on $\partial\Omega$ are of the type $(3,2)$ (respectively, are intersection points of two distinct non-singular branches of $\partial\Omega$ with a contact of order $1$). Using this, one can apply the arguments of \cite[Proposition~6.17]{LLMM1} to show that at a cusp (respectively, at a double point) on $\partial\Omega$, the tiling set $T^\infty(\sigma)$ contains a circular sector with angle $2(\pi-\delta)$ (respectively, two circular sectors with angle $(\pi-\delta)$ each), for some $\delta>0$ small enough (see Figure~\ref{tiles_various_ranks_fig}). Moreover, the singular points on $\partial\Omega$ repel nearby points in $\widehat{\C}\setminus\overline{T^\infty(\sigma)}$ under iterates of $\sigma$. Since these facts will not be used in this paper, we skip the proofs here.
\end{rem}

Let us denote the boundary of $T^\infty(\sigma)$ by $\mathcal{L}(\sigma)$, and call it the \emph{limit set} of $\sigma$. We end this subsection with a result on the area of the limit set that will be useful later.

\begin{prop}\label{limit_schwarz_zero_area}
The limit set $\mathcal{L}(\sigma)$ has zero area.
\end{prop}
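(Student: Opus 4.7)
The plan is to argue by contradiction via quasiconformal deformations, mirroring the strategy used in the proof of Proposition~\ref{dynamical_partition_schwarz}. Suppose that $\mathcal{L}(\sigma)$ has positive Lebesgue area. The crucial starting observation is that the only critical point of $\sigma$ is the super-attracting fixed point at $\infty$, which lies in $\mathcal{B}_\infty(\sigma)$; hence $\sigma$ has no critical points in a neighborhood of $\mathcal{L}(\sigma)$. Moreover, by Proposition~\ref{dynamical_partition_schwarz} the set $\mathcal{L}(\sigma)$ is contained in $\overline{T^\infty(\sigma)}\setminus \mathcal{B}_\infty(\sigma)$ and is completely invariant under $\sigma$ (as the boundary of a completely invariant set).

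First I would use these facts to produce $\sigma$-invariant Beltrami coefficients supported on (the grand orbit of a subset of) $\mathcal{L}(\sigma)$. Concretely, one selects a measurable fundamental set $F$ for the action of $\sigma$ on a positive-area subset of $\mathcal{L}(\sigma)$ (possible because $\sigma$ is a local homeomorphism at every point of $\mathcal{L}(\sigma)$ outside the finite set of singular points on $\partial\Omega$), chooses an arbitrary $\mu_0\in L^\infty(F)$ with $\|\mu_0\|_\infty<1$, and spreads $\mu_0$ around the grand orbit equivariantly using the anti-holomorphic dynamics of $\sigma$. Extending by $0$ off the grand orbit yields a $\sigma$-invariant Beltrami coefficient $\mu$ on $\widehat{\C}$. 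Next I would apply Proposition~\ref{qc_def_prop}: integrating $\mu$ produces a quasiconformal map $\mathbf{\Phi}_\mu$ (suitably normalized) such that $\mathbf{\Phi}_\mu\circ\sigma\circ\mathbf{\Phi}_\mu^{-1}$ is the Schwarz reflection map of a quadrature domain arising from some $h_\mu\in\Sigma_d^*$.

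The heart of the contradiction is then a dimension count. Because the space of admissible $\mu_0\in L^\infty(F)$ is infinite-dimensional, one can choose a one-parameter family $\{\mu_t\}$ whose induced deformations $\{h_{\mu_t}\}$ are forced to be constant (since $\Sigma_d^*$ is real $(d-2)$-dimensional by Subsection~\ref{sigma_d_subsec}), yet the conjugating maps $\mathbf{\Phi}_{\mu_t}$ realize a nontrivial $1$-parameter family of quasiconformal self-conjugacies of $\sigma$. Unwinding this via the Bers embedding / quasiconformal deformation formalism, such a family would produce a nontrivial holomorphic vector field on $\mathcal{B}_\infty(\sigma)$ commuting with $\sigma$, which is impossible since $\sigma$ has a super-attracting fixed point at $\infty$ of local degree $d\geq 2$. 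This is the same mechanism invoked in the proof of Proposition~\ref{dynamical_partition_schwarz} to rule out wandering components; the only change is that the positive-measure set supporting the deformation is the limit set itself rather than a wandering Fatou component.

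The main obstacle is the careful bookkeeping in the second paragraph: one must verify that the action of $\sigma$ on the positive-measure set is sufficiently nice (a countable-to-one measurable map off a set of measure zero, away from the finitely many cusp/double-point fixed points) so that an $L^\infty$ fundamental domain and the equivariant spreading are actually well-defined, and that the resulting deformations are truly non-trivial in the deformation space of $\sigma$. Once that is in place, finite-dimensionality of $\Sigma_d^*$ closes the argument and forces $\mathcal{L}(\sigma)$ to have zero area.
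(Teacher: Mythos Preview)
Your approach has a genuine gap at the ``fundamental set'' step, and it is not merely bookkeeping. You justify the existence of a measurable fundamental domain $F$ for $\sigma$ acting on a positive-area part of $\mathcal{L}(\sigma)$ by noting that $\sigma$ is a local homeomorphism there. But local injectivity does not imply that the action is dissipative: the doubling map on the circle is a local homeomorphism, yet its action on Lebesgue measure is conservative and ergodic, so no measurable fundamental domain exists. The same phenomenon is expected here: because the only critical point of $\sigma$ is the super-attracting fixed point at $\infty$, the map $\sigma$ is expanding on $\mathcal{L}(\sigma)$, and expanding dynamics on a positive-measure invariant set is typically conservative and ergodic with respect to Lebesgue measure. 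In that case the space of $\sigma$-invariant Beltrami coefficients supported on $\mathcal{L}(\sigma)$ is \emph{not} infinite-dimensional (invariant functions are constants), so the Sullivan-style dimension count against $\dim_{\R}\Sigma_d^*=d-2$ does not go through. This is precisely why the no-wandering-domains argument you invoke works on \emph{wandering} Fatou components (where spreading is automatic and yields an infinite-dimensional family) but does not transplant to an invariant set like $\mathcal{L}(\sigma)$.

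The paper takes a completely different and much more direct route: a ``hyperbolic zoom'' density-point argument (as in \cite{Lyu83}, \cite[Expos{\'e}~V]{orsay}, and \cite[Corollary~7.3]{LLMM1}). Since $\mathcal{L}(\sigma)$ is nowhere dense and $\sigma$ has no critical points near it, one iterates $\sigma$ to blow up small neighborhoods of a putative Lebesgue density point to definite scale; the nowhere-density then forces the density at that point to be bounded away from $1$, so by Lebesgue's density theorem $\mathcal{L}(\sigma)$ has zero area. If you want to salvage a deformation-theoretic argument, you would still need to rule out invariant line fields on $\mathcal{L}(\sigma)$, and the standard way to do that for hyperbolic-type maps is exactly this zoom argument---so you end up needing the paper's method anyway.
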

\begin{proof}
The proof is similar to that of \cite[Corollary~7.3]{LLMM1}. Indeed, since $\mathcal{L}(\sigma)$ is nowhere dense, one can use a standard `hyperbolic zoom' argument (which first appeared in \cite{Lyu83}, \cite[Expos{\'e}~V]{orsay}) to conclude that no point on $\mathcal{L}(\sigma)$, except possibly the countably many points that eventually land on the singular points on $\partial T$, is a point of Lebesgue density for $\mathcal{L}(\sigma)$. It now follows from Lebesgue's density theorem that $\mathcal{L}(\sigma)$ has zero area.
\end{proof}

\section{Extremal Quadrature Domains from Bi-angled Trees}\label{mainthm_proof}

In this section we consider the problem of realizing extremal unbounded quadrature domains with prescribed bi-angled tree structure (see Definition~\ref{associated_tree}). In particular we prove the following:

\begin{thm}\label{mainthm_qd_terminology} 
Let $\mathcal{T}$ be a bi-angled tree. There exists an extremal unbounded quadrature domain $\Omega$ whose associated bi-angled tree is isomorphic to $\mathcal{T}$.
\end{thm}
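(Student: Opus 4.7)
The plan is to realize $\mathcal{T}$ by starting from the hypocycloid uniformizer $f_0(z) = z - \frac{1}{dz^d} \in \Sigma_d^*$ (Proposition~\ref{hypocycloid_quadrature}) and successively creating double points on the boundary of the associated droplet. Since $\mathcal{T}$ has $d-1$ vertices and hence $d-2$ edges, I will perform exactly $d-2$ pinching steps: the droplet $\mathbb{C} \setminus f_0(\widehat{\C} \setminus \overline{\D})$ has $d+1$ cusps and no double points, corresponding to a single fundamental tile and a one-vertex tree, and each pinching step collides two adjacent singularities to create one new double point, thereby splitting one fundamental tile into two and adding one vertex (and one edge) to the associated tree.

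The heart of the argument is a single pinching step: given $f \in \Sigma_d^*$ whose domain $\Omega$ has $k \leq d-3$ double points, and an open boundary arc $\alpha \subset \partial \Omega$ running between two singularities that one wishes to collide, I would produce $\tilde{f} \in \Sigma_d^*$ in which those two singularities have merged into a single new double point at the appropriate location. To do this, work inside the basin of infinity $\mathcal{B}_\infty(\sigma)$ of the Schwarz reflection $\sigma$ of $\Omega$; this is a simply connected, completely invariant domain (Proposition~\ref{basin_topology}) whose complement $\overline{T^\infty(\sigma)}$ has zero-area boundary (Proposition~\ref{limit_schwarz_zero_area}). Pick a topological quadrilateral $Q \subset \mathcal{B}_\infty(\sigma)$ straddling $\alpha$, with short transverse $a$-sides near the two singularities to be collided and long $b$-sides running parallel to $\alpha$ on either side. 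Spread a Beltrami coefficient $\mu_n$ supported on the entire grand orbit $\bigcup_k \sigma^{-k}(Q)$ whose effect on $Q$ is to stretch the $b$-direction by a factor $n \to \infty$; $\sigma$-invariance is automatic by construction. Proposition~\ref{qc_def_prop} then yields normalized integrating maps $\mathbf{\Phi}_n$ and corresponding $f_n \in \Sigma_d^*$ conjugating $\sigma$ to the Schwarz reflection of $f_n(\widehat{\C} \setminus \overline{\D})$.

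By compactness of $\Sigma_d^*$ (Remark~\ref{krein_milman_rem}) I extract a subsequential limit $\tilde{f} = \lim f_{n_j} \in \Sigma_d^*$. The upper modulus estimate in Theorem~\ref{modulus_estimate}, applied to $\mathbf{\Phi}_n(Q)$, forces the Euclidean distance between the images of the two $a$-sides to tend to zero, since the $b$-sides lie in a compact subset of $\widehat{\C}$ while $M(\mathbf{\Phi}_n(Q)) \to \infty$. This collision in the limit produces the sought new double point on $\partial \tilde{f}(\widehat{\C}\setminus\overline{\D})$, whereas the remaining cusps persist because by Proposition~\ref{crit_points_on_circle} every member of $\Sigma_d^*$ has exactly $d+1$ simple critical points on $\mathbb{T}$, so no cusps can be created or destroyed in the limit. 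The bi-angled angle data at each new vertex then follows automatically: Propositions~\ref{cusp_geometry} and~\ref{double_geometry} show that cusps are of type $(3,2)$ and double points consist of two smooth branches with a first-order tangential contact, so the local picture at each fundamental tile is a deltoid with interior angles in $\{2\pi/3, 4\pi/3\}$ in accordance with Remark~\ref{bi-angled_equiv_def}. Iterating the pinching step $d-2$ times, choosing at each stage the arc of the current droplet boundary dictated by the next edge of $\mathcal{T}$ (and, when the vertex is trivalent, the side of the corresponding tile consistent with the angle $2\pi/3$ vs.\ $4\pi/3$), produces an extremal unbounded quadrature domain whose bi-angled tree is isomorphic to $\mathcal{T}$.

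The main obstacle is controlling the limit as $\|\mu_n\|_\infty \to 1$: the quasiconformal maps $\mathbf{\Phi}_n$ themselves do not converge to a homeomorphism, so the limit domain cannot be extracted directly from the deformations. Instead one must work through the external parameter $f_n$, using compactness of $\Sigma_d^*$ to obtain a limit in the right class and then appealing to the quadrilateral modulus bound to prove that the targeted collision genuinely occurs (rather than degenerating into a mere tangency). A secondary difficulty is ensuring that no singularities other than the intended one are produced or destroyed under the deformation; this is handled by choosing $Q$ in a small neighborhood of $\alpha$, by $\sigma$-invariance of the saturation, and by zero-area of the limit set which makes the saturated Beltrami coefficient well-defined on $\widehat{\C}$ without ambiguity on the boundary of $T^\infty(\sigma)$.
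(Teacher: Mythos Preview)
Your high-level strategy---quasiconformally deform $f_0$, pass to a subsequential limit in the compact class $\Sigma_d^*$, and use modulus degeneration to force a new double point---is the paper's. But the implementation of the pinching step has two genuine gaps.

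The quadrilateral and the Beltrami coefficient live on the \emph{droplet}, not in $\mathcal{B}_\infty(\sigma)$. The paper takes $Q$ to be $\widehat{\C}\setminus f_0(\widehat{\C}\setminus\overline{\D})$ itself (in later steps, one component of $\interior{T}$), with vertices at four cusps $\zeta_j,\zeta_{j+1},\zeta_k,\zeta_{k+1}$, so that the $a$-sides are two \emph{non-adjacent} boundary arcs $f_0(I_j)$ and $f_0(I_k)$. One maps $Q$ conformally to a rectangle, pulls back the stretch $L_t$, and spreads to the tiling set $\bigcup_{n\geq 0}\sigma_0^{-n}(T^0)$. This is well-posed because $T^0$ lies outside the domain of $\sigma_0$: no consistency condition is imposed on $T^0$, and the rank-$n$ tiles are pairwise disjoint. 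Your construction puts $Q\subset\mathcal{B}_\infty\subset\Omega$ and spreads to $\bigcup_k\sigma^{-k}(Q)$, but then $\sigma$-invariance fails on $Q$ itself: for $z\in Q$ one needs $\mu(z)$ to agree with the pullback of $\mu$ from $\sigma(z)$, yet the forward orbit of $Q$ escapes to $\infty$ and eventually lies where you set $\mu\equiv 0$, forcing $\mu\vert_Q=0$. So Proposition~\ref{qc_def_prop} does not apply to your $\mu$. Relatedly, a double point is not produced by ``colliding two adjacent singularities'' along an arc $\alpha$---that would try to merge cusps, which Proposition~\ref{crit_points_on_circle} forbids---but by making two non-adjacent arcs of $f(\mathbb{T})$ touch tangentially across the droplet; this is what splits one tile into two.

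The second gap is the control of unintended double points. ``Choosing $Q$ small'' cannot help, since the straightening map $\phi_t$ is global. The paper argues case by case: Principle~\ref{constant_curvature_principle} (constant negative conformal curvature of $f(\mathbb{T})$) excludes self-intersection of any single arc and intersection of two adjacent arcs; Principle~\ref{constant_ratio_principle} (a modulus bound for sub-quadrilaterals of a stretched rectangle, via Theorem~\ref{modulus_estimate}) shows that for every non-adjacent pair $\{m,l\}\neq\{j,k\}$ the corresponding quadrilateral modulus stays bounded, so those arcs cannot collide. Together with the modulus blow-up for the pair $\{j,k\}$ (which contradicts Theorem~\ref{convergence_of_quadrilaterals} if the limit droplet were still Jordan), this pins down exactly one new double point at each step.
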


\begin{rem}\label{thm_equiv_1_rem} By Proposition~\ref{suffridge_extremal_qd_equiv_prop}, Theorem~\ref{mainthm_qd_terminology} is equivalent to surjectivity of the map (1)$\mapsto$(2) in the statement of Theorem~\ref{theorem_A}. \end{rem}

\noindent There are several principles we will use repeatedly in the proofs of Section~\ref{mainthm_proof}, and it will be convenient for us to formalize them here. We will refer to Figure \ref{fig:constant_ratio_principle} for Principle \ref{constant_ratio_principle}.

\begin{principle}\label{constant_curvature_principle} {\bf (Constant curvature principle)} If $f\in\Sigma_d^*$, the curve $f(\mathbb{T})$ has constant conformal curvature (except for cusps). In particular the sign of the usual curvature of $f(\mathbb{T})$ is constant (except for cusps).
\end{principle}

\begin{proof} See \cite[Corollary~2.8]{2014arXiv1411.3415L}.
\end{proof}

\begin{principle}\label{constant_ratio_principle} {\bf (Constant ratio principle)} Let $R=[0,a]\times[0,b]$ be a Euclidean rectangle. Let $z_1$, $z_2 \in \partial R$ be points lying on a vertical boundary segment of $\partial R$, and let $w_1$, $w_2 \in \partial R$ lie on a common linear segment of $\partial R$ such that the vertical segment connecting $z_1$, $z_2$ does not contain either of $w_1$, $w_2$. Assume $z_1, z_2, w_1, w_2$ have been labeled so that $(z_1, z_2, w_1, w_2)$ is oriented positively with respect to $R$. Define $L_t:\mathbb{C}\rightarrow\mathbb{C}$ by $L_t(x,y):=(x, y/t)$ for $t>1$, and consider the quadrilateral $Q:=R(z_1, z_2, w_1, w_2)$. Then the modulus of the quadrilateral $L_t(Q)$ is bounded away from $\infty$ independently of $t$. 
\end{principle}

\begin{proof} Let $s_a(t)$, $s_b(t)$, as in Theorem \ref{modulus_estimate}, denote the Euclidean path-distance between the a-sides, b-sides (respectively) of the quadrilateral $L_t(Q)$. Note that if $w_1$ lies on the vertical segment containing $z_1$, $z_2$, then $s_a(t)=\min\{ t|z_2-w_1|, t|z_1-w_2| \}$, $s_b(t)=\min\{ t|z_1-z_2|, t|w_1-w_2| \}$ and so $s_a(t)/s_b(t)$ is independent of $t$. Otherwise, $s_a(t) \not \rightarrow 0$ but $s_b(t)\rightarrow 0$ as $t\rightarrow\infty$. In either case, Theorem \ref{modulus_estimate} applied to $L_t(Q)$ yields the desired bound (and in fact in the second case, Theorem \ref{modulus_estimate} yields that $M(L_t(Q))\rightarrow0$ as $t\rightarrow\infty$). 

\begin{figure}
\centering
\scalebox{.5}{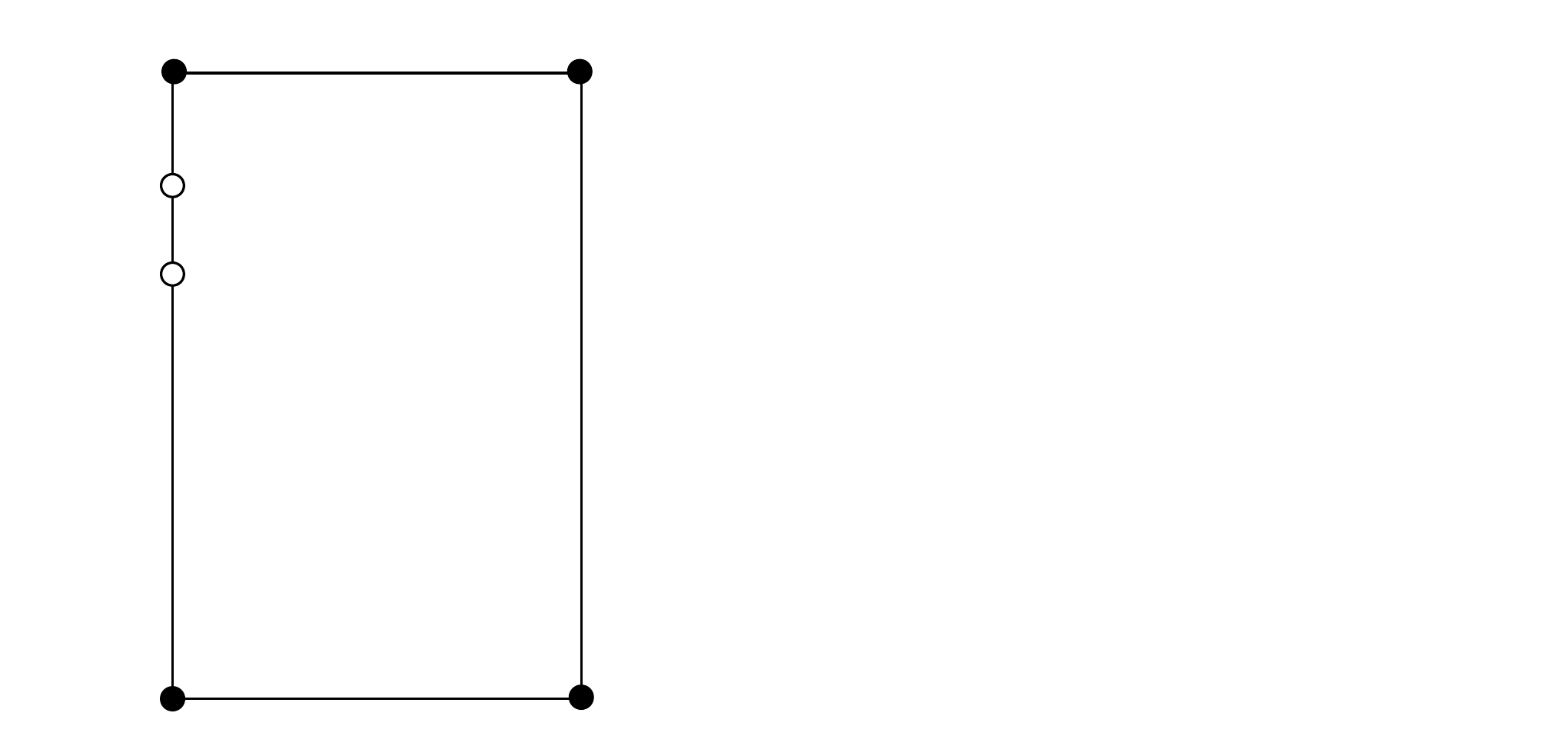}
\caption{ Illustrated is the Euclidean rectangle $R$ and the quadrilateral $Q=R(z_1, z_2, w_1, w_2)$ of Principle \ref{constant_ratio_principle}. The a-sides of the quadrilateral $Q$ have been labeled. }
\label{fig:constant_ratio_principle}      
\end{figure}
\end{proof}

\subsection{The Astroid Case}\label{astroid}

In this subsection, we prove Theorem~\ref{mainthm_qd_terminology} in the case of the astroid (corresponding to the class $\Sigma_3^*$). In fact an explicit formula may be given for the rational map corresponding to this extremal quadrature domain (see \cite[\S 2.2]{2014arXiv1411.3415L}), but our goal is rather to illustrate the main arguments used to prove Theorem~\ref{mainthm_qd_terminology} in this simplified setting. The argument is illustrated in Figure~\ref{fig:astroid}. We will use the following notation throughout this section: \[ f_0(z)=z-\frac{1}{3z^3}\textrm{, }\Omega_0=f_0(\widehat{\mathbb{C}}\setminus\overline{\D}),\ \sigma_0:=\ \textrm{Schwarz reflection map of}\ \Omega_0,\] \[Q:=\widehat{\mathbb{C}}\setminus\Omega_0, \textrm{and}\ Q^0:=Q\setminus\{\textrm{Singular points on}\ \partial Q\}. \] 

\begin{prop}\label{square} There exists a conformal map:

\[ \Psi: Q  \rightarrow [-1,1]\times[-1,1] \] 

\noindent mapping the critical values of $f_0$ to the four vertices of $[-1,1]\times[-1,1]$.

\end{prop}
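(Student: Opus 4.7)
The plan is to view $Q$ as a topological quadrilateral whose four vertices are the cusps of $f_0(\mathbb{T})$, uniformize it by a Euclidean rectangle, and then use the rotational symmetry of $f_0$ to upgrade the rectangle to a square.

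First I would note that by Proposition~\ref{hypocycloid_quadrature}, $\Omega_0 = f_0(\widehat{\mathbb{C}}\setminus\overline{\D})$ is an unbounded Jordan domain whose boundary is the astroid curve $f_0(\mathbb{T})$, so $Q$ is a closed Jordan domain; by Proposition~\ref{crit_points_on_circle} applied with $d=3$, there are exactly four cusps $c_1,c_2,c_3,c_4$ on $\partial Q$ (the critical values of $f_0$), which I label in counterclockwise order. Regarding $Q$ as a topological quadrilateral $Q(c_1,c_2,c_3,c_4)$, the standard uniformization of quadrilaterals (see Subsection~\ref{prelim_6}) produces a conformal map $\Psi_0 : \interior(Q) \to (0,a)\times(0,b)$ extending homeomorphically to the boundary and sending $(c_1,c_2,c_3,c_4)$ to the four corners of $[0,a]\times[0,b]$ in cyclic order.

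The key step is to show that $a = b$. A direct computation gives $f_0(iz) = i\,f_0(z)$, so multiplication by $i$ preserves both $\Omega_0$ and $Q$, and cyclically permutes the cusps: $M_i(c_j)=c_{j+1}$ with indices mod $4$. Hence $Q(c_1,c_2,c_3,c_4)$ is conformally equivalent (via $M_i$) to $Q(c_2,c_3,c_4,c_1)$, and since the modulus is a conformal invariant of quadrilaterals, these two have equal moduli. On the rectangle side, the modulus of $[0,a]\times[0,b]$ with cyclically shifted vertices is $b/a$ rather than $a/b$, so $a/b = b/a$, forcing $a=b$. (Equivalently, $\Psi_0\circ M_i\circ\Psi_0^{-1}$ is a conformal automorphism of the rectangle cyclically permuting its corners, which is possible only for a square.)

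Finally, composing $\Psi_0$ with the affine map $w\mapsto \frac{2w}{a}-(1+i)$ yields the desired conformal map $\Psi : Q \to [-1,1]\times[-1,1]$ sending the critical values of $f_0$ to the four vertices of the square. I do not anticipate any serious obstacle: the only mildly delicate point is the symmetry argument, and even that reduces to the bare fact that the modulus swaps from $a/b$ to $b/a$ under a cyclic relabeling of the vertices.
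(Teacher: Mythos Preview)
Your proposal is correct and follows essentially the same route as the paper: uniformize $Q$ as a topological quadrilateral by a rectangle, then use the four-fold rotational symmetry $f_0(iz)=if_0(z)$ (equivalently, the paper's pair of relations $f_0(\overline{z})=\overline{f_0(z)}$ and $f_0(i\overline{z})=i\overline{f_0(z)}$) to force $M(Q)=1/M(Q)$, hence $M(Q)=1$.
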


\begin{proof} Consider $Q$ as a topological quadrilateral with vertices at the four critical values of $f_0$, and horizontal a-sides and vertical b-sides. Since $f_0(\overline{z})=\overline{f_0(z)}$ and $f_0(i\overline{z})=i\overline{f_0(z)}$, the map $z\rightarrow iz$ maps $Q$ conformally to the quadrilateral $Q$ with reversed a-sides and b-sides, so that $M(Q)=1/M(Q)$, and hence $M(Q)=1$. Any topological quadrilateral can always be conformally mapped to some Euclidean rectangle such that vertices are preserved (see, for instance, \cite[\S I.2.4]{MR0344463}), and since $M(Q)=1$, the claim follows.  
\end{proof}

\begin{figure}
\centering
\scalebox{.36}{\hspace{-25mm}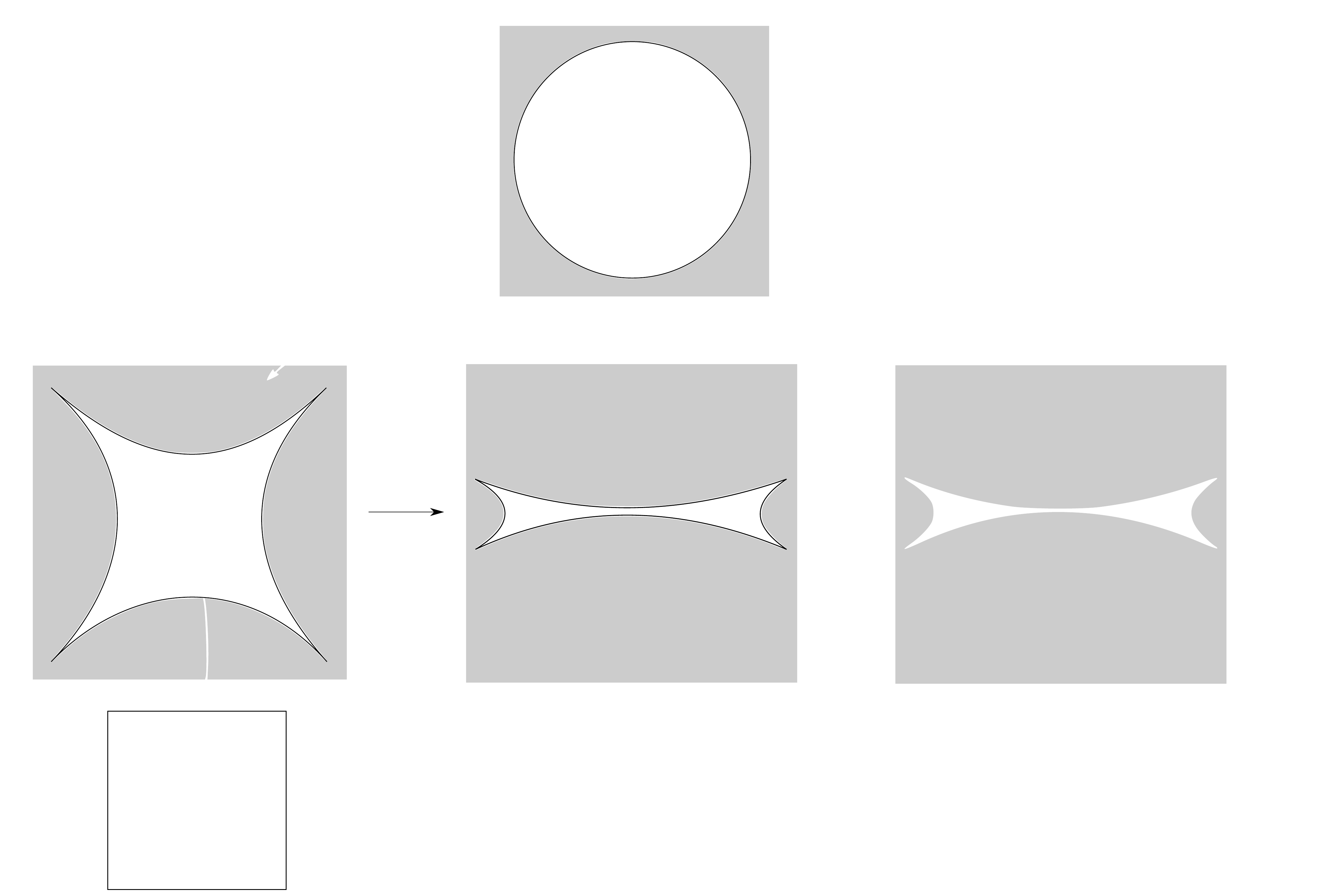}
\caption{ This Figure summarizes Section~\ref{astroid}. The image of $\widehat{\mathbb{C}}\setminus\overline{\mathbb{D}}$ under the map $f_0(z)= z-1/(3z^3)$ is a quadrature domain $\Omega_0$. $Q=\widehat{\mathbb{C}}\setminus\Omega_0$ is mapped conformally to $[-1,1]\times[-1,1]$ by $\Psi$. The map $L_t(x,y)=(x,\frac{y}{t})$ for $1<t<\infty$ determines a non-zero Beltrami coefficient on $[-1,1]\times[-1,1]$ which is pulled back under $\Psi$, spread by the Schwarz reflection map, and then straightened by a quasiconformal map $\phi_t$ which is conformal off of the tiling set. This determines a family of quadrature domains $f_t(\widehat{\mathbb{C}}\setminus\overline{\mathbb{D}})$ for $t\in[1,\infty)$. The desired extremal quadrature domain corresponds to a normal limit $f_\infty$ of the family $(f_t)_{t\nearrow+\infty}$.     }
\label{fig:astroid}      
\end{figure}

We now define a family of Beltrami coefficients $\mu_t\in L^{\infty}(\mathbb{C})$ depending on a real parameter $t\in[1,\infty)$ as follows. Let $L_t(x,y)=(x,y/t)$ for $1\leq t<\infty$, so that 
\[ \nu_t:=(L_t)_{\overline{z}}/(L_t)_{z}\equiv(t-1)/(t+1).\] 

\noindent For $z\in Q^0$, define $\mu_t(z)$ by pulling back $(L_t)_{\overline{z}}/(L_t)_{z}$ under $\Psi$; i.e., $\mu_t(z):=\Psi^*( \nu_t(\Psi(z)) )$. Next we pull back the Beltrami coefficient $\mu_t|_{Q^0}\in L^{\infty}(Q^0)$ under the Schwarz reflection map $\sigma_0$: 
\[ \mu_t(\sigma_0^{-n}(z) ):= (\sigma_0^{\circ n})^*( \mu_t(z) ) \textrm{ for } z\in Q^0 \textrm{ and } n\geq 1. \]
(See~\cite[Exercise~1.2.2]{BF14} for an explicit formula for pullbacks of Beltrami coefficients under anti-holomorphic maps.)

\noindent Lastly, we define:
\[ \mu_t(z)=0 \textrm{ for } z\not\in \cup_{n\geq 0} \sigma_0^{-n}(Q^0). \]

\noindent Since $\sigma_0$ is anti-holomorphic and $||\mu_t||_{L^{\infty}(Q^0)}=(t-1)/(t+1)$, it follows that $||\mu_t||_{L^{\infty}(\mathbb{C})}=(t-1)/(t+1)<1$.

\begin{prop}\label{normalizing_rational_maps} There exist a family of quasiconformal maps $(\phi_t)_{t\in[1,\infty)}:\widehat{\C}\rightarrow\widehat{\C}$, and a family of rational maps $(f_t)_{t\in[1,\infty)}$ such that $(\phi_t)_{\overline{z}}/(\phi_t)_{z}=\mu_t$ a.e., $\phi_t(\infty)=\infty$, $f_t\in\Sigma_3^*$, and $f_t(\widehat{\mathbb{C}}\setminus\overline{\D})=\phi_t(\Omega_0)$ for all $t\in[1,\infty)$.
\end{prop}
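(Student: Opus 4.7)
The plan is to reduce Proposition~\ref{normalizing_rational_maps} to a direct application of Proposition~\ref{qc_def_prop} (the general quasiconformal deformation result for maps in $\Sigma_d^*$), taking $g = f_0$, $\Omega = \Omega_0$, $\sigma = \sigma_0$, and $\mu = \mu_t$. For this to apply, the only nontrivial point to check is that $\mu_t$ is $\sigma_0$-invariant almost everywhere; the bound $\|\mu_t\|_{\infty} = (t-1)/(t+1) < 1$ was already verified in the paragraph preceding the proposition statement.

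For invariance, decompose $\widehat{\C} = \overline{T^\infty(\sigma_0)} \sqcup \mathcal{B}_\infty(\sigma_0)$ via Proposition~\ref{dynamical_partition_schwarz}. On $\mathcal{B}_\infty(\sigma_0)$ both $\mu_t$ and its $\sigma_0$-pullback vanish identically, since $\mu_t \equiv 0$ there and the basin of infinity is completely $\sigma_0$-invariant by Proposition~\ref{basin_topology}. On $T^\infty(\sigma_0)$, for any $z \in \sigma_0^{-n}(Q^0)$ with $n \geq 1$, the chain-rule identity $(\sigma_0^{\circ n})^* = \sigma_0^* \circ (\sigma_0^{\circ(n-1)})^*$ combined with the recursive definition of $\mu_t$ gives $\mu_t(z) = \sigma_0^*(\mu_t)(z)$. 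The remaining set $\mathcal{L}(\sigma_0) = \partial T^\infty(\sigma_0)$ has zero area by Proposition~\ref{limit_schwarz_zero_area}, so $\sigma_0$-invariance holds almost everywhere on the sphere.

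Having established invariance, the measurable Riemann mapping theorem produces a quasiconformal map $\widetilde{\phi}_t:\widehat{\C}\to\widehat{\C}$ fixing $\infty$ and solving $(\widetilde{\phi}_t)_{\bar z}/(\widetilde{\phi}_t)_z = \mu_t$. Feeding $\widetilde{\phi}_t$ into Proposition~\ref{qc_def_prop} yields the additional affine-and-rotational post-normalization (required to make the coefficient of $1/z^d$ in the uniformizing rational map equal to $-1/d$) under which the resulting map $\phi_t$ and the rational uniformization $f_t$ of $\phi_t(\Omega_0)$ automatically satisfy $f_t \in \Sigma_3^*$ and $f_t(\widehat{\C}\setminus\overline{\D}) = \phi_t(\Omega_0)$.

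I do not anticipate any serious obstacle in this proposition itself; its content is essentially bookkeeping that packages the Beltrami deformation into the compact family $\Sigma_3^*$. The genuine difficulty lies downstream, where one must show that a normal limit $f_\infty$ of $(f_t)_{t \nearrow \infty}$ can be extracted in $\Sigma_3^*$ and that $f_\infty(\mathbb{T})$ develops an actual double point at the location corresponding to the $\Psi$-preimage of the collapsing vertical segment, despite the fact that the dilatations of the $\phi_t$ degenerate along this family. Principles~\ref{constant_curvature_principle} and~\ref{constant_ratio_principle}, together with the modulus estimates of Theorem~\ref{modulus_estimate} applied to the quadrilaterals $L_t(Q)$, will be the natural tools to control this limit.
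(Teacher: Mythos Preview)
Your proposal is correct and follows essentially the same approach as the paper: apply the measurable Riemann mapping theorem and then invoke Proposition~\ref{qc_def_prop}, with the only substantive check being $\sigma_0$-invariance of $\mu_t$. The paper's proof is terser (it simply asserts invariance ``by construction''), whereas you spell out the verification on each piece of the dynamical partition; this is a harmless elaboration rather than a different argument.
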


\begin{proof}
For any $t\in[1,\infty)$, we may apply the measurable Riemann mapping theorem to obtain a quasiconformal map $\phi_t:\widehat{\C}\to\widehat{\C}$ with $(\phi_t)_{\overline{z}}/(\phi_t)_{z}=\mu_t$ a.e., and $\phi_t(\infty)=\infty$. By construction, $\mu_t$ is invariant under $\sigma_0$. Hence, Proposition~\ref{qc_def_prop} implies that $\phi_t(\Omega_0)$ is an unbounded simply connected quadrature domain. Moreover, the same proposition ensures a normalization of $\phi_t$ and the existence of a rational map $f_t\in\Sigma_3^*$ such that $f_t(\widehat{\C}\setminus\overline{\D})=\phi_t(\Omega_0)$.
\end{proof}

\begin{prop}\label{conformal_limit} There exist a map $f_\infty\in\Sigma_3^*$ and a sequence of positive real numbers $(t_n)\nearrow+\infty$ such that \[(f_{t_n})\xrightarrow[]{n\rightarrow\infty} f_\infty \textrm{ uniformly on   }\widehat{\mathbb{C}} \] with respect to the spherical metric. 
\end{prop}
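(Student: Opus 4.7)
The plan is to extract a convergent subsequence by exploiting compactness of $\Sigma_3^*$, and to verify that the limit lies in $\Sigma_3^*$ via Hurwitz's theorem. Recall that by Proposition~\ref{normalizing_rational_maps}, each $f_t$ has the form
\[
f_t(z) = z + \frac{a_1(t)}{z} + \frac{a_2(t)}{z^2} - \frac{1}{3z^3},
\]
so the family $\{f_t\}_{t\in[1,\infty)}$ is parameterized by the pair of complex numbers $(a_1(t),a_2(t))$ subject to the univalence condition on $\widehat{\C}\setminus\overline{\D}$.

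First I would show that the coefficients $a_1(t), a_2(t)$ remain bounded as $t\to\infty$. This follows from the classical area theorem applied to elements of $\Sigma$: for $f(z)=z+\sum_{n\geq 1}a_n/z^n \in \Sigma$, one has $\sum n|a_n|^2\leq 1$, which in particular gives $|a_1(t)|,|a_2(t)|\leq 1$. Hence $\{(a_1(t),a_2(t)):t\in[1,\infty)\}$ is a bounded subset of $\C^2$, and we may extract a sequence $t_n\nearrow\infty$ along which $(a_1(t_n),a_2(t_n))\to(a_1^\infty,a_2^\infty)$ for some $(a_1^\infty,a_2^\infty)\in\C^2$. Define
\[
f_\infty(z):=z+\frac{a_1^\infty}{z}+\frac{a_2^\infty}{z^2}-\frac{1}{3z^3}.
\]

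Next I would verify $f_\infty\in\Sigma_3^*$. Coefficient convergence immediately yields $f_{t_n}\to f_\infty$ uniformly on compact subsets of $\C\setminus\{0\}$. Since each $f_{t_n}$ is univalent on $\widehat{\C}\setminus\overline{\D}$ and is non-constant (having pole at $\infty$), Hurwitz's theorem applied to the differences $f_{t_n}(z)-f_{t_n}(w)$ (for fixed distinct $z,w \in \widehat{\C}\setminus\overline{\D}$) implies that $f_\infty$ is either univalent or constant on $\widehat{\C}\setminus\overline{\D}$. The normalization $f_\infty(z)=z+O(1/z)$ near $\infty$ rules out constancy, so $f_\infty$ is univalent on $\widehat{\C}\setminus\overline{\D}$, hence $f_\infty \in \Sigma_3^*$.

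Finally I would upgrade locally uniform convergence on $\C\setminus\{0\}$ to uniform convergence on $\widehat{\C}$ in the spherical metric. Because every $f_{t_n}$ and $f_\infty$ is a rational map of degree $4$ with a triple pole at $0$ (same principal part $-1/(3z^3)$) and a simple pole at $\infty$ (same residue $1$), the convergence near $0$ is seen by considering $1/f_{t_n}(z)$, which is holomorphic near $0$ and whose Taylor coefficients converge; convergence near $\infty$ is seen similarly via $1/f_{t_n}(z)$ at $\infty$. In both cases coefficient convergence gives spherical uniform convergence on a neighborhood of the pole, and combined with the locally uniform convergence on $\C\setminus\{0\}$ this yields uniform convergence on all of $\widehat{\C}$ in the spherical metric. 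There is no substantial obstacle here: the argument is a routine normal-families compactness argument, and the only slightly non-trivial point is keeping track of the poles when passing to the spherical metric.
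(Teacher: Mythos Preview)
Your proof is correct and follows essentially the same compactness strategy as the paper, with two cosmetic differences worth noting. First, to bound the coefficients $a_1(t),a_2(t)$ you invoke the area theorem, whereas the paper uses the factorization $z^4 f_t'(z)=\prod_{j=1}^4(z-\xi_j^t)$ with $|\xi_j^t|=1$ (critical points on $\mathbb{T}$) together with Vieta's formulas; both yield uniform bounds, and your route is the more classical one. Second, the paper extracts a normal limit of the univalent restrictions $f_t|_{\widehat{\mathbb{C}}\setminus\overline{\mathbb{D}}}$ first (citing a theorem from Lehto--Virtanen) and then recovers coefficient convergence via the Cauchy integral formula, while you reverse the order: extract a coefficient limit directly, define $f_\infty$ by those coefficients, and then verify univalence via Hurwitz. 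Your ordering is arguably cleaner here because the Laurent expansion has only finitely many terms, so coefficient convergence is the same as locally uniform convergence on $\mathbb{C}\setminus\{0\}$ and the passage to uniform spherical convergence on $\widehat{\mathbb{C}}$ is immediate.
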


\begin{proof} We first prove that there is a constant $M>0$ such that for \[ f_t(z)=z+\frac{a^t_{1}}{z}+\frac{a^t_{2}}{z^2}-\frac{1}{3z^3},  \] 
\noindent we have $|a_j^t|\leq M$ for $j\in\{0,1\}$ and all $t\in[1,\infty)$. Indeed, this follows from the triangle inequality and the factorization \[ z^4\cdot f_t'(z)=z^4-a_1^tz^2-2a_2^tz+1=\prod_{j=1}^4(z-\xi_j^t)\] 
\noindent where $|\xi_j^t|=1$ for $1\leq j\leq4$ and all $t\in[1,\infty)$. Thus \[ |f_t(z)-z| < \frac{M}{R} + \frac{M}{R^2} + \frac{1}{3R^3} \textrm{ for } |z|>R. \]
\noindent Hence, given $\varepsilon>0$, we may take $R$ sufficiently large such that \begin{equation}\label{close_to_id}|f_t(z)-z|<\varepsilon \textrm{ for } |z| > R \textrm{ and all } t\in[1,\infty). \end{equation}
\noindent Since each $f_t$ is conformal in $\widehat{\mathbb{C}}\setminus\overline{\mathbb{D}}$, it follows from, for instance  of  of \cite[Theorem~5.1, \S II.5.2]{MR0344463}, that the family $(f_t|_{\widehat{\mathbb{C}}\setminus\overline{\mathbb{D}}})_{t\in[1,\infty)}$ is normal (with respect to the spherical metric). Let $(t_n)\nearrow\infty$ and denote by $f_\infty$ a normal limit of $(f_{t_n}|_{\widehat{\mathbb{C}}\setminus\overline{\mathbb{D}}})_{n=1}^{\infty}$. Note that $f_\infty|_{\widehat{\mathbb{C}}\setminus\overline{\mathbb{D}}}$ is conformal by (\ref{close_to_id}) and \cite[Theorem~II.5.2]{MR0344463}. It follows from the Cauchy integral theorem that the coefficients in the Laurent series expansion for $(f_{t_n})_{n=1}^{\infty}$ (at $\infty$) converge to the corresponding coefficients in the Laurent series expansion for $f_\infty$ (at $\infty$). Hence \[ f_\infty(z)=z+\frac{a_{1}}{z}+\frac{a_{2}}{z^2}-\frac{1}{3z^3} \]
\noindent for some $a_1,a_2\in\mathbb{C}$, and so $f_\infty\in\Sigma_3^*$. Moreover, since we have $a_{1}^{t_n}\rightarrow a_1$ and  $a_{2}^{t_n}\rightarrow a_2$ as $n\rightarrow\infty$, it follows that the convergence $f_{t_n}\rightarrow f_\infty$ is uniform on $\widehat{\mathbb{C}}$ (with respect to the spherical metric), as needed.
\end{proof}

\begin{rem}\label{normal_rem} 
Here is an alternative argument for normality of $(f_t|_{\widehat{\mathbb{C}}\setminus\overline{\mathbb{D}}})_{t\in[1,\infty)}$. Note that each map $f_t$ is univalent on $\widehat{\mathbb{C}}\setminus\overline{\mathbb{D}}$, fixes $\infty$, and has derivative $1$ at $\infty$. Thus, by \cite[Theorem~1.10]{CG1}, the restrictions of $f_t$ on $\widehat{\mathbb{C}}\setminus\overline{\mathbb{D}}$ form a normal family, and a normal limit also has the same properties.
\end{rem}

\noindent Note that by Propositions~\ref{conformal_limit} and~\ref{s.c.q.d.}, the image $\Omega_\infty:=f_\infty(\widehat{\C}\setminus\overline{\D})$ is an unbounded simply connected quadrature domain.

\begin{prop}\label{4_cusp_1_double}
The boundary of the quadrature domain $\Omega_\infty$ has $4$ cusps and $1$ double point.
\end{prop}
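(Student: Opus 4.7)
Since $f_\infty\in\Sigma_3^*$, Proposition~\ref{crit_points_on_circle} guarantees four distinct simple critical points of $f_\infty$ on $\mathbb{T}$, hence four cusps on $\partial\Omega_\infty=f_\infty(\mathbb{T})$. Because Remark~\ref{extremal_rem} restricts the number of double points on $\partial\Omega_\infty$ to at most $d-2=1$, my plan is to produce at least one double point; combined with the cusp count this will yield the proposition.

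The engine is a modulus computation for the droplet $\phi_t(Q)=\widehat{\mathbb{C}}\setminus f_t(\widehat{\mathbb{C}}\setminus\overline{\mathbb{D}})$, regarded as a topological quadrilateral whose four distinguished vertices are the cusps of $\partial\phi_t(\Omega_0)$. By the construction of $\phi_t$, the composition $\phi_t\circ\Psi^{-1}\circ L_t^{-1}$ integrates the zero Beltrami coefficient on the Euclidean rectangle $[-1,1]\times[-1/t,1/t]$, hence is a conformal isomorphism from that rectangle onto $\phi_t(Q)$ carrying corners to cusps. Thus, as a quadrilateral, $\phi_t(Q)$ has modulus exactly $t$.

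Now I exploit the pinching as $t\to\infty$. Proposition~\ref{conformal_limit} (uniform convergence $f_t\to f_\infty$ on $\widehat{\mathbb{C}}$) confines every droplet $\phi_t(Q)$ to a common compact subset of $\mathbb{C}$, so the Euclidean distance $s_b^t$ between the two b-sides of $\phi_t(Q)$ is uniformly bounded above. The modulus upper bound in Theorem~\ref{modulus_estimate} then forces the Euclidean distance $s_a^t$ between the two a-sides to tend to $0$ as $M(\phi_t(Q))=t\to\infty$. The critical points of $f_t$ on $\mathbb{T}$ depend continuously on the coefficients $(a_1^t,a_2^t)$, and in the limit give the four distinct critical points of $f_\infty$ on $\mathbb{T}$; hence the a-sides of $\phi_t(Q)$ converge, in Hausdorff distance, to two arcs $\alpha_1,\alpha_3\subset f_\infty(\mathbb{T})$ whose four cusp endpoints are pairwise distinct and whose mutual Euclidean distance is zero. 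Compactness forces them to meet at some point interior to each arc, and such a meeting point is precisely a double point of $f_\infty(\mathbb{T})$.

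The main subtlety will be in ensuring that the four critical values $f_\infty(\xi_j^\infty)$ remain pairwise distinct, so that the droplet really does degenerate to a quadrilateral with four distinct cusps rather than to some more exotic singular configuration. This should follow from the $D_4$-symmetry shared by $f_0$ and the Beltrami coefficients $\mu_t$ (invariant under $z\mapsto\overline{z}$ and $z\mapsto iz$), which persists in the limit $f_\infty$ and pins down the a- and b-sides in advance: the pair of arcs that collapses lies in a single orbit of the conjugation symmetry, while the other pair (containing the remaining two cusps) stays separated.
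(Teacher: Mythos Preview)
Your direct approach via Theorem~\ref{modulus_estimate}—forcing the a-sides together rather than arguing by contradiction through Theorem~\ref{convergence_of_quadrilaterals}—is a legitimate shortcut relative to the paper's proof, which first eliminates all other self-intersections of $f_\infty(\mathbb{T})$ (via Principle~\ref{constant_curvature_principle} and a separate modulus bound on the b-sides) before concluding that $\partial Q_\infty$ would be Jordan if the a-sides missed. One technical caveat: the $s_a,s_b$ in Theorem~\ref{modulus_estimate} are \emph{path} distances inside the droplet, so ``confined to a compact set'' is not quite the right bound on $s_b^t$; use instead that $s_b^t$ is at most the arc length of an a-side, which stays bounded by $C^1$-convergence of the $f_{t_n}$ on $\mathbb{T}$.

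The final paragraph, however, is both incorrect and misdirected. The Beltrami coefficient $\mu_t$ is \emph{not} invariant under $z\mapsto iz$: since $L_t$ stretches only one axis, pulling back by a quarter-turn sends $\mu_t$ to $-\mu_t$. Only the Klein four-group generated by $z\mapsto\overline{z}$ and $z\mapsto -z$ survives, and that does not separate the critical values. More importantly, you have misidentified the residual gap. Distinctness of the critical \emph{points} $\xi_j^\infty$ already makes $\alpha_1,\alpha_3$ images of disjoint closed arcs of $\mathbb{T}$; what you actually need is that their common point is not a cusp (e.g.\ an endpoint of $\alpha_1$ lying in the interior of $\alpha_3$), and this does not follow merely from the four endpoints being distinct. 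It follows instead from conformality of $f_\infty$ on $\widehat{\mathbb{C}}\setminus\overline{\mathbb{D}}$ (cf.\ Remark~\ref{double_cut_rem}): at a cusp the image of a small exterior half-disk already fills a full punctured neighborhood minus a zero-angle wedge, leaving no room for a second disjoint half-disk image coming from another preimage on $\mathbb{T}$. Replace the symmetry argument with this observation and your proof closes.
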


\begin{rem}\label{double_cut_rem} We will use without further comment that any double point for $f\in\Sigma_d^*$ must be a cut point for $\widehat{\mathbb{C}}\setminus\Omega_{\infty}$ since $\Omega_{\infty}$ is connected. We also note that a cusp point of $f(\mathbb{T})$ for $f\in\Sigma_d^*$ cannot also be a double point of $f(\mathbb{T})$ by conformality of $f|_{\widehat{\mathbb{C}}\setminus\overline{\mathbb{D}}}$.
\end{rem}

\begin{proof}[Proof of Proposition~\ref{4_cusp_1_double}]  That $\partial\Omega_\infty$ has $4$ cusps follows from Proposition~\ref{crit_points_on_circle}. It remains to show that $\Omega_\infty$ has one double point.

 Proposition~\ref{crit_points_on_circle} also implies that each $f_t$ has $4$ distinct critical points on $\mathbb{T}$. Let $\xi_1^n$ denote the critical point of $f_{t_n}$ in the upper-right-half plane, and enumerate the remaining critical points in counter-clockwise order as $\xi_2^n$, $\xi_3^n$, $\xi_4^n$. Denote the corresponding critical values by $\zeta^n_1, \cdots, \zeta^n_4$. We may assume, by taking a subsequence in $n$ if necessary, that $\xi^n_j\rightarrow \xi^\infty_j\in\partial\mathbb{D}$ for $1\leq j\leq 4$. Since $f_{t_n}\rightarrow f_\infty$ uniformly on $\widehat{\mathbb{C}}$, it follows that $f_\infty'(\xi^{\infty}_j)=0$. Moreover, $\xi_1^\infty$, $\xi_2^\infty$, $\xi_3^\infty$, $\xi_4^\infty$ are all distinct by Proposition \ref{crit_points_on_circle} as $f_\infty\in\Sigma_3^*$.

We will denote by $\arc{z_1z_2}$ the closed subarc of $\partial\mathbb{D}$ with endpoints $z_1$, $z_2 \in \partial\mathbb{D}$ such that the triple $(z_1, e^{i\theta}, z_2)$ is oriented positively with respect to $\mathbb{D}$ for any $e^{i\theta} \in \arc{z_1z_2}$. We will also employ the notation \[Q_n=\widehat{\C}\setminus f_{t_n}(\widehat{\C}\setminus\overline{\D})\textrm{, } Q_\infty=\widehat{\C}\setminus f_{\infty}(\widehat{\C}\setminus\overline{\D}).\] Consider the quadrilateral $Q_n(\zeta^n_1, \zeta^n_2, \zeta^n_3, \zeta^n_4)$. Note that the map \[ L_{t_n}\circ\Psi\circ\phi_{t_n}^{-1}: Q_{n} \rightarrow [-1,1]\times[-1/t_n,1/t_n] \] is conformal for each $n\geq1$ and preserves vertices, so that \begin{equation}\label{some_convergence} M(Q_n)=t_n \rightarrow \infty \textrm{ as } n\rightarrow\infty. \end{equation}

We will show that  \begin{equation}\label{a-sides-intersect} f_\infty(\arc{\xi^\infty_1\xi^\infty_2}) \cap f_\infty( \arc{\xi^\infty_3\xi^\infty_4} )\not=\emptyset, \end{equation} 

\noindent whence the claim that $\Omega_\infty$ has exactly one double point will follow since $\Omega_\infty$ can have \emph{at most} one double point (see \cite[Lemma 2.4]{2014arXiv1411.3415L}). First we show that the only possible self-intersection of $\partial\Omega_\infty$ is between the arcs  $f_\infty(\arc{\xi^\infty_1\xi^\infty_2})$, $f_\infty( \arc{\xi^\infty_3\xi^\infty_4} )$. Indeed, all other a priori possible self-intersections of $\partial Q_{\infty}$ fall into the following three categories, and we prove that each, in turn, is impossible:

\begin{itemize}

\item An arc $f_\infty(\arc{\xi_j^\infty\xi_{j+1}^\infty})$ has a self-intersection.

\subitem This is disallowed because of Principle \ref{constant_curvature_principle} (constant curvature principle).

\item Two adjacent arcs $f_{\infty}(\arc{\xi_j^\infty\xi_{j+1}^\infty})\textrm{, } f_{\infty}(\arc{\xi_{j+1}^\infty\xi_{j+2}^\infty})$ intersect at more than their common endpoint.

\subitem This is disallowed because of Principle \ref{constant_curvature_principle} (constant curvature principle).

\item The arcs $f_\infty(\arc{\xi^\infty_2\xi^\infty_3})$, $f_\infty( \arc{\xi^\infty_4\xi^\infty_1} )$ have non-empty intersection.

\subitem This follows from Theorem \ref{modulus_estimate}. Indeed, let $s_a(n)$, $s_b(n)$ denote the Euclidean path-distance between the $a$, $b$-sides of $Q_n$, respectively. If $f_\infty(\arc{\xi^\infty_2\xi^\infty_3})$, $f_\infty( \arc{\xi^\infty_4\xi^\infty_1} )$ intersect, then $s_{b}(n)\rightarrow 0$ but $s_{a}(n)$ stays bounded away from $0$ as $n\rightarrow\infty$ (this follows from the local geometry of a double point singularity). Theorem \ref{modulus_estimate} then implies $M(Q_n)\rightarrow 0$ as $n\rightarrow\infty$, and this contradicts (\ref{some_convergence}). 
\end{itemize}

Thus if we suppose, by way of contradiction, the failure of (\ref{a-sides-intersect}),  $\partial Q_{\infty}$ must be a Jordan curve. It follows then that \[Q_\infty(\zeta^\infty_1, \zeta^\infty_2, \zeta^\infty_3, \zeta^\infty_4) \] is a topological quadrilateral, and moreover, \[Q_n \rightarrow Q_\infty \textrm{ as } n\rightarrow\infty \] in the sense of Definition \ref{definition_of_quadrilateral_convergence}. Hence \[M(Q_n)\rightarrow M(Q_\infty) \textrm{ as } n\rightarrow\infty \] by Theorem \ref{convergence_of_quadrilaterals}. However $M(Q_\infty)<\infty$ since $Q_\infty$ is a topological quadrilateral, which contradicts (\ref{some_convergence}), as needed.
\end{proof}

\subsection{The General Case}\label{mainthm_general_proof_subsec}

Let $\mathcal{T}$ be a bi-angled tree. We define \begin{equation}\label{degree_from_tree} d_\mathcal{T}:=1+\#\{\textrm{vertices in the tree } \mathcal{T}\}.\end{equation}

\begin{rem} As will be seen below, the quantity $d_\mathcal{T}$ indicates in which degree we will look for an extremal function whose associated bi-angled tree is isomorphic to $\mathcal{T}$. For instance, the tree $\mathcal{T}$ of Subsection \ref{astroid} consists of two vertices with a connecting edge, for which $d_\mathcal{T}=3$, and the associated extremal function is indeed in the family $\Sigma_3^*$.
\end{rem}

\noindent We set $d=d_\mathcal{T}$. Denote by $\xi_1, \cdots, \xi_{d+1}$ the critical points of the map \begin{equation}\label{z^d} f_0(z)= z-\frac{1}{dz^d} \end{equation} on $\partial\mathbb{D}$, enumerated counter-clockwise starting with $\xi_1:=\exp(2\pi i/(d+1))$. Label the arc of $\partial\mathbb{D}$ connecting $\xi_j$ to $\xi_{j+1}$ as $I_j$, for $1\leq j \leq d+1$ (see Figure \ref{fig:unit_disc}). Our strategy in the proof of Theorem~\ref{mainthm_qd_terminology} will be to pinch pairs of arcs from $(f_0(I_j))_{j=1}^{d+1}$ as in Section~\ref{astroid}, so that the tree associated to the resulting pinched quadrature domain will be isomorphic to $\mathcal{T}$. For now, we need to determine, from the combinatorics of the tree $\mathcal{T}$, for which pairs of indices $\{j,k\}$ will we pinch the associated arcs. To this end, it will be useful to introduce an \emph{augmented tree} $\widehat{\mathcal{T}}$ from the bi-angled tree $\mathcal{T}$ (see Figure \ref{angled_augmented_tree}).

\begin{figure}
\centering
\scalebox{.5}{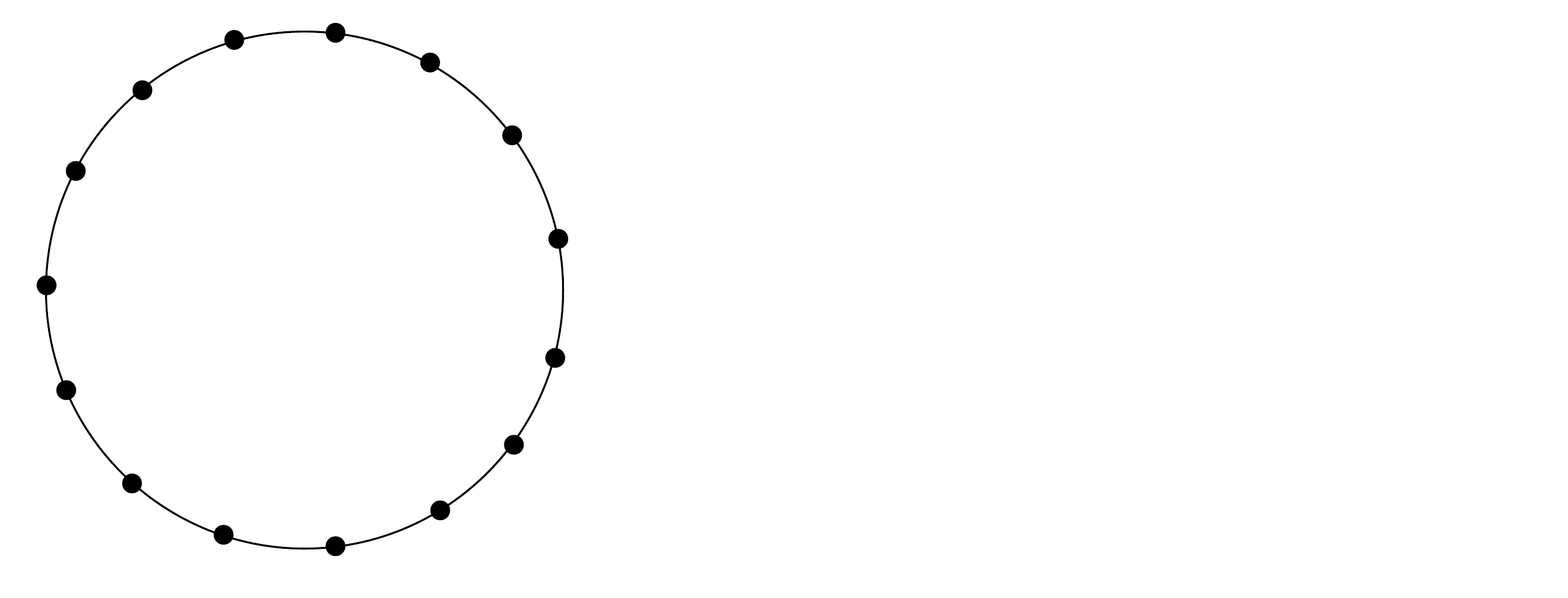}
\caption{ This Figure illustrates the image of the unit circle under the mapping $f_0$. The map $f_0$ has $d+1$ critical points $\xi_1, \cdots, \xi_{d+1}$ on the unit circle with corresponding critical values $\zeta_1, \cdots, \zeta_{d+1}$. }
\label{fig:unit_disc}      
\end{figure}

To do so, let us first color the vertices of $\mathcal{T}$ red. Now, for each vertex $v$ of degree $j$ ($j=1,2$) in $\mathcal{T}$, we add $3-j$ new edge(s) that connect(s) $v$ to $3-j$ new vertices, which we color blue. We add these $3-j$ new vertices and edges such that if $e$ and $e'$ are two edges incident at a common vertex $v$, and are consecutive in the counter-clockwise circular order around $v$, then the counter-clockwise oriented angle between $e$ and $e'$ is $\frac{2\pi}{3}$. Finally, given any edge $e$ in $\mathcal{T}$ (by definition, it connects two red vertices), subdivide it into two edges by introducing a blue vertex in the middle of the edge $e$. We will call the resulting tree the \emph{augmented tree of} $\mathcal{T}$, and denote it by $\widehat{\mathcal{T}}$.

Now suppose that $\mathcal{T}=\mathcal{T}(\Omega)$, where $\Omega$ is an extremal unbounded quadrature domain of order $d$. Then, $\mathcal{T}$ has $d-1$ vertices. It follows from the construction of $\widehat{\mathcal{T}}$ that $\widehat{\mathcal{T}}$ has $d-1$ red vertices (each of degree $3$), and $2d-1$ blue vertices (of which $d+1$ have degree $1$, and $d-2$ have degree $2$). Every edge of $\widehat{\mathcal{T}}$ connects a blue and a red vertex. Moreover, each red vertex of $\widehat{\mathcal{T}}$ represents an interior component of $T=\widehat{\C}\setminus\Omega$, each blue vertex of degree $1$ (respectively, of degree $2$) stands for a cusp (respectively, a double point) on $\partial T$. There is an edge in $\widehat{\mathcal{T}}$ connecting a red and a blue vertex if and only if the singular point corresponding to the blue vertex lies on the boundary of the interior component of $T$ corresponding to the red vertex.

\begin{figure}[ht!]
\begin{tikzpicture}
\node[anchor=south west,inner sep=0] at (0,4.5) {\includegraphics[width=0.8\textwidth]{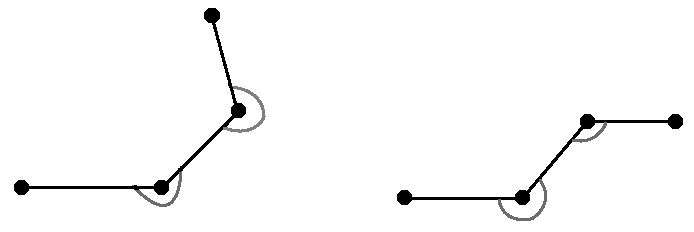}};
\node[anchor=south west,inner sep=0] at (0,0) {\includegraphics[width=0.8\textwidth]{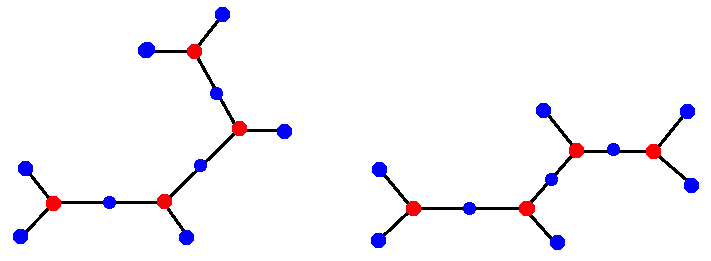}};
\node at (9,4.5) {$\frac{4\pi}{3}$};
\node at (10,5.9) {$\frac{2\pi}{3}$};
\node at (4.6,6.4) {$\frac{4\pi}{3}$};
\node at (3.2,5) {$\frac{4\pi}{3}$};
\end{tikzpicture}
\caption{Top: Two non-isomorphic bi-angled trees (for $d=5$) whose underlying topological trees are isomorphic. Bottom: No isomorphism between the corresponding augmented trees preserves the circular order of edges meeting at each vertex.}
\label{angled_augmented_tree}
\end{figure}

The bi-angled tree $\mathcal{T}$ and the corresponding augmented tree $\widehat{\mathcal{T}}$ contain precisely the same amount of information; i.e., one can be recovered from the other. Indeed, the bi-angled tree $\mathcal{T}$ is obtained from the augmented tree $\widehat{\mathcal{T}}$ by removing all the blue vertices along with the edges of $\widehat{\mathcal{T}}$ terminating at the blue vertices of degree $1$. Thus, the red vertices of $\widehat{\mathcal{T}}$ are the only vertices of $\mathcal{T}$. This is illustrated in Figure~\ref{angled_augmented_tree}.

\begin{definition}\label{isom_def_1}
Two augmented trees $\widehat{\mathcal{T}}_1$, $\widehat{\mathcal{T}}_2$ are said to be \emph{isomorphic} if there exists a tree isomorphism between them preserving the color of each vertex and the (counter-clockwise) cyclic order of edges meeting at each vertex.
\end{definition}

The above discussion directly implies the following lemma.

\begin{lem}\label{angled_augmented_equiv_lem}
Two bi-angled trees $\mathcal{T}_1$ and $\mathcal{T}_2$ are isomorphic if and only if the corresponding augmented trees $\widehat{\mathcal{T}}_1$ and $\widehat{\mathcal{T}}_2$ are isomorphic.
\end{lem}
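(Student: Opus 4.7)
The plan is to show that the construction $\mathcal{T}\mapsto\widehat{\mathcal{T}}$ is canonical with respect to the two notions of isomorphism, together with its inverse obtained by pruning blue leaves and smoothing blue bivalent vertices (i.e., replacing each path $v-b-w$ through a blue divalent $b$ by the single edge $vw$). The crucial preliminary observation is that the angle function on a bi-angled tree is equivalent data to a counter-clockwise cyclic order of edges at each vertex: conditions (2) and (3) of Definition~\ref{bi-angled_tree_def}, combined with the requirement that all nonzero angles lie in $\{2\pi/3,4\pi/3\}$, force $\angle_v(e,e')=2\pi/3$ exactly when $e'$ immediately follows $e$ in counter-clockwise order at a trivalent vertex $v$, and force the angle function to be vacuously determined at bivalent and monovalent vertices once the labels are fixed.

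For the forward implication, given a bi-angled isomorphism $h:\mathcal{T}_1\to\mathcal{T}_2$, I would define $\widehat{h}$ on the red vertices of $\widehat{\mathcal{T}}_1$ by $h$ itself, on the blue divalent vertices (which are in canonical bijection with the edges of $\mathcal{T}_1$) by sending the midpoint of $e$ to the midpoint of $h(e)$, and on the blue leaves as follows: at each red vertex $v$ of $\mathcal{T}_1$, the $3-\deg(v)$ blue leaves attached at $v$ in $\widehat{\mathcal{T}}_1$ occupy the cyclic positions not taken by edges of $\mathcal{T}_1$. Since $h$ preserves the angle function at $v$, hence the cyclic order of the edges at $v$, the missing positions at $v$ correspond bijectively to the missing positions at $h(v)$, and we assign each blue leaf at $v$ to the blue leaf at $h(v)$ in the matching cyclic position. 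Extending to edges, $\widehat{h}$ is then a color-preserving tree isomorphism that preserves the cyclic order of edges at each vertex (Definition~\ref{isom_def_1}).

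For the reverse implication, let $\widehat{h}:\widehat{\mathcal{T}}_1\to\widehat{\mathcal{T}}_2$ be an augmented tree isomorphism. Because $\widehat{h}$ preserves vertex colors, it commutes with the pruning-and-smoothing operation that recovers $\mathcal{T}_i$ from $\widehat{\mathcal{T}}_i$, and therefore descends canonically to a tree isomorphism $h:\mathcal{T}_1\to\mathcal{T}_2$ (two red vertices $v,w$ are adjacent in $\mathcal{T}_i$ exactly when a blue divalent vertex of $\widehat{\mathcal{T}}_i$ is adjacent to both). At each vertex $v$ of $\mathcal{T}_i$ the angle function is, by the preliminary observation, determined by the counter-clockwise cyclic order of the edges of $\mathcal{T}_i$ at $v$, which is precisely the cyclic order at $v$ in $\widehat{\mathcal{T}}_i$ restricted to the non-blue-leaf edges; since $\widehat{h}$ preserves this cyclic order, $h$ preserves $\angle$, so $h$ is an isomorphism of bi-angled trees.

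The only subtlety to be checked is the preliminary observation that the discretization of angles to $\{0,2\pi/3,4\pi/3\}$ reduces the angle data to pure cyclic order data, and that at vertices of $\mathcal{T}_i$ of degree $\leq 2$ there is no hidden angle information (which is why the augmentation is nontrivial precisely where blue leaves are added). Neither point is a substantive obstacle, and once they are verified the two directions above give the stated equivalence.
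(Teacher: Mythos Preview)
Your preliminary observation is false at bivalent vertices, and this breaks the reverse implication. At a vertex $v$ of degree $2$ in $\mathcal{T}$, with incident edges $e,e'$, the value $\angle_v(e,e')$ can be either $2\pi/3$ or $4\pi/3$, and these are genuinely different bi-angled structures: the cyclic order on two elements is unique, so it cannot distinguish them. The paper's Figure~\ref{angled_augmented_tree} displays exactly this phenomenon---two non-isomorphic bi-angled trees on the same underlying topological tree, differing only in the angle at a bivalent vertex. Consequently your sentence ``the angle function is \ldots\ determined by the counter-clockwise cyclic order of the edges of $\mathcal{T}_i$ at $v$, which is precisely the cyclic order at $v$ in $\widehat{\mathcal{T}}_i$ restricted to the non-blue-leaf edges'' is not valid when $\deg_{\mathcal{T}_i}(v)=2$: restricting a cyclic order on three elements to two of them throws away exactly the information you need.

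The repair is straightforward and is implicit in the paper's construction. At a bivalent vertex $v$ the angle datum is encoded not by the restricted cyclic order but by the \emph{position of the blue leaf} among the three edges at $v$ in $\widehat{\mathcal{T}}_i$: if the CCW order is $(e,e',b)$ then $\angle_v(e,e')=2\pi/3$, while if it is $(e,b,e')$ then $\angle_v(e,e')=4\pi/3$. Since an augmented-tree isomorphism $\widehat{h}$ preserves both the full cyclic order at each red vertex and the color of each neighbor, it preserves this relative position of the blue leaf, and hence the induced map $h$ on $\mathcal{T}_i$ preserves $\angle$ at bivalent vertices as well. Your forward direction is fine (you use that $h$ preserves $\angle$, not merely cyclic order, to place blue leaves), so once the reverse direction is corrected as above the argument goes through and agrees with the paper's one-line justification that $\mathcal{T}$ and $\widehat{\mathcal{T}}$ determine each other.
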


With the introductory remarks of this section in mind, we will now record, given a tree $\mathcal{T}$, which edges $(f_0(I_j))_{j=1}^{d+1}$ we will need to ``pinch'' in order to obtain an extremal quadrature domain whose bi-angled tree is isomorphic to $\mathcal{T}$. See Figure \ref{fig:orientation} for an illustration of Definition \ref{definition_of_S}.

\begin{definition}\label{definition_of_S} Given a tree $\mathcal{T}$, we define a subset \begin{equation}\label{combinatorics_from_tree}S=S_\mathcal{T}\subset \{1, \cdots, d+1\}\times \{1, \cdots, d+1\} \end{equation} as follows. Consider the augmented tree $\widehat{\mathcal{T}}$, and a Riemann map \begin{equation} \phi: \widehat{\mathbb{C}} \setminus \mathbb{D} \rightarrow \widehat{\mathbb{C}} \setminus \widehat{\mathcal{T}}  \end{equation} fixing infinity. There are $d+1$ preimages (under $\phi$) of the degree $1$ blue vertices of $\widehat{\mathcal{T}}$. We label them $\eta_1$, ..., $\eta_{d+1}$ in counter-clockwise order starting with an arbitrary choice of $\eta_1$. Denote the closed arc of $\mathbb{T}$ connecting $\eta_k$ to $\eta_{k+1}$ (where $k+1$ is taken mod $d+1$) by $J_k$. Lastly, let $j$, $k \in \{1, ..., d+1\}$ with $j\not=k\pm1$ (mod $d+1$). We define \begin{equation} \{j, k\} \in S \textrm{ if and only if  } \phi(J_j)\cap\phi(J_k)\not=\emptyset. \end{equation}

\end{definition}

\begin{proof}[Proof of Theorem~\ref{mainthm_qd_terminology}] 

Let $\mathcal{T}$ be as in the statement of Theorem~\ref{mainthm_qd_terminology}. Define $d=d_\mathcal{T}$ as in (\ref{degree_from_tree}) and $S=S_\mathcal{T}$ as in Definition \ref{definition_of_S}. Consider the map \[ f_0(z)= z-\frac{1}{dz^d}. \] Let $\{j, k\} \in S$, and define $\zeta_l:=f_0(\xi_l)$ to be the critical value of $f_0$ corresponding to the critical point $\xi_l\in\partial\mathbb{D}$, for $1\leq l \leq d+1$. Consider the topological quadrilateral \[ Q\left( \zeta_j, \zeta_{j+1}, \zeta_{k}, \zeta_{k+1} \right), \textrm{ where } Q=\widehat{\mathbb{C}}\setminus f_0(\widehat{\mathbb{C}} \setminus\overline{\D}  ). \]

\noindent Denote by $\Psi\equiv\Psi_s$ the conformal map of $Q$ to the Euclidean rectangle $[0, M(Q)]\times[0,1]$ which preserves vertices. We now proceed as in Section~\ref{astroid}: see Figure~\ref{fig:quasiconformal_deformation}.

\begin{figure}
\centering
\scalebox{.36}{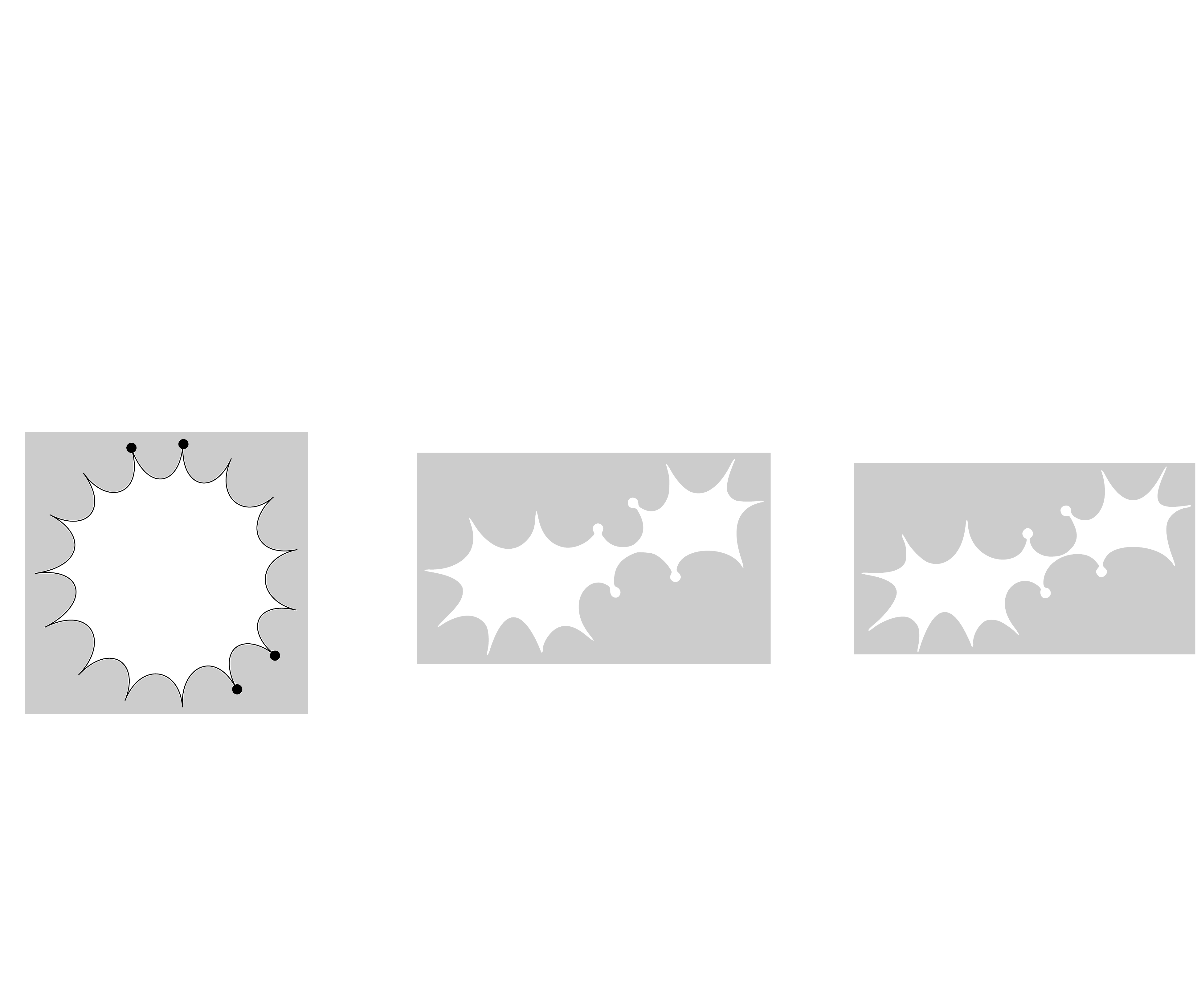}
\caption{ This Figure summarizes the first step in the proof of Theorem~\ref{mainthm_qd_terminology}. One considers a pair of indices $\{j, k\} \in S_T$ and pinches the curve $f(I_j)$ to the curve $f(I_k)$ by quasiconformally perturbing the quadrilateral $Q$ as in Section~\ref{astroid}. }

\label{fig:quasiconformal_deformation}      
\end{figure}

Let $L_t$ be the linear map defined by $L_t(x,y)=(x,y/t)$ for $1\leq t<\infty$, so that \[ \nu_t=(L_t)_{\overline{z}}/(L_t)_{z}\equiv(t-1)/(t+1).\] For $z\in Q$, define $\mu_t(z)$ by pulling back $(L_t)_{\overline{z}}/(L_t)_{z}$ under $\Psi$; i.e., $\mu_t(z)=\Psi^*( \nu_t(\Psi(z)) )$. Next we pull back the Beltrami coefficient $\mu_t|_{Q}\in L^{\infty}(Q)$ under the Schwarz reflection map $\sigma_0$ associated to $\Omega_0=f_0(\widehat{\mathbb{C}}\setminus\overline{\D})$: \[ \mu_t(\sigma_0^{-n}(z) )= (\sigma_0^{\circ n})^*( \mu_t(z) ) \textrm{ for } z\in Q^0 \textrm{ and } n\geq 1. \] Lastly, we define: \[ \mu_t(z)=0 \textrm{ for } z\not\in \bigcup_{n\geq0} \sigma_0^{-n}(Q^0). \]

\noindent The same proofs as for Propositions~\ref{normalizing_rational_maps} and~\ref{conformal_limit} yield the following two propositions:

\begin{prop}\label{normalizing_rational_maps_general} There exist a family of quasiconformal maps $(\phi_t)_{t\in[1,\infty)}:\widehat{\C}\rightarrow\widehat{\C}$, and a family of rational maps $(f_t)_{t\in[1,\infty)}$ such that $(\phi_t)_{\overline{z}}/(\phi_t)_{z}=\mu_t$ a.e., $\phi_t(\infty)=\infty$, $f_t\in\Sigma_d^*$, and $f_t(\widehat{\mathbb{C}}\setminus\overline{\D})=\phi_t(\Omega_0)$ for all $t\in[1,\infty)$.
\end{prop}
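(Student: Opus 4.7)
The plan is to follow essentially verbatim the argument given for Proposition~\ref{normalizing_rational_maps}, the only difference being that we now work with an arbitrary quadrilateral $Q(\zeta_j,\zeta_{j+1},\zeta_k,\zeta_{k+1})$ rather than the particular square of Subsection~\ref{astroid}, and inside the class $\Sigma_d^*$ rather than $\Sigma_3^*$. Nothing about the construction used the $d=3$ normalization, so the same verification will go through once we check that the input hypotheses of Proposition~\ref{qc_def_prop} are satisfied.

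First I would observe that $\mu_t \in L^{\infty}(\C)$ with $\|\mu_t\|_\infty = (t-1)/(t+1) < 1$. Indeed, on $Q$ the coefficient $\mu_t$ is the pullback under the conformal map $\Psi$ of the constant coefficient $\nu_t\equiv(t-1)/(t+1)$, so $|\mu_t|=(t-1)/(t+1)$ a.e.\ on $Q$. On every tile $\sigma_0^{-n}(Q^0)$ of positive rank, $\mu_t$ is defined as the pullback of $\mu_t|_{Q^0}$ by the anti-holomorphic map $\sigma_0^{\circ n}$, which preserves the absolute value of a Beltrami coefficient; and off the closure of the tiling set we set $\mu_t\equiv 0$. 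Since the tiling set has zero-area boundary (this is not even needed here, only measurability), $\mu_t$ is a well-defined bounded Beltrami coefficient on $\widehat{\C}$.

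Next I would check that $\mu_t$ is $\sigma_0$-invariant, which is true by construction: on the open set $\bigcup_{n\geq 0}\sigma_0^{-n}(Q^0)$ the invariance is built into the recursive definition $\mu_t(\sigma_0^{-n}(z))=(\sigma_0^{\circ n})^*\mu_t(z)$, while on the complement $\mu_t\equiv 0$, which is trivially invariant (and in particular the complement is forward invariant since $\sigma_0$ maps the tiling set into itself). With this in hand, apply the measurable Riemann mapping theorem to produce \emph{any} quasiconformal integrating map $\mathbf{\Phi}:\widehat{\C}\to\widehat{\C}$ with $\mathbf{\Phi}_{\overline{z}}/\mathbf{\Phi}_z = \mu_t$ a.e.\ and $\mathbf{\Phi}(\infty)=\infty$. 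Proposition~\ref{qc_def_prop} now applies directly to $g=f_0\in\Sigma_d^*$, $\Omega=\Omega_0$, $\sigma=\sigma_0$ and $\mathbf{\Phi}$: it furnishes a specific post-composition of $\mathbf{\Phi}$ by an affine map and a rotation (the composition being called $\phi_t$) and a corresponding $f_t\in\Sigma_d^*$ such that $f_t(\widehat{\C}\setminus\overline{\D})=\phi_t(\Omega_0)$ and $\phi_t\circ\sigma_0\circ\phi_t^{-1}$ is the Schwarz reflection of $f_t(\widehat{\C}\setminus\overline{\D})$. Since the affine normalization and rotation preserve $\infty$, we still have $\phi_t(\infty)=\infty$.

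There is no substantive obstacle here; the entire content of the proposition is bookkeeping. The only point that requires a moment's care is the $\sigma_0$-invariance of $\mu_t$, since it is the hypothesis of Proposition~\ref{qc_def_prop} that does the real work and the whole point of the recursive spreading of $\mu_t$ by $\sigma_0^{-n}$. Once that is noted the proof is identical to that of Proposition~\ref{normalizing_rational_maps}, with every occurrence of $\Sigma_3^*$ replaced by $\Sigma_d^*$.
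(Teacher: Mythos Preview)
Your proposal is correct and follows essentially the same approach as the paper, which simply states that the proof of Proposition~\ref{normalizing_rational_maps} carries over verbatim. You have supplied a bit more detail on the $\sigma_0$-invariance of $\mu_t$ than the paper does, but the argument is identical in substance: verify $\|\mu_t\|_\infty<1$ and $\sigma_0$-invariance, apply the measurable Riemann mapping theorem, then invoke Proposition~\ref{qc_def_prop} to obtain the normalized $\phi_t$ and $f_t\in\Sigma_d^*$.
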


\begin{prop}\label{conformal_limit_general} There exist a map ${f}^1_{\infty}\in\Sigma_d^*$ and a sequence of positive real numbers $(t_n)\nearrow\infty$ such that \[(f_{t_n})\xrightarrow[]{n\rightarrow\infty} f^1_{\infty} \textrm{  uniformly on   }\widehat{\mathbb{C}} \] with respect to the spherical metric. 
\end{prop}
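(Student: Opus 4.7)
The plan is to mirror the proof of Proposition~\ref{conformal_limit} in the higher degree setting, since Proposition~\ref{normalizing_rational_maps_general} already provides $f_t \in \Sigma_d^*$ with $f_t(\widehat{\mathbb{C}}\setminus\overline{\D}) = \phi_t(\Omega_0)$. First, I would write each $f_t$ in the form
\[ f_t(z) = z + \frac{a_1^t}{z} + \frac{a_2^t}{z^2} + \cdots + \frac{a_{d-1}^t}{z^{d-1}} - \frac{1}{dz^d}, \]
and establish a uniform bound $|a_j^t| \leq M_j$ for all $t\in[1,\infty)$ and $j\in\{1,\ldots,d-1\}$. This follows from the factorization
\[ z^{d+1} f_t'(z) = z^{d+1} - a_1^t z^{d-1} - 2a_2^t z^{d-2} - \cdots - (d-1) a_{d-1}^t z + 1 = \prod_{k=1}^{d+1}(z-\xi_k^t), \]
where, by Proposition~\ref{crit_points_on_circle}, $|\xi_k^t| = 1$ for all $k$ and $t$. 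Applying Vieta's formulas (or directly the triangle inequality to the elementary symmetric polynomials of the unit-modulus quantities $\xi_k^t$) yields the desired uniform bound with $M_j = \binom{d+1}{j}/j$.

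Next, I would use this bound to produce an ``asymptotically affine'' estimate near $\infty$: for any $\varepsilon > 0$, there exists $R = R(\varepsilon) > 1$ such that
\[ |f_t(z) - z| \leq \sum_{j=1}^{d-1} \frac{M_j}{R^j} + \frac{1}{dR^d} < \varepsilon \quad \text{for all } |z| > R \text{ and all } t \in [1,\infty). \]
Combined with the fact that each $f_t|_{\widehat{\mathbb{C}}\setminus\overline{\D}}$ is conformal with $f_t(\infty) = \infty$ and $f_t'(\infty) = 1$, this yields normality of the family $(f_t|_{\widehat{\mathbb{C}}\setminus\overline{\D}})_{t\in[1,\infty)}$, either via the argument given in the proof of Proposition~\ref{conformal_limit} (invoking \cite[Theorem~5.1, \S II.5.2]{MR0344463}) or via the alternative route of Remark~\ref{normal_rem}.

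Extracting a sequence $(t_n) \nearrow \infty$ along which $(f_{t_n}|_{\widehat{\mathbb{C}}\setminus\overline{\D}})$ converges locally uniformly to some $f_\infty^1$, I would then appeal to \cite[Theorem~II.5.2]{MR0344463} together with the estimate above to conclude that $f_\infty^1|_{\widehat{\mathbb{C}}\setminus\overline{\D}}$ is itself conformal (rather than constant). The Cauchy integral formula applied to the Laurent coefficients of $f_{t_n}$ at $\infty$ shows that $a_j^{t_n} \to a_j$ for $j=1,\ldots,d-1$, while the coefficient of $1/z^d$ remains $-1/d$; hence
\[ f_\infty^1(z) = z + \frac{a_1}{z} + \cdots + \frac{a_{d-1}}{z^{d-1}} - \frac{1}{dz^d} \in \Sigma_d^*. \]
Since only finitely many coefficients appear and each converges, the convergence $f_{t_n} \to f_\infty^1$ is uniform on $\widehat{\mathbb{C}}$ in the spherical metric, as desired.

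There is no genuine obstacle here; the argument is a verbatim transcription of Proposition~\ref{conformal_limit} once the uniform bound on $|a_j^t|$ is noted. The only subtle point is ensuring that the coefficient normalization $a_d^t = -1/d$ is inherited by the limit, which is automatic from the Cauchy formula argument above.
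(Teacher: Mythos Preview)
Your proposal is correct and follows exactly the approach the paper intends: the paper's own proof of Proposition~\ref{conformal_limit_general} consists of the single remark that the argument of Proposition~\ref{conformal_limit} carries over verbatim, and you have written out precisely that argument in degree $d$. (A trivial nitpick: the explicit bound should be $M_j=\binom{d+1}{j+1}/j$ rather than $\binom{d+1}{j}/j$, but only the existence of a uniform bound is used.)
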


\begin{rem}\label{superscript_rem} We note that by Propositions~\ref{conformal_limit_general} and~\ref{s.c.q.d.}, $\Omega^1_\infty:=f^1_{\infty}(\widehat{\mathbb{C}}\setminus\overline{\mathbb{D}})$ is an unbounded, simply connected quadrature domain. The superscript $1$ is used in the notation ${\Omega}^1_\infty$, ${f}^1_{\infty}$ to emphasize that $\partial{\Omega}^1_\infty$ will have exactly one double point, as proven below in Proposition~\ref{d+1_cusp_1_double}. Subsequent steps in the proof of Theorem~\ref{mainthm_qd_terminology} will yield functions ${f}^j_{\infty}$, $2\leq j \leq |S|$ such that $\partial{\Omega}^j_\infty$ has exactly $j$ double points (see also Figure~\ref{fig:quasiconformal_deformation_step_two}).
\end{rem}

\begin{prop}\label{d+1_cusp_1_double}
The boundary of the quadrature domain $\Omega^1_\infty$ has $d+1$ cusps and $1$ double point.
\end{prop}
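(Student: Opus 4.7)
The count of $d+1$ cusps is immediate from $f^1_\infty\in\Sigma_d^*$ via Proposition~\ref{crit_points_on_circle}, so the content is producing exactly one double point. The strategy is to run the argument of Proposition~\ref{4_cusp_1_double} verbatim at the target pair $\{j,k\}$, and to use the new Principle~\ref{constant_ratio_principle} to rule out pinching at every other non-adjacent pair. Label the critical points of $f_{t_n}$ in counter-clockwise order as $\xi^n_1,\ldots,\xi^n_{d+1}$ so that $\xi^n_l$ is close to the corresponding critical point $\xi_l$ of $f_0$; pass to a subsequence so that $\xi^n_l\to\xi^\infty_l$, and note that uniform convergence gives $(f^1_\infty)'(\xi^\infty_l)=0$, with Proposition~\ref{crit_points_on_circle} applied to $f^1_\infty$ ensuring these limits are pairwise distinct. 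Set $\zeta^n_l=f_{t_n}(\xi^n_l)$, $\zeta^\infty_l=f^1_\infty(\xi^\infty_l)$, $Q_n=\widehat{\C}\setminus f_{t_n}(\widehat{\C}\setminus\overline{\D})$, and $Q^1_\infty=\widehat{\C}\setminus\Omega^1_\infty$.

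The core of the argument is the interplay of two modulus estimates. At the target pair, the composition $L_{t_n}\circ\Psi\circ\phi_{t_n}^{-1}$ is a vertex-preserving conformal isomorphism from $Q_n(\zeta^n_j,\zeta^n_{j+1},\zeta^n_k,\zeta^n_{k+1})$ onto $[0,M(Q)]\times[0,1/t_n]$, so $M(Q_n(\zeta^n_j,\zeta^n_{j+1},\zeta^n_k,\zeta^n_{k+1}))=t_n\,M(Q)\to\infty$. For a competing non-adjacent pair $\{j',k'\}\ne\{j,k\}$, observe that $\Psi$ sends $\zeta_j,\zeta_{j+1},\zeta_k,\zeta_{k+1}$ to the four corners of $[0,M(Q)]\times[0,1]$, while every other critical value lies in the interior of a vertical side (the image of a $b$-side of $Q$). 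A short case analysis --- splitting on whether each of the consecutive pairs $\{j',j'+1\}$ and $\{k',k'+1\}$ sits entirely on one vertical side or straddles two via a corner from $\{j,j+1,k,k+1\}$ --- shows that at least two of the four vertices $\Psi(\zeta_{j'}),\Psi(\zeta_{j'+1}),\Psi(\zeta_{k'}),\Psi(\zeta_{k'+1})$ share a common vertical side (case $(c,c)$ is ruled out precisely by $\{j',k'\}\ne\{j,k\}$). Principle~\ref{constant_ratio_principle} then yields a uniform bound $M(Q_n(\zeta^n_{j'},\zeta^n_{j'+1},\zeta^n_{k'},\zeta^n_{k'+1}))\le C$ independent of $n$.

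With these two estimates in hand, the remaining possibilities for self-intersections of $\partial\Omega^1_\infty$ are eliminated just as in Proposition~\ref{4_cusp_1_double}: self-intersections within a single arc $f^1_\infty(\arc{\xi^\infty_l\xi^\infty_{l+1}})$, or intersections of two adjacent arcs away from their common cusp, are forbidden by Principle~\ref{constant_curvature_principle}; and any intersection at a non-adjacent pair $\{j',k'\}\ne\{j,k\}$ would, by Proposition~\ref{double_geometry} and Theorem~\ref{modulus_estimate}, force the corresponding modulus to tend to $\infty$, contradicting the uniform bound $C$. Finally, if the target arcs $f^1_\infty(\arc{\xi^\infty_j\xi^\infty_{j+1}})$ and $f^1_\infty(\arc{\xi^\infty_k\xi^\infty_{k+1}})$ were disjoint, then $\partial Q^1_\infty$ would be a Jordan curve and $Q^1_\infty(\zeta^\infty_j,\zeta^\infty_{j+1},\zeta^\infty_k,\zeta^\infty_{k+1})$ a genuine topological quadrilateral with $Q_n\to Q^1_\infty$ in the sense of Definition~\ref{definition_of_quadrilateral_convergence}; Theorem~\ref{convergence_of_quadrilaterals} would give $M(Q_n)\to M(Q^1_\infty)<\infty$, contradicting the target blow-up. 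Hence $\partial\Omega^1_\infty$ has at least one double point at the target location, and by Remark~\ref{extremal_rem} (capping the number of double points at $d-2$) together with the exclusion of spurious intersections, exactly one.

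The main new obstacle beyond the astroid case is the third bullet above: in degree~$3$ there is only one non-target non-adjacent pair, handled directly by the target modulus blow-up argument read in the opposite direction, whereas for $d\ge 4$ there are many competing non-adjacent pairs and one needs a uniform bound across all of them simultaneously. This is precisely what Principle~\ref{constant_ratio_principle} is designed to supply, and its applicability hinges on the case analysis that locates at least two vertices of each competing quadrilateral on a common vertical side of the model rectangle.
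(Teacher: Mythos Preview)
Your proposal follows essentially the same approach as the paper's own proof: blow up the modulus at the target quadrilateral via the conformal straightening $L_{t_n}\circ\Psi\circ\phi_{t_n}^{-1}$, rule out self-intersections and adjacent intersections by the constant curvature principle, bound the moduli of all competing non-adjacent quadrilaterals via Principle~\ref{constant_ratio_principle}, and conclude via Theorem~\ref{convergence_of_quadrilaterals}. The case analysis you sketch (locating at least two of the four competing vertices on a common vertical side of the model rectangle) is exactly what the paper does, phrased there simply as ``since $\{m,l\}\neq\{j,k\}$, at least one of these linear segments is vertical.''

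There is one small gap at the very end. Your appeal to Remark~\ref{extremal_rem} only caps the number of double points at $d-2$, which for $d\geq 4$ does not by itself exclude the target arcs $f^1_\infty(\arc{\xi^\infty_j\xi^\infty_{j+1}})$ and $f^1_\infty(\arc{\xi^\infty_k\xi^\infty_{k+1}})$ meeting at \emph{two} points. The paper closes this explicitly: two intersections of a pair of arcs each of constant-sign curvature are ruled out by Principle~\ref{constant_curvature_principle}. Once you add that one line, your argument is complete and matches the paper's.
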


\begin{proof} That  $\partial\Omega^1_\infty$ has $d+1$ cusps follows from Proposition~\ref{crit_points_on_circle} since ${f}^1_{\infty} \in \Sigma_d^*$. Proposition~\ref{crit_points_on_circle} also implies that each $f_t$ has $d+1$ distinct critical points on $\mathbb{T}$. We label these critical points in counterclockwise order as $\xi_1^t, \cdots, \xi_{d+1}^t$, with $\xi_1^t$ having the minimal argument where we take the branch $0\leq\arg(z)<2\pi$. We denote $\zeta_i^t:=f_t(\xi_i^t)$, and recall the notation $\zeta_i:=f_0(\xi_i)$. We claim that \begin{equation}\label{dependence_on_t}\zeta_i^t=\phi_t(\zeta_i) \textrm{ for } 1\leq i \leq d+1 \textrm{ and } 1\leq t<\infty.\end{equation} Indeed, the relation clearly holds for $t=1$, whence for $t>1$ we note that each $\phi_t(\zeta_i)$ must be a critical value of $f_t$ since quasiconformal maps preserve cusps, and so the claim follows by continuous dependence of $\phi_t$ on the parameter $t$ (see, for instance, \cite[Theorem~7.5]{CG1}). By taking a further subsequence in $n$ if necessary, we have that \[ \xi_1^{t_n} \xrightarrow{n\rightarrow\infty} \xi_1^\infty, \cdots, \xi_{d+1}^{t_n} \xrightarrow{n\rightarrow\infty} \xi_{d+1}^\infty\textrm{} \] for some $\xi_1^\infty, \cdots, \xi_{d+1}^\infty \in \partial\mathbb{D}$. Since $f_{t_n}\rightarrow f^1_{\infty}$ uniformly in the spherical metric, it follows that $\xi_1^\infty, \cdots, \xi_{d+1}^\infty$ are critical points for $f^1_{\infty}$. Moreover,  $\xi_1^\infty, \cdots, \xi_{d+1}^\infty$ are distinct since $f^1_{\infty}\in\Sigma_d^*$.

We consider \[ Q_n\big( \zeta_j^{t_n}, \zeta_{j+1}^{t_n}, \zeta_k^{t_n}, \zeta_{k+1}^{t_n} \big), \] where \[Q_n:=\widehat{\C}\setminus f_{t_n}(\widehat{\C}\setminus\overline{\D})\textrm{ and } {Q}^1_\infty:=\widehat{\C}\setminus{f}^1_{\infty}(\widehat{\C}\setminus\overline{\D}). \] Note that the map \begin{equation}\label{conformal_invariance} L_{t_n}\circ\Psi\circ\phi_{t_n}^{-1}: Q_n\big( \zeta_j^{t_n}, \zeta_{j+1}^{t_n}, \zeta_k^{t_n}, \zeta_{k+1}^{t_n} \big) \rightarrow [0,M(Q)]\times[0,1/t_n] \end{equation} is conformal and preserves vertices by (\ref{dependence_on_t}), so that \begin{equation}\label{needed_contradiction} M(Q_n)\rightarrow\infty \textrm{ as } n\rightarrow\infty. \end{equation} 

We will show that \begin{equation}\label{want_to_show_intersection} f^1_{\infty}(\arc{\xi_j^\infty\xi_{j+1}^\infty}) \cap f^1_{\infty}(\arc{\xi_k^\infty\xi_{k+1}^\infty}) \not=\emptyset, \end{equation} but first we need to prove that the only possible self-intersection of $\partial Q^1_\infty$ is between the arcs $f^1_{\infty}(\arc{\xi_j^\infty\xi_{j+1}^\infty}), f^1_{\infty}(\arc{\xi_k^\infty\xi_{k+1}^\infty})$. As in Proposition \ref{4_cusp_1_double}, we list all other a priori possibilities and rule out each in turn:

\begin{itemize}

\item An arc $f^1_\infty(\arc{\xi^i_\infty\xi^{i+1}_\infty})$ has a self-intersection.

\subitem This is disallowed because of Principle \ref{constant_curvature_principle} (constant curvature principle). 

\item Two adjacent arcs $f^1_{\infty}(\arc{\xi^i_\infty\xi^{i+1}_\infty})\textrm{, } f^1_{\infty}(\arc{\xi^{i+1}_\infty\xi^{i+2}_\infty})$ intersect at more than their common endpoint.

\subitem This is disallowed because of Principle \ref{constant_curvature_principle} (constant curvature principle).

\item Non-adjacent arcs $f^1_{\infty}(\arc{\xi_m^\infty\xi_{m+1}^\infty})$,  $f^1_{\infty}(\arc{\xi_l^\infty\xi_{l+1}^\infty})$ have non-empty intersection, where $\{m,l\}\not=\{j,k\}$.

Consider the quadrilateral $Q_n(\zeta_m^{t_n}, \zeta_{m+1}^{t_n}, \zeta_l^{t_n}, \zeta_{l+1}^{t_n})$.  Let $s_a(n)$, $s_b(n)$ denote the Euclidean path-distance between the a, b-sides of $Q_n(\zeta_m^{t_n}, \zeta_{m+1}^{t_n}, \zeta_l^{t_n}, \zeta_{l+1}^{t_n})$, respectively. If $f^1_{\infty}(\arc{\xi_m^\infty\xi_{m+1}^\infty})$,  $f^1_{\infty}(\arc{\xi_l^\infty\xi_{l+1}^\infty})$ have non-empty intersection, then $s_a(n) \rightarrow 0$ but $s_b(n) \not \rightarrow 0$ as $n\rightarrow\infty$. Thus by Theorem \ref{modulus_estimate}, \[ M(Q_n(\zeta_m^{t_n}, \zeta_{m+1}^{t_n}, \zeta_l^{t_n}, \zeta_{l+1}^{t_n})) \rightarrow \infty  \textrm{ as } t\rightarrow\infty.\]

Thus to rule out the intersection of $f^1_{\infty}(\arc{\xi_m^\infty\xi_{m+1}^\infty})$,  $f^1_{\infty}(\arc{\xi_l^\infty\xi_{l+1}^\infty})$, it will suffice to show that in fact the family $\Gamma_t$ of rectifiable curves in $\overline{Q_t}$ with endpoints one on each of \[ f_t(\arc{\xi_m^t\xi_{m+1}^t})\textrm{, } f_t(\arc{\xi_l^t\xi_{l+1}^t}) \] satisfies \begin{equation}\label{desired_extremal_length_estimate}\textrm{mod}(\Gamma_t)<R<\infty \textrm{ for some } R>0 \textrm{ independent of }t.\end{equation} Note that \begin{equation}\label{extremal_length}\textrm{mod}(\Gamma_t)=\textrm{mod}\left(L_{t}\circ\Psi\circ\phi_{t}^{-1}(\Gamma_t)\right),\end{equation} since \[L_{t}\circ\Psi\circ\phi_{t}^{-1}: Q_t \rightarrow [0,M(Q)]\times[0,t^{-1}]\] is conformal (see, for instance, Theorem IV.3.2 of \cite{MR0344463}). The path family on the right-hand side of (\ref{extremal_length}) corresponds to the family of curves in $[0,M(Q)]\times[0,t^{-1}]$ connecting the linear segment between $t^{-1}\Psi(\zeta_m)$, $t^{-1}\Psi(\zeta_{m+1})$ with the non-adjacent linear segment connecting $t^{-1}\Psi(\zeta_l)$, $t^{-1}\Psi(\zeta_{l+1})$ (see Figure \ref{fig:extremal_length_argument}). Since $\{m,l\}\not=\{j,k\}$, at least one of these linear segments is vertical. The argument is thus finished by applying Principle \ref{constant_ratio_principle} (Constant ratio principle).

\end{itemize}

\begin{figure}
\centering
\scalebox{.45}{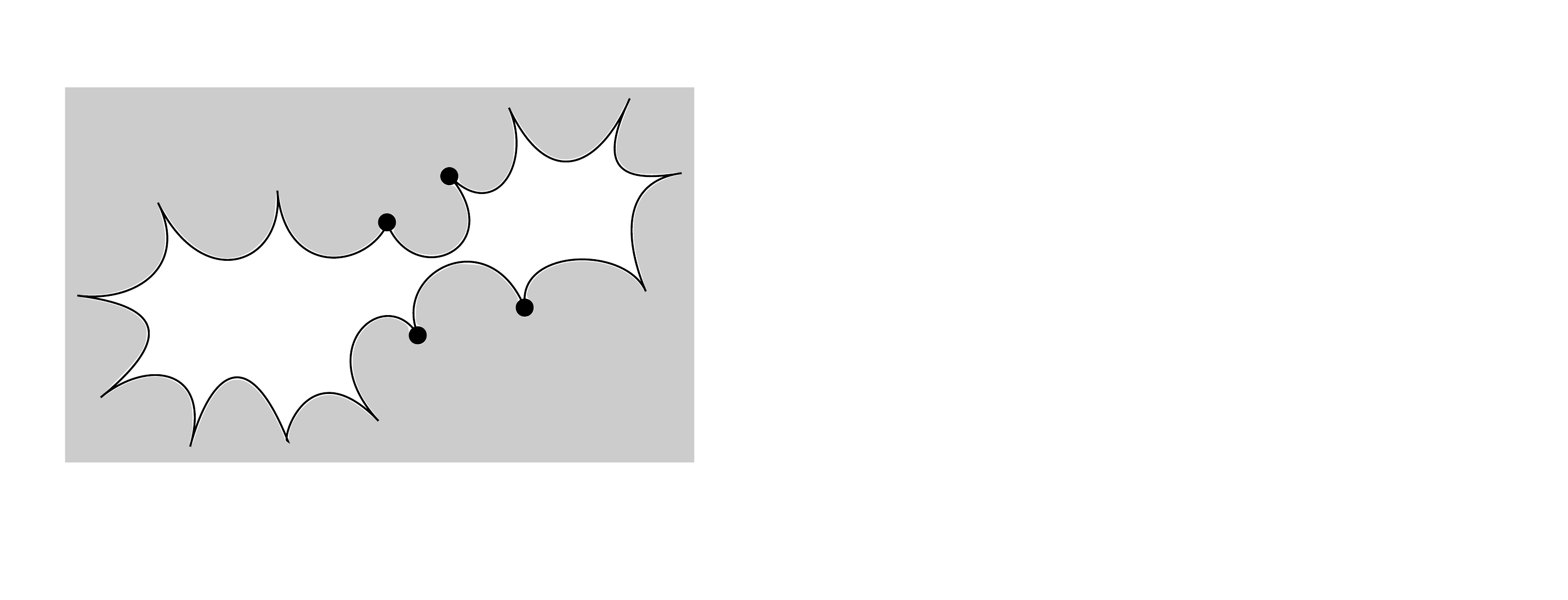}
\caption{ Illustrated is the argument that the modulus of the path family $\Gamma_t$ stays bounded away from $\infty$ as $t\rightarrow\infty$.  }
\label{fig:extremal_length_argument}      
\end{figure}

We have established that the only possible self-intersection of $\partial Q^1_\infty$ is between the arcs $f^1_{\infty}(\arc{\xi_j^\infty\xi_{j+1}^\infty}), f^1_{\infty}(\arc{\xi_k^\infty\xi_{k+1}^\infty})$. Thus if we suppose by way of contradiction that (\ref{want_to_show_intersection}) fails, $\partial Q_{\infty}$ must be a Jordan curve. We now follow the same strategy as in the proof of Proposition \ref{4_cusp_1_double}. Since $\partial Q_{\infty}$ is a Jordan curve, \[Q_\infty(\zeta^\infty_j, \zeta^\infty_{j+1}, \zeta^\infty_k, \zeta^\infty_{k+1}) \] is a topological quadrilateral, and moreover, \[M(Q_n) \rightarrow M(Q_\infty) < \infty \textrm{ as } n\rightarrow\infty, \] contradicting (\ref{needed_contradiction}).

Thus we have shown that $\partial\Omega^1_\infty$ has at least one double point: an intersection point of the curves $f^1_{\infty}(\arc{\xi_j^\infty\xi_{j+1}^\infty})$, $f^1_{\infty}(\arc{\xi_k^\infty\xi_{k+1}^\infty})$. It remains to be shown that there are no other double points for $\partial\Omega^1_\infty$. The only possibility of a second double point which we have not already ruled out is:

\begin{itemize}

\item The arcs $f^1_{\infty}(\arc{\xi_j^\infty\xi_{j+1}^\infty})$,  $f^1_{\infty}(\arc{\xi_k^\infty\xi_{k+1}^\infty})$ intersect at more than one point.

\subitem This is disallowed because of Principle \ref{constant_curvature_principle} (constant curvature principle).

\end{itemize}
\end{proof}

We summarize our discussion in the proof of Theorem~\ref{mainthm_qd_terminology} thus far. Given a bi-angled tree $\mathcal{T}$, we have quasiconformally modified the function $f_0(z)=z-1/(dz^d)$ to produce a function $f^1_{\infty}\in\Sigma_d^*$ so that the associated quadrature domain has exactly one double point corresponding to pinching an arbitrarily chosen pair of edges indexed by $\{j,k\}\in S=S_\mathcal{T}$. The remainder of the proof of Theorem \ref{mainthm_qd_terminology} proceeds in a recursive fashion: we will quasiconformally modify the function $f^1_{\infty}$ to produce another pair of pinched edges specified by $S$ (see Figure \ref{fig:quasiconformal_deformation_step_two}), and repeat this procedure until all of the pinchings specified by $S$ are achieved. We will finish the proof of Theorem \ref{mainthm_qd_terminology} by detailing the second step in this recursive procedure: subsequent steps are completely analogous. 

\begin{figure}
\centering
\scalebox{.36}{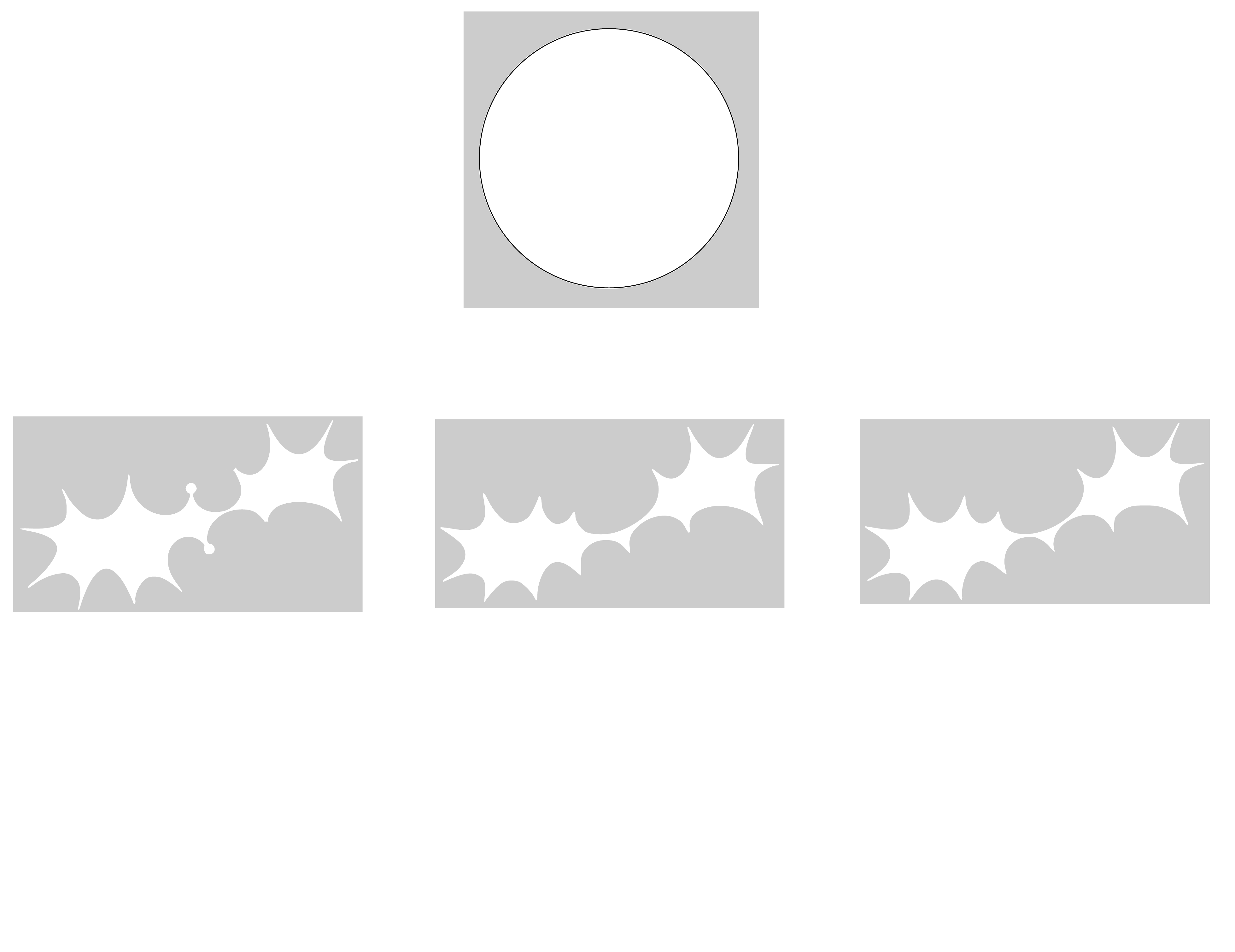}
\caption{ This Figure summarizes the second step in the proof of Theorem~\ref{mainthm_qd_terminology}. }
\label{fig:quasiconformal_deformation_step_two}      
\end{figure}

Denote by ${Q}^{1,+}_{\infty}$, ${Q}^{1,-}_\infty$ the two components of $\textrm{int}(\widehat{\mathbb{C}}\setminus\Omega^1_\infty)$. To ease notation, we will relabel the critical points of $f^1_{\infty}$ as $\xi_1, \cdots, \xi_{d+1}$ (previously denoted $\xi_1^{\infty}, \cdots, \xi_{d+1}^{\infty}$). Denote the corresponding critical values by $\zeta_1, \cdots, \zeta_{d+1}$ (previously denoted $\zeta_1^{\infty}, \cdots, \zeta_{d+1}^{\infty}$). Let $\{j',k'\}\in S\setminus\{\{j,k\}\}$. From the definition of $S$, it follows that both the curves \[ f^1_{\infty}(\arc{\xi_{j'}\xi_{j'+1}})\textrm{, } f^1_{\infty}(\arc{\xi_{k'}\xi_{k'+1}}) \] must have non-empty intersection with the boundary of a common component of $\textrm{int}(\widehat{\mathbb{C}}\setminus\Omega^1_\infty)$, say ${Q}_{\infty}^{1,+}$. We consider the quadrilateral ${Q}_{\infty}^{1,+}$ with vertices \[{Q}_{\infty}^{1,+}(\zeta_{j'}, \zeta_{j'+1}, \zeta_{k'}, \zeta_{k'+1}) \textrm{ if } j', k' \not\in\{j,k\} \] and, otherwise, if say $j'=j$, we consider the quadrilateral \[ {Q}_{\infty}^{1,+}(x, \zeta_{j'+1}, \zeta_{k'}, \zeta_{k'+1}), \] where $x$ is the unique double point of Proposition~\ref{d+1_cusp_1_double}. We henceforth consider the case that $j'=j$ (the other setting is similar). Again we consider a conformal map \[\Psi: {Q}_{\infty}^{1,+} \rightarrow [0,M({Q}_{\infty}^{1,+})]\times[0,1] \] which preserves vertices, and define a Beltrami coefficient $\mu_t$ in ${Q}_{\infty}^{1,+}$ by pulling back \[ \nu_t=(L_t)_{\overline{z}}/(L_t)_{z}\equiv(t-1)/(t+1)\textrm{, } t\in[1,\infty)\] under $\psi$: $\mu_t(z)=\psi^*(\nu_t(\psi(z)))$ for $z\in{Q}_{\infty}^{1,+}$. For $z\in Q_{\infty}^{1,-}$, define $\mu_t(z)=0$. Given the definition of $\mu_t$ in $\textrm{int}(\widehat{\mathbb{C}}\setminus\Omega_\infty^1)$, we define as before: \[ \mu_t(\sigma^{-n}(z) )= (\sigma^{\circ n})^*( \mu_t(z) ) \textrm{ for } z\in\textrm{int}(\widehat{\mathbb{C}}\setminus\Omega_\infty^1)  \textrm{ and } n\geq 1, \] where $\sigma$ is the Schwarz reflection map associated to $\Omega^1_\infty$. Lastly, $\mu_t$ is defined to be $0$ elsewhere. Analogous  proofs of Propositions \ref{normalizing_rational_maps_general} - \ref{conformal_limit_general} hold to yield:

\begin{prop}\label{final_normalizing_rational_maps_general} With notation as above, there exist a family of quasiconformal maps $(\phi_t)_{t\in[1,\infty)}:\widehat{\C}\rightarrow\widehat{\C}$, and a family of rational maps $(f_t)_{t\in[1,\infty)}$ such that $(\phi_t)_{\overline{z}}/(\phi_t)_{z}=\mu_t$ a.e., $\phi_t(\infty)=\infty$, $f_1=f_\infty^1$, $f_t\in\Sigma_d^*$, and $f_t(\widehat{\mathbb{C}}\setminus\overline{\D})=\phi_t(\Omega_\infty^1)$ for all $t\in[1,\infty)$. Moreover, there exist a map ${f}^2_{\infty}\in\Sigma_d^*$ and a sequence of positive real numbers $(t_n)\nearrow\infty$ such that \[(f_{t_n})\xrightarrow[]{n\rightarrow\infty} f^2_{\infty} \textrm{  uniformly on   }\widehat{\mathbb{C}} \] with respect to the spherical metric. \end{prop}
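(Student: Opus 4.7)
The argument is entirely parallel to that of Propositions~\ref{normalizing_rational_maps_general} and~\ref{conformal_limit_general}, with the map $f_0$ and its Schwarz reflection $\sigma_0$ replaced throughout by $f_\infty^1$ and its Schwarz reflection $\sigma$. The plan is to first verify that $\mu_t$ is a well-defined $\sigma$-invariant Beltrami coefficient of norm $<1$, then apply Proposition~\ref{qc_def_prop} to obtain the straightening $\phi_t$ and the rational map $f_t\in\Sigma_d^*$, and finally extract a normal limit using compactness.

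For the first step, observe that by construction $\mu_t$ has $\|\mu_t\|_\infty=(t-1)/(t+1)<1$ on the fundamental tile $Q_\infty^{1,+}$, while it vanishes on $Q_\infty^{1,-}$. The spreading rule $\mu_t(\sigma^{-n}(z))=(\sigma^{\circ n})^*(\mu_t(z))$ is well-defined because $\sigma$ is a branched covering from $\sigma^{-1}(\Omega_\infty^1)$ onto $\Omega_\infty^1$ branched only at $\infty\in\mathcal{B}_\infty(\sigma)$ (Proposition~\ref{s.c.q.d.}), so the pullbacks across the iterated preimages of the desingularized droplet $T^0$ are determined unambiguously and isometrically in the $L^\infty$ norm. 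Setting $\mu_t\equiv 0$ outside $\bigcup_{n\geq0}\sigma^{-n}(T^0)=T^\infty(\sigma)$ yields a measurable coefficient on $\widehat{\mathbb{C}}$ with $\|\mu_t\|_\infty<1$, and by Proposition~\ref{limit_schwarz_zero_area} the boundary set $\mathcal{L}(\sigma)$ where $\mu_t$ could fail to be defined has zero area. The identity $\mu_t(\sigma^{-1}(z))=\sigma^*(\mu_t(z))$ is precisely $\sigma$-invariance.

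Having verified these hypotheses, the measurable Riemann mapping theorem produces a quasiconformal solution $\phi_t:\widehat{\mathbb{C}}\to\widehat{\mathbb{C}}$, and Proposition~\ref{qc_def_prop} furnishes the normalization of $\phi_t$ (fixing $\infty$ together with an affine correction and a rotation by a $(d+1)$-st root of unity) so that $\phi_t(\Omega_\infty^1)=f_t(\widehat{\mathbb{C}}\setminus\overline{\mathbb{D}})$ for some $f_t\in\Sigma_d^*$. At $t=1$ the Beltrami coefficient vanishes and the normalization recovers $\phi_1=\textrm{id}$ and $f_1=f_\infty^1$. Continuous dependence of $\phi_t$ on $t$ (see \cite[Theorem~7.5]{CG1}) shows that the map $t\mapsto f_t$ is continuous into $\Sigma_d^*$.

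For the limit, since each $f_t\in\Sigma_d^*$ is univalent on $\widehat{\mathbb{C}}\setminus\overline{\mathbb{D}}$, fixes $\infty$, and has derivative $1$ at $\infty$, the family $\{f_t|_{\widehat{\mathbb{C}}\setminus\overline{\mathbb{D}}}\}$ is normal (see Remark~\ref{normal_rem}). Alternatively, as in the proof of Proposition~\ref{conformal_limit}, the representation $f_t(z)=z+\sum_{k=1}^{d-1} a_k^t/z^k-1/(dz^d)$ combined with the factorization $z^{d+1}f_t'(z)=\prod(z-\xi_j^t)$ with all roots of modulus $1$ bounds the coefficients $|a_k^t|$ uniformly in $t$. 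Extracting a subsequence $(t_n)\nearrow\infty$ along which these coefficients converge yields a limit $f_\infty^2(z)=z+\sum a_k/z^k-1/(dz^d)$, with $f_\infty^2\in\Sigma_d^*$ by a Hurwitz-type argument, and convergence $f_{t_n}\to f_\infty^2$ is uniform on $\widehat{\mathbb{C}}$ in the spherical metric because $f_{t_n}-f_\infty^2$ is a Laurent polynomial whose coefficients all tend to zero. The only mildly nontrivial point is the first one, namely that the new base domain $\Omega_\infty^1$ possesses a double point; however, since the dynamical decomposition $\widehat{\mathbb{C}}=\overline{T^\infty(\sigma)}\sqcup\mathcal{B}_\infty(\sigma)$ of Proposition~\ref{dynamical_partition_schwarz} and the zero-area property of $\mathcal{L}(\sigma)$ persist verbatim in this setting, the pullback construction goes through without modification.
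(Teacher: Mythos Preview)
Your proof is correct and follows exactly the approach indicated by the paper, which simply states that analogous proofs of Propositions~\ref{normalizing_rational_maps_general} and~\ref{conformal_limit_general} hold. You have helpfully spelled out the verification that the pullback construction of $\mu_t$ and the compactness argument go through unchanged when the base map $f_0$ is replaced by $f_\infty^1$ (with its double point), which is precisely what the paper leaves to the reader.
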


\noindent We define $\Omega^2_\infty:=f^2_{\infty}(\widehat{\mathbb{C}}\setminus\overline{\mathbb{D}})$, and show:

\begin{prop}\label{d+1_cusp_2_double}
The boundary of the quadrature domain $\Omega^2_\infty$ has $d+1$ cusps and $2$ double points.
\end{prop}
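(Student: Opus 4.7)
The plan is to follow the template of the proof of Proposition~\ref{d+1_cusp_1_double}, adding two new ingredients: carrying the previously-created double point through the second deformation, and ensuring that the new pinching does not interact with the old one. First, the $d+1$ cusps on $\partial\Omega^2_\infty$ are immediate from Proposition~\ref{crit_points_on_circle} applied to $f^2_\infty \in \Sigma_d^*$. Enumerate the critical points of $f_{t_n}$ in counterclockwise order as $\xi_i^{t_n}$, and pass to a subsequence so that $\xi_i^{t_n} \to \xi_i^\infty$, all distinct on $\mathbb{T}$ (since $f^2_\infty \in \Sigma_d^*$), with corresponding critical values $\zeta_i^{t_n} \to \zeta_i^\infty$.

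To produce the new double point, I will apply the same modulus argument as in Proposition~\ref{d+1_cusp_1_double}, using that the map
\[ L_{t_n} \circ \Psi \circ \phi_{t_n}^{-1} : Q_{t_n} \to [0,\, M(Q^{1,+}_\infty)] \times [0,\, 1/t_n] \]
is conformal and preserves the vertices corresponding to $\{j',k'\}$. Consequently the modulus of $Q_{t_n}$ tends to $\infty$. The same enumeration of a priori possible intersections as in Proposition~\ref{d+1_cusp_1_double} (self-intersection of a single arc, intersection of adjacent arcs, intersection of non-adjacent arcs) shows that only the arcs $f^2_\infty(\arc{\xi^\infty_{j'}\xi^\infty_{j'+1}})$ and $f^2_\infty(\arc{\xi^\infty_{k'}\xi^\infty_{k'+1}})$ can meet, and must indeed meet (else the quadrilateral moduli would converge to a finite limit, contradicting unboundedness of $M(Q_{t_n})$). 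This produces a new double point $x'_\infty$. When the index $j'$ collides with $j$, the vertex $x$ of the quadrilateral plays the role of one of the $\zeta$'s, with $\phi_{t_n}(x)$ tracking its image; the argument is otherwise unchanged.

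To carry the old double point to the limit, observe that since $\phi_t$ is a homeomorphism of $\widehat{\C}$ and $f_t(\widehat{\mathbb{C}}\setminus\overline{\mathbb{D}}) = \phi_t(\Omega^1_\infty)$, the point $\phi_t(x)$ is a double point of $\partial(f_t(\widehat{\mathbb{C}}\setminus\overline{\mathbb{D}}))$ for every $t\in[1,\infty)$. Passing to a further subsequence, $\phi_{t_n}(x) \to x_\infty \in \partial\Omega^2_\infty$ by uniform convergence of $f_{t_n}$. To verify that $x_\infty$ is a double point (and not a cusp or a smooth point of $\partial\Omega^2_\infty$), I will track the two preimages $y_1^{t_n},\, y_2^{t_n}\in\mathbb{T}$ of $\phi_{t_n}(x)$ under $f_{t_n}$: by construction of the first pinching, they lie in the interiors of two disjoint arcs between consecutive critical points of $f_{t_n}$ (those indexed by the first pair $\{j,k\}$). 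After passing to subsequences, $y_i^{t_n}\to y_i^\infty\in\mathbb{T}$, and the separation by (distinct, non-adjacent) critical-point arcs forces $y_1^\infty\neq y_2^\infty$, while uniform convergence gives $f^2_\infty(y_1^\infty) = f^2_\infty(y_2^\infty) = x_\infty$. Since the arc-pairs pinching to $x_\infty$ and $x'_\infty$ are different, $x_\infty \neq x'_\infty$.

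Finally, to exclude any additional double points, I will apply the same trichotomy as in Proposition~\ref{d+1_cusp_1_double}: Principle~\ref{constant_curvature_principle} forbids self-intersections of a single arc $f^2_\infty(\arc{\xi_i^\infty\xi_{i+1}^\infty})$, multiple intersections of adjacent arcs, and multiple intersections of the pinched pairs; while for any non-adjacent pair $\{m,l\}\notin\{\{j,k\},\{j',k'\}\}$, Principle~\ref{constant_ratio_principle} applied inside the rectangle $[0,M(Q^{1,+}_\infty)]\times[0,1/t_n]$ (together with Theorem~\ref{modulus_estimate}) gives a uniform bound on the modulus of the corresponding quadrilateral, precluding such a pinching in the limit. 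The main obstacle I expect is the persistence step: one must argue carefully that the preimages $y_i^{t_n}$ do not collide in the limit, which is where the fact that the two pinched arcs of the first step are bordered by \emph{distinct} critical points of $f^2_\infty$ is essential. Once the proposition is established, iterating this two-step procedure a total of $|S_\mathcal{T}|=d-2$ times yields an extremal unbounded quadrature domain whose associated bi-angled tree is isomorphic to $\mathcal{T}$.
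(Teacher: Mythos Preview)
Your outline follows the paper's approach and the two main ideas---producing the new double point via the modulus argument on the deformed quadrilateral, and carrying the old double point through by tracking its two $\mathbb{T}$-preimages and showing they cannot collide---are both correct and essentially as in the paper.

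There is, however, a genuine gap in your exclusion of additional double points. You propose to rule out every non-adjacent pair $\{m,l\}\notin\{\{j,k\},\{j',k'\}\}$ by applying Principle~\ref{constant_ratio_principle} inside the rectangle $[0,M(Q^{1,+}_\infty)]\times[0,1/t_n]$. But the conformal map $L_{t_n}\circ\Psi\circ\phi_{t_n}^{-1}$ is only defined on $\phi_{t_n}(Q^{1,+}_\infty)$, not on the whole droplet: the component $\phi_{t_n}(Q^{1,-}_\infty)$ does not map into that rectangle at all, so for arc-pairs lying on $\partial Q^{1,-}_\infty$ your argument says nothing. The paper handles this by observing that the Beltrami coefficient $\mu_t$ was defined to be $0$ on $Q^{1,-}_\infty$, so $\phi_{t_n}$ is \emph{conformal} on $Q^{1,-}_\infty$. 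Consequently, for any quadrilateral $Q^{1,-}_\infty(\zeta_m,\zeta_{m+1},\zeta_l,\zeta_{l+1})$ one has
\[
M\bigl(\phi_{t_n}(Q^{1,-}_\infty)(\zeta_m^{t_n},\zeta_{m+1}^{t_n},\zeta_l^{t_n},\zeta_{l+1}^{t_n})\bigr)=M\bigl(Q^{1,-}_\infty(\zeta_m,\zeta_{m+1},\zeta_l,\zeta_{l+1})\bigr),
\]
a fixed finite number independent of $n$; together with Theorem~\ref{modulus_estimate} this precludes any new pinching in $Q^{1,-}_\infty$. You should also treat separately the split sub-arcs $f^2_\infty(\arc{\xi_-^\infty\xi_{k+1}^\infty})$ and $f^2_\infty(\arc{\xi_j^\infty\xi_+^\infty})$ that border $Q^{1,-}_\infty$ and contain the old double point; the same conformality argument applies to them.
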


\begin{proof} We introduce some notation (see also Figure \ref{fig:quasiconformal_deformation_step_two}).  Proposition~\ref{crit_points_on_circle} implies that each $f_t$ has $d+1$ distinct critical points on $\mathbb{T}$. We label these critical points in counterclockwise order as $\xi_1^t, \cdots, \xi_{d+1}^t$, with $\xi_1^t$ having the minimal argument where we take the branch $0\leq\arg(z)<2\pi$. We denote $\zeta_i^t=f_t(\xi_i^t)$, and recall the notation $\zeta_i=f_{\infty}^1(\xi_i)$. As in the proof of Proposition \ref{d+1_cusp_1_double}, we have that \begin{equation}\label{dependence_on_t2}\zeta_i^t=\phi_t(\zeta_i) \textrm{ for } 1\leq i \leq d+1 \textrm{ and } 1\leq t<\infty.\end{equation} By taking a further subsequence in $n$ if necessary, we have that \[ \xi_1^{t_n} \xrightarrow{n\rightarrow\infty} \xi_1^\infty, \cdots, \xi_{d+1}^{t_n} \xrightarrow{n\rightarrow\infty} \xi_{d+1}^\infty\textrm{} \] for some $\xi_1^\infty, \cdots, \xi_{d+1}^\infty \in \partial\mathbb{D}$. Since $f_{t_n}\rightarrow f^2_{\infty}$ uniformly in the spherical metric, it follows that $\xi_1^\infty, \cdots, \xi_{d+1}^\infty$ are critical points for $f^2_{\infty}$. Moreover,  $\xi_1^\infty, \cdots, \xi_{d+1}^\infty$ are distinct since $f^2_{\infty}\in\Sigma_d^*$.

Let $x_t:=\phi_t(x)$. Note that $x_t$ is a double point of the quadrature domain $\phi_t(\Omega^1_\infty)$ for each $t$. Let $\xi_+^t$, $\xi_-^t \in \mathbb{T}$ denote the preimages of $x_t$ under $f_t$. By taking a further subsequence in $n$ if necessary, \[ \xi^{t_n}_+ \rightarrow \xi^{\infty}_+\textrm{, and } \xi^{t_n}_- \rightarrow \xi^{\infty}_- \textrm{ as } n\rightarrow \infty,\] for $ \xi^{\infty}_+$, $\xi^{\infty}_- \in \mathbb{T}$ respectively.  By uniform convergence of $f_{t_n}\rightarrow f^2_{\infty}$, it follows that $f^2_{\infty}(\xi^{\infty}_+)=f^2_{\infty}(\xi^{\infty}_-)$. We claim that $\xi^{\infty}_+\not=\xi^{\infty}_-$. Indeed, $\arc{\xi^{t_n}_+\xi^{t_n}_-}$ contains at least two critical points of $f_{t_n}$ for each $n$, so that $\xi^{\infty}_+=\xi^{\infty}_-$ would imply that the common value $\xi^{\infty}_+=\xi^{\infty}_-$ is a double critical point for $f^2_{\infty}$, however this violates conformality of $f^2_\infty$ in $\mathbb{C}\setminus\mathbb{D}$. Thus $\xi^{\infty}_+\not=\xi^{\infty}_-$, and so $x_\infty:=f^2_{\infty}(\xi^{\infty}_+)=f^2_{\infty}(\xi^{\infty}_-)$ is a double point of $\partial\Omega^2_\infty$.

Curvature considerations yield that neither $\xi^{\infty}_+$, nor $\xi^{\infty}_-$ can be cusps, and since $\arc{\xi^{t_n}_+\xi^{t_n}_-}$ contains at least two critical points of $f_{t_n}$ for each $n$, it follows that $\xi^{\infty}_+$, $\xi^{\infty}_-$ are distinct, non-critical points on the unit circle. Thus $f^2_{\infty}(\xi^{\infty}_+)=f^2_{\infty}(\xi^{\infty}_-)$ is a double point of $\phi_\infty(\Omega^1_\infty)$.

The same arguments as in Proposition \ref{d+1_cusp_1_double} apply verbatim to show that precisely one double point is introduced in \[ \phi_{t_n}(Q^{1,+}_{\infty})\left( x_{t_n}, \zeta_{j'+1}^{t_n}, \zeta_{k'}^{t_n}, \zeta_{k'+1}^{t_n} \right) \] as $n\rightarrow\infty$ corresponding to an intersection point of the arcs \[   f_\infty^2(\arc{\xi_{+}^{\infty} \xi_{j'+1}^{\infty}}), f_\infty^2(\arc{\xi_{k'}^{\infty}\xi_{k'+1}^{\infty})}.   \] Thus it remains to be shown that no double points are introduced in  $\phi_{t_n}(Q^{1,-}_{\infty})$  as  $n\rightarrow\infty$. As before, we consider all such a priori possible double points and rule out each, in turn:

\begin{itemize} 

\item An arc $f^1_\infty(\arc{\xi^i_\infty\xi^{i+1}_\infty})$ has a self-intersection.

\subitem This is disallowed because of Principle \ref{constant_curvature_principle} (constant curvature principle). 

\item Two adjacent arcs $f^1_{\infty}(\arc{\xi^i_\infty\xi^{i+1}_\infty})\textrm{, } f^1_{\infty}(\arc{\xi^{i+1}_\infty\xi^{i+2}_\infty})$ intersect at more than their common endpoint.

\subitem This is disallowed because of Principle \ref{constant_curvature_principle} (constant curvature principle). 

\item Non-adjacent arcs $f^2_{\infty}(\arc{\xi_m^\infty\xi_{m+1}^\infty})$,  $f^2_{\infty}(\arc{\xi_l^\infty\xi_{l+1}^\infty})$ have non-empty intersection, where both curves $f^1_{\infty}(\arc{\xi_m\xi_{m+1}})$,  $f^1_{\infty}(\arc{\xi_l\xi_{l+1}})$ lie on the boundary of $Q_\infty^{1,-}$.

\subitem This follows from conformal invariance of extremal length. Indeed, consider the quadrilateral $\phi_{t_n}(Q_\infty^{1,-})(\zeta_m^{t_n}, \zeta_{m+1}^{t_n}, \zeta_l^{t_n}, \zeta_{l+1}^{t_n})$, and let $s_a(n)$, $s_b(n)$ denote the Euclidean path-distance between its a, b-sides. If $f^2_{\infty}(\arc{\xi_m^\infty\xi_{m+1}^\infty})$,  $f^2_{\infty}(\arc{\xi_l^\infty\xi_{l+1}^\infty})$ have non-empty intersection, then $s_a(n) \rightarrow 0$ but $s_b(n) \not \rightarrow 0$ as $n\rightarrow\infty$. Thus by Theorem \ref{modulus_estimate}, \[ M\left(\phi_{t_n}(Q_\infty^{1,-})(\zeta_m^{t_n}, \zeta_{m+1}^{t_n}, \zeta_l^{t_n}, \zeta_{l+1}^{t_n})\right) \rightarrow \infty  \textrm{ as } t\rightarrow\infty.\] However, $\phi_{t_n}$ is conformal on $Q_\infty^{1,-}$ for each $n$, so that \begin{equation}\label{conformal_invariance} M\left(\phi_{t_n}(Q_\infty^{1,-})(\zeta_m^{t_n}, \zeta_{m+1}^{t_n}, \zeta_l^{t_n}, \zeta_{l+1}^{t_n})\right) = M \left( Q_\infty^{1,-}(\zeta_m, \zeta_{m+1}, \zeta_l, \zeta_{l+1}) \right) \end{equation} for all $n$, and this is our needed contradiction since the right-hand side of (\ref{conformal_invariance}) is finite and does not depend on $n$.

\item One of the arcs $f^2_{\infty}(\arc{\xi_{-}^{\infty} \xi_{k+1}^{\infty}})$, $f^2_{\infty}(\arc{\xi_{j}^{\infty} \xi_{+}^{\infty}})$ has nonempty intersection with a non-adjacent arc $f^2_\infty(\arc{\xi_i^\infty\xi_{i+1}^\infty})$.

\subitem The same argument as above applies.

\end{itemize}
\end{proof}

As already mentioned, iterating the above procedure $|S|$ many times yields a rational function \[f_\infty\equiv f^{|S|}_\infty\in\Sigma_d^*\] such that the quadrature domain \[\Omega_\infty:=f_\infty(\widehat{\mathbb{C}}\setminus\overline{\mathbb{D}})\] has precisely $d+1$ cusps and $|S|$ double points. 

Let $\mathcal{T}_\infty$ denote the tree associated to $\Omega_\infty$ (see Definition \ref{associated_tree}). We finish the proof of Theorem \ref{mainthm_qd_terminology} by showing that $\mathcal{T}_{\infty}$ is isomorphic to $\mathcal{T}$ (see also Figure \ref{fig:orientation}). For each fundamental tile $T$ of $\Omega_\infty$, there is a unique set of three distinct integers $j$, $k$, $l \in \{1, \cdots, d+1\}$ such that the pairwise intersections of $f_\infty(I_{j})$, $f_\infty(I_{k})$, $f_\infty(I_{l})$ form the three singular points of $\partial T$. Moreover, by construction, $f_\infty(I_{j})$, $f_\infty(I_{k})$, $f_\infty(I_{l})$ have pairwise non-empty intersection if and only if $\phi(J_j)$, $\phi(J_k)$, $\phi(J_l)$ intersect at a common vertex $v$ of $\mathcal{T}$ (see Definition~\ref{definition_of_S}). Thus, this defines a natural bijection $T \mapsto v$ between the vertices of $\mathcal{T}_\infty$ and the vertices of $\mathcal{T}$. Moreover, this bijection naturally extends to the vertex sets of the augmented trees. It is straightforward to check that this bijection is indeed an isomorphism of the augmented trees (namely that it preserves edge structure, and cyclic ordering of edges). By Lemma \ref{angled_augmented_equiv_lem}, it follows that $\mathcal{T}_\infty$ and $\mathcal{T}$ are isomorphic.

\begin{figure}
\centering
\scalebox{.22}{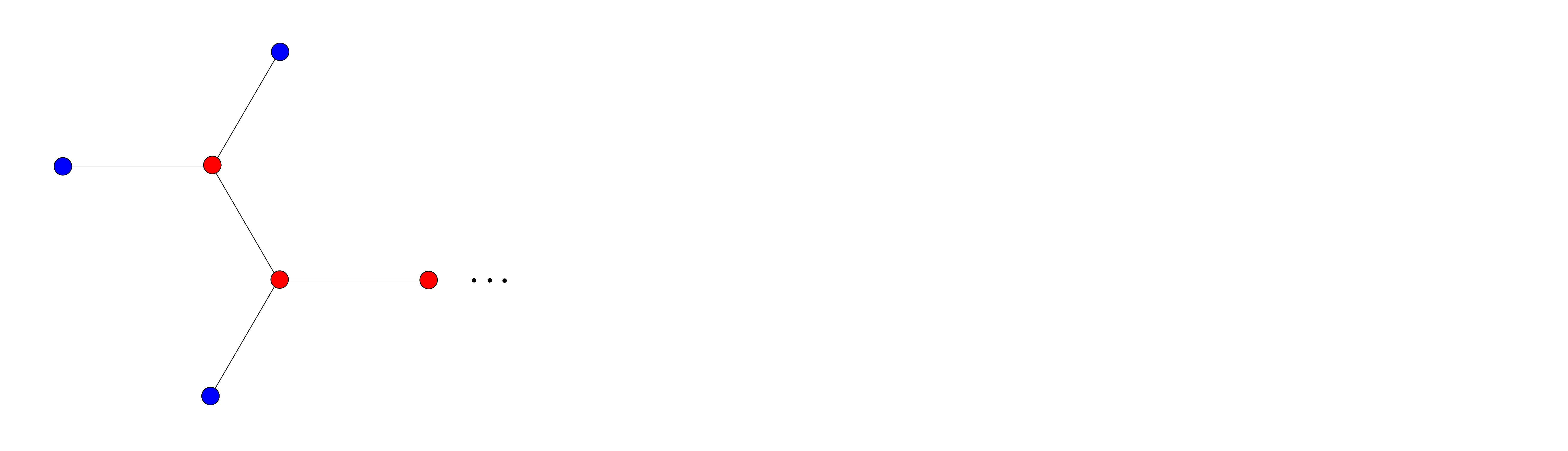}
\caption{Pictured on the left is part of a bi-angled tree $\mathcal{T}$. Also shown is $\widehat{\mathcal{T}}$, where only the degree 1 blue vertices of $\widehat{\mathcal{T}}$ have been illustrated for clarity. In the middle are shown the arcs $J_k$ on $\mathbb{T}$ and the pre-images (under $\phi$) of the degree 1 blue vertices of $\widehat{\mathcal{T}}$. Elements of $S=S_{\mathcal{T}}$ have been illustrated by lines connecting the arcs $J_k$. On the right is shown part of the quadrature domain $\Omega_\infty$ obtained in Theorem \ref{mainthm_qd_terminology}. Note that the correct oriented angle between edges in $T_\infty$ is ensured by the identifications in $S$.}
\label{fig:orientation}      
\end{figure}

\end{proof}

Lastly, we remark that the technical work done in Section~\ref{mainthm_proof} may be summarized as proving the following Theorem:

\begin{thm}\label{pinching_theorem} \emph{(Pinching Theorem)} Let $f\in\Sigma_d^*$. Choose a counter-clockwise labelling $\xi_1, \cdots, \xi_{d+1}$ of the (necessarily distinct) critical points of $f$ on $\mathbb{T}$. Suppose that \[f(\arc{\xi_{j}\xi_{j+1}}), f(\arc{\xi_{k}\xi_{k+1}})\] do not intersect, but both intersect the boundary of a common bounded component of $\widehat{\C}\setminus f(\mathbb{T})$. 

Then there exist $f_{\infty}\in\Sigma_d^*$ and a counter-clockwise labelling  $\xi_1^\infty, \cdots, \xi_{d+1}^\infty$ of the (necessarily distinct) critical points of $f_\infty$ on $\mathbb{T}$ such that the arcs \[f_\infty(\arc{\xi_{j}^\infty\xi_{j+1}^\infty}), f_\infty(\arc{\xi_{k}^\infty\xi_{k+1}^\infty})\] have a (necessarily unique) intersection point. Furthermore, if $\{m, n\}\not=\{j,k\}$, then \[f_\infty(\arc{\xi_{m}^\infty\xi_{m+1}^\infty})\textrm{, } f_\infty(\arc{\xi_{n}^\infty\xi_{n+1}^\infty}) \textrm{ intersect if and only if } f(\arc{\xi_{m}\xi_{m+1}})\textrm{, } f(\arc{\xi_{n}\xi_{n+1}}) \textrm{ intersect. }\]

\end{thm}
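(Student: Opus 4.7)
The strategy is to mimic the one-step deformation of Section~\ref{mainthm_proof} with $f$ in the role of the base map. Set $\Omega := f(\widehat{\C}\setminus\overline{\D})$, let $\sigma$ denote the associated Schwarz reflection, and let $C$ be the common bounded component of $\interior(\widehat{\C}\setminus\Omega)$ meeting both arcs. Writing $\zeta_i := f(\xi_i)$, view $C$ as a topological quadrilateral $C(\zeta_j,\zeta_{j+1},\zeta_k,\zeta_{k+1})$ with $a$-sides $f(\arc{\xi_j\xi_{j+1}})$ and $f(\arc{\xi_k\xi_{k+1}})$, and fix a vertex-preserving conformal isomorphism $\Psi : C \to [0,M(C)]\times[0,1]$. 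For $t\in[1,\infty)$ I would define a Beltrami coefficient $\mu_t$ on $\widehat{\C}$ by pulling back the constant dilatation $(t-1)/(t+1)$ of $L_t(x,y)=(x,y/t)$ through $\Psi$ on $C$, setting $\mu_t\equiv 0$ on the remaining fundamental tiles, spreading via $(\sigma^{\circ n})^*$ across the whole tiling set, and setting $\mu_t\equiv 0$ on $\mathcal{B}_\infty(\sigma)$. By construction $\mu_t$ is $\sigma$-invariant, so Proposition~\ref{qc_def_prop} supplies quasiconformal maps $\phi_t$ integrating $\mu_t$ (with $\phi_t(\infty)=\infty$) and rational maps $f_t\in\Sigma_d^*$ satisfying $f_t(\widehat{\C}\setminus\overline{\D})=\phi_t(\Omega)$.

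Compactness of $\Sigma_d^*$ yields a sequence $t_n\nearrow\infty$ and a limit $f_\infty\in\Sigma_d^*$ with $f_{t_n}\to f_\infty$ uniformly on $\widehat{\C}$. By Proposition~\ref{crit_points_on_circle}, $f_\infty$ has $d+1$ distinct simple critical points $\xi_1^\infty,\ldots,\xi_{d+1}^\infty$ on $\mathbb{T}$, which (after a further subsequence) are the limits of the critical points $\xi_1^{t_n},\ldots,\xi_{d+1}^{t_n}$ of $f_{t_n}$; set $\zeta_i^{t_n}:=f_{t_n}(\xi_i^{t_n})=\phi_{t_n}(\zeta_i)$ and $Q_n:=\phi_{t_n}(C)$. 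Then $L_{t_n}\circ\Psi\circ\phi_{t_n}^{-1}$ is a vertex-preserving conformal isomorphism from $Q_n(\zeta_j^{t_n},\zeta_{j+1}^{t_n},\zeta_k^{t_n},\zeta_{k+1}^{t_n})$ onto $[0,M(C)]\times[0,1/t_n]$, so its modulus equals $t_n M(C)$ and diverges. If the $a$-sides $f_\infty(\arc{\xi_j^\infty\xi_{j+1}^\infty})$ and $f_\infty(\arc{\xi_k^\infty\xi_{k+1}^\infty})$ were disjoint in the limit (and, by the non-intersection facts established in the next paragraph, no other degeneration occurs), the quadrilaterals $Q_n$ would converge to a topological quadrilateral $Q_\infty$ with $M(Q_\infty)<\infty$ by Theorem~\ref{convergence_of_quadrilaterals}, a contradiction. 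Hence these two arcs must meet, and Principle~\ref{constant_curvature_principle} forbids constant-curvature arcs from meeting twice, giving uniqueness.

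The main work, and what I expect to be the principal obstacle, is to verify that no other pair of arcs changes intersection status. Existing intersections persist under uniform convergence: a double point $p=f(\xi_+)=f(\xi_-)$ (with $\xi_\pm$ in distinct arcs, hence separated on $\mathbb{T}$ by at least two critical points) yields double points $\phi_{t_n}(p)$ of $f_{t_n}(\mathbb{T})$ whose preimages $\xi_\pm^{t_n}$ converge along a subsequence to distinct limits $\xi_\pm^\infty$ (distinct by simplicity of the critical points of $f_\infty$), producing the required double point of $f_\infty(\mathbb{T})$. Absence of new intersections is handled by case analysis, mirroring Proposition~\ref{d+1_cusp_2_double}: Principle~\ref{constant_curvature_principle} kills self-intersections of single arcs and intersections of adjacent arcs; for a pair of non-adjacent arcs $\{m,n\}\neq\{j,k\}$ bounding a fundamental tile $C'\neq C$, conformality of $\phi_{t_n}$ on $C'$ (where $\mu_t\equiv 0$) gives
\[ M\bigl(\phi_{t_n}(C')(\zeta_m^{t_n},\zeta_{m+1}^{t_n},\zeta_n^{t_n},\zeta_{n+1}^{t_n})\bigr)=M\bigl(C'(\zeta_m,\zeta_{m+1},\zeta_n,\zeta_{n+1})\bigr), \]
constant in $n$, and a limiting intersection would force $s_a(n)\to 0$ and the modulus to diverge by Theorem~\ref{modulus_estimate}, a contradiction; finally, for a pair $\{m,n\}\neq\{j,k\}$ bounding $C$ itself, transport via $L_{t_n}\circ\Psi\circ\phi_{t_n}^{-1}$ into $[0,M(C)]\times[0,1/t_n]$ and invoke Principle~\ref{constant_ratio_principle} (the constant ratio principle) to bound the transported quadrilateral's modulus independently of $n$, so that Theorem~\ref{modulus_estimate} again precludes a limiting intersection. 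These three subcases together close the argument.
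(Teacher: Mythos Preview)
Your proposal is correct and follows essentially the same route as the paper: the paper does not give a separate proof of the Pinching Theorem but states it as a summary of the technical work in Section~\ref{mainthm_proof}, and your outline reproduces precisely the one-step deformation carried out in Propositions~\ref{final_normalizing_rational_maps_general}--\ref{d+1_cusp_2_double} (stretch a rectangle in one fundamental tile, spread by $\sigma$, take a limit in $\Sigma_d^*$, and run the same curvature/modulus case analysis).

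One small imprecision worth flagging: you write the quadrilateral as $C(\zeta_j,\zeta_{j+1},\zeta_k,\zeta_{k+1})$ with $a$-sides the full arcs $f(\arc{\xi_j\xi_{j+1}})$ and $f(\arc{\xi_k\xi_{k+1}})$, but when $f$ already has double points these arcs may not lie entirely on $\partial C$, and some of the critical values $\zeta_j,\zeta_{j+1},\zeta_k,\zeta_{k+1}$ may lie on a neighbouring tile. The paper handles exactly this situation in its second step by replacing the offending critical-value vertex by the double point $x$ (writing ${Q}_{\infty}^{1,+}(x,\zeta_{j'+1},\zeta_{k'},\zeta_{k'+1})$); with that adjustment the rest of your argument goes through unchanged, since the resulting intersection of sub-arcs is a fortiori an intersection of the full arcs.
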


\section{Bi-angled Trees Determine Extremal Quadrature Domains}\label{rigidity_qd_sec}

The goal of this section is to prove that the bi-angled tree associated with an extremal unbounded quadrature domain uniquely determines the quadrature domain up to an affine map.

\begin{thm}\label{rigidity_extremal_qd_thm}
Let $\widetilde{\Omega}$ and $\Omega$ be two unbounded extremal quadrature domains such that $\mathcal{T}( \widetilde{\Omega})$ and $\mathcal{T}(\Omega)$ are isomorphic as bi-angled trees. Then, there exists an affine map $A$ with $A( \widetilde{\Omega})=\Omega$.
\end{thm}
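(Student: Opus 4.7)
The plan is to follow a four-stage strategy culminating in a pullback argument borrowed from holomorphic dynamics, along the lines sketched in the introduction.

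\textbf{Step 1 (conformal equivalence of droplets).} Write $T = \mathbb{C}\setminus\Omega$ and $\widetilde{T} = \mathbb{C}\setminus\widetilde{\Omega}$, and recall that each droplet decomposes into $d-1$ fundamental tiles, each a Jordan domain bounded by a deltoid-like curve (as noted after Definition~\ref{deltoid-like}). The bi-angled tree isomorphism induces an adjacency-preserving bijection between fundamental tiles and specifies which boundary cusps of matched tiles are to be identified as double points. I would choose, on each tile of $T$, the Riemann map onto its partner in $\widetilde{T}$ sending the three boundary cusps to their counterparts in the cyclic order prescribed by the tree isomorphism (this uniquely fixes the Riemann map), and then glue these maps across the common double points. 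The resulting $h: T \to \widetilde{T}$ is a homeomorphism, conformal on the interior of each fundamental tile.

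\textbf{Step 2 (asymptotic linearity and global qc extension).} The $(3,2)$-cusp normal form from Proposition~\ref{cusp_geometry} and the first-order-contact normal form for double points from Proposition~\ref{double_geometry} together rigidly determine the shape of $\partial T$ and $\partial\widetilde{T}$ near each singular point, and Principle~\ref{constant_curvature_principle} forces the conformal curvature along $\partial T$ to be constant away from cusps. I would combine these facts to extract the Taylor expansion of $h$ at each singular point $z_0$ and obtain
\[ h(z) - h(z_0) = a_{z_0}(z - z_0) + o(z - z_0), \quad a_{z_0} \in \mathbb{C}^*, \]
i.e.\ the asymptotic linearity lemma alluded to in the introduction. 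Asymptotic linearity makes it possible to extend $h$ across $\partial T$ to a global quasiconformal homeomorphism $\Phi:\widehat{\mathbb{C}}\to\widehat{\mathbb{C}}$; I would arrange the extension so that on a suitable neighborhood $U$ of the super-attracting fixed point $\infty$ inside $\mathcal{B}_\infty(\sigma)$, the map $\Phi$ conjugates the Schwarz reflections $\sigma$ and $\widetilde{\sigma}$ (for instance by transferring B\"ottcher coordinates at $\infty$).

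\textbf{Step 3 (pullback and conclusion).} Set $\Phi_0 := \Phi$ and define $\Phi_{n+1}$ inductively to coincide with $\Phi_n$ on $U \cup E^n(\sigma)$ and, on each rank-$(n+1)$ tile $T'$, to be the unique lift through $\sigma$ and $\widetilde{\sigma}$ selected by the matching of tiles induced by the tree isomorphism. Because $\sigma$ and $\widetilde{\sigma}$ are anti-holomorphic on the relevant open sets, pulling back preserves the Beltrami coefficient, so $\|\mu_{\Phi_n}\|_\infty = \|\mu_\Phi\|_\infty < 1$ for every $n$. A normal-family extraction then produces a subsequential limit $\Phi_\infty$ that conjugates $\sigma$ to $\widetilde{\sigma}$ on the entire tiling set $T^\infty(\sigma)$, with Beltrami coefficient supported on the limit set $\mathcal{L}(\sigma)$. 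By Proposition~\ref{limit_schwarz_zero_area} this set has zero area, so $\Phi_\infty$ is $1$-quasiconformal, hence a M\"obius transformation. Since $\Phi_\infty$ fixes $\infty$ and conjugates a rational map whose only pole is at $\infty$ to another such, $\Phi_\infty$ must be affine, yielding the desired map $A$.

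\textbf{Expected main obstacle.} I expect the principal technical difficulty to lie in Step 2. Step 1 produces a conformal map between the interiors of the droplets but gives no a priori control on its derivative at the $(3,2)$ cusps, where the boundary itself is singular. Extracting the existence and non-vanishing of $a_{z_0}$ will require carefully matching the local $(3,2)$ and double-point normal forms on both sides of $h$, and invoking the constant conformal curvature principle to rule out degenerate leading behavior. Once asymptotic linearity is in hand, the quasiconformal extension is fairly routine and the pullback argument is classical; one must only check that the dilatation does not accumulate on the basin of $\infty$, which is guaranteed by keeping $\Phi_n \equiv \Phi$ on $U$ together with the area-zero conclusion for $\mathcal{L}(\sigma)$.
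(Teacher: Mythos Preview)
Your four-stage outline matches the paper's proof essentially step for step: Lemma~\ref{angled_tree_conformal_map} is your Step~1, Lemma~\ref{asymp_linear} and Lemma~\ref{global_qc} are your Step~2, and the pullback argument in the proof of Theorem~\ref{rigidity_extremal_qd_thm} is your Step~3.

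Two technical points worth flagging, since you identify Step~2 as the obstacle. First, the paper does \emph{not} obtain asymptotic linearity at the singular points by direct manipulation of the $(3,2)$ and double-point normal forms together with the constant-curvature principle; instead, it sends each singular point to $\infty$ by a M\"obius map, so that the droplet near the singularity becomes a curvilinear strip, and then invokes Warschawski's asymptotic formula for conformal maps of strips (reference [War] in the paper) to read off the leading behavior. Second, the quasiconformal extension in Lemma~\ref{global_qc} is not done directly on $\partial T$: rather, the boundary map is \emph{lifted} via the uniformizing Suffridge polynomials $g,\widetilde{g}$ to a self-map $\widehat{\mathbf{\Psi}}$ of $\mathbb{T}$, asymptotic linearity is used to check quasisymmetry of $\widehat{\mathbf{\Psi}}$ (the critical points of $g,\widetilde{g}$ on $\mathbb{T}$ being simple is essential here), and then Ahlfors--Beurling gives the extension to $\widehat{\mathbb{C}}\setminus\overline{\mathbb{D}}$, which is pushed forward by $g$. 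Your sketch is otherwise accurate.
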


\begin{rem}\label{thm_equiv_2_rem} By Proposition~\ref{suffridge_extremal_qd_equiv_prop}, Theorem~\ref{rigidity_extremal_qd_thm} is equivalent to injectivity of the map (1)$\mapsto$(2) in Theorem~\ref{theorem_A}.
\end{rem}

\begin{rem}\label{conf_remove_rem}
It follows from a conformal removability result that the only extremal unbounded quadrature domain of order $2$ (up to M{\"o}bius equivalence) is the exterior of a deltoid curve (compare \cite[\S 5.1]{LLMM1}). More precisely, if $\widetilde{\Omega}$ is an unbounded extremal quadrature domains of order $2$ and $\Omega$ is the exterior of the deltoid curve, we can first map the complement of one to that of the other by a conformal map that preserves cusps (note that there are only $3$ cusps and no double points on their boundaries), and then lift it using the corresponding Schwarz reflections to obtain a conformal isomorphism between the tiling sets. Furthermore, there is a conformal isomorphism between the basins of attractions (of $\infty$) of the two Schwarz reflection maps that conjugates the dynamics. These two conformal maps agree continuously on the boundary of the tiling set (which is a Jordan curve, see \cite[Theorem~5.11]{LLMM1}) producing a homeomorphism of $\widehat{\C}$ that is conformal off the boundary of the tiling set. It now follows from conformal removability of the boundary of the tiling set (see \cite[Corollary~5.22]{LLMM1}) that this homeomorphism is, in fact, a M{\"o}bius map; and hence, the quadrature domains $\widetilde{\Omega}$ and $\Omega$ are M{\"o}bius equivalent. 

On the other hand, the lack of corresponding conformal removability results for $d\geq 3$ forces us to use dynamical tools to prove the above rigidity theorem. More precisely, we prove rigidity of an extremal unbounded quadrature domain (of order $d\geq3$) by demonstrating rigidity of the corresponding Schwarz reflection map using a `pullback argument'. 
\end{rem}

\subsection{Proof of Theorem~\ref{rigidity_extremal_qd_thm}}\label{rigidity_thm_subsec}

Let $\Omega$ be an extremal unbounded quadrature domain of order $d$. By Proposition~\ref{suffridge_extremal_qd_equiv_prop}, possibly after replacing $\Omega$ by its image under an affine map, we can assume that there exists a rational map $g\in\Sigma_d^*$ of degree $d+1$ (in fact, a Suffridge polynomial) such that $g:\widehat{\C}\setminus\overline{\D}\to\Omega$ is a conformal isomorphism. Since $g$ has a $(d-1)$-fold pole at the origin, it follows that the Schwarz reflection map $\sigma$ of $\Omega$ fixes $\infty$, and has a critical point of order $d-1$ at $\infty$. In particular, $\infty$ is a super-attracting fixed point of $\sigma$. Moreover, as all critical points of $g$ lie on $\mathbb{T}$ and at the origin, $\infty$ is the only critical point of the Schwarz reflection map $\sigma$.

Let us set $T=\widehat{\C}\setminus\Omega$, and $T^0=T\setminus\{$Singular points on $\partial T\}$. The connected components of $T^0$ are labelled as $T_1,\cdots,T_{d-1}$. It follows from Proposition~\ref{s.c.q.d.} that $\sigma:\sigma^{-1}(\Omega)\to\Omega$ is a proper branched covering of degree $d$ (branched only at $\infty$), and $\sigma:\sigma^{-1}(\interior{T})\to\interior{T}$ is a covering of degree $d+1$.

By Proposition~\ref{basin_topology} and Corollary~\ref{tiling_set_full}, the tiling set \[T^\infty\equiv T^\infty(\sigma)=\displaystyle\bigcup_{j=0}^\infty\sigma^{-j}(T^0)\] is open, and its closure is a full continuum (in fact, for $d\geq 3$, the desingularized droplet $T^0$ has more than one connected component, and hence the tiling set $T^\infty$ has infinitely many connected components; but adding the singular points $\displaystyle\cup_{j=0}^\infty\sigma^{-j}(T\setminus T^0)$ to $T^\infty$ makes it connected). 

Recall that we denote the basin of attraction of the fixed point $\infty$ by $\mathcal{B}_\infty(\sigma)$. By Proposition~\ref{basin_topology}, $\mathcal{B}_\infty(\sigma)$ is a simply connected, completely invariant domain (under $\sigma$). Hence, there exists a conformal isomorphism (which we call a ``B{\"o}ttcher  coordinate'') $$\tau:\widehat{\C}\setminus\overline{\D}\to\mathcal{B}_\infty(\sigma)$$ that conjugates $\overline{z}^d$ to $\sigma$. The conformal map $\tau$ is unique up to pre-composition by a $(d+1)$-st root of unity. Moreover, by Proposition~\ref{dynamical_partition_schwarz}, we have that $$\widehat{\C}=\mathcal{B}_\infty(\sigma)\sqcup\overline{T^\infty}$$ (see Figure~\ref{extremal_QD_schwarz}).

Let us now assume that $\Omega,  \widetilde{\Omega}$ are two extremal unbounded quadrature domains of order $d$. For any object associated with the domain $\Omega$, the corresponding object associated with $ \widetilde{\Omega}$ will be denoted with a $``\ \widetilde{}\ $''.

\begin{lem}\label{angled_tree_conformal_map}
Let  $ \widetilde{\Omega}$ and $\Omega$ be two unbounded extremal quadrature domains such that $\mathcal{T}( \widetilde{\Omega})$ and $\mathcal{T}(\Omega)$ are isomorphic as bi-angled trees. Then, there exists a cusp-preserving homeomorphism $\mathbf{\Psi}:\widetilde{T}\to T$ that maps $\interior{\widetilde{T}_i}$ conformally onto $\interior{T_i}$ for $1\leq i \leq d-1$.
\end{lem}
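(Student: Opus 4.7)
The strategy is to construct $\mathbf{\Psi}$ tile-by-tile via Riemann maps, then glue the pieces along their shared singular boundary points. First, I would use the bi-angled tree isomorphism $h \colon \mathcal{T}(\widetilde\Omega)\to\mathcal{T}(\Omega)$ to match up fundamental tiles: after a relabeling, $\widetilde T_i$ corresponds to $T_i$ for each $i$. By the remark following Definition~\ref{deltoid-like} (and \cite[Proposition~4.1]{2014arXiv1411.3415L}), each fundamental tile is a Jordan domain bounded by a deltoid-like curve, hence $\partial \widetilde T_i$ and $\partial T_i$ each carry exactly three singular points. Passing to augmented trees via Lemma~\ref{angled_augmented_equiv_lem}, each such singular point corresponds to a blue vertex of degree $1$ (a cusp of $\partial\widetilde\Omega$, resp.\ $\partial\Omega$) or of degree $2$ (a double point), and the induced isomorphism of augmented trees preserves color and the cyclic order of edges around each red vertex.

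Using this, I would extract, for each $i$, a bijection $\phi_i$ between the three singular points on $\partial\widetilde T_i$ and those on $\partial T_i$ which (a) preserves the type of singularity (cusp vs.\ double point), (b) respects the counter-clockwise cyclic ordering around the two tiles, and (c) is compatible across shared boundaries: if $\widetilde T_i$ and $\widetilde T_j$ meet at a double point $p$ of $\partial\widetilde\Omega$, then $\phi_i(p) = \phi_j(p)$ is the double point of $\partial\Omega$ at which $T_i$ and $T_j$ meet. Properties (a)--(c) are direct consequences of the augmented tree isomorphism together with its compatibility with the cyclic orientation at each vertex.

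Next, for each $i$, I would apply the Riemann mapping theorem to obtain a conformal isomorphism $\interior \widetilde T_i \to \interior T_i$. Since both are Jordan domains, Carath\'eodory's theorem gives a homeomorphic extension to the closures, and the three real parameters of the Möbius action of $\mathrm{Aut}(\D)$ allow us to prescribe the images of any three boundary points in the correct cyclic order. I would normalize this Riemann map so that its boundary extension $\mathbf{\Psi}_i \colon \overline{\widetilde T_i}\to\overline{T_i}$ sends each singular point of $\partial\widetilde T_i$ to the prescribed image under $\phi_i$; this is legal by property (b). Finally, define $\mathbf{\Psi} \colon \widetilde T \to T$ by $\mathbf{\Psi}|_{\overline{\widetilde T_i}} = \mathbf{\Psi}_i$. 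Compatibility condition (c) ensures that the $\mathbf{\Psi}_i$ agree at the (finitely many) shared points on the boundaries of neighboring tiles, so $\mathbf{\Psi}$ is well-defined and continuous; injectivity and surjectivity follow from the fact that the fundamental tiles have disjoint interiors and cover $\widetilde T$ (resp.\ $T$), together with the bijectivity of the singular-point matching. By construction $\mathbf{\Psi}$ is conformal on each $\interior \widetilde T_i$ and cusp-preserving on the singular boundary set.

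The main obstacle is step (c) --- verifying that the matching of singular boundary points is globally coherent. This is precisely the role played by the angle data in the bi-angled tree. The underlying topological tree alone does not determine the cyclic order in which the neighbors of a degree-$3$ red vertex appear around the corresponding fundamental tile; only after the angle function is fixed (and shown to be preserved under the isomorphism $h$) do we obtain a consistent correspondence across adjacent tiles. Provided this compatibility is established, the remaining steps reduce to standard applications of the Riemann mapping theorem and Carath\'eodory extension.
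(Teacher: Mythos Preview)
Your proposal is correct and follows essentially the same route as the paper: pass to the augmented trees via Lemma~\ref{angled_augmented_equiv_lem} to obtain a color- and cyclic-order-preserving matching of singular boundary points, then apply the Riemann mapping theorem with Carath\'eodory extension on each topological triangle, normalizing so that the three boundary singular points are sent to their prescribed images, and glue. Your discussion of the compatibility condition (c) is slightly more explicit than the paper's, but the argument is the same.
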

\begin{proof}
By the assumption and Lemma~\ref{angled_augmented_equiv_lem}, the corresponding augmented trees are also isomorphic (in the sense of Definition~\ref{isom_def_1}). Since each red vertex of the augmented trees represents some $T_i$ or $\widetilde{T}_i$, we can re-label the components $\{\widetilde{T}_1,\cdots,\widetilde{T}_{d-1}\}$ such that the red vertex corresponding to $\widetilde{T}_i$ is mapped to the red vertex corresponding to $T_i$ by the isomorphism between their augmented trees.

As the isomorphism between the augmented trees preserves the circular order of edges meeting at each vertex, it induces a homeomorphism $$\mathbf{\Psi}:\widetilde{T}=\displaystyle\bigcup_{i=1}^{d-1}\overline{\widetilde{T}_i}\to T=\displaystyle\bigcup_{i=1}^{d-1}\overline{T_i}$$ that maps $\partial\widetilde{T}_i$ onto $\partial T_i$ as an orientation preserving homeomorphism (note that each $\partial\widetilde{T}_i$ and $\partial T_i$ is a topological triangle). In particular, $\mathbf{\Psi}$ sends the three singular points on $\partial\widetilde{T}_i$ to the three singular points on $\partial T_i$ preserving their circular order. Since each $\overline{\widetilde{T}_i}$ and $\overline{T_i}$ is a closed Jordan disk, the Riemann mapping theorem (along with Carath{\'e}odory's theorem) implies that $\mathbf{\Psi}$ can be chosen to be conformal on $\interior{\widetilde{T}_i}$ and preserve cusps, for each $i\in\{1,\cdots, d-1\}$.
\end{proof}

We continue to work with the hypotheses of the previous proposition. It follows from the construction of $\mathbf{\Psi}$ that it maps the singular points on $\partial\widetilde{T}$ onto those of $\partial T$. The next proposition studies the asymptotics of the map $\mathbf{\Psi}$ near the singular points on $\partial\widetilde{T}$.

\begin{lem}\label{asymp_linear}
$\mathbf{\Psi}$ is asymptotically linear near the singular points on $\partial\widetilde{T}$. More precisely, if $\widetilde{\zeta}_0$ is a singular point on $\partial\widetilde{T}$ with $\zeta_0=\mathbf{\Psi}(\widetilde{\zeta}_0)$, then 
\begin{equation}
\mathbf{\Psi}(\widetilde{\zeta})=\zeta_0+c_1(\widetilde{\zeta}-\widetilde{\zeta}_0)+o((\widetilde{\zeta}-\widetilde{\zeta}_0)) \textrm{ as } \widetilde{\zeta}\rightarrow\widetilde{\zeta}_0
\label{asymp_linear_1}
\end{equation}
where $c_1\in\C^\ast$.
\end{lem}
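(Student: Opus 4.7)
The plan is to obtain the asymptotic linearity of $\mathbf{\Psi}$ at $\widetilde{\zeta}_0$ by passing through local Riemann uniformizations of $\widetilde{T}_i$ and $T_i$ from the upper half plane $\mathbb{H}$, and then invoking the fact that a conformal self-map of $\mathbb{H}$ fixing a boundary point is a M\"obius transformation, hence asymptotically linear there. Concretely, I would choose Riemann maps $\widetilde{\Phi} \colon \mathbb{H} \to \widetilde{T}_i$ and $\Phi \colon \mathbb{H} \to T_i$ sending $0$ to $\widetilde{\zeta}_0$ and $\zeta_0$ respectively, and use Propositions~\ref{cusp_geometry} and~\ref{double_geometry} to read off their boundary expansions at $0$.

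At a cusp, the tile has interior angle $2\pi$ with $(3,2)$-cuspidal boundary, and a Schwarz--Christoffel style analysis yields $\widetilde{\Phi}(w) = \widetilde{\zeta}_0 + \widetilde{a}\, w^2 + o(w^2)$ with $\widetilde{a} \neq 0$, and similarly $\Phi(w) = \zeta_0 + a\,w^2 + o(w^2)$. At a double point, each tile has a parabolic-wedge geometry of zero interior angle; here the change of variable $z \mapsto 1/z$ sends the parabolic wedge to an asymptotic half-strip of width proportional to $c_+ - c_-$, and composing with the standard half-strip uniformization (via $-\log$) yields a logarithmic expansion $\widetilde{\Phi}(w) = \widetilde{\zeta}_0 - \widetilde{b}/\log w + o(1/\log w)$ with $\widetilde{b}$ proportional to $c_+ - c_- \neq 0$, and similarly for $\Phi$ with coefficient $b$.

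The composition $F := \Phi^{-1} \circ \mathbf{\Psi} \circ \widetilde{\Phi}$ is a conformal self-map of $\mathbb{H}$ fixing $0$, hence a M\"obius map with $F(w) = \lambda w + O(w^2)$ near $0$ for some $\lambda > 0$. Substituting $\mathbf{\Psi}(z) = \Phi(F(\widetilde{\Phi}^{-1}(z)))$ then yields the claimed asymptotic $\mathbf{\Psi}(\widetilde{\zeta}) = \zeta_0 + c_1(\widetilde{\zeta} - \widetilde{\zeta}_0) + o(\widetilde{\zeta} - \widetilde{\zeta}_0)$ directly with $c_1 = a\lambda^2/\widetilde{a}$ in the cusp case; in the double point case, the essential observation is that the exponential form $\widetilde{\Phi}^{-1}(z) \sim \exp(-\widetilde{b}/(z - \widetilde{\zeta}_0))$ cancels the logarithmic singularity of $\Phi$ exactly, leaving the linear leading term $c_1 = b/\widetilde{b}$. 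In either case $c_1 \neq 0$. The main obstacle is establishing the precise boundary expansion of $\widetilde{\Phi}$ and $\Phi$ at the singular point, particularly the logarithmic form at a double point; once these expansions are in hand, the asymptotic linearity of $\mathbf{\Psi}$ falls out of a short substitution.
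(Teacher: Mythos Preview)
Your overall strategy---factoring $\mathbf{\Psi}$ through Riemann uniformizations so that the induced self-map $F$ of $\mathbb{H}$ is M\"obius---is correct and is in fact a clean organizational device. However, your geometric input at the cusps is backwards. At a $(3,2)$ cusp of $f(\mathbb{T})$ the cusp points \emph{into} $\Omega$, so from the tile's side the interior angle is $0$, not $2\pi$: near the cusp each $\widetilde T_i$ is a thin region of width $\sim(\textrm{distance})^{3/2}$, not a slit-plane. Thus the Schwarz--Christoffel style expansion $\widetilde\Phi(w)=\widetilde\zeta_0+\widetilde a\,w^2+o(w^2)$ is wrong. If you push your own double-point heuristic through the cusp geometry (send the cusp to $\infty$, so the tile becomes a parabolic region $\{|Y|\lesssim X^{1/2}\}$, map to a straight strip by $Z\mapsto\sqrt Z$, then to $\mathbb H$ by an exponential), you get instead
\[
\widetilde\Phi(w)=\widetilde\zeta_0+\dfrac{\widetilde c}{(\log w)^2}+o\!\left(\dfrac{1}{(\log w)^2}\right),
\]
and then the same substitution with $F(w)=\lambda w+O(w^2)$ yields $c_1=c/\widetilde c\neq 0$. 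So your framework survives, but the specific expansion you wrote at cusps does not.

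For comparison, the paper does not pass through $\mathbb H$ at all: it sends the singular point to $\infty$ by a M\"obius map, so that the tile near a cusp (resp.\ a double point) becomes a curvilinear strip of width $\sim\sqrt X$ (resp.\ asymptotically constant width), and then invokes Warschawski's asymptotic formula for conformal maps of curvilinear strips to get that the map to a straight strip is $k_5\sqrt Z+O(1)$ (resp.\ $k_8 Z+o(Z)$). Composing the two strip maps directly gives $k_6 Z+O(1)$ (resp.\ $Z+o(Z)$), which translates back to the claimed asymptotic linearity of $\mathbf{\Psi}$. Your M\"obius observation replaces this composition step by a one-liner, but the hard analytic content---the precise asymptotic of the Riemann map at a zero-angle boundary point---is the same in both approaches, and you have not supplied it (the paper cites Warschawski for exactly this). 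That is the step you flagged as ``the main obstacle,'' and it really is the whole proof.
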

\begin{proof}
The proof is similar to that of \cite[Lemma~6.10]{LLMM2}. However, as we are dealing with a different setting here, we work out the details for completeness.

Let us first prove the statement for a cusp point $\widetilde{\zeta}_0\in\partial\widetilde{T}$. We set $\zeta_0=\mathbf{\Psi}(\widetilde{\zeta}_0)$. By Proposition~\ref{cusp_geometry}, $\widetilde{\zeta}_0$ (respectively, $\zeta_0$) is a $(3,2)$ cusp. Thus, we can send $\widetilde{\zeta}_0$ (respectively, $\zeta_0$) to $\infty$ by a M{\"o}bius map such that the image of $\widetilde{T}$ near $\widetilde{\zeta}_0$ (respectively, the image of $T$ near $\zeta_0$) is a curvilinear strip $\widetilde{\mathfrak{S}}$ (respectively, $\mathfrak{S}$) bounded by the non-singular real-analytic curves $y=v_1(x)=k_1+k_2\sqrt{x}+k_3/\sqrt{x}+o(1/\sqrt{x})$ and $y=v_2(x)=k_1-k_2\sqrt{x}-k_3/\sqrt{x}+o(1/\sqrt{x})$, where $x\geq x_0$ for some large $x_0>0$ (with possibly different constants for $\mathfrak{S}$). Note that as $v_i$ is a real-algebraic curve (of negative curvature), we have that: \begin{itemize}
\item\label{property_1} $v_i'$ is continuous and of bounded variation on $\left[x_0,+\infty\right]$ ($i=1,2$), and

\item\label{property_2} the strip $\widetilde{\mathfrak{S}}$ (respectively, $\mathfrak{S}$) has \emph{boundary inclination} $0$ at $\infty$ (in the sense of \cite{War}) since $v_i'(x)\to 0$ as $x\to+\infty$ ($i=1,2$). 
\end{itemize}
We define $\theta_1(x):=v_1(x)-v_2(x)=2k_2\sqrt{x}+2k_3/\sqrt{x}+o(1/\sqrt{x})$, and $\theta_2(x):=(v_2(x)+v_1(x))/2=k_1+o(1/\sqrt{x})$.   

Let us now consider a conformal map $\widetilde{\mathfrak{b}}$ (respectively, $\mathfrak{b}$) from the curvilinear strip $\widetilde{\mathfrak{S}}$ (respectively, $\mathfrak{S}$) to the right-half of the horizontal strip $\{y\in\C:\vert y\vert<\pi/2\}$. Since the boundary curves $v_i$ ($i=1,2$) of the strip $\widetilde{\mathfrak{S}}$ (respectively, $\mathfrak{S}$) satisfy the above two properties, we can invoke \cite{War} to conclude that the conformal map $\widetilde{\mathfrak{b}}$ (respectively, $\mathfrak{b}$) from the curvilinear strip $\widetilde{\mathfrak{S}}$ (respectively, $\mathfrak{S}$) to the horizontal strip is of the form
\begin{equation}
z=x+iy\mapsto k_4+\pi\int_{x_0}^x\frac{1+(\theta_2'(t))^2}{\theta_1(t)} dt +i\pi\frac{y-\theta_2(x)}{\theta_1(x)}+o(1)=k_5\sqrt{z}+O(1),
\label{asymp_riemann_cusp}
\end{equation}
as $\re(z)\to+\infty$, where $k_5\in\C^\ast$ (with possibly different constants for $\mathfrak{b}$). Moreover, as $v_i'$ is continuous and of bounded variation on $\left[x_0,+\infty\right]$, the asymptotic expression~(\ref{asymp_riemann_cusp}) holds uniformly with respect to $\im(z)$.

It follows that the conformal map $\mathfrak{b}^{-1}\circ\widetilde{\mathfrak{b}}$ between the curvilinear strips $\widetilde{\mathfrak{S}}$ and $\mathfrak{S}$ admits the asymptotics $k_6z+O(1)$ as $\re(z)\to+\infty$,  for some $k_6\in\C^\ast$ (uniformly in $\im(z)$). Due to local connectivity of the above curvilinear strips, the conformal map  $\mathfrak{b}^{-1}\circ\widetilde{\mathfrak{b}}$ extends continuously to the boundary. Furthermore, since the above asymptotics is uniform in $\im(z)$, the boundary extension admits the same asymptotics $k_6z+O(1)$ as $\re(z)\to+\infty$. Now going back to $\widetilde{T}$ and $T$ by the M{\"o}bius maps used earlier, we conclude that $\mathbf{\Psi}$ is asymptotically linear near $\widetilde{\zeta}_0$; i.e., 
$$
\mathbf{\Psi}(\widetilde{\zeta})=\zeta_0+k_7(\widetilde{\zeta}-\widetilde{\zeta}_0)+o((\widetilde{\zeta}-\widetilde{\zeta}_0)),
$$ 
for $\partial\widetilde{T}\ni\widetilde{\zeta}\to\widetilde{\zeta}_0$, and some constant $k_7\in\C^\ast$.

Let us now prove the assertion for a double point $\widetilde{\zeta}_0\in\partial\widetilde{T}$. As before, we set $\zeta_0=\mathbf{\Psi}(\widetilde{\zeta}_0)$. By Proposition~\ref{double_geometry}, the two distinct non-singular branches of $\partial\widetilde{T}$ (respectively, of $\partial T$) have distinct osculating circles at $\widetilde{\zeta}_0$ (respectively, at $\zeta_0$). Let us denote the two (distinct) osculating circles to $\partial\widetilde{T}$ (respectively, $\partial T$) at $\widetilde{\zeta}_0$ (respectively, $\zeta_0$) by $\widetilde{\mathbf{C}}_{1}$ and $\widetilde{\mathbf{C}}_2$ (respectively, $\mathbf{C}_1$ and $\mathbf{C}_2$). We can send $\widetilde{\zeta}_0$ (respectively, $\zeta_0$) to $\infty$ by a M{\"o}bius map such that the images of $\widetilde{\mathbf{C}}_{1}$ and $\widetilde{\mathbf{C}}_2$ (respectively, $\mathbf{C}_1$ and $\mathbf{C}_2$) are the horizontal straight lines $y=0$ and $y=1$. Since the two distinct non-singular branches of $\partial\widetilde{T}$ (respectively, $\partial T$) that meet tangentially at $\widetilde{\zeta}_0$ (respectively, at $\zeta_0$) have at least second order contact with their corresponding osculating circles, the images of these two distinct non-singular branches of $\partial\widetilde{T}$ (respectively, $\partial T$) under the above M{\"o}bius map are curves of the form $y=w_1(x)=0+O(1/x)$ and $y=w_2(x)=1+O(1/x)$ for $x$ large enough. Thus, the image of $\widetilde{T}$ near $\widetilde{\zeta}_0$ (respectively, the image of $T$ near $\zeta_0$) under the above M{\"o}bius map is a curvilinear strip bounded by $y=0+O(1/x)$ and $y=1+O(1/x)$, where $x\geq x_0$ for some large $x_0>0$. As in the previous case, $w_i'$ is continuous and of bounded variation on $\left[x_0,+\infty\right]$, and the strips have \emph{boundary inclination} $0$ at $\infty$ (in the sense of \cite{War}) since $w_i'(x)\to 0$ as $x\to+\infty$ ($i=1,2$). 

Let us now consider a conformal map $\widetilde{\mathfrak{b}}_1$ (respectively, $\mathfrak{b}_1$) from the above curvilinear strip to the right-half of the horizontal strip $\{y\in\C:\vert y\vert<\pi/2\}$. Applying \cite{War}, we now conclude that the conformal map $\widetilde{\mathfrak{b}}_1$ (respectively, $\mathfrak{b}_1$) from the curvilinear strip to the horizontal strip is of the form $k_{8}z+o(z)$ as $\re(z)\to+\infty$, where $k_{8}\in\C^\ast$. Moreover, as $w_i'$ is continuous and of bounded variation on $\left[x_0,+\infty\right]$, the above asymptotic expression holds uniformly with respect to $\im(z)$. It now follows that the conformal map $\mathfrak{b}_1^{-1}\circ\widetilde{\mathfrak{b}}_1$ between the curvilinear strips (obtained by sending the double points on $\partial\widetilde{T}$ and $\partial T$ to $\infty$) is of the form $z+o(z)$ as $\re(z)\to+\infty$ (uniformly in $\im(z)$). Once again, due to local connectivity of the above curvilinear strips, $\mathfrak{b}_1^{-1}\circ\widetilde{\mathfrak{b}}_1$ extends continuously to the boundary of the curvilinear strip. Since the above asymptotics hold uniformly in $\im(z)$, the boundary extension also admits the same asymptotics $z+o(z)$ as $\re(z)\to+\infty$. Finally, going back to $\widetilde{T}$ and $T$ by the M{\"o}bius maps used earlier, we conclude that the asymptotics of $\mathbf{\Psi}$ near $\widetilde{\zeta}_0$ is of the form (\ref{asymp_linear_1}).
\end{proof}

\begin{figure}[ht!]
\begin{tikzpicture}
\node[anchor=south west,inner sep=0] at (0,0) {\includegraphics[width=0.9\textwidth]{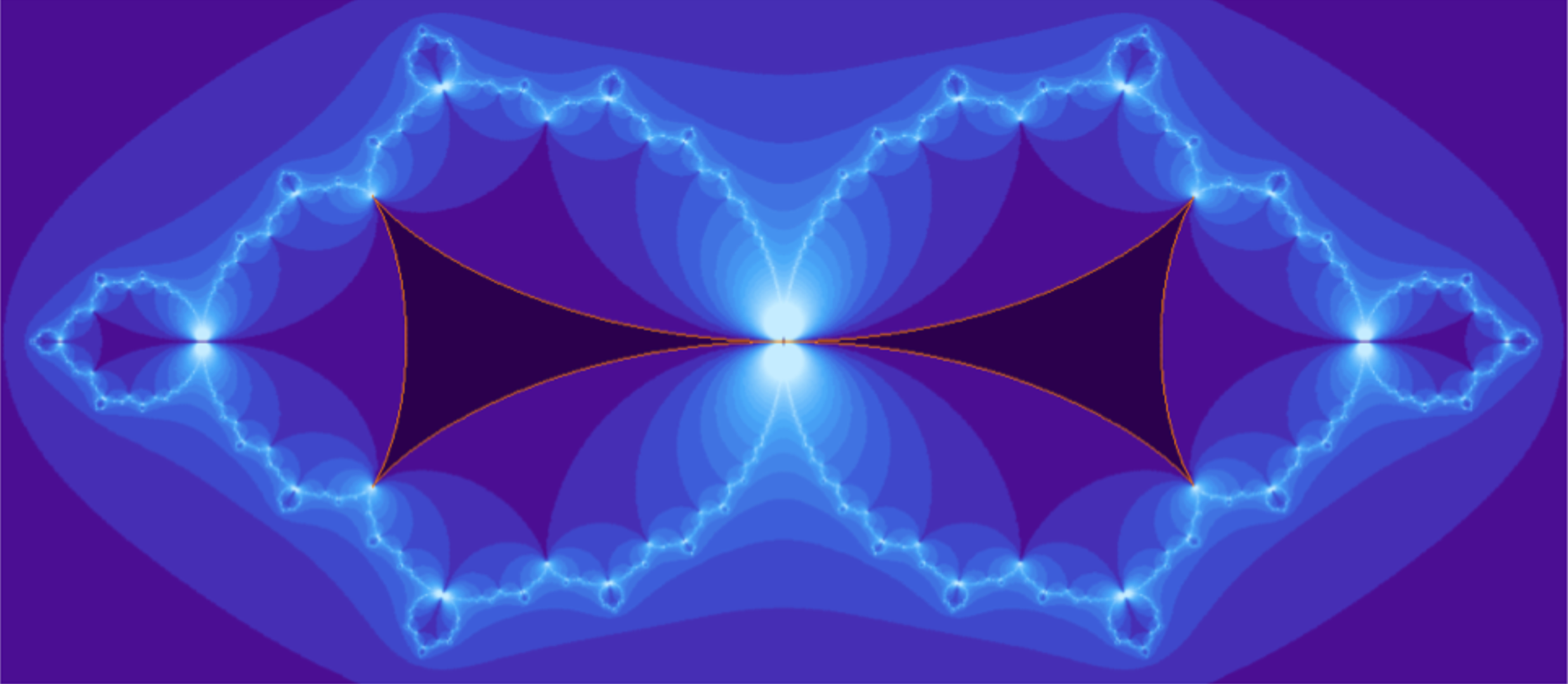}};
\node at (4,2.8) {\color{white}$T_2$};
\node at (8.4,2.8) {\color{white}$T_1$};
\end{tikzpicture}
\caption{The dynamical plane of the Schwarz reflection map $\sigma$ associated with the extremal unbounded quadrature domain realizing the unique bi-angled tree with $2$ vertices. The limit set of $\sigma$ is the boundary of the tiling set.}
\label{extremal_QD_schwarz}
\end{figure}

The above analysis of the asymptotics of $\mathbf{\Psi}$ near the singular points of $\partial\widetilde{T}$ will allow us to extend it to a global quasiconformal map.

\begin{lem}\label{global_qc}
$\mathbf{\Psi}$ can be extended to a quasiconformal map on $\widehat{\C}$.
\end{lem}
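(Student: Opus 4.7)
The strategy is to extend $\mathbf{\Psi}$ across $\partial\widetilde{T}$ into $\widetilde{\Omega}$ in two stages: first into a neighborhood of $\partial\widetilde{T}$ using local models near singular and non-singular points, and then into the bulk of $\widetilde{\Omega}$ using a standard quasiconformal extension argument.

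First, I would extend $\mathbf{\Psi}$ across the non-singular part of $\partial\widetilde{T}$. Away from the finitely many cusps and double points, $\partial\widetilde{T}$ consists of non-singular real-analytic arcs separating each fundamental tile $\widetilde{T}_i$ from $\widetilde{\Omega}$. Since $\mathbf{\Psi}$ restricts to a conformal isomorphism on each $\interior{\widetilde{T}_i}$ with continuous (in fact real-analytic) boundary extension, the classical Schwarz reflection principle lets me extend $\mathbf{\Psi}$ conformally across each such arc into a thin tubular neighborhood in $\widetilde{\Omega}$. This extension is automatically $1$-quasiconformal.

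Second, I would construct local quasiconformal extensions near each singular point $\widetilde{\zeta}_0 \in \partial\widetilde{T}$ using Lemma~\ref{asymp_linear}. Writing $\zeta_0 = \mathbf{\Psi}(\widetilde{\zeta}_0)$ and $L_{\widetilde{\zeta}_0}(\widetilde{\zeta}) := \zeta_0 + c_1(\widetilde{\zeta} - \widetilde{\zeta}_0)$ for the asymptotic linear model, I would fix a small $\rho>0$ and define the extension on the disk $D(\widetilde{\zeta}_0,\rho)$ to equal $\mathbf{\Psi}$ on $D\cap\widetilde{T}$ and to interpolate smoothly between $\mathbf{\Psi}$ and $L_{\widetilde{\zeta}_0}$ inside $D\cap\widetilde{\Omega}$ (e.g., by writing $\widetilde{\zeta}-\widetilde{\zeta}_0 = re^{i\theta}$ and defining the extension as a convex combination of $\mathbf{\Psi}$ and $L_{\widetilde{\zeta}_0}$ with a cutoff in $r$). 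The key point is that by Lemma~\ref{asymp_linear}, the difference $\mathbf{\Psi}-L_{\widetilde{\zeta}_0}$ is $o(|\widetilde{\zeta}-\widetilde{\zeta}_0|)$, so the interpolated map has dilatation tending to $1$ as $\rho\to 0$; choosing $\rho$ sufficiently small on each singular point yields a uniformly quasiconformal local extension. At a cusp point (where only one fundamental tile meets $\widetilde{\zeta}_0$) the model is a single conformal piece glued to a linear extension; at a double point (where two tiles meet) the continuous matching of $\mathbf{\Psi}$ on the two sides combined with the same asymptotic linear model gives the required quasiconformality across the double point as well.

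Gluing these local extensions with the Schwarz-reflection extension from the first step via a partition of unity (taken in a standard way so as not to blow up the dilatation) produces a quasiconformal homeomorphism $\mathbf{\Psi}^*$ defined on an open neighborhood $N\supset\widetilde{T}$ in $\widehat{\C}$, which agrees with $\mathbf{\Psi}$ on $\widetilde{T}$. The complement $\widetilde{W} := \widehat{\C}\setminus\overline{N}$ is a simply connected open subset of $\widetilde{\Omega}$ containing $\infty$, bounded by a quasi-circle $\partial\widetilde{W}\subset\widetilde{\Omega}$, and its image $W := \mathbf{\Psi}^*(\widetilde{W})\subset\Omega$ is likewise a simply connected domain bounded by the quasi-circle $\partial W = \mathbf{\Psi}^*(\partial\widetilde{W})$. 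To complete the extension I would apply a standard quasiconformal extension across quasi-circles (e.g., uniformize $\widetilde{W}$ and $W$ to $\widehat{\C}\setminus\overline{\D}$ by conformal maps fixing $\infty$; the induced boundary map on $\mathbb{T}$ is quasi-symmetric because $\mathbf{\Psi}^*|_{\partial\widetilde{W}}$ is quasiconformal on a neighborhood of $\partial\widetilde{W}$, and extend it by the Beurling-Ahlfors or Douady-Earle extension). Pulling this back through the uniformizations and matching with $\mathbf{\Psi}^*$ on $\partial\widetilde{W}$ yields the desired global quasiconformal map of $\widehat{\C}$. The main obstacle is the second step: building the interpolation at each singular point so that the resulting map has uniformly bounded dilatation, which requires careful quantitative use of the asymptotic estimate in Lemma~\ref{asymp_linear} combined with compatible local coordinates straightening the cusp or double-point geometry.
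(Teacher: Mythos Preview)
Your overall strategy is different from the paper's and, as you suspect, step~2 is where it runs into a genuine difficulty. The interpolation you propose on $D\cap\widetilde{\Omega}$ between $\mathbf{\Psi}$ and the linear model $L_{\widetilde{\zeta}_0}$ is not well-defined: $\mathbf{\Psi}$ is only defined on $\widetilde{T}$, and the Schwarz-reflection extension from step~1 exists only away from the singular point, so there is nothing to interpolate against inside the cuspidal sliver $D\cap\widetilde{\Omega}$ near the tip. If instead you try to use $L_{\widetilde{\zeta}_0}$ alone there, the problem is that asymptotic linearity of $\mathbf{\Psi}$ on $\widetilde{T}$ only pins down the common tangent direction; the $(3,2)$-cusps of $\widetilde{\Omega}$ and $\Omega$ can have different opening constants, so $L_{\widetilde{\zeta}_0}$ need not carry $D\cap\widetilde{\Omega}$ onto $D'\cap\Omega$, and the glued map need not be injective nor match $\mathbf{\Psi}$ on $\partial\widetilde{T}$. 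The partition-of-unity gluing is also delicate: convex combinations of qc maps are not qc without control on the differences and the cutoff derivatives, and here the sliver is so thin that cutoffs in $r$ have large gradients relative to the width.

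The paper avoids all of this by transporting the problem to the unit circle. Using the uniformizers $\widetilde{g},g\in\Sigma_d^*$ with $\widetilde{g}(\mathbb{T})=\partial\widetilde{T}$ and $g(\mathbb{T})=\partial T$, it lifts $\mathbf{\Psi}\vert_{\partial\widetilde{T}}$ to a circle homeomorphism $\widehat{\mathbf{\Psi}}:\mathbb{T}\to\mathbb{T}$. Away from the finite set $\mathrm{crit}^*(\widetilde{g})\cup\mathrm{pinch}(\widetilde{g})$ this lift is locally conformal; at a pinch point $\widetilde{g}$ and $g$ are local diffeomorphisms, so the asymptotic linearity of $\mathbf{\Psi}$ transfers directly to $\widehat{\mathbf{\Psi}}$; at a critical point the Taylor and Puiseux expansions $\widetilde{g}\sim(\cdot)^2$ and $g^{-1}\sim(\cdot)^{1/2}$ combine with Lemma~\ref{asymp_linear} to give the same conclusion. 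Thus $\widehat{\mathbf{\Psi}}$ is $C^1$ (hence quasisymmetric), the Ahlfors--Beurling theorem extends it to a qc map of $\widehat{\C}\setminus\overline{\D}$, and pushing forward by $g$ and gluing with $\mathbf{\Psi}$ on $\widetilde{T}$ (using removability of analytic arcs and points) finishes the proof. The point is that the cusp geometry, which obstructs your direct local construction, becomes invisible on the circle because the uniformizer already ``opens up'' the cusp via its simple critical point.
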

\begin{proof}
Note that by Proposition~\ref{suffridge_extremal_qd_equiv_prop}, there exist $g,\widetilde{g}\in\Sigma_d^*$ such that $g(\widehat{\C}\setminus\overline{\D})=\Omega$, and $\widetilde{g}(\widehat{\C}\setminus\overline{\D})= \widetilde{\Omega}$. Moreover, both $g,\widetilde{g}$ have $d+1$ simple critical points on $\mathbb{T}$. Let us denote the set of all non-zero critical points of $\widetilde{g}$ (respectively, of $g$) by $\textrm{crit}^\ast(\widetilde{g})$ (respectively, by $\textrm{crit}^\ast(g)$), and the set of all points on $\mathbb{T}$ that are mapped by $\widetilde{g}$ (respectively, by $g$) to the double points on $\partial\widetilde{T}$ (respectively, on $\partial T$) by $\textrm{pinch}(\widetilde{g})$ (respectively, by $\textrm{pinch}(g)$).

Using $g:\mathbb{T}\to\partial\Omega=\partial T$ and $\widetilde{g}:\mathbb{T}\to\partial \widetilde{\Omega}=\partial\widetilde{T}$, we can lift the map $\mathbf{\Psi}:\partial\widetilde{T}\to\partial T$ to a homeomorphism $\widehat{\mathbf{\Psi}}:\mathbb{T}\to\mathbb{T}$ that sends $\textrm{crit}^\ast(\widetilde{g})$ onto $\textrm{crit}^\ast(g)$ and $\textrm{pinch}(\widetilde{g})$ onto $\textrm{pinch}(g)$.

\begin{center}
$\begin{CD}
(\mathbb{T},\widetilde{\xi}_0) @>\widehat{\mathbf{\Psi}}>> (\mathbb{T},\xi_0)\\
@VV \widetilde{g} V @VV g V\\
(\partial\widetilde{T},\widetilde{\zeta}_0) @>\mathbf{\Psi}>> (\partial T,\zeta_0)
\end{CD}$
\end{center}

We will now show that $\widehat{\mathbf{\Psi}}$ is a quasisymmetric map. Since the map $\mathbf{\Psi}$ admits a conformal extension to a neighborhood of $\widetilde{T}^0$ (the desingularized droplet), it follows that $\widehat{\mathbf{\Psi}}$ can be conformally extended to a neighborhood of $\mathbb{T}\setminus\left(\textrm{crit}^\ast(\widetilde{g})\cup\textrm{pinch}(\widetilde{g})\right)$. Hence, we only need to check the quasisymmetry property of $\mathbf{\Psi}$ near $\textrm{crit}^\ast(\widetilde{g})\cup\textrm{pinch}(\widetilde{g})$.

To this end, let us first pick $\widetilde{\xi}_0\in\textrm{pinch}(\widetilde{g})$, and set $\widetilde{\zeta}_0:=\widetilde{g}(\widetilde{\xi}_0)$. Then, $\xi_0:=\widehat{\mathbf{\Psi}}(\widetilde{\xi}_0)\in\textrm{pinch}(g)$, and $\zeta_0:=\mathbf{\Psi}(\widetilde{\zeta}_0)=g(\xi_0)$ is a double point on $\partial T$. By Lemma~\ref{asymp_linear}, the asymptotics of $\mathbf{\Psi}$ near $\widetilde{\zeta}_0$ is of the form (\ref{asymp_linear_1}). Moreover, $\widetilde{g}$ (respectively, $g$) has non-zero derivative at each point of $\textrm{pinch}(\widetilde{g})$ (respectively, of $\textrm{pinch}(g)$). Hence, the lifted map $\widehat{\mathbf{\Psi}}$ is asymptotically linear near $\widetilde{\xi}_0$; i.e., 
\begin{equation}
\widehat{\mathbf{\Psi}}(\widetilde{\xi})=\xi_0+c_2(\widetilde{\xi}-\widetilde{\xi}_0)+o((\widetilde{\xi}-\widetilde{\xi}_0)),
\label{asymp_linear_2}
\end{equation}
for $\mathbb{T}\ni\widetilde{\xi}\to\widetilde{\xi}_0$, and some constant $c_2\in\C^\ast$. This implies that if $I$ and $J$ are sub-arcs of $\mathbb{T}$ with $\vert I\vert=\vert J\vert$ and $I\cap J=\{\widetilde{\xi}_0\}$, then the ratio of the lengths of the images of $I$ and $J$ under $\widehat{\mathbf{\Psi}}$ is uniformly bounded. It follows that $\widehat{\mathbf{\Psi}}$ is quasisymmetric near points of $\textrm{pinch}(\widetilde{g})$.

Let us now pick $\widetilde{\xi}_0\in\textrm{crit}^\ast(\widetilde{g})$, and set $\widetilde{\zeta}_0:=\widetilde{g}(\widetilde{\xi}_0)$. Then, $\xi_0:=\widehat{\mathbf{\Psi}}(\widetilde{\xi}_0)\in\textrm{crit}^\ast(g)$, and $\zeta_0:=\mathbf{\Psi}(\widetilde{\zeta}_0)=g(\xi_0)$ is a cusp point on $\partial T$. Since $\widetilde{g}$ has a simple critical point at $\widetilde{\xi}_0$, the Taylor series of $\widetilde{g}$ at $\widetilde{\xi}_0$ is given by
\begin{equation}
\widetilde{g}(\widetilde{\xi})=\widetilde{\zeta}_0+c_3(\widetilde{\xi}-\widetilde{\xi}_0)^2+o((\widetilde{\xi}-\widetilde{\xi}_0)^2),
\label{asymp_quad}
\end{equation} for some constant $c_3\in\C^\ast$. Similarly, since $g$ has a simple critical point at $\xi_0$, the Puiseux series of $\left(g\vert_{\widehat{\C}\setminus\overline{\D}}\right)^{-1}$ at $\zeta_0$ is given by 
\begin{equation}
\left(g\vert_{\widehat{\C}\setminus\overline{\D}}\right)^{-1}(\zeta)=\xi_0+c_4(\zeta-\zeta_0)^{\frac12}+o((\zeta-\zeta_0)^\frac12),
\label{asymp_root}
\end{equation} for some constant $c_4\in\C^\ast$. Moreover, by Lemma~\ref{asymp_linear}, the asymptotics of $\mathbf{\Psi}$ near $\widetilde{\zeta}_0$ is of the form (\ref{asymp_linear_1}). It follows that $\widehat{\mathbf{\Psi}}$ has an asymptotics of the form (\ref{asymp_linear_2}) near $\widetilde{\xi}_0$. As in the previous case, this implies that $\widehat{\mathbf{\Psi}}$ is quasisymmetric near points of $\textrm{crit}^\ast(\widetilde{g})$.

It now follows that $\widehat{\mathbf{\Psi}}:\mathbb{T}\to\mathbb{T}$ is quasisymmetric (in fact, we have showed that $\widehat{\mathbf{\Psi}}:\mathbb{T}\to\mathbb{T}$ is $C^1$). By the Ahlfors-Beurling extension theorem, we can extend $\widehat{\mathbf{\Psi}}:\mathbb{T}\to\mathbb{T}$ to a quasiconformal map $\widehat{\mathbf{\Psi}}:\widehat{\C}\setminus\overline{\D}\to\widehat{\C}\setminus\overline{\D}$. By construction, $g\circ\widehat{\mathbf{\Psi}}\circ\widetilde{g}^{-1}: \widetilde{\Omega}\to\Omega$ and $\mathbf{\Psi}:\widetilde{T}\to T$ match continuously on $\partial\widetilde{T}$ to produce a homeomorphism $\check{\mathbf{\Psi}}$ of $\widehat{\C}$ that is quasiconformal on $\widehat{\C}\setminus\partial\widetilde{T}$. Since analytic arcs and finitely many points are removable, it follows that $\check{\mathbf{\Psi}}$ is a global quasiconformal extension of $\mathbf{\Psi}:\widetilde{T}\to T$ .
\end{proof}

We denote the union of the tiles of $ \widetilde{\sigma}$ (respectively, $\sigma$) of rank up to $n\geq1$ by $\widetilde{E}^n$ (respectively, $E^n$).

\begin{proof}[Proof of Theorem~\ref{rigidity_extremal_qd_thm}]
For $\sigma, \widetilde{\sigma}$, the images of the radial line at angle $\theta\in\R/\Z$ in $\widehat{\C}\setminus\overline{\D}$ under the B{\"o}ttcher coordinates $\tau, \widetilde{\tau}$ (with chosen normalizations) are called the \emph{external rays} of $\sigma, \widetilde{\sigma}$ at angle $\theta\in\R/\Z$. They are denoted by $R_\theta(\sigma)$ and $R_\theta(\widetilde{\sigma})$, respectively.

Using arguments from polynomial dynamics, one can show that the $d+1$ cusps on $\partial\Omega$ and $\partial\widetilde{\Omega}$ are the landing points of the $d+1$ fixed external rays of $\sigma$ and $\widetilde{\sigma}$, respectively (for a detailed study of landing patterns of the external rays of Schwarz reflection maps arising from $\Sigma_d^*$, see \cite[\S 4]{LMM2}). Let $\widetilde{\zeta}_0$ be the cusp point on $\partial\widetilde{\Omega}$ that is the landing point of the fixed ray $R_0(\widetilde{\sigma})$. We set $\zeta_0:=\pmb{\Psi}(\widetilde{\zeta}_0)$, and normalize the B{\"o}ttcher coordinate $\tau$ for $\sigma$ such that $R_0(\sigma)$ lands at $\zeta_0$.
 
Let us now define a $K$-qc map $\mathbf{\Psi}_0$ of the sphere that agrees with $\mathbf{\Psi}:\widetilde{T}\to T$ on $\widetilde{T}$ (constructed in Lemma~\ref{angled_tree_conformal_map}), and with $\tau\circ\widetilde{\tau}^{-1}$ (which conjugates $\widetilde{\sigma}$ to $\sigma$) on $U\cup R_0(\widetilde{\sigma})$, where $U$ is a $\widetilde{\sigma}$-invariant neighborhood of $\infty$. The existence of such a map is guaranteed by Lemma~\ref{global_qc}.

Note that $ \widetilde{\sigma}$ (respectively, $\sigma$) fixes $\partial\widetilde{T}$ (respectively, $\partial T$) point-wise.  Using the $(d+1):1$ covering maps $ \widetilde{\sigma}: \widetilde{\sigma}^{-1}(\widetilde{T}^0)\to\widetilde{T}^0$ and $\sigma:\sigma^{-1}(T^0)\to T^0$, we can now lift $\mathbf{\Psi}_0:\widetilde{T}^0\to T^0$ to a homeomorphism $\mathbf{\Psi}_1: \widetilde{\sigma}^{-1}(\widetilde{T}^0)\to\sigma^{-1}(T^0)$ that is conformal on the interior. Moreover, we can choose the lift so that $\mathbf{\Psi}_1: \widetilde{\sigma}^{-1}(\widetilde{T}^0)\to\sigma^{-1}(T^0)$ and $\mathbf{\Psi}_0:\widetilde{T}^0\to T^0$ match on $\partial\widetilde{T}^0$ to produce a homeomorphism $\mathbf{\Psi}_1:\overline{\widetilde{E}^1}\to\overline{E^1}$ (that is conformal on the interior). In fact, it conjugates $ \widetilde{\sigma}:\partial \widetilde{E}^1\to\partial \widetilde{T}^0$ to $\sigma:\partial E^1\to\partial T^0$ (both of which are degree $d$ coverings). 

Again, $\widetilde{\sigma}:\widehat{\C}\setminus\interior{\widetilde{E}^1}\to\widehat{\C}\setminus\interior{\widetilde{T}^0}$ and $\sigma:\widehat{\C}\setminus\interior{E^1}\to\widehat{\C}\setminus\interior{T^0}$ are degree $d$ branched coverings branched only at $\infty$. Since $\mathbf{\Psi}_0$ fixes $\infty$, we can lift $\mathbf{\Psi}_0$ via $ \widetilde{\sigma}$ and $\sigma$ to obtain a $K$-qc homeomorphism from $\widehat{\C}\setminus \interior{\widetilde{E}^1}$ to $\widehat{\C}\setminus \interior{E^1}$ (note that we get the same constant $K$ as we lift by two anti-holomorphic maps $ \widetilde{\sigma}$ and $\sigma$). The lift becomes unique once we require that it maps $\widetilde{\zeta}_0$ to $\zeta_0$. With this choice, the lift matches continuously with $\mathbf{\Psi}_1\vert_{\partial\widetilde{E}^1}$ (as constructed in the previous paragraph). By quasiconformal removability of analytic arcs and finitely many points, we obtain a $K$-qc map $\mathbf{\Psi}_1$ of the sphere that agrees with $\pmb{\Psi}_0$ on $\widetilde{T}$ (since $\partial\widetilde{E}^1$ has zero area, removing it does not affect the dilatation of $\mathbf{\Psi}_1$). Also note that the normalization $\pmb{\Psi}_1(\widetilde{\zeta}_0)=\zeta_0$ implies that $\pmb{\Psi}_1$ maps the ray $R_0(\widetilde{\sigma})$ to the ray $R_0(\sigma)$. As $\pmb{\Psi}_0$ conjugates $\widetilde{\sigma}$ to $\sigma$ on $U$, and $\pmb{\Psi}_1$ is a lift of $\pmb{\Psi}_0$ via $\widetilde{\sigma}$ and $\sigma$, it now follows that $\pmb{\Psi}_1=\pmb{\Psi}_0$ on $U$.

By iterating this lifting procedure (using the degree $d$ coverings  $ \widetilde{\sigma}:\widehat{\C}\setminus\interior{\widetilde{E}^1}\to\widehat{\C}\setminus\interior{\widetilde{T}^{0}}$ and $\sigma:\widehat{\C}\setminus\interior{E^1}\to\widehat{\C}\setminus\interior{T^{0}}$), we obtain a sequence of $K$-quasiconformal maps $\{\mathbf{\Psi}_n\}_n$ of the sphere so that 
\begin{equation}
\sigma\circ\mathbf{\Psi}_n=\mathbf{\Psi}_{n-1}\circ \widetilde{\sigma},\ \textrm{on}\ \widehat{\C}\setminus\interior{\widetilde{T}},
\label{lift_relation}
\end{equation}
and
\begin{equation}
\mathbf{\Psi}_n\equiv\mathbf{\Psi}_{n-1},\ \textrm{on}\ \widetilde{E}^{n-1}\cup \widetilde{\sigma}^{-(n-1)}(U).
\label{matching_lift}
\end{equation}

Due to compactness of the family of $K$-quasiconformal homeomorphisms, there exists a $K$-quasiconformal homeomorphism $\mathbf{\Psi}_\infty$ (of $\widehat{\C}$) that is a subsequential limit of the sequence $\{\mathbf{\Psi}_n\}_n$. By Equations~\eqref{lift_relation} and~\eqref{matching_lift}, we have that $$\sigma\circ\mathbf{\Psi}_\infty=\mathbf{\Psi}_\infty\circ \widetilde{\sigma},\ \textrm{on}\ \mathcal{B}_\infty( \widetilde{\sigma})\cup\left(\widetilde{T}^\infty\setminus\interior{\widetilde{T}}\right).$$ By continuity, the above identity continues to hold true on $\widehat{\C}\setminus\interior{\widetilde{T}}$.

By construction, we have that $\mathbf{\Psi}_\infty(\widetilde{T})=T$, and hence $\mathbf{\Psi}_\infty( \widetilde{\Omega})=\Omega$. Moreover, Equations~\eqref{lift_relation} and~\eqref{matching_lift} imply that the quasiconformal map $\mathbf{\Psi}_\infty$ is conformal on $\widetilde{T}^\infty\cup\mathcal{B}_\infty( \widetilde{\sigma})$. Since
$$
\widehat{\C}= \widetilde{T}^\infty \sqcup \partial\widetilde{T}^\infty\sqcup\mathcal{B}_\infty(\widetilde{\sigma})
$$
(by Proposition~\ref{dynamical_partition_schwarz}), and $\mathcal{L}( \widetilde{\sigma})=\partial \widetilde{T}^\infty$ has measure zero (by Proposition~\ref{limit_schwarz_zero_area}), it follows from \cite[\S II.B, Corollary~2]{A3} that $\mathbf{\Psi}_\infty$ is conformal on $\widehat{\C}$. Since $\mathbf{\Psi}_\infty$ fixes $\infty$, it follows that $A:=\mathbf{\Psi}_\infty$ is our desired affine map with $A( \widetilde{\Omega})= \Omega$.
\end{proof}

\begin{thm}\label{bijection_thm}
There is a bijection between affine equivalence classes of extremal unbounded quadrature domains of order $d$ and isomorphism classes of bi-angled trees with $d-1$ vertices.
\end{thm}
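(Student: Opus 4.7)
The proof is essentially a synthesis of the two main technical theorems of the preceding sections. First I would verify that the assignment
\[
\Omega \;\longmapsto\; \mathcal{T}(\Omega)
\]
is a well-defined map from affine equivalence classes of extremal unbounded quadrature domains of order $d$ to isomorphism classes of bi-angled trees with $d-1$ vertices. For the vertex count, recall that by \cite[Proposition~4.1]{2014arXiv1411.3415L} (cited after Definition~\ref{deltoid-like}) the droplet $T = \widehat{\mathbb{C}}\setminus\Omega$ of an order $d$ extremal unbounded quadrature domain has exactly $d-1$ interior components, so by Definition~\ref{tiles_def} there are $d-1$ fundamental tiles, and hence $d-1$ vertices in $\mathcal{T}(\Omega)$ by Definition~\ref{associated_tree}. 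The tree produced by Definition~\ref{associated_tree} is planar and its edges meet at angles $2\pi/3, 4\pi/3$, so (via Remark~\ref{bi-angled_equiv_def}) it is a genuine bi-angled tree. Affine maps clearly preserve the adjacency structure of fundamental tiles and preserve (counter-clockwise) angles, so affinely equivalent extremal quadrature domains are sent to isomorphic bi-angled trees.

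Next I would read off surjectivity of the map directly from Theorem~\ref{mainthm_qd_terminology}: given any bi-angled tree $\mathcal{T}$ with $d-1$ vertices (so that $d_{\mathcal{T}} = d$ in the notation of~\eqref{degree_from_tree}), that theorem produces an extremal unbounded quadrature domain $\Omega$ whose associated bi-angled tree is isomorphic to $\mathcal{T}$. Injectivity is exactly the content of Theorem~\ref{rigidity_extremal_qd_thm}: if $\widetilde{\Omega}$ and $\Omega$ are two extremal unbounded quadrature domains with $\mathcal{T}(\widetilde{\Omega})\cong \mathcal{T}(\Omega)$ as bi-angled trees, then they differ by an affine map, i.e.\ they represent the same affine equivalence class.

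Nothing essentially new needs to be proved; the only step that is not a one-line citation is the verification above that the map descends to equivalence classes with the correct target. The main conceptual obstacles were absorbed into the two preceding theorems: the existence half required the pinching construction developed in Section~\ref{mainthm_proof} (quasiconformal deformation together with the compactness of $\Sigma_d^*$ and the curvature/modulus control encoded in Principles~\ref{constant_curvature_principle} and~\ref{constant_ratio_principle}), while the uniqueness half required the pullback argument in Section~\ref{rigidity_qd_sec} (built on the asymptotic linearity Lemma~\ref{asymp_linear}, the global quasiconformal extension Lemma~\ref{global_qc}, and the measure-zero property of the limit set in Proposition~\ref{limit_schwarz_zero_area}). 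Combining these two yields Theorem~\ref{bijection_thm} immediately.
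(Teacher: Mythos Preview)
Your proposal is correct and matches the paper's approach exactly: the paper's proof consists of the single sentence ``This follows from Theorems~\ref{mainthm_qd_terminology} and~\ref{rigidity_extremal_qd_thm}.'' Your additional verification that the assignment $\Omega\mapsto\mathcal{T}(\Omega)$ is well-defined on affine equivalence classes and lands in bi-angled trees with $d-1$ vertices is a welcome bit of explicitness that the paper leaves implicit.
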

\begin{proof}
This follows from Theorems~\ref{mainthm_qd_terminology} and~\ref{rigidity_extremal_qd_thm}.
\end{proof}

\section{Crofoot--Sarason Polynomials}\label{crofoot_sarason_sec}

In this section, we will study a special class of polynomials, known as \emph{Crofoot--Sarason polynomials} (see Subsection~\ref{prelim_5} for the definition of CS polynomials), and prove a classification theorem for CS polynomials in terms of bi-angled trees (see Theorem~\ref{theorem_A}). CS anti-polynomials (which are complex conjugates of CS polynomials) will play a central role in the mating description for extremal Schwarz reflection maps, which will be studied in a sequel to this work \cite{LMM2}. Since the classification result for CS polynomials (equivalently, CS anti-polynomials) is important in its own right, we write this section in a self-contained manner.

Throughout this section, we denote a CS polynomial by $q$, and the corresponding CS anti-polynomial by $p$; i.e., $p(z):=\overline{q(z)}$. The \emph{filled Julia set} of $p$ is denoted by $K(p)$; this is the set of all points in $\C$ that have bounded forward orbits under iteration of $p$. Its boundary $\partial K(p)$ is called the \emph{Julia set} of $p$. By Subsection~\ref{prelim_5.5}, $K(p)$ is a full continuum with locally connected boundary.

The following two propositions describe the structure of ``touching'' of the immediate basins of attractions of the $d-1$ finite critical fixed points of $p$ (see Figure~\ref{fig:cubic_crit_fixed}), from which the bi-angled tree structure of the Hubbard tree of $p$ follows. It will be evident from the proofs that the bi-angled tree structure of the Hubbard tree of $p$ is a special feature of the anti-holomorphic setting.

\begin{prop}\label{sarason_bi-angled} Let $q(z)$ be a CS polynomial, and denote by $U_1, \cdots, U_{d-1}$ the $d-1$ immediate basins of attraction of the $d-1$ finite critical fixed points of $p(z)=\overline{q(z)}$. Let $i,j\in\{1,\cdots, d-1\}$ with $i\neq j$. Then either \[ \overline{U_i} \cap \overline{U_j} = \emptyset\textrm{, or  } \overline{U_i} \cap \overline{U_j} = \{ \zeta \}, \]  where $\zeta$ is a non-critical fixed point of $p$. Moreover, in the latter situation, $U_i$ and $U_j$ are the only bounded Fatou components touching at $\zeta$.
\end{prop}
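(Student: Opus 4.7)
The plan is to leverage the conformal conjugacy $\phi_k:\overline{U_k}\to\overline{\mathbb D}$ between $p|_{\overline{U_k}}$ and $\overline z^2|_{\overline{\mathbb D}}$ arising from the B\"ottcher coordinate in each immediate basin. As discussed in Subsection~\ref{prelim_5.5}, local connectivity of $\partial K(p)$ ensures that each $U_k$ is a Jordan domain and this conjugacy extends homeomorphically to the closure. Consequently, any $\zeta \in \overline{U_i}\cap\overline{U_j}$ with $i\neq j$ lies in $\partial U_i \cap \partial U_j \subset \partial K(p)$, is automatically non-critical (since the critical points of $p$ are the basin centers $c_1,\dots,c_{d-1}$, which lie interior to their basins), and the intersection $\overline{U_i}\cap\overline{U_j}$ is forward $p$-invariant because $p(\overline{U_k})=\overline{U_k}$ for each $k$.

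The heart of the argument will be to show $|\overline{U_i}\cap\overline{U_j}|\le 1$. I would proceed by contradiction: given distinct $\zeta_1,\zeta_2\in\overline{U_i}\cap\overline{U_j}$, join them to $c_i$ via internal rays in $U_i$ and to $c_j$ via internal rays in $U_j$, producing a Jordan curve $\Gamma$ whose bounded complementary component I denote by $V$. First I would show $\overline{U_i}\cap\overline{U_j}$ is finite (an accumulation of intersection points would produce infinitely many such Jordan curves nesting inside a bounded region, contradicting that $p$ has only $d-1$ critical values and that the Julia set supports expanding dynamics). Once finiteness is established, each intersection point is preperiodic, and replacing $p$ by a suitable iterate $p^{\circ N}$ we may arrange $p^{\circ N}(\Gamma)=\Gamma$ setwise, whence $p^{\circ N}(V)=V$. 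The hard step will be ruling out such an invariant $V$: since $\partial V=\Gamma\subset\overline{U_i}\cup\overline{U_j}$, connectedness of each $U_k$ forces $\overline{U_k}\subset V$ for every $c_k\in V$ with $k\neq i,j$, and a Riemann--Hurwitz count for the proper self-map $p^{\circ N}:V\to V$, whose critical orbit inside $V$ is exhausted by such $c_k$, yields a numerical contradiction. With uniqueness of $\zeta$ in hand, forward invariance of the singleton $\{\zeta\}$ immediately forces $p(\zeta)=\zeta$.

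For the final assertion, suppose that $\zeta \in \partial U_k$ for some $k \notin \{i,j\}$. Since $\zeta$ is a repelling fixed point of $p$ (hyperbolicity of the Julia set follows from all critical points being fixed) and $p$ is anti-holomorphic, the local action of $p$ at $\zeta$ is an orientation-\emph{reversing} $\R$-linear map of non-zero derivative; in particular it reverses the cyclic order of the sectors at $\zeta$ cut out by the distinct basins meeting $\zeta$. But $p(\overline{U_l})=\overline{U_l}$ for each $l$, so $p$ fixes every basin label at $\zeta$. An order-reversing cyclic permutation that fixes every element is only possible when at most two elements are being permuted; hence $\zeta\in\partial U_l$ for at most two values of $l$, namely $i$ and $j$, completing the proof.
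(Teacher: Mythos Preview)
Your arguments for why $\zeta$ must be fixed, why it is non-critical, and why at most two bounded components can meet at $\zeta$ are essentially the same as the paper's (the paper phrases the last point as: a small circle around $\zeta$ receives an orientation from three marked points in $U_i,U_j,U_k$, which $p$ would have to preserve since each $U_l$ is fixed, contradicting that $p$ is a local orientation-reversing diffeomorphism at the non-critical point $\zeta$).

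The discrepancy is in the step $\lvert\overline{U_i}\cap\overline{U_j}\rvert\le 1$. The paper dispatches this in one line: it follows from \emph{fullness of $K(p)$}. Concretely, once you have built your Jordan curve $\Gamma\subset\overline{U_i}\cup\overline{U_j}\subset K(p)$ through two distinct intersection points, connectedness of the basin of infinity forces the bounded complementary domain $V$ to lie entirely in $K(p)$. But then the open set $V$ contains an arc of $\partial U_j\subset J(p)=\partial\mathcal{B}_\infty$ while being disjoint from $\mathcal{B}_\infty$, which is impossible. You set up exactly the right curve $\Gamma$ and region $V$, but then veer into a much harder dynamical program instead of invoking fullness.

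That program, as written, has genuine gaps. The finiteness claim (``infinitely many nested Jordan curves \dots\ contradicting that $p$ has only $d-1$ critical values and that the Julia set supports expanding dynamics'') is not an argument; nothing prevents $\partial U_i\cap\partial U_j$ from being, a priori, a Cantor set, and neither the critical-value count nor hyperbolicity rules this out without further work. More seriously, the promised ``numerical contradiction'' from Riemann--Hurwitz is never stated: for $p^{\circ N}\colon V\to V$ proper of some degree $m$ on a disk with $r$ simple critical points inside, Riemann--Hurwitz gives $r=m-1$, which is a relation, not a contradiction. You would still need an independent lower bound on $m$ or upper bound on $r$ to finish, and none is supplied. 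The fullness argument avoids all of this.
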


\begin{proof} If \[ \overline{U_i} \cap \overline{U_j} \not= \emptyset, \] it follows from fullness of $K(p)$ that there can be at most one intersection point $\zeta$. That $\zeta$ is a fixed point of $p$ follows since $p(\zeta)$ must also then lie in the intersection of $\overline{U_i}$, $\overline{U_j}$. Furthermore, as the fixed point $\zeta$ lies on $\partial U_i$, which is contained in the Julia set of $p$, it cannot be a critical point. To prove the last statement, suppose by way of contradiction there is a third component $U_k$, distinct from $U_i$, $U_j$, so that $\zeta \in \partial U_k$. Consider a small circle $C$ centered at $\zeta$, with orientation determined by ordering three points on $C$ which lie one in each of $U_i$, $U_j$, $U_k$. Since the components $U_i$, $U_j$, $U_k$ are fixed by $p$, the orientation of $C$ is preserved under mapping by $p$, but this contradicts the fact that $p$ is a local orientation-reversing diffeomorphism in a neighborhood of $\zeta$ (as $p$ is anti-holomorphic).
\end{proof}

\begin{prop}\label{connectedness_prop} Let $q(z)$ be a CS polynomial, and notation as in Proposition \ref{sarason_bi-angled}. Then \[ \bigcup_{i=1}^{d-1}\overline{U_i} \textrm{ is connected. }\]
\end{prop}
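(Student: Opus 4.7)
The plan is to introduce the \emph{touching graph} $G$ with vertex set $\{U_1,\ldots,U_{d-1}\}$ and an edge $\{i,j\}$ whenever $\overline{U_i}\cap\overline{U_j}\neq\emptyset$. Since each $\overline{U_i}$ is a Jordan disk, $X:=\bigcup_{i=1}^{d-1}\overline{U_i}$ is connected if and only if $G$ is connected, and it is the latter I will aim for.

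First I would analyze the boundary dynamics. The B\"ottcher coordinate introduced in Subsection~\ref{prelim_5.5} conjugates $p|_{U_i}$ to $\overline{z}^2:\mathbb{D}\to\mathbb{D}$, and since each $\partial U_i$ is a Jordan curve (local connectivity of $K(p)$ applied to the Jordan domain $U_i$), the boundary map $p|_{\partial U_i}$ is conjugate to $\overline{z}^2|_{\mathbb{T}}$, which has exactly three fixed points (the cube roots of unity). So each $\partial U_i$ contains exactly three fixed points of $p$, and by Proposition~\ref{sarason_bi-angled} each such point lies on at most two $\partial U_i$'s. Summing over $i$ yields
\begin{equation*}
3(d-1)=2|E(G)|+F,
\end{equation*}
where $F$ counts boundary fixed points lying on only one $\partial U_i$. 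I would next show that every non-critical fixed point of $p$ lies on some $\partial U_i$: such a $\zeta$ is repelling (hence in $J(p)$), local connectivity makes it accessible from some Fatou component $V$, and the identity $p(\zeta)=\zeta$ propagates $\zeta\in\partial V\Rightarrow\zeta\in\partial p^n(V)$ for all $n\geq 0$; Sullivan's non-wandering theorem applied to $p^{\circ 2}$ then forces the forward orbit of $V$ to land on a periodic component $U_i$, placing $\zeta\in\partial U_i$. Combined with the Lefschetz--Hopf count~\eqref{hyperbolic_count} (which gives $3d-2$ total fixed points, of which $d-1$ are critical), this forces $|E(G)|+F=2d-1$ and hence $|E(G)|=d-2$.

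Since a graph on $d-1$ vertices with $d-2$ edges is connected if and only if it is acyclic, it remains to rule out cycles in $G$. A cycle $U_{i_1},\ldots,U_{i_r},U_{i_1}$ would, after joining consecutive touching points $\zeta_l$ via the unique internal rays inside each $\overline{U_{i_l}}$ landing at $\zeta_{l-1},\zeta_l$, produce a Jordan curve $\Gamma\subset X$; because these rays all lie at the three $p$-invariant B\"ottcher angles (the cube roots of unity), $\Gamma$ is $p$-invariant, and fullness of $K(p)$ forces $\Gamma$ to bound a disk $D\subset K(p)$. The hardest step I anticipate is extracting a contradiction from the existence of $D$: my plan is to apply the Riemann--Hurwitz formula to the branched cover $p:p^{-1}(D)\to D$, using that $p|_\Gamma$ is an orientation-reversing homeomorphism, to force $D$ to contain some critical fixed point $c_k$ with $k\notin\{i_1,\ldots,i_r\}$. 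The basin $\overline{U_k}$ would then sit inside $\overline{D}$, and its three boundary fixed points would either create additional edges of $G$ lying inside $D$ (contradicting the tight count $|E(G)|=d-2$) or produce fixed points of $p$ not on any $\partial U_j$ (contradicting the invariant established in the previous paragraph), yielding the desired contradiction.
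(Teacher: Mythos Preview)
Your overall strategy---encode the touching pattern as a graph $G$, show $\vert E(G)\vert=d-2$, and show $G$ is acyclic---would work, but two of your steps break down as written. First, your argument that every repelling fixed point $\zeta$ lies on some $\partial U_i$ assumes $\zeta$ is accessible from a \emph{bounded} Fatou component $V$, so that pushing $V$ forward under $p$ eventually reaches some $U_i$. But local connectivity of $J(p)$ only guarantees accessibility from the basin of infinity; nothing you wrote excludes $V$ being that basin, and then the forward-orbit argument yields nothing. (The conclusion you want---that all $2d-1$ repelling fixed points lie on $\bigcup\partial U_i$---is exactly what the paper establishes in Proposition~\ref{sharp_bound}, \emph{using} Proposition~\ref{connectedness_prop}.) The paper sidesteps this entirely: rather than locating all fixed points, it simply counts fixed points within each putative connected component of $\bigcup\overline{U_i}$ and shows the total exceeds $3d-2$.

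Second, your acyclicity argument fails at the critical vertices. The claim that $p\vert_\Gamma$ is orientation-reversing is false: each fixed internal ray at $c_{i_l}$ is mapped to itself (since $-2\theta\equiv\theta$ for $\theta\in\{0,\tfrac13,\tfrac23\}$), so $p\vert_\Gamma$ is orientation-\emph{preserving} near every vertex and every touching point. More seriously, because $c_{i_l}\in\Gamma$ is a critical point, in B\"ottcher coordinates the sector of $U_{i_l}$ on the $D$-side of $\Gamma$ (of angular width $2\pi/3$ or $4\pi/3$) is carried by $\overline{z}^2$ onto a region containing the complementary sector; hence $p(D)\not\subset D$ and the Riemann--Hurwitz framework you propose does not apply. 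The paper's acyclicity argument is simpler: a cycle in $G$ would enclose an open disk $D$ which, near any touching point $\zeta_l$, contains a wedge bounded by sub-arcs of $\partial U_{i_l}$ and $\partial U_{i_{l+1}}$; since these arcs lie in $J(p)=\partial K(p)$, points of the basin of infinity accumulate inside that wedge, contradicting $D\subset K(p)$ (fullness). With acyclicity, each size-$k$ component of $G$ is a tree and carries $k+(3k-(k-1))=3k+1$ fixed points, so $m\geq 2$ components give at least $3(d-1)+m>3d-2$, which is the paper's contradiction.
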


\begin{figure}[ht!]
\begin{tikzpicture}
\node[anchor=south west,inner sep=0] at (-3.2,0) {\includegraphics[width=0.48\textwidth]{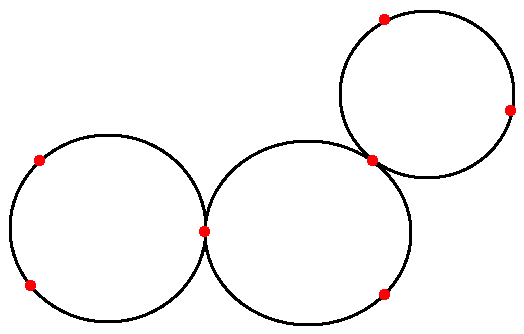}};
\node[anchor=south west,inner sep=0] at (4,0) {\includegraphics[width=0.46\textwidth]{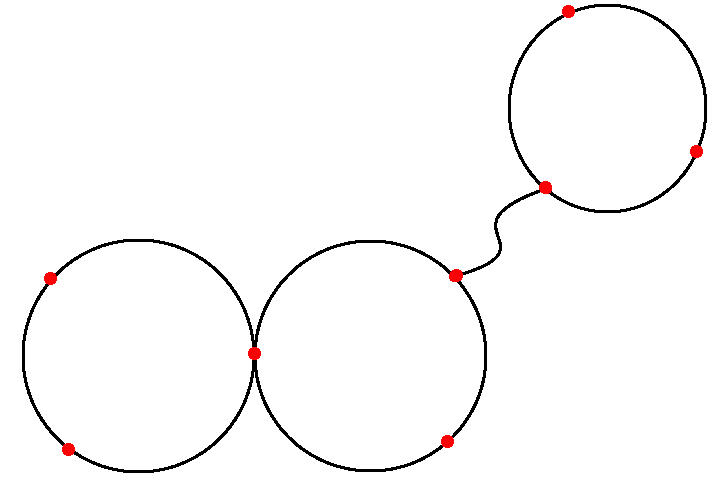}};
\node at (-1.5,1.2) {\begin{large}$U_3$\end{large}};
\node at (0.8,1.2) {\begin{large}$U_2$\end{large}};
\node at (2.1,3) {\begin{large}$U_1$\end{large}};
\node at (5.4,1.2) {\begin{large}$U_3$\end{large}};
\node at (7.4,1.2) {\begin{large}$U_2$\end{large}};
\node at (9.5,3.6) {\begin{large}$U_1$\end{large}};
\end{tikzpicture}
\caption{Left: A valid configuration of invariant (bounded) immediate basins for a CS anti-polynomial of degree $4$ (the non-critical fixed points are marked in red). Right: No CS anti-polynomial of degree $4$ can admit such a configuration of invariant (bounded) immediate basins; indeed, this configuration produces $11$ fixed points of the anti-polynomial ($8$ non-critical and $3$ critical), but the maximum possible number of fixed points is $3\cdot 4-2=10$.}
\label{fatou_connection}
\end{figure}

\begin{proof} We work by way of contradiction: suppose that $\cup_{i=1}^{d-1}\overline{U_i}$ is not connected. Then, perhaps after renumbering, there is $k$ with $1\leq k < d-1$ such that $\cup_{i=1}^k \overline{U_i}$ is connected and disjoint from $\cup_{i=k+1}^{d-1} \overline{U_i}$. We will show that there must then be more than $3d-2$ fixed points of $p$ in $\C$, contradicting Theorem~\ref{KhSw} (see Figure~\ref{fatou_connection}).  

For each $i\in\{1, \cdots, d-1\}$, there is a conformal map \[\phi_i: U_i\rightarrow \mathbb{D} \textrm{ such that } \phi\circ p\circ\phi^{-1}(z)=\overline{z}^2. \] The conjugacy $\phi_i: U_i\rightarrow \mathbb{D}$ extends homeomorphically to $\phi_i: \partial U_i\rightarrow\partial\mathbb{D}$ since $\partial U_i$ is locally connected (see \cite[Lemma~19.3]{MR2193309}) and $\overline{U_i}$ is full. Thus it follows that the map \[p: \overline{U_i}\rightarrow\overline{U_i}\] has three fixed points on $\partial U_i$ (and one interior critical fixed point). 

Let $\zeta$ be a non-critical fixed point of $p$. By Proposition \ref{sarason_bi-angled}, $\zeta$ can lie on the boundary of at most two immediate basins of attraction (of the finite critical points). Furthermore, we claim that there must be at least one non-critical fixed point of $p$ which is on the boundary of precisely one basin of attraction $U_i$ for $1\leq i \leq k$. Indeed, if this was not the case, then the set \[\widehat{\mathbb{C}}\setminus\left(\cup_{i=1}^k \overline{U_i}\right) \] would be disconnected, and so $\widehat{\mathbb{C}}\setminus K(p)$ would be disconnected, and this contradicts the fact that $K(p)$ is full (see Subsection~\ref{prelim_5.5}). Thus a count yields that $p$ has at least  \[  \underbrace{(k)}_{\textrm{ critical fixed points in }U_i\textrm{, } 1\leq i \leq k} + \underbrace{3k}_{ \textrm{ fixed points in } \partial U_i\textrm{, } 1\leq i \leq k } - \underbrace{(k-1)}_{\textrm{shared fixed points}} = 3k+1  \] fixed points in $\cup_{i=1}^{k} \overline{U_i}$. 

A similar argument in each of the other components of $\cup_{i=1}^{d-1}\overline{U_i}$ yields that $p$ has at least $3d-2-3k$ fixed points in $\cup_{i=k+1}^{d-1} \overline{U_i}$. As $\cup_{i=1}^{k} \overline{U_i}$ and $\cup_{i=k+1}^{d-1} \overline{U_i}$ are disjoint, this means $p$ has at least $3d-1>3d-2$ fixed points in $\mathbb{C}$, which is our needed contradiction.

\end{proof}

It is known that if $q(z)$ is a CS polynomial of degree $d$, then $p$ has $(3d-2)$ fixed points in the plane; i.e., it realizes the upper bound of \cite{KhSw} (see Remark \ref{lefschetz_count_rem}). Our analysis yields a dynamical description of these fixed points.

\begin{prop}\label{sharp_bound}
Let $q(z)$ be a CS polynomial of degree $d$. Then $p$ has exactly $3d-2$ fixed points in $\C$; $d-1$ of which are super-attracting, and the remaining $2d-1$ are repelling. 
\end{prop}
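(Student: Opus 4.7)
The plan is to combine the Lefschetz-Hopf fixed point count recorded in Equation \eqref{hyperbolic_count} with Fatou's classical bound on the number of non-repelling cycles.

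First I would observe that all $d-1$ critical points of $p$ are (by the definition of a CS anti-polynomial, Definition~\ref{sara_cro_def}) distinct fixed points of $p$, each with multiplier $0$, so they automatically supply $d-1$ super-attracting fixed points. The next step is to rule out neutral fixed points: if $p$ had a fixed point $z_0$ with $\vert p'(z_0)\vert=1$, then (as noted in Remark~\ref{lefschetz_count_rem}) the forward orbit of some critical point of $p$ would have to accumulate on either the boundary of a Siegel disk or a parabolic petal at $z_0$, hence be infinite. This contradicts the fact that every critical point of $p$ is fixed.

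Having excluded neutral fixed points, I would invoke Fatou's theorem: every (super-)attracting cycle of $p$ (equivalently, of the holomorphic iterate $p^{\circ 2}$, after passing through \cite[\S 3.2]{CG1}) must attract at least one critical point. Since each critical point of $p$ is fixed, it lies in its own immediate basin and cannot contribute to any other attracting cycle. Consequently, the set of attracting fixed points of $p$ coincides exactly with the set of $d-1$ critical points, so
\[
\#\bigl\{z\in\C: p(z)=z,\ \vert p'(z)\vert<1\bigr\}=d-1.
\]

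With no neutral fixed points present, the Lefschetz-Hopf formula \eqref{hyperbolic_count} applies and yields
\[
\#\bigl\{z\in\C: p(z)=z\bigr\}=2(d-1)+d=3d-2.
\]
Subtracting the $d-1$ super-attracting fixed points, the remaining $3d-2-(d-1)=2d-1$ fixed points must all have multipliers of modulus $>1$, and are therefore repelling, completing the proof.

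No step here is a serious obstacle; the main thing to be careful about is justifying the absence of neutral fixed points, which I would write out explicitly by recalling that a Siegel disk boundary or parabolic basin forces an infinite critical orbit, directly violating the CS hypothesis.
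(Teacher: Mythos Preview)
Your proof is correct and is essentially the argument the paper itself sketches in Remark~\ref{lefschetz_count_rem}. However, the paper's formal proof of Proposition~\ref{sharp_bound} takes a genuinely different route: rather than invoking the Lefschetz--Hopf count~\eqref{hyperbolic_count} directly, it uses the structural results already established in Propositions~\ref{sarason_bi-angled} and~\ref{connectedness_prop} to \emph{locate} the fixed points explicitly. Each immediate basin $U_i$ carries exactly three fixed points on its boundary (via the B{\"o}ttcher conjugacy to $\overline{z}^2$), giving at most $3(d-1)$ boundary fixed points; by connectedness of $\cup_i\overline{U_i}$ and the fact that at most two basins can share a boundary fixed point, exactly $d-2$ of these are shared, leaving $2d-1$ distinct boundary fixed points. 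Together with the $d-1$ critical fixed points this already reaches $3d-2$, and Theorem~\ref{KhSw} then says there are no others.

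Your approach is shorter and more self-contained (it does not rely on Propositions~\ref{sarason_bi-angled} and~\ref{connectedness_prop}), but the paper's approach yields more: it shows that every repelling fixed point actually lies on some $\partial U_i$, information that feeds directly into the Hubbard tree description in Proposition~\ref{cs_angled_hubbard}. Your argument gives the count and the classification into super-attracting versus repelling, but says nothing about where the repelling fixed points sit relative to the basins.
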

\begin{proof}
By definition, $p$ has $d-1$ distinct fixed critical points in $\C$. Moreover, each $U_i$ has exactly three distinct fixed points on its boundary. This gives at most $3(d-1)$ distinct fixed points of $p$ on the boundary of $\cup_{i=1}^{d-1} \overline{U_i}$. Since $K(p)$ is full and at most two distinct $U_j$ can touch at a fixed point, it follows from the proof of Proposition~\ref{connectedness_prop} that exactly $d-2$ of these boundary fixed points lie on the common boundary of two distinct $U_j$. We conclude that there are exactly $3(d-1)-(d-2)=(2d-1)$ distinct fixed points on the boundary of $\cup_{i=1}^{d-1} \overline{U_i}$. But this already accounts for $(2d-1)+(d-1)=(3d-2)$ distinct fixed points of $p$ in $\C$. By Theorem~\ref{KhSw}, these are all the fixed points of $p$.

The non-critical fixed points of $p$ lie on $\partial U_i$ (for some $i\in\{1,\cdots, d-1\}$), and hence cannot be (super-)attracting. Since each critical point of $p$ is fixed, the non-critical fixed points cannot be indifferent either \cite[Theorem~10.15, 11.17]{MR2193309}. Hence, all of the $2d-1$ non-critical fixed points of $p$ are repelling.
\end{proof}

\begin{figure}[ht!]
\begin{tikzpicture}
\node[anchor=south west,inner sep=0] at (0,0) {\includegraphics[width=0.7\textwidth]{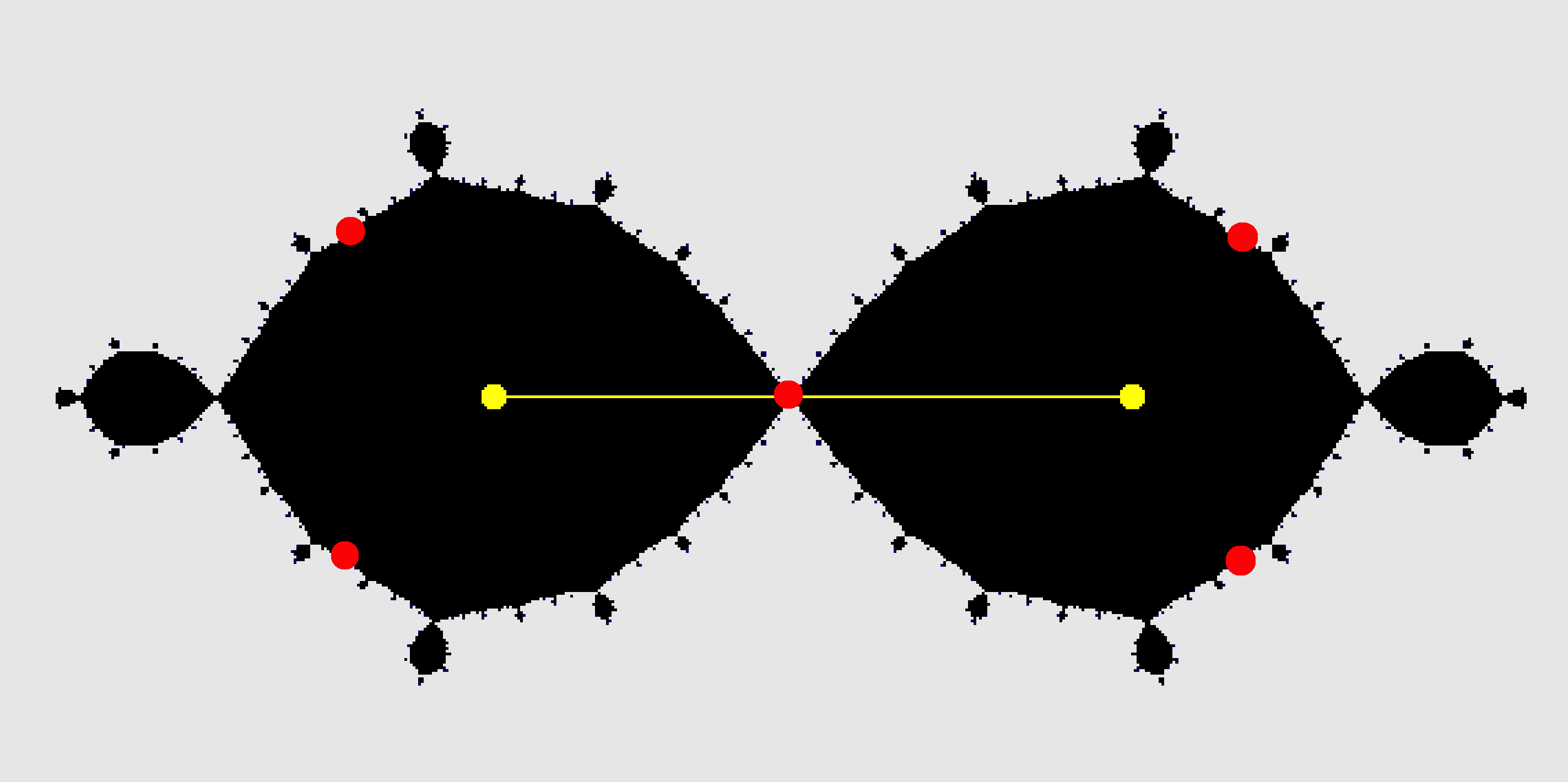}};
\end{tikzpicture}
\caption{The dynamical plane of a critically fixed degree $3$ anti-polynomial whose angled Hubbard tree (shown in yellow) is isomorphic to the unique bi-angled tree with $2$ vertices. The five repelling fixed points of the anti-polynomial are also marked (in red)}
\label{fig:cubic_crit_fixed}
\end{figure}

We will now deduce from the previous results that the angled Hubbard tree of a CS anti-polynomial (defined in Subsection~\ref{prelim_5.5}) has the structure of a bi-angled tree.

\begin{prop}\label{cs_angled_hubbard}
The angled Hubbard tree $\mathcal{T}(p)$ of a CS anti-polynomial $p$ of degree $d$ is a bi-angled tree with $d-1$ vertices.
\end{prop}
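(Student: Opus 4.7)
The plan is to extract the bi-angled tree structure directly from the dynamical picture supplied by Propositions \ref{sarason_bi-angled} and \ref{connectedness_prop} together with the B\"ottcher coordinates on the invariant basins $U_1,\dots,U_{d-1}$ of the finite critical fixed points of $p$. First I would show that the Hubbard tree $\mathcal{T}(p)$ actually lives inside $\bigcup_{i=1}^{d-1}\overline{U_i}$. By Proposition \ref{connectedness_prop} this union is connected, and each $\overline{U_i}$ is path-connected, so any two critical points $c_i$, $c_j$ can be joined by an arc in $\bigcup_k \overline{U_k}$ that, within each $U_k$ it traverses, consists of a pair of internal rays meeting at $c_k$, and which passes between neighboring $\overline{U_i}$, $\overline{U_j}$ only through the (unique) non-critical fixed points where their closures touch, as furnished by Proposition \ref{sarason_bi-angled}. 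Such an arc is allowable by construction, and by the uniqueness of allowable arcs in $K(p)$ recalled in Subsection \ref{prelim_5.5}, it is the allowable arc from $c_i$ to $c_j$. Thus $\mathcal{T}(p)\subset \bigcup_i \overline{U_i}$.

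Next I would identify the vertex set of $\mathcal{T}(p)$ with the set of $d-1$ critical points of $p$. A point $\zeta\in\mathcal{T}(p)\setminus\{c_1,\dots,c_{d-1}\}$ must lie in some $\overline{U_i}$. Inside $U_i$ the tree is a union of internal rays meeting at $c_i$, so such a $\zeta$ must be on $\partial U_i$. By Proposition \ref{sarason_bi-angled}, $\zeta$ lies on at most one additional $\partial U_j$, and is then a non-critical fixed point shared by exactly two basins; hence $\zeta$ is incident to exactly two edges (one internal ray in $U_i$, one in $U_j$) and is therefore not a branch point. It follows that every branch point of $\mathcal{T}(p)$ is a critical point, and the vertex set of the Hubbard tree is precisely $\{c_1,\dots,c_{d-1}\}$, giving $d-1$ vertices.

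Finally, I would verify the bi-angled conditions at each critical vertex $c_i$ using the B\"ottcher coordinate $\phi_i\colon U_i\to\mathbb{D}$ conjugating $p|_{U_i}$ to $\overline{z}^2$. Every edge of $\mathcal{T}(p)$ incident to $c_i$ is an internal ray in $U_i$ landing at a fixed point of $p$ on $\partial U_i$ (namely a shared boundary point with some adjacent $\overline{U_j}$). Since $\phi_i$ conjugates the dynamics, such a landing point corresponds to a fixed point of $\overline{z}^2$ on $\partial\mathbb{D}$, and the three fixed points of $\overline{z}^2$ lie exactly at the B\"ottcher angles $0$, $2\pi/3$, $4\pi/3$. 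Therefore $\deg(c_i)\le 3$, and the angle $\angle_{c_i}(e,e')$ (defined as the B\"ottcher-angle difference) between any two distinct edges at $c_i$ belongs to $\{2\pi/3,4\pi/3\}$, while $\angle_{c_i}(e,e)=0$. The skew-symmetry and additivity required by Definition \ref{bi-angled_tree_def} follow immediately from this ``difference of angles'' definition. This proves that $\mathcal{T}(p)$ is a bi-angled tree with $d-1$ vertices.

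There is no real obstacle in the proof; the main point is simply to marshal Propositions \ref{sarason_bi-angled} and \ref{connectedness_prop} to pin down the global topology of $\mathcal{T}(p)$, after which the angular rigidity is forced by the fact that the only fixed points of the local model $\overline{z}^2$ on $\partial\mathbb{D}$ are the three cube roots of unity.
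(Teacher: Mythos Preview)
Your proposal is correct and follows essentially the same approach as the paper's proof: both use Propositions~\ref{sarason_bi-angled} and~\ref{connectedness_prop} to confine $\mathcal{T}(p)$ to the union of the fixed internal rays of $U_1,\dots,U_{d-1}$ together with their landing points, deduce that the only branch points are the $d-1$ critical points, and then read off the angle data from the fact that the fixed rays of $\overline{z}^2$ sit at B\"ottcher angles $0,\tfrac{2\pi}{3},\tfrac{4\pi}{3}$. One minor wording slip: in your second paragraph the clause ``so such a $\zeta$ must be on $\partial U_i$'' is not literally true (a non-critical $\zeta$ can sit in the interior of a ray), but the intended conclusion---that such $\zeta$ has local degree at most two and hence is not a branch point---is clearly what you mean and is correct.
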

\begin{proof}
We will continue to use the notations of Proposition~\ref{connectedness_prop}. 

Let us first observe that the three radial lines $\{re^{2\pi i\theta}:\theta\in\{0,\frac13,\frac23\}, r\in[0,1)\}\subset\overline{\D}$ are set-wise fixed by $\overline{z}^2$. For each immediate basin of attraction $U_i$ (of a finite super-attracting fixed point of $p$), the pre-images of these radial lines under the B{\"o}ttcher coordinate $\phi_i$ are fixed (as curves) by $p$. These are called the fixed \emph{internal rays} of $\overline{U_i}$. Note that since each $U_i$ is a Jordan domain with exactly three fixed points on its boundary, the three fixed internal rays of $U_i$ land at distinct repelling fixed point on $\partial U_i$. Propositions~\ref{connectedness_prop} and~\ref{sarason_bi-angled} now imply that the Hubbard tree $\mathcal{T}(p)$ of $p$ is contained in the union of the fixed internal rays of $U_1,\cdots, U_{d-1}$ along with their landing points. Moreover, $\mathcal{T}(p)$ intersects the Julia set of $p$ precisely at the $d-2$ fixed points that are cut-points of $\cup_{i=1}^{d-1}\overline{U_i}$. It also follows from Propositions~\ref{connectedness_prop} and~\ref{sarason_bi-angled} that every branch point of $\mathcal{T}(p)$ is a (finite) fixed critical point of $p$, and there are exactly three edges meeting at such a branch point. Hence, $\mathcal{T}(p)$ has precisely $d-1$ vertices that are fixed critical points of $p$ in $\C$, and each vertex has degree at most $3$. 

As the fixed internal rays forming the Hubbard tree have angles $0, \frac{2\pi}{3}$, or $\frac{4\pi}{3}$ (in B{\"o}ttcher coordinates), it follows that the counter-clockwise oriented angle between two fixed internal rays meeting at a fixed critical point is $\frac{2\pi}{3}$ or $\frac{4\pi}{3}$. It now follows from the definition of bi-angled trees that $\mathcal{T}(p)$ has a natural bi-angled tree structure.
\end{proof}

To realize each bi-angled tree as the angled Hubbard tree of some CS anti-polynomial, we will endow a bi-angled tree with an orientation-reversing angled tree map (see Subsection~\ref{prelim_5.5}), and invoke the realization theorem \cite[Theorem~5.1]{Poi3}.

\begin{prop}\label{bi_angled_cs_hubbard}
Each bi-angled tree $\mathcal{T}$ with $d-1$ vertices is isomorphic to the angled Hubbard tree of a CS anti-polynomial of degree $d$.
\end{prop}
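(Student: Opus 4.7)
The plan is to realize the given bi-angled tree $\mathcal{T}$ as the angled Hubbard tree of a postcritically finite anti-polynomial by invoking the realization theorem \cite[Theorem~5.1]{Poi3} of Poirier. Concretely, I will equip the abstract tree $\mathcal{T}$ (regarded as an angled tree in the sense of Subsection~\ref{prelim_5.5}, with its angle function taking values in $\{0,\frac{2\pi}{3},\frac{4\pi}{3}\} \subset \Q/2\pi\Z$) with a local degree function and an orientation-reversing self-map that model the expected dynamics of a degree-$d$ CS anti-polynomial on its Hubbard tree.

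Declare the local degree function by setting $\textswab{d}(v):=2$ for each of the $d-1$ vertices $v$ of $\mathcal{T}$, and let $\tau:\mathcal{T}\to\mathcal{T}$ be the identity map on $\mathcal{T}$. To check that $\tau$ is an orientation-reversing angled tree map, one must verify
\[
\angle_{\tau(v)}(\tau(e),\tau(e')) \equiv -\textswab{d}(v)\,\angle_v(e,e')\ (\mathrm{mod}\ 2\pi)
\]
for every pair of edges $e,e'$ incident at $v$. Since $\tau$ is the identity and $\textswab{d}(v)=2$, this reduces to the congruence $3\angle_v(e,e')\equiv 0\ (\mathrm{mod}\ 2\pi)$, which holds precisely because $\mathcal{T}$ is bi-angled, i.e.\ every angle $\angle_v(e,e')$ lies in $\{0,\frac{2\pi}{3},\frac{4\pi}{3}\}$. (This observation is, in essence, why bi-angled trees are the correct combinatorial objects for CS anti-polynomials: the values $\frac{2\pi}{3}$ and $\frac{4\pi}{3}$ are exactly the nonzero angles fixed by $\theta\mapsto -2\theta\ (\mathrm{mod}\ 2\pi)$.) The degree of $\tau$ equals $1+\sum_{v}(\textswab{d}(v)-1)=1+(d-1)(2-1)=d$.

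Every vertex $v$ of $\mathcal{T}$ is critical (as $\textswab{d}(v)=2>1$) and fixed under $\tau$, hence trivially of Fatou type. Applying Poirier's realization theorem \cite[Theorem~5.1]{Poi3} to the orientation-reversing angled tree map $(\mathcal{T},\tau,\textswab{d})$ produces a postcritically finite anti-polynomial $p$ of degree $d$ whose dynamics on its angled Hubbard tree $\mathcal{T}(p)$ realizes $\tau$. Since each vertex of $\mathcal{T}$ corresponds to a fixed critical point of $p$, and there are $d-1$ such vertices, $p$ has $d-1$ distinct fixed critical points in $\C$; as a degree-$d$ anti-polynomial has exactly $d-1$ finite critical points (counted with multiplicity), these must be simple and account for every finite critical point. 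Thus $p$ is a CS anti-polynomial.

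The remaining point is to identify $\mathcal{T}(p)$ with $\mathcal{T}$ as bi-angled trees. By Proposition~\ref{cs_angled_hubbard}, the angled Hubbard tree of any CS anti-polynomial is a bi-angled tree, so $\mathcal{T}(p)$ has the structure required by Definition~\ref{bi-angled_tree_def}. The realization theorem asserts that $\mathcal{T}(p)$, equipped with its angle data and local degree function inherited from $p$, is isomorphic (as an angled dynamical tree) to $(\mathcal{T},\tau,\textswab{d})$; in particular the underlying tree isomorphism intertwines the angle functions, which is exactly the content of Definition~\ref{isom_def}. I expect the main technical obstacle to lie in verifying this final identification cleanly from Poirier's theorem (which is stated for formal angled tree maps rather than bi-angled trees directly), and in checking compatibility of the various conventions (e.g.\ that the local degrees at non-branching vertices of $\mathcal{T}$ are correctly matched to those of $\mathcal{T}(p)$, and that no spurious vertices are added by the realization).
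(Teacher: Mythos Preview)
Your proposal is correct and follows essentially the same approach as the paper: both equip $\mathcal{T}$ with the constant local degree function $\textswab{d}\equiv 2$ and the identity tree map $\tau=\mathrm{id}$, verify the orientation-reversing condition via $-2\theta\equiv\theta\ (\mathrm{mod}\ 2\pi)$ for $\theta\in\{0,\frac{2\pi}{3},\frac{4\pi}{3}\}$, note that all vertices are critical, fixed, and hence of Fatou type, and then invoke \cite[Theorem~5.1]{Poi3}. The only item the paper mentions that you do not is that the \emph{expanding} hypothesis of Poirier's theorem is vacuously satisfied (since every vertex is of Fatou type); you may wish to add a clause to that effect, but otherwise your argument matches the paper's.
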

\begin{proof}
Let us fix a bi-angled tree $\mathcal{T}$ with $d-1$ vertices. We define a \emph{local degree function} $\textswab{d}:V(\mathcal{T})\to\N$ by setting $\textswab{d}(v)=2$, for all $v\in V(\mathcal{T})$. The relations $$-2(\frac{2\pi}{3})=\frac{2\pi}{3},\ \textrm{and}\ -2(\frac{4\pi}{3})=\frac{4\pi}{3},\ (\textrm{mod}\ 2\pi)$$ imply that $\tau\equiv\mathrm{id}$ is an orientation-reversing angled tree map of degree $d$ on $\mathcal{T}$. Moreover, since all vertices of $\mathcal{T}$ are critical and fixed under $\tau$, it follows that all vertices are of \emph{Fatou type}. Hence, the condition of being \emph{expanding} (in the sense of \cite[\S 1]{Poi3}) is vacuously satisfied by the orientation-reversing angled tree map $\tau:\mathcal{T}\to\mathcal{T}$. The realization theorem \cite[Theorem~5.1]{Poi3} now provides us with a postcritically finite anti-polynomial $p$ of degree $d$ such that the angled Hubbard tree of $p$ is isomorphic to $\mathcal{T}$, and $p$ has $d-1$ distinct, fixed critical points in $\C$ (as the angled tree map $\tau$ has the same). Therefore, $p$ is our desired CS anti-polynomial of degree $d$ whose angled Hubbard tree is isomorphic to $\mathcal{T}$.
\end{proof}

We now prove the following Theorem:

\begin{thm}\label{crofoot_sarason_bi-angled_bijection_thm}
There exists a bijection between equivalence classes of CS polynomials of degree $d$ (respectively, affine conjugacy classes of CS anti-polynomials of degree $d$) and isomorphism classes of bi-angled trees with $d-1$ vertices. 
\end{thm}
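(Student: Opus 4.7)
The plan is to combine the preceding propositions of Section~\ref{crofoot_sarason_sec} with a uniqueness statement built into Poirier's classification of Hubbard trees for postcritically finite anti-polynomials. First, by Proposition~\ref{equiv_conj_sara_cro_prop}, the two formulations in the statement are equivalent, so it suffices to establish a bijection $\Phi$ between affine conjugacy classes of CS anti-polynomials of degree $d$ and isomorphism classes of bi-angled trees with $d-1$ vertices. I would define $\Phi$ by sending the class of a CS anti-polynomial $p$ to the isomorphism class of its angled Hubbard tree $\mathcal{T}(p)$, which by Proposition~\ref{cs_angled_hubbard} is a bi-angled tree with $d-1$ vertices.

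The next step is to verify that $\Phi$ is well-defined. If $p_2 = A\circ p_1\circ A^{-1}$ for an affine map $A$, then $A$ carries each bounded Fatou component of $p_1$ to one of $p_2$ and conjugates the B\"ottcher coordinates up to composition with a unit complex rotation; in particular it sends the fixed internal rays of $p_1$ forming $\mathcal{T}(p_1)$ to the analogous internal rays forming $\mathcal{T}(p_2)$, producing a tree isomorphism that preserves the angle function recorded in B\"ottcher coordinates. Thus $\mathcal{T}(p_1)\cong\mathcal{T}(p_2)$ as bi-angled trees. Surjectivity of $\Phi$ is then precisely Proposition~\ref{bi_angled_cs_hubbard}.

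The remaining and main point is injectivity. Suppose $p_1$ and $p_2$ are CS anti-polynomials of degree $d$ with $\mathcal{T}(p_1)\cong\mathcal{T}(p_2)$ as bi-angled trees. I would upgrade this to an isomorphism of \emph{orientation-reversing angled tree maps} in the sense of Subsection~\ref{prelim_5.5}: on each $\mathcal{T}(p_i)$, the dynamics induced by $p_i$ sends every vertex to itself (since each critical point of $p_i$ is fixed, and by Proposition~\ref{sarason_bi-angled} every branch point is a fixed critical point), with local degree function $\textswab{d}\equiv 2$. Since the dynamics is the identity on vertices and the local degree is constant, any bi-angled tree isomorphism automatically intertwines these dynamical structures. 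Hence $p_1$ and $p_2$ produce isomorphic orientation-reversing angled tree maps of degree $d$, all of whose vertices are of Fatou type. The uniqueness part of the realization theorem \cite[Theorem~5.1]{Poi3} then implies that $p_1$ and $p_2$ are affinely conjugate, giving injectivity.

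The only delicate step is justifying that bi-angled tree isomorphism captures enough data to invoke Poirier's uniqueness statement, and the observation above---that for CS anti-polynomials the entire dynamics on $\mathcal{T}(p)$ is trivial (identity on vertices, local degree two everywhere)---removes this obstacle cleanly, since the angle function at each vertex is the only non-trivial information and is preserved by definition of bi-angled tree isomorphism. All other steps (well-definedness, surjectivity) reduce immediately to results already established in the excerpt.
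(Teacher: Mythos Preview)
Your proposal is correct and follows essentially the same route as the paper: define the map via $p\mapsto\mathcal{T}(p)$ using Proposition~\ref{cs_angled_hubbard}, check well-definedness under affine conjugacy, obtain surjectivity from Proposition~\ref{bi_angled_cs_hubbard}, and deduce injectivity by observing that the induced orientation-reversing angled tree map is the identity with constant local degree $2$, so that any bi-angled tree isomorphism automatically respects the full Poirier data and the uniqueness in \cite[Theorem~5.1]{Poi3} applies. The paper's proof differs only in phrasing (it notes each edge is mapped homeomorphically to itself and speaks of the dynamics being isotopic to the identity relative to the vertices), not in substance.
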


\begin{proof}
By Proposition~\ref{cs_angled_hubbard}, for any CS polynomial $q$ of degree $d$, the angled Hubbard tree of the corresponding CS anti-polynomial $p=\overline{q}$ is a bi-angled tree with $d-1$ vertices. Thanks to Proposition~\ref{equiv_conj_sara_cro_prop} and the fact that the angled Hubbard trees of affinely conjugate anti-polynomials are isomorphic, the map $$q\mapsto \mathcal{T}(p)$$ induces a map from the set of equivalence classes of CS polynomials of degree $d$ to the set of isomorphism classes of bi-angled trees with $d-1$ vertices (see Definition~\ref{isom_def}).  

Surjectivity of the above map is the content of Proposition~\ref{bi_angled_cs_hubbard}.

Finally, suppose that there exist two CS polynomials $q_1$ and $q_2$ (of degree $d$) such that the angled Hubbard trees of $p_1:=\overline{q}_1$ and $p_2:=\overline{q}_2$ are isomorphic. By Propositions~\ref{sarason_bi-angled} and~\ref{connectedness_prop}, for $i\in\{1,2\}$, each vertex of the Hubbard tree of $p_i$ is a fixed critical point, and each edge of the tree is the closure of the union of two invariant internal rays. Hence, $p_i$ fixes each vertex of its Hubbard tree, and maps each edge homeomorphically to itself. It follows that the dynamics induced by $p_1$ and $p_2$ on their respective Hubbard trees are isotopic to the identity (relative to the vertices of the respective trees). The uniqueness part of \cite[Theorem~5.1]{Poi3} now implies that $p_1$ and $p_2$ are affinely conjugate. Hence, by Proposition~\ref{equiv_conj_sara_cro_prop}, the CS polynomials $q_1$ and $q_2$ are equivalent in the sense of Definition~\ref{sara_cro_equiv_def}. This completes the proof of injectivity of the map between equivalence classes of CS polynomials (of degree $d$) and isomorphism classes of bi-angled trees (with $d-1$ vertices).

Similarly, the bijection between affine conjugacy classes of CS anti-polynomials (of degree $d$) and isomorphism classes of bi-angled trees (with $d-1$ vertices) is obtained by sending a CS anti-polynomial $p$ to its angled Hubbard tree $\mathcal{T}(p)$.
\end{proof}

\begin{rem}\label{hele_shaw_surgery_rem}
A quite different proof of the existence  Proposition~\ref{bi_angled_cs_hubbard} can be given in terms of Hele-Shaw flow and quasiconformal surgery. Indeed, given a bi-angled tree $\mathcal{T}$, we have proved the existence of an extremal unbounded quadrature domain $\Omega$ of order $d$ with its associated bi-angled tree isomorphic to $\mathcal{T}$. Following \cite[\S 5.3]{MR3454377}, we now run Hele-Shaw flow for a sufficiently small time on the corresponding droplet which produces a new unbounded quadrature domain $\check{\Omega}$ (also of order $d$) with non-singular boundary and of connectivity $d-1$. In particular, the new droplet has $d-1$ connected components each of which is a topological disk with non-singular boundary. Denoting the Schwarz reflection map of $\check{\Omega}$ by $\check{\sigma}$, one now observes that $\check{\sigma}:\check{\sigma}^{-1}(\check{\Omega})\to\check{\Omega}$ is an \emph{anti-rational-like map} of degree $d$ (in the sense of \cite[Definition~4]{Buff}) with a unique pole at $\infty$. Arguing as in the proof of \cite[Lemma~4.3]{MR3454377} (see \cite[Theorem~4]{Buff} for a similar surgery procedure in the holomorphic case), one now concludes that $\check{\sigma}:\check{\sigma}^{-1}(\check{\Omega})\to\check{\Omega}$ extends to an anti-quasiregular map (of $\widehat{\C}$) of degree $d$ with a unique pole at $\infty$ and a simple fixed critical point in each component of the droplet. Moreover, this anti-quasiregular map is quasiconformally conjugate to an anti-rational map of degree $d$. Clearly, this straightened map is an anti-polynomial of degree $d$ with $d-1$ distinct fixed critical points in the plane. Using Propositions~\ref{connectedness_prop} and~\ref{sarason_bi-angled}, one may argue that the angled Hubbard tree of this critically fixed anti-polynomial is isomorphic to $\mathcal{T}$.

Yet another way of establishing a direct link between (\ref{Sigma_d^*}) and (\ref{crofoot-sarason}) is as follows. For $f\in\Sigma_d^*$ with $d-2$ double points on $f(\mathbb{T})$, the restriction of the associated Schwarz reflection map $\sigma$ to the closure of each of the $d-1$ invariant components of $T^\infty(\sigma)$ is conformally conjugate to the reflection map $\rho$ (see \cite[\S 3.1]{LLMM1}). Using the conjugacy $\mathcal{E}$ between $\rho\vert_\mathbb{T}$ and $\overline{z}^2\vert_\mathbb{T}$ (defined in \cite[\S 3.2]{LLMM1}), one can now perform a topological surgery on each of these $d-1$ invariant components of $T^\infty(\sigma)$ to replace the action of $\rho$ by the action of $\overline{z}^2$. This defines a critically fixed orientation-reversing branched covering of degree $d$ on the topological $2$-sphere. It is not hard to check that this branch cover is Thurston equivalent to a degree $d$ CS anti-polynomial whose Hubbard tree, viewed as a bi-angled tree, is isomorphic to $\mathcal{T}(f)$.
\end{rem}

\section{The Class $S_d^*$}\label{S_d^*_section}

As already mentioned in the Introduction, the class $\Sigma_d^*$ is related to the more classical space $S_d^*$ (also defined in the Introduction). Arguing as in the proof of Proposition~\ref{crit_points_on_circle}, one sees that for $f\in S_d^*$, $f(\mathbb{T})$ has $d-1$ cusps. Analogous to the $\Sigma_d^*$ setting, we will call $f\in S_d^*$ a \emph{Suffridge polynomial} if $f(\mathbb{T})$ has the maximal number of double points: $d-2$ according to \cite[Lemma~2.2]{2014arXiv1411.3415L}. Given a Suffridge polynomial $f\in S_d^*$, there is a natural rooted binary tree $\mathcal{T}(f)$ associated with $f$ (see Definitions \ref{rooted_binary_tree}, \ref{rooted_bi-angled_tree} and Table \ref{table_2} below). In this Section we will prove analogous existence and rigidity results (corresponding to Theorems \ref{mainthm_qd_terminology}, \ref{rigidity_extremal_qd_thm} already proven for $\Sigma_d^*$) for the class $S_d^*$.

\begin{definition}\label{rooted_binary_tree} A \emph{rooted binary tree} is a planar, rooted tree for which every vertex has a left child, a right child, neither, or both. \end{definition}

\begin{definition}\label{rooted_bi-angled_tree} For an extremal bounded quadrature domain $\Omega$ of order $d>2$, we associate a rooted binary tree $\mathcal{T}=\mathcal{T}(\Omega)=(V,E)$ as follows. Since $d>2$, there is a unique bounded fundamental tile $T_1$ which shares a common boundary point $c_1$ with the unique unbounded fundamental tile $T_0$. Assign a \emph{rooted} vertex at $1$ associated to $T_1$. $T_1$ has two remaining cusps $c_2$, $c_3$ (labelled so that $(c_1, c_2, c_3)$ is counter-clockwise). For $j=2,3$, we include in $V$ the point $1+\exp(i (\pi/3 + (2-j)2\pi/3) )$ and in $E$ the linear segment connecting $1$ to $1+\exp(i (\pi/3 + (2-j)2\pi/3) )$ if and only if the cusp point $c_j$ is a boundary point of two fundamental tiles. This defines at most two new vertices and edges in $\mathcal{T}$. We perform a similar procedure for each new vertex, allowing for edges to decrease in length in successive steps of this procedure so as to avoid self-intersection. This recursively defines the rooted binary tree $\mathcal{T}$. 
\end{definition}

\begin{table}
\begin{adjustwidth}{-.65in}{-.65in} 
\begin{center}
\begin{tabular}{ |c|c|c|c|c| } 
\hline
$d$ & Rooted Binary tree & Droplet & Suffridge Polynomial  \\
\hline

    2 & $\emptyset$ & \begin{centering} \parbox[c]{8em}{\scalebox{.12}{
\begingroup%
  \makeatletter%
  \providecommand\color[2][]{%
    \errmessage{(Inkscape) Color is used for the text in Inkscape, but the package 'color.sty' is not loaded}%
    \renewcommand\color[2][]{}%
  }%
  \providecommand\transparent[1]{%
    \errmessage{(Inkscape) Transparency is used (non-zero) for the text in Inkscape, but the package 'transparent.sty' is not loaded}%
    \renewcommand\transparent[1]{}%
  }%
  \providecommand\rotatebox[2]{#2}%
  \ifx\svgwidth\undefined%
    \setlength{\unitlength}{566bp}%
    \ifx\svgscale\undefined%
      \relax%
    \else%
      \setlength{\unitlength}{\unitlength * \real{\svgscale}}%
    \fi%
  \else%
    \setlength{\unitlength}{\svgwidth}%
  \fi%
  \global\let\svgwidth\undefined%
  \global\let\svgscale\undefined%
  \makeatother%
  \begin{picture}(1,1.09363958)%
    \put(0,0){\includegraphics[width=\unitlength,page=1]{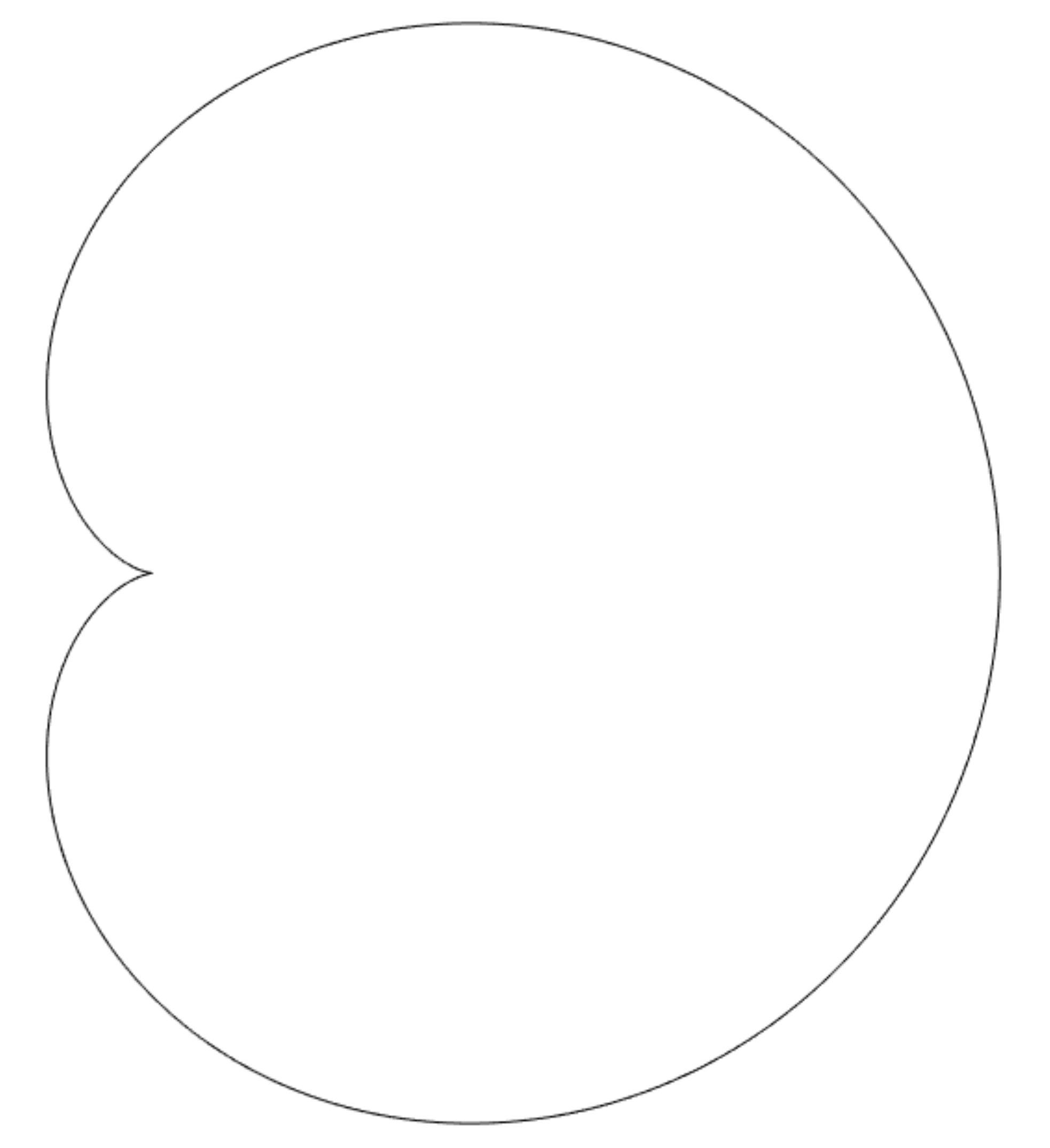}}%
  \end{picture}%
\endgroup%
}} \end{centering} &  $ z+\frac{z^2}{2} $ \\

    3 & \parbox[c]{4em}{\scalebox{.15}{
\begingroup%
  \makeatletter%
  \providecommand\color[2][]{%
    \errmessage{(Inkscape) Color is used for the text in Inkscape, but the package 'color.sty' is not loaded}%
    \renewcommand\color[2][]{}%
  }%
  \providecommand\transparent[1]{%
    \errmessage{(Inkscape) Transparency is used (non-zero) for the text in Inkscape, but the package 'transparent.sty' is not loaded}%
    \renewcommand\transparent[1]{}%
  }%
  \providecommand\rotatebox[2]{#2}%
  \ifx\svgwidth\undefined%
    \setlength{\unitlength}{286.88332625bp}%
    \ifx\svgscale\undefined%
      \relax%
    \else%
      \setlength{\unitlength}{\unitlength * \real{\svgscale}}%
    \fi%
  \else%
    \setlength{\unitlength}{\svgwidth}%
  \fi%
  \global\let\svgwidth\undefined%
  \global\let\svgscale\undefined%
  \makeatother%
  \begin{picture}(1,1.02253522)%
    \put(0,0){\includegraphics[width=\unitlength,page=1]{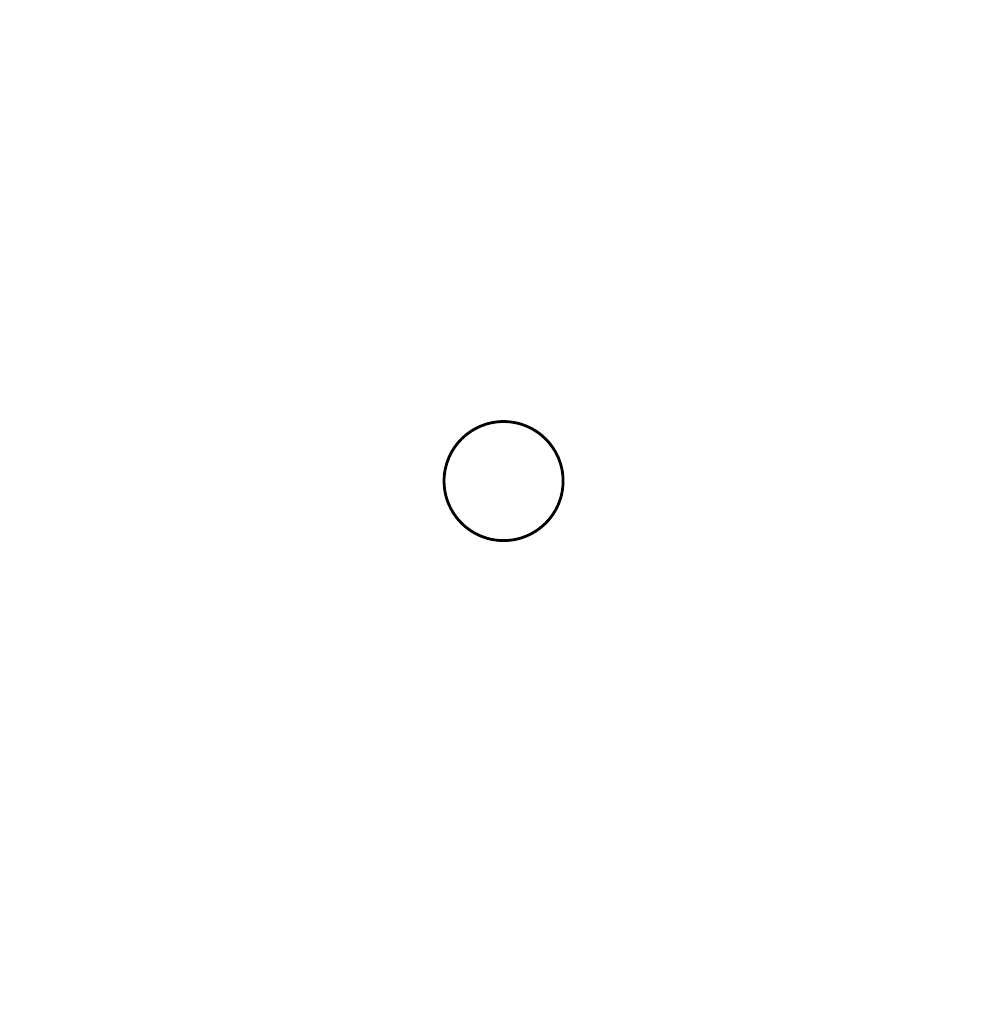}}%
  \end{picture}%
\endgroup%
}} & \parbox[c]{8em}{\scalebox{.15}{
\begingroup%
  \makeatletter%
  \providecommand\color[2][]{%
    \errmessage{(Inkscape) Color is used for the text in Inkscape, but the package 'color.sty' is not loaded}%
    \renewcommand\color[2][]{}%
  }%
  \providecommand\transparent[1]{%
    \errmessage{(Inkscape) Transparency is used (non-zero) for the text in Inkscape, but the package 'transparent.sty' is not loaded}%
    \renewcommand\transparent[1]{}%
  }%
  \providecommand\rotatebox[2]{#2}%
  \ifx\svgwidth\undefined%
    \setlength{\unitlength}{474bp}%
    \ifx\svgscale\undefined%
      \relax%
    \else%
      \setlength{\unitlength}{\unitlength * \real{\svgscale}}%
    \fi%
  \else%
    \setlength{\unitlength}{\svgwidth}%
  \fi%
  \global\let\svgwidth\undefined%
  \global\let\svgscale\undefined%
  \makeatother%
  \begin{picture}(1,1.13924051)%
    \put(0,0){\includegraphics[width=\unitlength,page=1]{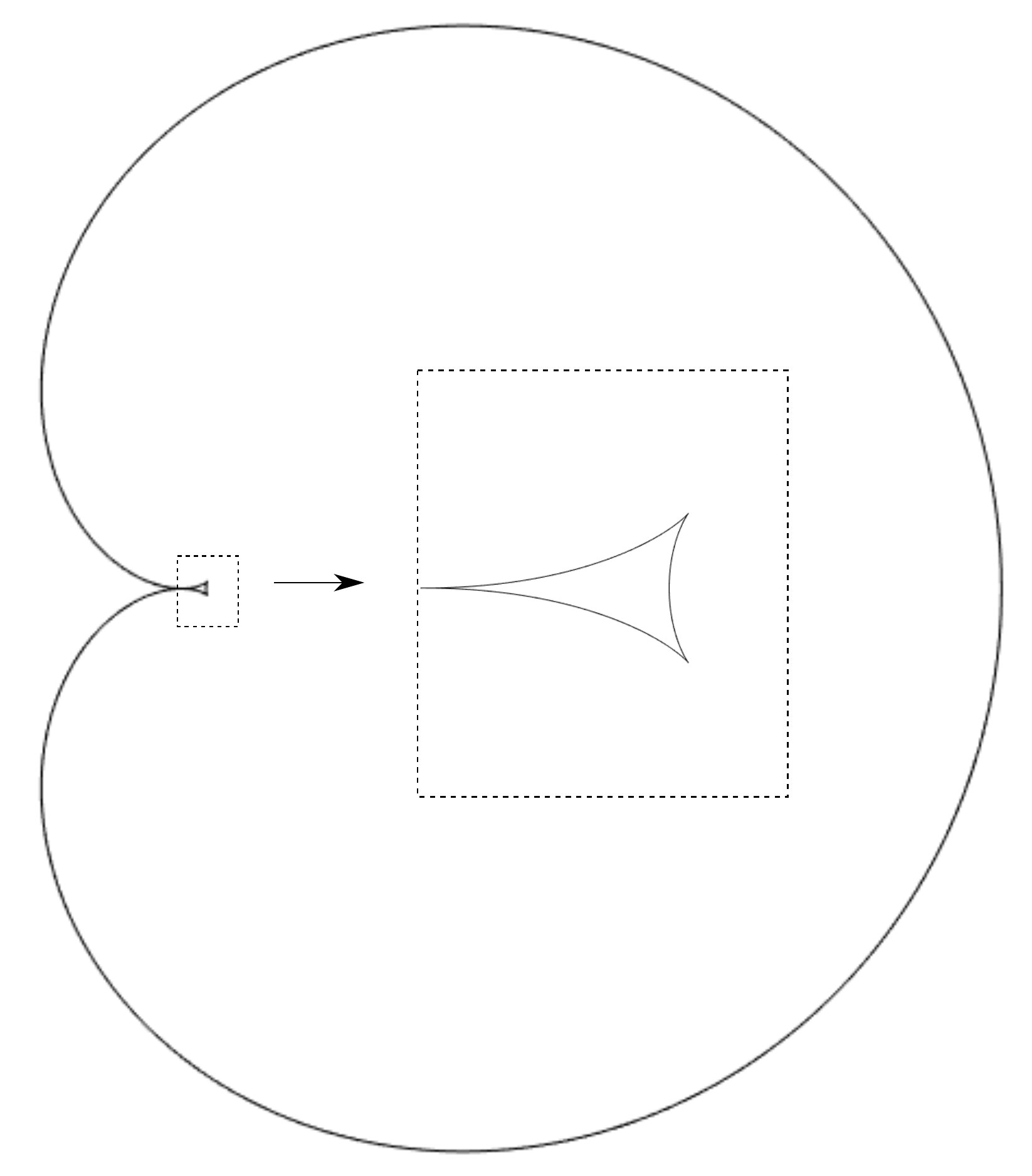}}%
  \end{picture}%
\endgroup%
}} & $z+\frac{2\sqrt{2}}{3}z^2+\frac{z^3}{3}$ \\ 

    4 & \parbox[c]{4em}{\scalebox{.15}{
\begingroup%
  \makeatletter%
  \providecommand\color[2][]{%
    \errmessage{(Inkscape) Color is used for the text in Inkscape, but the package 'color.sty' is not loaded}%
    \renewcommand\color[2][]{}%
  }%
  \providecommand\transparent[1]{%
    \errmessage{(Inkscape) Transparency is used (non-zero) for the text in Inkscape, but the package 'transparent.sty' is not loaded}%
    \renewcommand\transparent[1]{}%
  }%
  \providecommand\rotatebox[2]{#2}%
  \ifx\svgwidth\undefined%
    \setlength{\unitlength}{198.78990834bp}%
    \ifx\svgscale\undefined%
      \relax%
    \else%
      \setlength{\unitlength}{\unitlength * \real{\svgscale}}%
    \fi%
  \else%
    \setlength{\unitlength}{\svgwidth}%
  \fi%
  \global\let\svgwidth\undefined%
  \global\let\svgscale\undefined%
  \makeatother%
  \begin{picture}(1,1.10976061)%
    \put(0,0){\includegraphics[width=\unitlength,page=1]{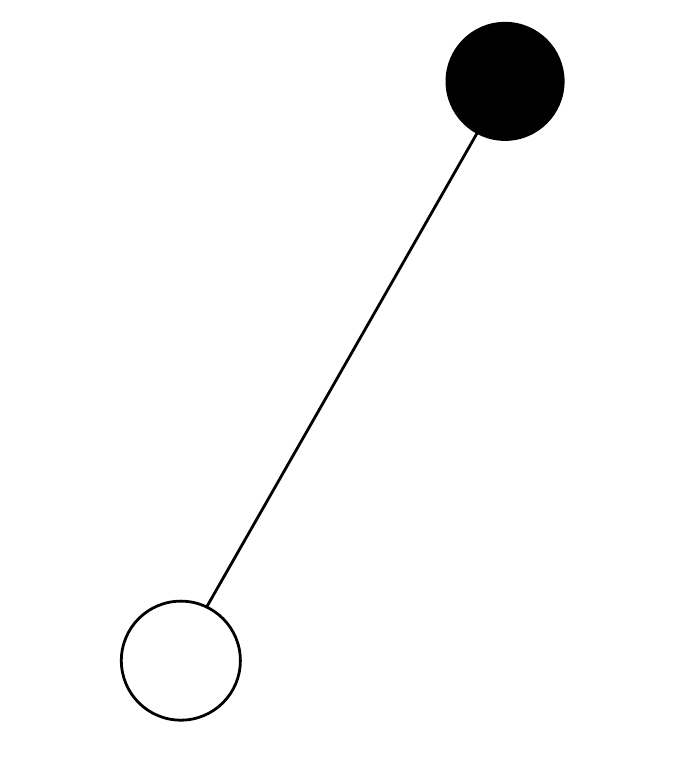}}%
  \end{picture}%
\endgroup%
}} & \parbox[c]{8em}{\scalebox{.15}{
\begingroup%
  \makeatletter%
  \providecommand\color[2][]{%
    \errmessage{(Inkscape) Color is used for the text in Inkscape, but the package 'color.sty' is not loaded}%
    \renewcommand\color[2][]{}%
  }%
  \providecommand\transparent[1]{%
    \errmessage{(Inkscape) Transparency is used (non-zero) for the text in Inkscape, but the package 'transparent.sty' is not loaded}%
    \renewcommand\transparent[1]{}%
  }%
  \providecommand\rotatebox[2]{#2}%
  \ifx\svgwidth\undefined%
    \setlength{\unitlength}{479bp}%
    \ifx\svgscale\undefined%
      \relax%
    \else%
      \setlength{\unitlength}{\unitlength * \real{\svgscale}}%
    \fi%
  \else%
    \setlength{\unitlength}{\svgwidth}%
  \fi%
  \global\let\svgwidth\undefined%
  \global\let\svgscale\undefined%
  \makeatother%
  \begin{picture}(1,1.1440501)%
    \put(0,0){\includegraphics[width=\unitlength,page=1]{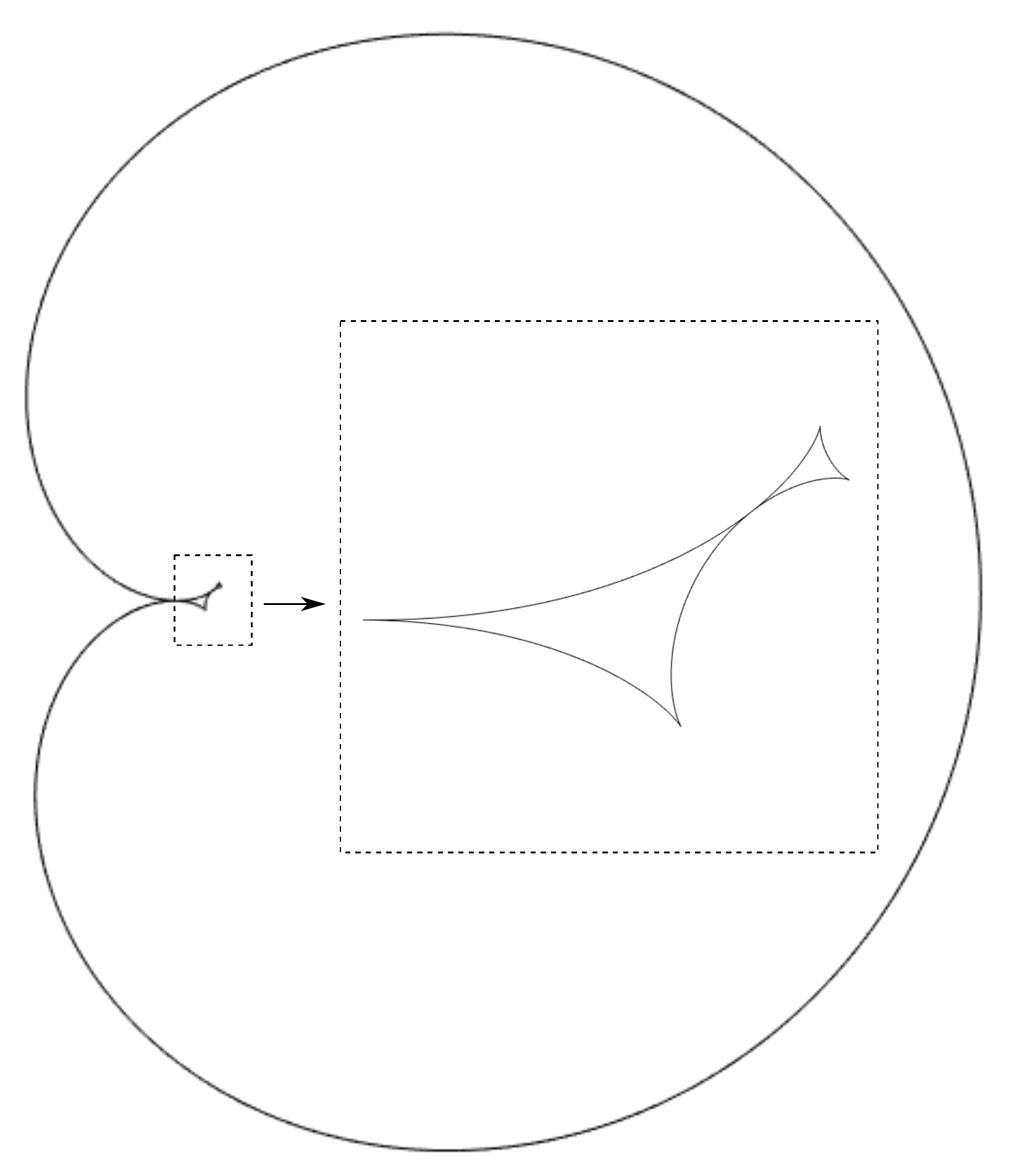}}%
  \end{picture}%
\endgroup%
}} & $ z+\frac{3}{2}Ae^{it}z^2 + Ae^{-it}z^3+\frac{z^4}{4} $ \\

    4 & \parbox[c]{4em}{\scalebox{.15}{
\begingroup%
  \makeatletter%
  \providecommand\color[2][]{%
    \errmessage{(Inkscape) Color is used for the text in Inkscape, but the package 'color.sty' is not loaded}%
    \renewcommand\color[2][]{}%
  }%
  \providecommand\transparent[1]{%
    \errmessage{(Inkscape) Transparency is used (non-zero) for the text in Inkscape, but the package 'transparent.sty' is not loaded}%
    \renewcommand\transparent[1]{}%
  }%
  \providecommand\rotatebox[2]{#2}%
  \ifx\svgwidth\undefined%
    \setlength{\unitlength}{198.78990834bp}%
    \ifx\svgscale\undefined%
      \relax%
    \else%
      \setlength{\unitlength}{\unitlength * \real{\svgscale}}%
    \fi%
  \else%
    \setlength{\unitlength}{\svgwidth}%
  \fi%
  \global\let\svgwidth\undefined%
  \global\let\svgscale\undefined%
  \makeatother%
  \begin{picture}(1,1.10976061)%
    \put(0,0){\includegraphics[width=\unitlength,page=1]{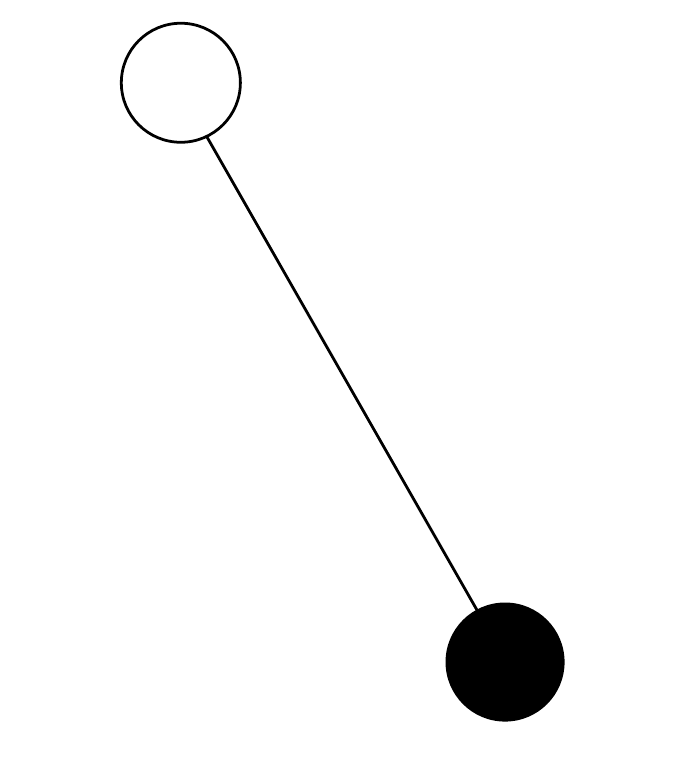}}%
  \end{picture}%
\endgroup%
}} & \parbox[c]{8em}{\scalebox{.15}{
\begingroup%
  \makeatletter%
  \providecommand\color[2][]{%
    \errmessage{(Inkscape) Color is used for the text in Inkscape, but the package 'color.sty' is not loaded}%
    \renewcommand\color[2][]{}%
  }%
  \providecommand\transparent[1]{%
    \errmessage{(Inkscape) Transparency is used (non-zero) for the text in Inkscape, but the package 'transparent.sty' is not loaded}%
    \renewcommand\transparent[1]{}%
  }%
  \providecommand\rotatebox[2]{#2}%
  \ifx\svgwidth\undefined%
    \setlength{\unitlength}{494bp}%
    \ifx\svgscale\undefined%
      \relax%
    \else%
      \setlength{\unitlength}{\unitlength * \real{\svgscale}}%
    \fi%
  \else%
    \setlength{\unitlength}{\svgwidth}%
  \fi%
  \global\let\svgwidth\undefined%
  \global\let\svgscale\undefined%
  \makeatother%
  \begin{picture}(1,1.09919028)%
    \put(0,0){\includegraphics[width=\unitlength,page=1]{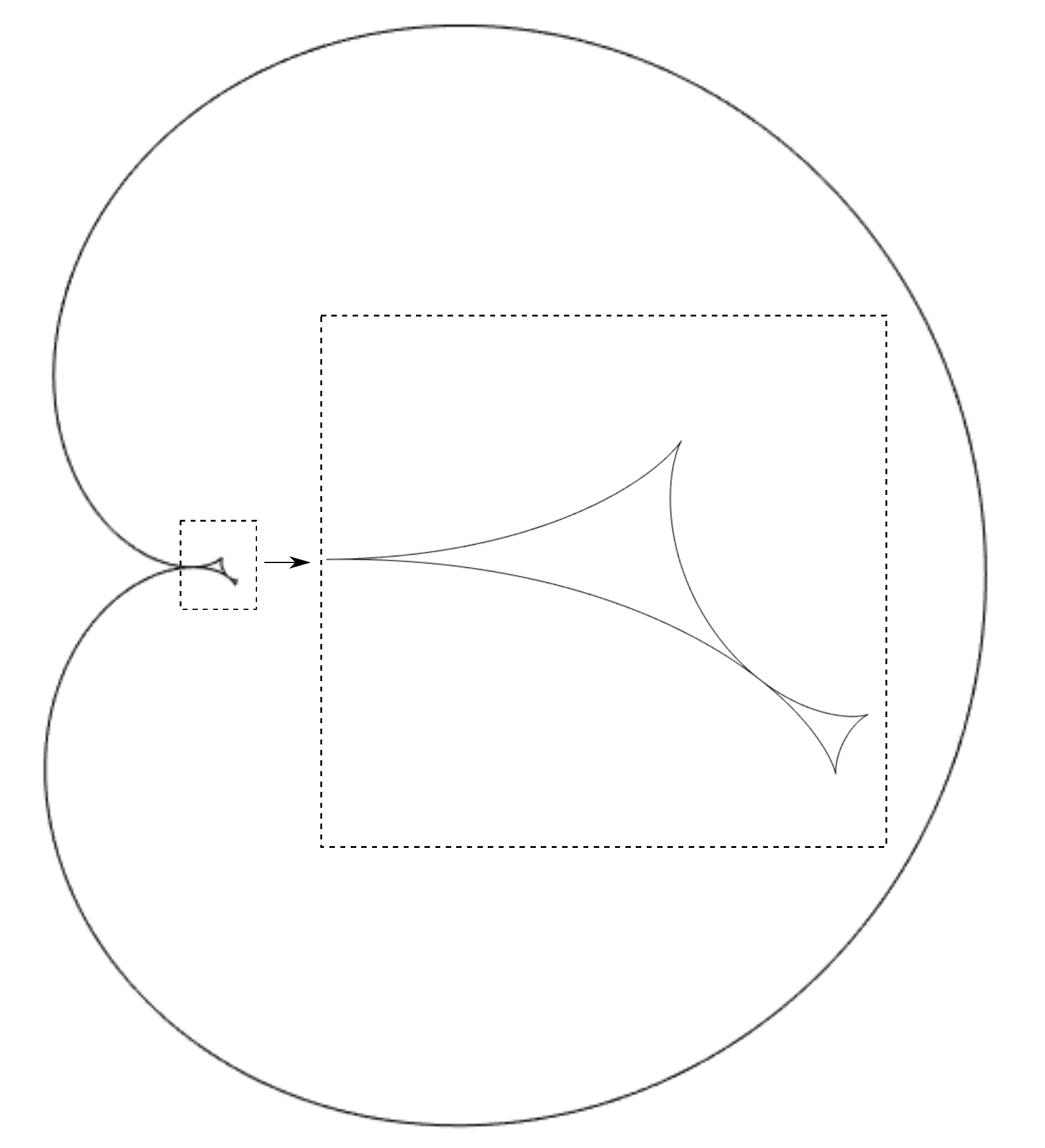}}%
  \end{picture}%
\endgroup%
}} & $ z+\frac{3}{2}Ae^{-it}z^2 + Ae^{it}z^3+\frac{z^4}{4} $ \\

    5 & \parbox[c]{4em}{\scalebox{.15}{
\begingroup%
  \makeatletter%
  \providecommand\color[2][]{%
    \errmessage{(Inkscape) Color is used for the text in Inkscape, but the package 'color.sty' is not loaded}%
    \renewcommand\color[2][]{}%
  }%
  \providecommand\transparent[1]{%
    \errmessage{(Inkscape) Transparency is used (non-zero) for the text in Inkscape, but the package 'transparent.sty' is not loaded}%
    \renewcommand\transparent[1]{}%
  }%
  \providecommand\rotatebox[2]{#2}%
  \ifx\svgwidth\undefined%
    \setlength{\unitlength}{260.20718071bp}%
    \ifx\svgscale\undefined%
      \relax%
    \else%
      \setlength{\unitlength}{\unitlength * \real{\svgscale}}%
    \fi%
  \else%
    \setlength{\unitlength}{\svgwidth}%
  \fi%
  \global\let\svgwidth\undefined%
  \global\let\svgscale\undefined%
  \makeatother%
  \begin{picture}(1,1.43169045)%
    \put(0,0){\includegraphics[width=\unitlength,page=1]{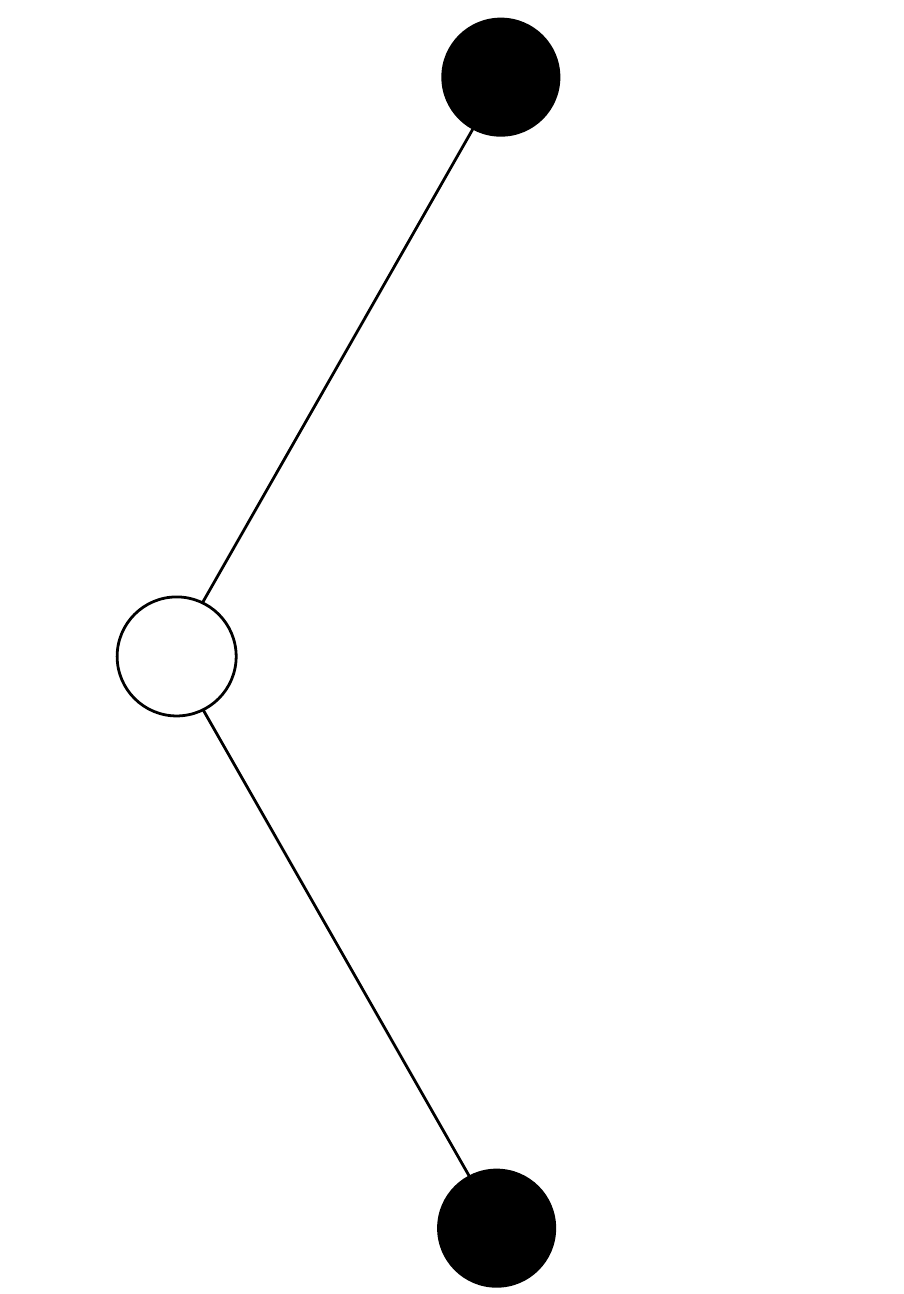}}%
  \end{picture}%
\endgroup%
}} & \parbox[c]{8em}{\scalebox{.15}{
\begingroup%
  \makeatletter%
  \providecommand\color[2][]{%
    \errmessage{(Inkscape) Color is used for the text in Inkscape, but the package 'color.sty' is not loaded}%
    \renewcommand\color[2][]{}%
  }%
  \providecommand\transparent[1]{%
    \errmessage{(Inkscape) Transparency is used (non-zero) for the text in Inkscape, but the package 'transparent.sty' is not loaded}%
    \renewcommand\transparent[1]{}%
  }%
  \providecommand\rotatebox[2]{#2}%
  \ifx\svgwidth\undefined%
    \setlength{\unitlength}{555bp}%
    \ifx\svgscale\undefined%
      \relax%
    \else%
      \setlength{\unitlength}{\unitlength * \real{\svgscale}}%
    \fi%
  \else%
    \setlength{\unitlength}{\svgwidth}%
  \fi%
  \global\let\svgwidth\undefined%
  \global\let\svgscale\undefined%
  \makeatother%
  \begin{picture}(1,1.11351351)%
    \put(0,0){\includegraphics[width=\unitlength,page=1]{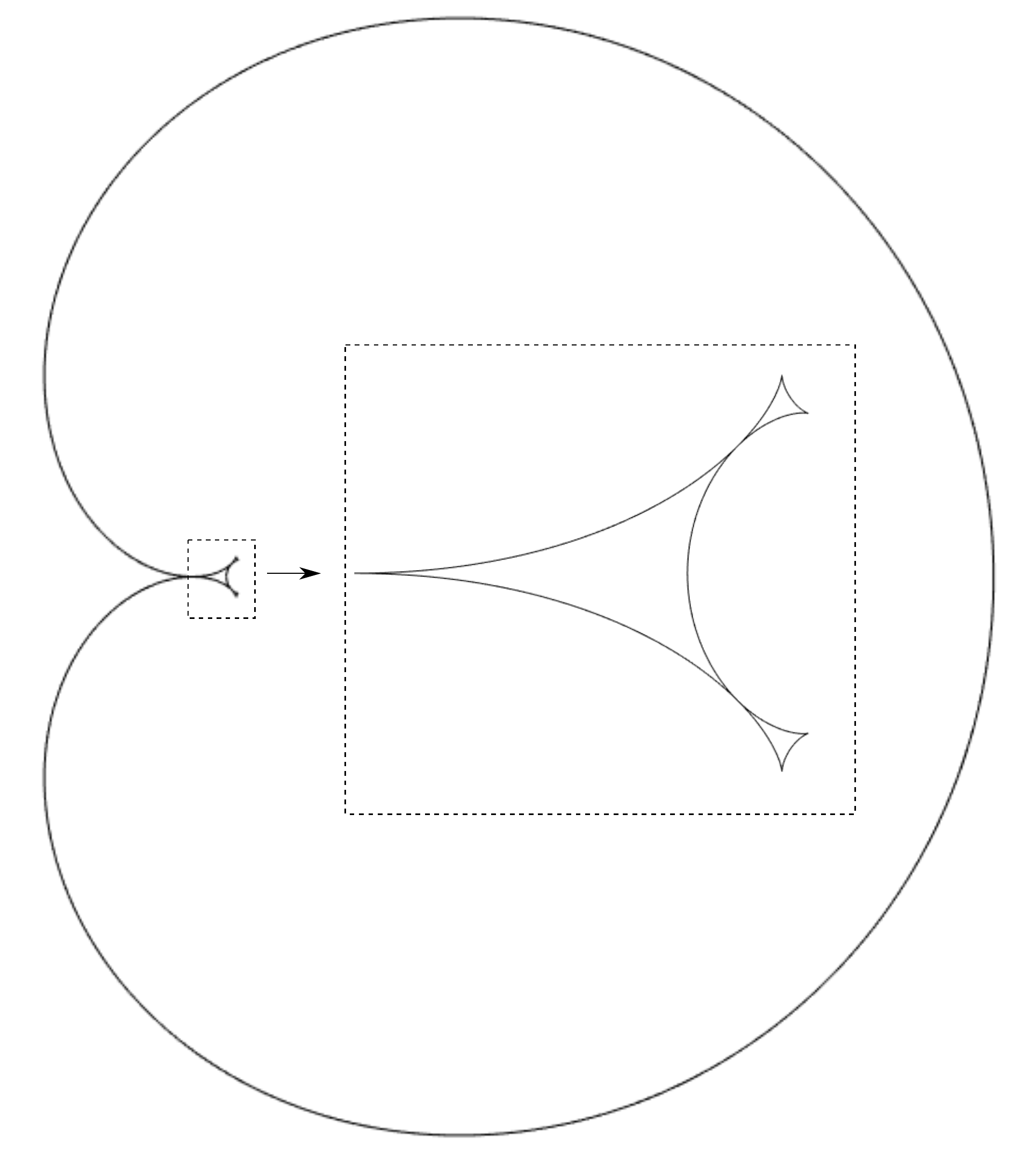}}%
  \end{picture}%
\endgroup%
}} & $ z+\frac{8}{5}\sqrt{\frac{2}{3}}z^2+\frac{6}{5}z^3+\frac{4}{5}\sqrt{\frac{2}{3}}z^4+\frac{z^5}{5} $ \\

& & &  \\

\hline
\end{tabular}

\label{table_2}
\end{center}
\end{adjustwidth}
\captionsetup{singlelinecheck=off}
    \caption[.]{ Pictured are all the Suffridge polynomials (with associated rooted binary trees and droplets) in $S_d^*$ for $d=2$, $3$, $4$, and one Suffridge polynomial in $S_5^*$. The rooted vertex is marked white, whereas non-rooted vertices are marked black. Here
    \begin{displaymath} \hspace{-10mm}  A=(1/2)\sqrt{3\left(\sqrt{15}-3\right)} \textrm{, } t=(1/3)\cos^{-1}\left((3/16)\sqrt{9+5\sqrt{15}}\right).
    \end{displaymath}  For the case $d=3$, see \cite{MR0047198}, \cite{MR227392} and \cite{MR0220919}. For the case $d=4$, see \cite{MR0280704}. The polynomial corresponding to $d=5$ appears in \cite{2014arXiv1411.3415L}.}
\label{table_2}
\end{table}

\begin{definition}\label{isom_def_rooted} Two rooted binary trees $\mathcal{T}$, $\mathcal{T}'$ are said to be \emph{isomorphic} if they are isomorphic as trees via a planar homeomorphism which preserves the root, and left/right children.
\end{definition}

\begin{rem}\label{rooted_binary_isom_rem} One usually makes no mention of isomorphism for rooted binary trees and simply identifies isomorphic rooted binary trees. With this convention, for instance, there is precisely one rooted binary tree with one vertex, and precisely two rooted binary trees with two vertices (see Table \ref{table_2}). This is the convention used in the statement of Theorem \ref{theorem_B}. It will nevertheless be useful for us to have Definition \ref{isom_def_rooted} for the statements and proofs of Theorems \ref{mainthm_qd_terminology_bounded}, \ref{rigidity_extremal_bounded_qd_thm}.  \end{rem}

\subsection{Dynamics of Schwarz Reflections Arising from $S_d^*$}\label{dyn_S_d_schwarz_subsec} A straightforward adaptation of Proposition~\ref{s.c.q.d.} implies that for $f\in S_d^*$, the domain $\Omega=f(\D)$ is a bounded quadrature domain with associated Schwarz reflection map \[\sigma=f\circ\eta\circ\left(f\vert_{\D}\right)^{-1}.\] The map $\sigma$ has a $d$-fold pole at the origin, and no other critical point in $\Omega$. Moreover, $\sigma:\sigma^{-1}(\Omega)\to\Omega$ is a covering map of degree $d-1$, and $\sigma:\sigma^{-1}(\interior{\Omega^c})\to\interior{\Omega^c}$ is a degree $d$ proper branched covering map (branched only at $0$).

As in Subsection~\ref{Dynamical_Partition}, we define $T=\widehat{\mathbb{C}}\setminus \Omega$, $T^0=T\setminus\{$The singular points on $\partial T\}$, and $$T^\infty(\sigma)=\bigcup_{n\geq0} \sigma^{-n}(T^0).$$ We will call $T^\infty(\sigma)$ the \emph{tiling set} of $\sigma$.

As in the case for $\Sigma_d^*$, the cusps (respectively, the double points) on $\partial\Omega$ are of the type $(3,2)$ (respectively, are intersection points of two distinct non-singular branches of $\partial\Omega$ with a contact of order $1$).

\noindent The proof of the next proposition is completely analogous to that of Proposition~\ref{basin_topology}.

\begin{prop}\label{tiling_topology_S_d}
The tiling set $T^\infty(\sigma)$ is open. Its closure $\overline{T^\infty(\sigma)}$ is a compact, connected set.
\end{prop}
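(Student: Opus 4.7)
The plan is to adapt the proof of Proposition~\ref{basin_topology}(1) essentially verbatim. The only differences in the $S_d^*$ setting are that $\Omega = f(\D)$ is now a bounded subset of $\mathbb{C}$ and that $0$ (rather than $\infty$) is the super-attracting fixed point of $\sigma$; neither of these affects the topological arguments used there.

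For openness, given $z \in T^\infty(\sigma)$, let $k$ be the rank of a tile containing $z$. If $z$ is interior to this rank-$k$ tile, then it is automatically interior to $T^\infty(\sigma)$. Otherwise $z$ lies on the boundary of the rank-$k$ tile in the relative topology of $T^\infty(\sigma)$, and the tiles of rank at most $k+1$ together fill in a full Euclidean neighborhood of $z$; hence $z \in \interior{E^{k+1}(\sigma)} \subset \interior{T^\infty(\sigma)}$, where $E^{k+1}(\sigma)$ denotes the union of tiles of rank $\leq k+1$.

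Compactness of $\overline{T^\infty(\sigma)}$ is immediate, since it is a closed subset of the compact space $\widehat{\mathbb{C}}$. For connectedness, I would write
$$\overline{T^\infty(\sigma)} = \overline{\bigcup_{k \geq 0}\overline{E^k(\sigma)}},$$
and prove by induction on $k$ that each $\overline{E^k(\sigma)}$ is connected. The base case $\overline{E^0(\sigma)} = T$ is connected because $\Omega$ is simply connected; the inductive step uses that every rank-$(k+1)$ tile is attached to $\overline{E^k(\sigma)}$ at an iterated $\sigma$-preimage of a singular point on $\partial T$. Since an increasing union of connected sets is connected and the closure of a connected set is connected, it follows that $\overline{T^\infty(\sigma)}$ is connected. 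No step is delicate; the adaptation from the $\Sigma_d^*$ case is mechanical, which is why the authors only claim analogy with Proposition~\ref{basin_topology}.
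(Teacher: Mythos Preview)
Your proposal is correct and follows essentially the same approach as the paper, which simply states that the proof is ``completely analogous to that of Proposition~\ref{basin_topology}'' and gives no further detail. Your write-up of openness and of connectedness via the increasing chain $\overline{E^k(\sigma)}$ matches the argument of Proposition~\ref{basin_topology}(1) exactly; the only minor imprecision is that rank-$(k{+}1)$ tiles are in fact attached to $\overline{E^k(\sigma)}$ along full boundary arcs (preimages of $\partial T$), not merely at preimages of singular points, but this only strengthens the connectedness claim.
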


\noindent The following result states that the family of Schwarz reflections arising from $S_d^*$ is quasiconformally closed.

\begin{prop}\label{qc_def_S_d_prop}
Let $g\in S_d^*$, $\Omega:=g(\D)$, and $\sigma$ the Schwarz reflection map of $\Omega$. Further, let $\mu$ be a $\sigma$-invariant Beltrami coefficient on $\widehat{\C}$, and $\mathbf{\Phi}:(\widehat{\mathbb{C}},0)\rightarrow(\widehat{\mathbb{C}},0)$ be a quasiconformal map satisfying $\mathbf{\Phi}_{\overline{z}}/\mathbf{\Phi}_{z}=\mu$ a.e.. Then $\mathbf{\Phi}(\Omega)$ is a simply connected bounded quadrature domain. If the quasiconformal map $\mathbf{\Phi}$ is normalized appropriately, there exists $h\in S_d^*$ with $\mathbf{\Phi}(\Omega)=h(\D)$, and $\mathbf{\Phi}\circ\sigma\circ\mathbf{\Phi}^{-1}$ is the Schwarz reflection map of $\mathbf{\Phi}(\Omega)$.
\end{prop}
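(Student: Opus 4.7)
The plan is to mirror the proof of Proposition~\ref{qc_def_prop}, adapting the normalizations to the class $S_d^\ast$. Let $\mathbf{\Phi}_2$ be any quasiconformal map integrating $\mu$ with $\mathbf{\Phi}_2(0) = 0$, and set $\widehat{\sigma} := \mathbf{\Phi}_2 \circ \sigma \circ \mathbf{\Phi}_2^{-1}$. The $\sigma$-invariance of $\mu$ makes $\widehat{\sigma}$ anti-meromorphic on $\widehat{\Omega} := \mathbf{\Phi}_2(\Omega)$, while $\sigma|_{\partial\Omega} = \textrm{id}$ gives $\widehat{\sigma}|_{\partial \widehat{\Omega}} = \textrm{id}$, so $\widehat{\Omega}$ is a bounded simply connected quadrature domain with Schwarz reflection $\widehat{\sigma}$. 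Since $g$ is a polynomial of degree $d$ with $g(0) = 0$, the formula $\sigma = g \circ \eta \circ (g|_\D)^{-1}$ shows that $\sigma$ has a pole of order $d$ at $0$, and as $\mathbf{\Phi}_2(0) = 0$, the same is true of $\widehat{\sigma}$.

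Next, by the bounded analog of Proposition~\ref{s.c.q.d.}, the Riemann uniformization of $\widehat{\Omega}$ extends to a rational map $h_2 : \widehat{\C} \to \widehat{\C}$. Boundedness of $\widehat{\Omega}$ forces $h_2$ to be a polynomial, and the order-$d$ pole of $\widehat{\sigma}$ at $0$, read off the commutative diagram of Figure~\ref{comm_diag_schwarz}, forces $h_2(0) = 0$ and $\deg h_2 = d$. Writing $h_2(z) = c_1 z + c_2 z^2 + \cdots + c_d z^d$ with $c_1 \neq 0$, we post-compose $\mathbf{\Phi}_2$ with the affine map $A_1(w) := w/c_1$ to obtain $\mathbf{\Phi}_1 := A_1 \circ \mathbf{\Phi}_2$ uniformizing $A_1(\widehat{\Omega}) = \mathbf{\Phi}_1(\Omega)$ via the polynomial $h_1 := A_1 \circ h_2$, which now has the form $h_1(z) = z + b_2 z^2 + \cdots + b_d z^d$.

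The crucial step is to show $|b_d| = 1/d$. Because $g \in S_d^\ast$ is conformal on $\D$ and $|a_d| = 1/d$, Vieta's formulas applied to $g'(z) = 1 + 2a_2 z + \cdots + d a_d z^{d-1}$ show that each of its $d - 1$ finite critical points (counted with multiplicity) has modulus exactly $1$, and hence lies on $\mathbb{T}$, producing $d - 1$ cusps on $\partial \Omega$. Since quasiconformal maps preserve cusps, $\partial \mathbf{\Phi}_1(\Omega)$ has $d - 1$ cusps, which correspond via the Carath{\'e}odory extension of $h_1$ to $d - 1$ critical points of $h_1$ on $\mathbb{T}$. As $h_1$ is a polynomial of degree $d$ with only $d - 1$ finite critical points, all of them must lie on $\mathbb{T}$, and Vieta's formulas applied to $h_1'$ then yield $|b_d| = 1/d$. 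Finally, choose $\lambda \in \mathbb{T}$ with $\lambda^{d-1} = d b_d$, and set $R(z) := \lambda z$, $\mathbf{\Phi} := R \circ \mathbf{\Phi}_1$, and $h := R \circ h_1 \circ R^{-1}$: a direct computation gives $h(z) = z + (b_2/\lambda) z^2 + \cdots + (b_d/\lambda^{d-1}) z^d$ with $b_d/\lambda^{d-1} = 1/d$, so $h \in S_d^\ast$ and $h(\D) = \mathbf{\Phi}(\Omega)$; affine conjugation of Schwarz reflections delivers the last claim $\mathbf{\Phi} \circ \sigma \circ \mathbf{\Phi}^{-1} = $ Schwarz reflection of $\mathbf{\Phi}(\Omega)$.

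The main obstacle is the cusp-preservation step: one needs that the qc homeomorphism $\mathbf{\Phi}_1$ carries cusps of $\partial\Omega$ bijectively to cusps of $\partial \mathbf{\Phi}_1(\Omega)$ with matching multiplicities, so that the Carath{\'e}odory extension of $h_1$ produces the required $d-1$ critical points on $\mathbb{T}$. This is essentially the same local analysis used in the unbounded setting of Proposition~\ref{qc_def_prop}, using that quasiconformal maps preserve the topological distinction between regular, cusp, and double-point boundary behavior.
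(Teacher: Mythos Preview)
Your approach is essentially the paper's, but there are two places where the argument as written does not go through.

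First, you only normalize $\mathbf{\Phi}_2(0)=0$. This is not enough to conclude either that $\widehat{\Omega}=\mathbf{\Phi}_2(\Omega)$ is bounded or that $\widehat{\sigma}$ has its pole at $0$: from $\widehat{\sigma}(0)=\mathbf{\Phi}_2(\sigma(0))=\mathbf{\Phi}_2(\infty)$ you see that $\widehat{\sigma}$ sends $0$ to $\mathbf{\Phi}_2(\infty)$, which is $\infty$ only if you also impose $\mathbf{\Phi}_2(\infty)=\infty$. The paper makes exactly this additional normalization; you should too.

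Second, the sentence ``Boundedness of $\widehat{\Omega}$ forces $h_2$ to be a polynomial'' is false as stated: a disk is a bounded simply connected quadrature domain, yet $z\mapsto z/(2-z)$ uniformizes a disk and is rational but not polynomial. The correct reasoning---which you gesture at in the next clause---is: (i) the covering degree of $\widehat{\sigma}$ on $\widehat{\sigma}^{-1}(\widehat{\Omega})$ is a topological invariant, hence equals $d-1$, so $\deg h_2=d$; (ii) the commutative diagram together with $h_2(0)=0$ and the order-$d$ pole of $\widehat{\sigma}$ at $0$ gives $h_2(\infty)=\infty$ with multiplicity $d$; (iii) a degree-$d$ rational map with a pole of order $d$ at $\infty$ is a polynomial. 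This is precisely how the paper argues. Once these two points are repaired, your proof is the paper's proof.
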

\begin{proof}
Arguing as in the proof of Proposition~\ref{qc_def_prop}, one first sees that $\widehat{\Omega}:=\mathbf{\Phi}(\Omega)$ is a bounded quadrature domain with Schwarz reflection map $\widehat{\sigma}:=\mathbf{\Phi} \circ \sigma \circ \mathbf{\Phi}^{-1} : \overline{\widehat{\Omega}} \rightarrow \widehat{\mathbb{C}}$.

Since $g\in S_d^*$, we know that $\sigma$ has a pole of order $d$ at the origin. Normalizing $\mathbf{\Phi}$ so that it fixes $\infty$, we conclude that $\widehat{\sigma}$ also has a pole of order $d$ at the origin. Moreover, $\widehat{\sigma}:\widehat{\sigma}^{-1}(\widehat{\Omega})\to\widehat{\Omega}$ is a covering map of degree $d-1$. Hence, an analogue of Proposition~\ref{s.c.q.d.} for bounded quadrature domains provides us with a rational map $h$ of degree $d$ such that $h(\D)=\widehat{\Omega}$ and $h$ is conformal on $\D$. We may normalize $h$ so that $h(0)=0$. Since the Schwarz reflection map $\widehat{\sigma}$ of $\widehat{\Omega}$ has a pole of order $d$ at the origin, it follows that $h$ has a pole of order $d$ at $\infty$. Hence, $h$ must be a polynomial of the form \[h(z)=a_1z+a_{2}z^2+\cdots+a_{d}z^d. \] Note also that since $\partial\Omega$ has $d-1$ cusps, $h$ must have $d-1$ critical points on $\mathbb{T}$. The rest of the proof is similar to that of Proposition~\ref{qc_def_prop}. More precisely, by post-composing $\mathbf{\Phi}$ with multiplication by a non-zero complex number, and conjugating $h$ by a rotation, one concludes that $a_1=1$, and $a_d=1/d$; i.e., $h\in S_d^*$.
\end{proof}

\noindent Lastly, we state a counterpart of Proposition~\ref{dynamical_partition_schwarz} for the family $S_d^*$.

\begin{prop}\label{dynamical_partition_schwarz_S_d} 
$\widehat{\C}=\overline{T^\infty(\sigma)}.$
\end{prop}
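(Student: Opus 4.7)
The plan is to follow the proof strategy of Proposition~\ref{dynamical_partition_schwarz} with the only difference being that there is now no analogue of the super-attracting basin at $\infty$; as a consequence, the entire sphere is swallowed by the closure of the tiling set. First I would observe that $T^0\subset T^\infty(\sigma)$ by definition, and each (of the finitely many) singular points of $\partial T$ is accumulated by points of $T^0$, so $T\subset\overline{T^\infty(\sigma)}$. Consequently $U:=\widehat{\C}\setminus\overline{T^\infty(\sigma)}\subset\Omega$. For any $z\in U$, the iterate $\sigma^{\circ n}(z)$ is well-defined and remains in $U$ for every $n\geq 0$, since otherwise some iterate would land in $T^0$, forcing $z\in T^\infty(\sigma)$. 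Therefore $\{\sigma^{\circ n}\}_{n\geq 0}$ is a sequence of anti-holomorphic maps on $U$ whose images avoid the non-empty open set $\interior{T}$, so by Montel's theorem the family is normal on $U$.

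Next I would suppose for contradiction that $U\neq\emptyset$ and show that no component of $U$ can be wandering. This is the Sullivan-type argument used in Proposition~\ref{dynamical_partition_schwarz}: a wandering Fatou component would produce an infinite-dimensional space of non-equivalent quasiconformal deformations of $\sigma$, each arising (by Proposition~\ref{qc_def_S_d_prop}) as the Schwarz reflection map associated with some $h(\D)$ for $h\in S_d^*$. This contradicts the finite-dimensionality of $S_d^*$. Hence every component of $U$ is eventually periodic.

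Then I would rule out periodic components by applying the classification from \cite[Propositions~6.25, 6.26]{LLMM1} to the current setting: any periodic component of $U$ must be either the immediate basin of attraction of a (super-)attracting or parabolic cycle, or a Siegel disk. In each case, the closure of the component must meet the closure of the postcritical set of $\sigma$. However, as noted in Subsection~\ref{dyn_S_d_schwarz_subsec}, the only critical point of $\sigma$ in $\Omega$ is at the origin, and $\sigma(0)=\infty\in\interior{T}\subset T^0\subset\overline{T^\infty(\sigma)}$. Thus the postcritical set of $\sigma$ is contained in $\overline{T^\infty(\sigma)}$ and is therefore disjoint from $U$. This contradiction forces $U=\emptyset$, proving $\widehat{\C}=\overline{T^\infty(\sigma)}$.

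The main step requiring care is the transcription of the no-wandering-domains theorem and the classification of non-wandering Fatou components to the Schwarz reflection setting, in which $\sigma$ is only partially defined on $\widehat{\C}$; but these arguments proceed essentially verbatim as in Proposition~\ref{dynamical_partition_schwarz} and \cite[Propositions~6.25, 6.26]{LLMM1}. The decisive new feature enabling the stronger conclusion in the $S_d^*$ case is that the unique critical point of $\sigma$ has its entire forward orbit contained in $\overline{T^\infty(\sigma)}$, which forbids any non-trivial Fatou components in its complement.
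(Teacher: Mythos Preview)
Your proposal is correct and follows essentially the same route as the paper's proof: a Sullivan-type no-wandering argument via Proposition~\ref{qc_def_S_d_prop} and finite-dimensionality of $S_d^*$, followed by the classification of periodic Fatou components, ruled out because the unique critical point $0$ (and hence the entire postcritical set $\{\infty\}$) lies in $T^\infty(\sigma)$. The paper compresses this into two sentences, while you have spelled out the auxiliary observations (that $U\subset\Omega$, normality via Montel, and $\sigma(0)=\infty\in T^0$) more explicitly, but the substance is identical.
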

\begin{proof}
As in the proof of Proposition~\ref{dynamical_partition_schwarz}, one can use Proposition~\ref{qc_def_S_d_prop} and finite-dimensionality of the family of Schwarz reflections arising from $S_d^*$ to show that every connected component of $\widehat{\C}\setminus\overline{T^\infty(\sigma)}$ is eventually periodic. The fact that the unique critical point of $\sigma$ lies in its tiling set now  implies that $\widehat{\C}\setminus\overline{T^\infty(\sigma)}$ may not have any periodic component as the closure of such a component must intersect the closure of the post-critical set of $\sigma$. Hence, $\widehat{\C}\setminus\overline{T^\infty(\sigma)}=\emptyset$, and the result follows.
\end{proof}

\subsection{Existence in $S_d^*$}\label{existence_S_d_subsec}

In this subsection we prove an existence theorem for $S_d^*$ which is analogous to the Theorem \ref{mainthm_qd_terminology} for the class $\Sigma_d^*$: 

\begin{thm}\label{mainthm_qd_terminology_bounded} 
Let $\mathcal{T}$ be a rooted binary tree. Then there exists an extremal bounded quadrature domain $\Omega$ such that $\mathcal{T}(\Omega)$ is isomorphic to $\mathcal{T}$.
\end{thm}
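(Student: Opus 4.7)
The plan is to carry out the pinching procedure of Section~\ref{mainthm_proof} in the class $S_d^*$. The necessary infrastructure is already in place: Proposition~\ref{qc_def_S_d_prop} yields quasiconformal deformation within $S_d^*$, Proposition~\ref{tiling_topology_S_d} gives openness of the tiling set of the associated Schwarz reflection, and compactness of $S_d^*$ follows from the classical normality of the family $S$. For the base point I take $f_0(z) = z + z^d/d \in S_d^*$, whose image $\Omega_0 = f_0(\mathbb{D})$ is bounded by an epicycloid-type curve with $d-1$ simple inward-pointing cusps and no double points; univalence is checked by an argument analogous to Proposition~\ref{hypocycloid_quadrature}. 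Let $\xi_1, \ldots, \xi_{d-1}$ be the critical points of $f_0$ on $\mathbb{T}$ (counter-clockwise), $I_j$ the arc between $\xi_j$ and $\xi_{j+1}$, and $\zeta_j = f_0(\xi_j)$ the corresponding cusps of $\partial \Omega_0$.

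To translate the rooted binary tree $\mathcal{T}$ into a pinching scheme $S_\mathcal{T} \subset \{1, \ldots, d-1\}^2$, I would adapt Definition~\ref{definition_of_S}. Form an augmented tree $\widehat{\mathcal{T}}$ by attaching enough degree-one leaves so that every original vertex acquires degree three, together with one extra edge at the root leading to a distinguished leaf that represents the unbounded fundamental tile of the eventual extremal domain; each edge is then subdivided by a new bivalent vertex. A Riemann map $\phi: \widehat{\mathbb{C}} \setminus \overline{\mathbb{D}} \to \widehat{\mathbb{C}} \setminus \widehat{\mathcal{T}}$ fixing $\infty$ is unique up to rotation; I normalize it so that the preimage of the distinguished root leaf is $\xi_1$. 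Labelling the remaining preimages of degree-one leaves as $\eta_1, \ldots, \eta_{d-1}$ counter-clockwise, I set $\{j,k\} \in S_\mathcal{T}$ precisely when the arcs of $\mathbb{T}$ between consecutive $\eta_l$'s containing $I_j$ and $I_k$ map to edges of $\widehat{\mathcal{T}}$ sharing a common endpoint. By construction $|S_\mathcal{T}| = d-2$, matching the maximal double-point count for $S_d^*$.

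For each $\{j, k\} \in S_\mathcal{T}$ in turn, I would perform the quasiconformal pinching of Subsection~\ref{mainthm_general_proof_subsec}: treat the bounded topological quadrilateral $\widehat{\mathbb{C}} \setminus \Omega_0$ with vertices $\zeta_j, \zeta_{j+1}, \zeta_k, \zeta_{k+1}$, pull back the Beltrami coefficient of the vertical stretch $L_t(x,y) = (x, y/t)$ through the canonical conformal map to a Euclidean rectangle, spread the result by iterated pullbacks under the Schwarz reflection map $\sigma_0$, and integrate by the measurable Riemann mapping theorem. Proposition~\ref{qc_def_S_d_prop} produces a family $f_t \in S_d^*$, and compactness yields a subsequential normal limit $f_\infty^1 \in S_d^*$ as $t \to \infty$. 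The arguments of Proposition~\ref{d+1_cusp_1_double} then transfer unchanged: the constant curvature principle rules out self-intersections of single arcs and of adjacent arcs, while the constant ratio principle together with Theorem~\ref{modulus_estimate} rules out intersections between non-adjacent arcs not prescribed by $S_\mathcal{T}$. Iterating this procedure $d-2$ times, as in the second-step construction of Subsection~\ref{mainthm_general_proof_subsec}, produces $f_\infty \in S_d^*$ whose image is an extremal bounded quadrature domain; the natural bijection between bounded fundamental tiles of $f_\infty(\mathbb{D})$ and vertices of $\mathcal{T}$ is then a rooted binary tree isomorphism, with the distinguished tile adjacent to the unbounded component corresponding to the root thanks to the normalization of $\phi$. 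The main obstacle relative to Section~\ref{mainthm_proof} is the careful bookkeeping of the rooted structure: the $\mathbb{Z}_{d-1}$ rotational symmetry among the cusps of $f_0(\mathbb{T})$ is broken by the root, so the encoding $\mathcal{T} \mapsto S_\mathcal{T}$ must consistently single out the root-adjacent arc $I_1$; once this combinatorial setup is correct, the analytic core of the proof transfers directly.
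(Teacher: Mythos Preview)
Your approach is genuinely different from the paper's, and there is a real obstruction to making it work as written.

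The paper does \emph{not} start from the base point $z+z^d/d$. Instead it begins with Suffridge's explicit polynomial $P(z;d,1)$, which already has roughly $d/2$ real double points, and then uses a linear perturbation argument (finding $r$ with self-dual derivative satisfying finitely many linear constraints) to separate all but the leftmost double point. The resulting $g_\delta\in S_d^*$ has exactly one double point, and---crucially---\emph{all} $d-1$ cusps lie on the unique \emph{bounded} component of $\widehat{\mathbb{C}}\setminus g_\delta(\overline{\mathbb{D}})$. From there the pinching of Section~\ref{mainthm_proof} runs inside this bounded droplet, which is genuinely deltoid-like, and the $\Sigma_d^*$ arguments apply verbatim.

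The reason your direct transfer fails is curvature. For $f\in\Sigma_d^*$ the conformal curvature of $f(\mathbb{T})$ is the negative constant $-(d-1)/2$, so each inter-cusp arc turns by $\pi(d-1)/(d+1)<\pi$; this is exactly what Principle~\ref{constant_curvature_principle} uses to forbid self-intersections of single arcs and extra intersections of adjacent arcs. For $f\in S_d^*$ the conformal curvature is the \emph{positive} constant $(d+1)/2$, and each of the $d-1$ inter-cusp arcs turns by $\pi(d+1)/(d-1)>\pi$. Such arcs can, and in the extremal case \emph{do}, self-intersect. Indeed, in any extremal $S_d^*$ configuration the unbounded tile has exactly one singular boundary point (the double point to the root) and zero cusps; the loop bounding it is therefore a cusp-free sub-arc of a single $I_j$, so the root double point is a self-touching of one arc. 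Your pinching scheme, which by design only brings together \emph{distinct} arcs $I_j,I_k$ using cusp endpoints as quadrilateral vertices, cannot manufacture this double point, and your invocation of the constant curvature principle to rule out single-arc and adjacent-arc intersections is simply false in $S_d^*$. (This also explains the count mismatch in your augmented-tree encoding: the distinguished leaf for the unbounded tile does not correspond to a cusp of $f_0$.) The paper's use of $P(z;d,1)$ is precisely the device that supplies this otherwise inaccessible first double point.
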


\begin{proof} Following \cite{MR0235107}, we define, for $d>2$: \[ A_{k,j} := \frac{d-k+1}{d}\frac{\sin kj\pi/(d+1)}{\sin j\pi/(d+1)} \textrm{  } (j, k\in\{1,2,\cdots, d\}), \] and \[ P(z; d,j):=\sum_{k=1}^dA_{k,j}z^k. \] Let $f:=P(z;d,1)$. The following geometric description of $f(\mathbb{T})$ is proven in \cite{MR0235107} (see also Figure \ref{fig:n=5,6}): $f(\mathbb{T})$ has precisely $(d-1)/2$ double points (each real-valued) if $d$ is odd, and $(d-2)/2$ double points (each real-valued) if $d$ is even. Enumerate the double points of $f(\mathbb{T})$ as $x_1,\cdots, x_l$ where $x_1<x_2<\cdots<x_l$, and denote the two preimages of $x_j$ as $\zeta_j^{\pm}$ for $1\leq j\leq l$. The curve $f(\mathbb{T})$ has two $\mathbb{R}$-symmetric cusps $c_j$, $\overline{c_j}$ with $x_j<\textrm{Re}(c_j)<x_{j+1}$ for each $1\leq j\leq l-1$. If $d$ is odd, there are two further cusps $c_{l}$, $\overline{c_l}$ with $\textrm{Re}(c_l)>x_l$. If $d$ is even, there is one more cusp $c_l \in \mathbb{R}$ with $\textrm{Re}(c_l)>x_l$.

\begin{figure}
\centering
\begin{subfigure}{.5\textwidth}
  \centering
  \scalebox{.26}{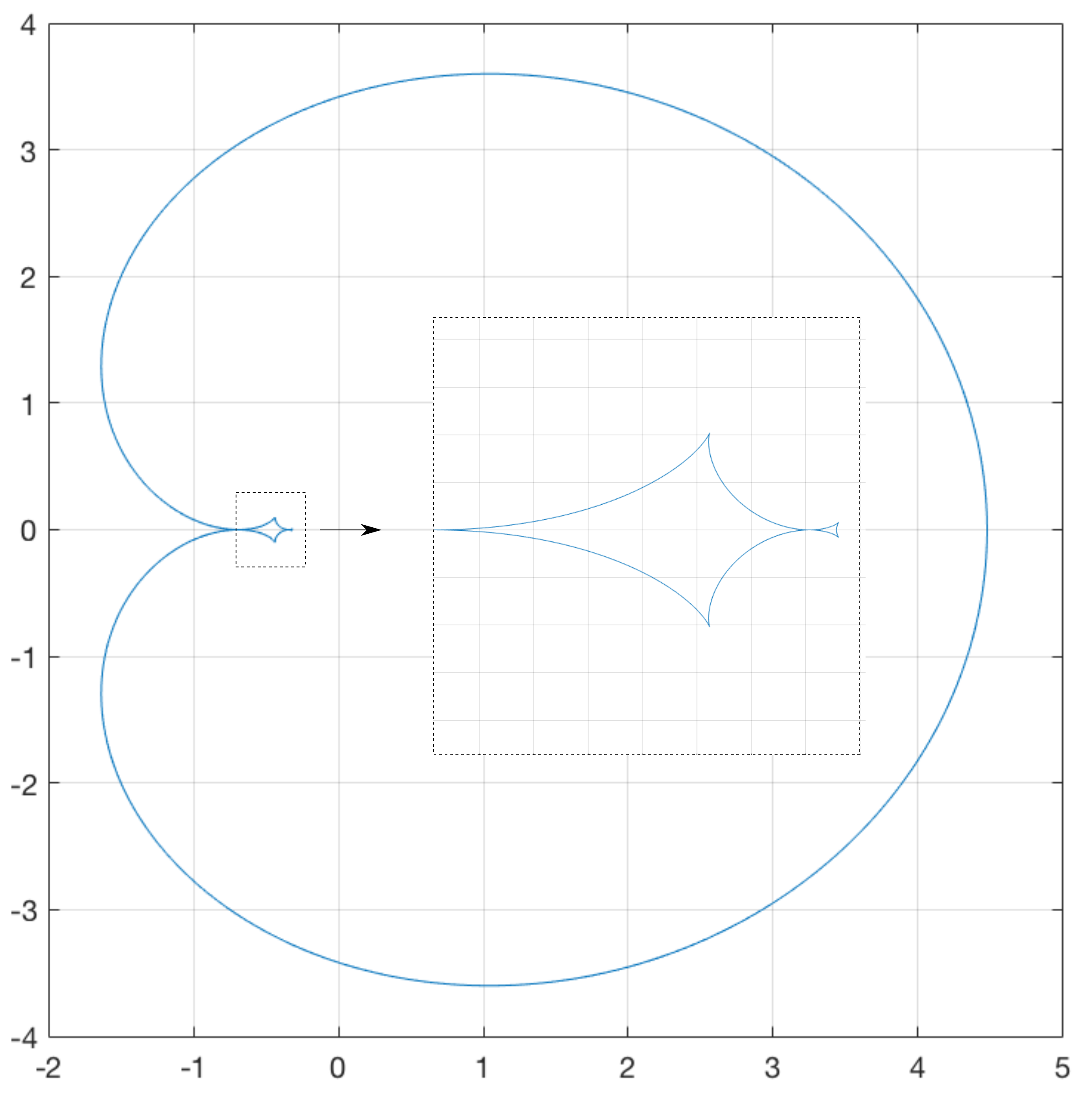}
  \caption{ $P(z; 5,1)$ }
  \label{fig:sub1}
\end{subfigure}
\begin{subfigure}{.5\textwidth}
  \centering
  \scalebox{.26}{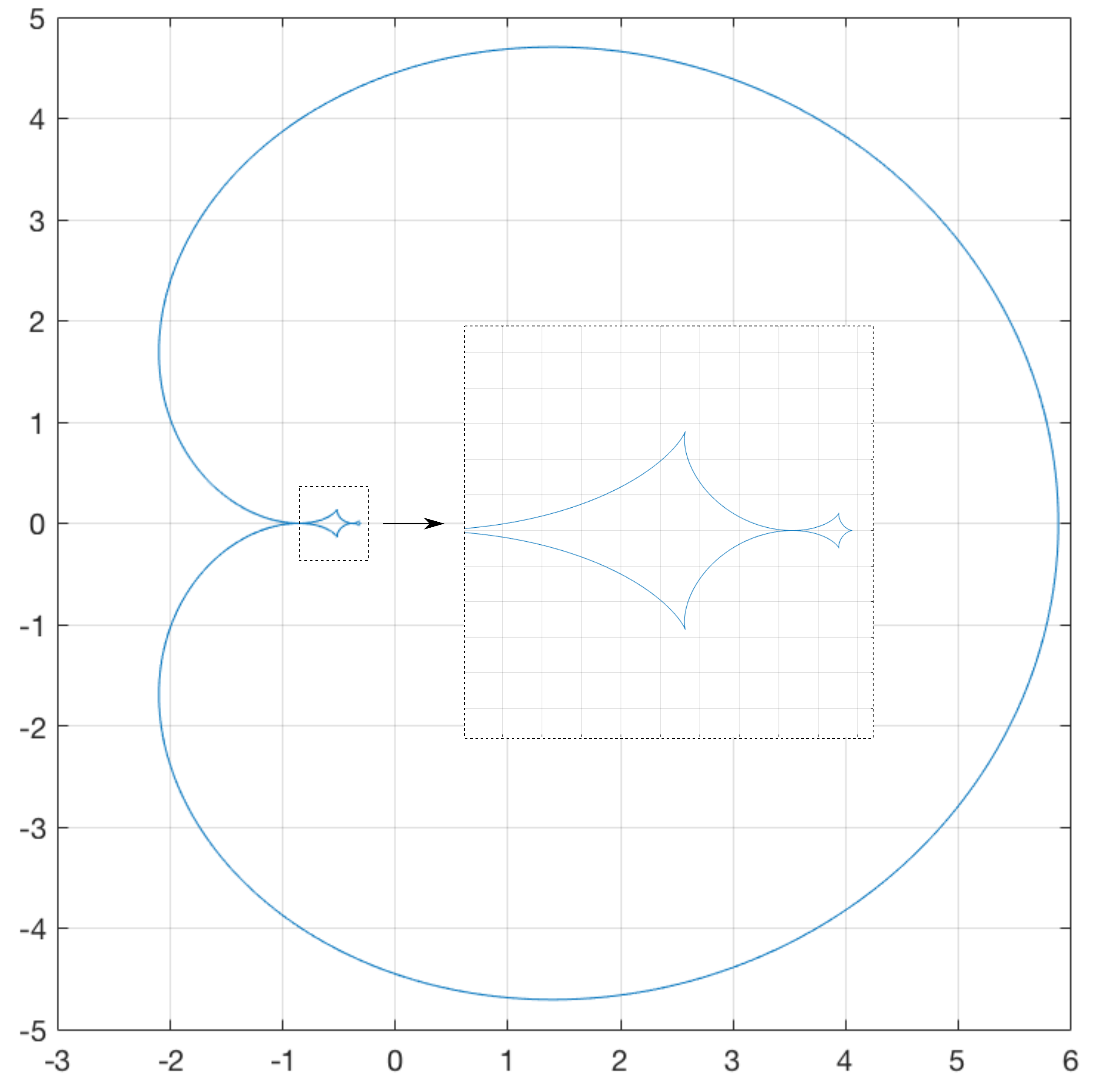}
  \caption{ $P(z; 6,1)$ }
  \label{fig:sub2}
\end{subfigure}
\caption{Pictured is the image of $\mathbb{T}$ under certain polynomials of degrees $5$ and $6$ studied in \cite{MR0235107}. The proof of Theorem \ref{mainthm_qd_terminology_bounded} uses an approach in \cite{2014arXiv1411.3415L} to ``separate'' all but the left-most double point, whence the techniques in the proof of Theorem \ref{mainthm_qd_terminology} apply. }
\label{fig:n=5,6}
\end{figure}

For a positive integer $k$, let $\Pi_k$ be the space of complex polynomials of degree at most $k$. Following \cite{2014arXiv1411.3415L}, we claim that there exists a polynomial \[ r(z) =  \sum_{j=2}^{d-1}a_jz^j \] such that:

\begin{enumerate}

\item $r'(z)$ is self-dual in $\Pi_{d-1}$, i.e., $r'(z)=z^{d-1}\overline{r(1/\overline{z})}$.

\item \[\vspace{3mm}\textrm{Re}\left( \frac{r(\zeta_j^+)-r(\zeta_j^-)}{(\zeta_j^+)^{(d+1)/2}} \right) = 0 \textrm{ for } j\in\{2,\cdots, l\}.  \]

\item $r(\zeta_1^+) = r(\zeta_1^-)$.  

\end{enumerate}

Such a polynomial exists because $\Pi_{d-1}$ has $d-2$ real dimension, whereas the last two conditions give us $l<d-2$  homogeneous linear equations. Then, for sufficiently small $\delta$, $g_\delta(z):=f(z)+\delta r(z)\in S_d^*$ and has a unique double point $g_\delta(\zeta_1^{\pm})$ (see \cite[\S 3.2]{2014arXiv1411.3415L}). All cusps of $g_\delta(\mathbb{T})$ are contained on the boundary of the unique bounded component of $\mathbb{C}\setminus g_\delta(\overline{\mathbb{D}})$. The techniques in the proof of Theorem~\ref{mainthm_qd_terminology} now apply to yield any pattern of allowable maximal pinchings in this bounded droplet.
\end{proof}

\subsection{Rigidity in $S_d^*$}

The goal of this subsection is to prove a rigidity theorem for $S_d^*$ to the effect that extremal bounded quadrature domains are uniquely determined by their rooted binary trees.

\begin{thm}\label{rigidity_extremal_bounded_qd_thm}
Let $\widetilde{\Omega}$ and $\Omega$ be two bounded extremal quadrature domains such that $\mathcal{T}(\widetilde{\Omega})$ and $\mathcal{T}(\Omega)$ are isomorphic as rooted binary trees. Then, there exists an affine map $A$ with $A(\widetilde{\Omega})=\Omega$.
\end{thm}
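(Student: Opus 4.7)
The plan is to follow, almost line by line, the proof of Theorem~\ref{rigidity_extremal_qd_thm}, with the modifications required by the bounded setting. By the normalization procedure employed in the proof of Proposition~\ref{qc_def_S_d_prop}, we may assume there exist $g,\widetilde{g}\in S_d^*$ with $g(\D)=\Omega$ and $\widetilde{g}(\D)=\widetilde{\Omega}$. An isomorphism $\mathcal{T}(\widetilde{\Omega})\cong\mathcal{T}(\Omega)$ of rooted binary trees produces a bijective matching of bounded fundamental tiles $\widetilde{T}_i\leftrightarrow T_i$: preservation of the root identifies the distinguished tiles $\widetilde{T}_1\leftrightarrow T_1$, and preservation of left and right children then propagates the matching and extends canonically to $\widetilde{T}_0\leftrightarrow T_0$ of the unbounded tiles. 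Riemann mapping each tile and matching the three singular boundary points then yields a cusp-preserving homeomorphism $\mathbf{\Psi}:\widetilde{T}\to T$ which is conformal on each tile's interior, exactly as in Lemma~\ref{angled_tree_conformal_map}.

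The analogues of Propositions~\ref{cusp_geometry} and~\ref{double_geometry} for $f\in S_d^*$ hold by the same proofs, so the singular points of $\partial\widetilde{T}$, $\partial T$ are $(3,2)$-cusps and tangential double points with order-one contact. Lemma~\ref{asymp_linear} then gives asymptotic linearity of $\mathbf{\Psi}$ at each singular point of $\partial\widetilde{T}$. Lifting $\mathbf{\Psi}|_{\partial\widetilde{T}}$ via the polynomials $\widetilde{g}, g$, and arguing as in Lemma~\ref{global_qc} using the asymptotic linearity together with the Puiseux expansion of the local inverse branches of $g$ at its critical points on $\mathbb{T}$, gives a quasisymmetric self-map of $\mathbb{T}$. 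An Ahlfors--Beurling extension, glued to $\mathbf{\Psi}|_{\widetilde{T}}$ across the analytic arcs of $\partial\widetilde{T}$ using quasiconformal removability, then produces a global $K$-quasiconformal map $\mathbf{\Psi}_0:\widehat{\C}\to\widehat{\C}$ with $\mathbf{\Psi}_0(\widetilde{\Omega})=\Omega$, $\mathbf{\Psi}_0(0)=0$, and $\mathbf{\Psi}_0(\infty)=\infty$.

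The final step is the pullback argument. Using that $\widetilde{\sigma}$ and $\sigma$ are degree $d-1$ unbranched covers over $\widetilde{\Omega},\Omega$ and degree $d$ covers branched only at $0$ over $\interior{\widetilde{T}},\interior{T}$ (Subsection~\ref{dyn_S_d_schwarz_subsec}), lift $\mathbf{\Psi}_0$ iteratively to $K$-quasiconformal maps $\{\mathbf{\Psi}_n\}$ of $\widehat{\C}$ satisfying $\sigma\circ\mathbf{\Psi}_n=\mathbf{\Psi}_{n-1}\circ\widetilde{\sigma}$ on $\overline{\widetilde{\Omega}}$, with the lift uniquely determined by the normalization $\mathbf{\Psi}_n(0)=0$ (consistent with $\widetilde{\sigma}(0)=\sigma(0)=\infty$). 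Then $\mathbf{\Psi}_n\equiv\mathbf{\Psi}_{n-1}$ on $\widetilde{E}^{n-1}$, and, by anti-holomorphic lifting of conformal maps, each $\mathbf{\Psi}_n$ is conformal on $\widetilde{E}^n$. A subsequential limit $\mathbf{\Psi}_\infty$ is $K$-quasiconformal on $\widehat{\C}$, fixes $0$ and $\infty$, and is conformal on the tiling set $\widetilde{T}^\infty=\bigcup_n\widetilde{E}^n$. The analogue of Proposition~\ref{limit_schwarz_zero_area}, proved by the same hyperbolic-zoom and Lebesgue-density argument, shows that $\mathcal{L}(\widetilde{\sigma})=\partial\widetilde{T}^\infty$ has zero area; combined with Proposition~\ref{dynamical_partition_schwarz_S_d} this forces $\mathbf{\Psi}_\infty$ to be conformal almost everywhere, hence M{\"o}bius, and the normalizations give $\mathbf{\Psi}_\infty(z)=\alpha z$ for some $\alpha\in\C^*$, the desired affine map $A$.

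The principal technical difference from Theorem~\ref{rigidity_extremal_qd_thm} is the absence of an attracting periodic cycle: in the $\Sigma_d^*$ setting the B{\"o}ttcher coordinate at the super-attracting $\infty$ provides an a priori conformal conjugacy on the basin of attraction which seeds the pullback and guarantees its convergence. Here there is no basin to exploit, and the pullback must run purely on the tile structure; fortunately Proposition~\ref{dynamical_partition_schwarz_S_d} (the sphere equals the closure of the tiling set) removes the need for any basin-side construction, so that the argument is in fact slightly simpler than in the unbounded case.
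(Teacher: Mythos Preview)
Your proof is correct and follows the same route as the paper's own argument. One minor imprecision worth flagging: the condition $\mathbf{\Psi}_n(0)=0$ does not actually normalize the lift, because $0$ is the unique $\widetilde{\sigma}$-preimage of $\infty$ and is a \emph{critical} point of the branched cover $\widetilde{\sigma}:\widetilde{\sigma}^{-1}(\interior{\widetilde{T}})\to\interior{\widetilde{T}}$, so every lift satisfies $\mathbf{\Psi}_n(0)=0$ automatically and the local branches at $0$ differ by a $d$-th root of unity. The correct normalization, exactly as in the paper's unbounded proof, is at a cusp $\widetilde{\zeta}_0\in\partial\widetilde{T}$ (a regular point fixed by $\widetilde{\sigma}$), or equivalently by requiring the boundary matching $\mathbf{\Psi}_n|_{\partial\widetilde{T}}=\mathbf{\Psi}_0|_{\partial\widetilde{T}}$; this then forces $\mathbf{\Psi}_n\equiv\mathbf{\Psi}_{n-1}$ on $\widetilde{E}^{n-1}$ as you claim.
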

\begin{proof}
The proof is an adaptation of Theorem~\ref{rigidity_extremal_qd_thm}, so we only outline the necessary modifications.

The fact that $\mathcal{T}(\widetilde{\Omega})$ and $\mathcal{T}(\Omega)$ are isomorphic as rooted binary trees implies that the arguments in the proof of Lemma~\ref{angled_tree_conformal_map} can be used to define a conformal map $\mathbf{\Psi}:\widetilde{T}\to T$ such that

\begin{itemize}
\item $\mathbf{\Psi}$ maps the unbounded connected component of $\interior{\widetilde{T}}$ onto that of $\interior{T}$,

\item $\mathbf{\Psi}(\infty)=\infty$, 

\item $\mathbf{\Psi}$ sends the cusps (respectively, double points) on $\partial\widetilde{T}$ to the cusps (respectively, double points) on $\partial T$.
\end{itemize}

Since the cusps (respectively, the double points) on the boundaries of $\widetilde{T}$ and $T$ are of the type $(3,2)$ (respectively, are intersection points of two distinct non-singular branches with a contact of order $1$), we can argue as in Lemmas~\ref{asymp_linear} and~\ref{global_qc} to conclude that $\mathbf{\Psi}$ admits a quasiconformal extension to $\widehat{\C}$. We now lift the quasiconformal map $\mathbf{\Psi}$ by iterates of the Schwarz reflection maps $\widetilde{\sigma}$ and $\sigma$ to obtain a sequence of quasiconformal maps with uniformly bounded dilatation (the lifting is possible because $\mathbf{\Psi}$ maps the unique critical value $\infty$ of $\widetilde{\sigma}$ to the unique critical value $\infty$ of $\sigma$). Finally, we pass to a subsequential limit of these quasiconformal maps which conformally conjugates $\widetilde{\sigma}$ to $\sigma$ on the tiling set $T^\infty(\widetilde{\sigma})$. Since $\widehat{\C}=\overline{T^\infty(\widetilde{\sigma})}$ (see Proposition~\ref{dynamical_partition_schwarz_S_d}), this limiting map is a global quasiconformal conjugacy between $\widetilde{\sigma}$ and $\sigma$ that is conformal away from the boundary of the tiling set $T^\infty(\widetilde{\sigma})$. The arguments of Proposition~\ref{limit_schwarz_zero_area} apply verbatim to the current setting to show that $\partial T^\infty(\widetilde{\sigma})$ has zero area. Therefore, the above limiting quasiconformal homeomorphism of the sphere is conformal outside a set of zero area, and hence must be a M{\"o}bius map. The theorem now follows.
\end{proof}

\subsection{Counting Suffridge Polynomials}

\noindent Lastly, we consider the problem of counting Suffridge polynomials of degree $d$.

\begin{thm}\label{counting_extremal_bounded_qds} Let $d\geq2$. Then \begin{equation}\label{counting_formula} \# \bigg( \big\{ f \in S_d^* : f \emph{ has } d- 2 \emph{ double points} \big\}\big/ \hspace{1mm} \mathbb{Z}_{d-1} \bigg) = \frac{1}{d-1}{2(d-2) \choose d-2}.   \end{equation}\end{thm}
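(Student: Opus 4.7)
The plan is to deduce the counting formula as a direct corollary of Theorem~\ref{theorem_B}, reducing the enumeration of Suffridge polynomials to a classical combinatorial count.

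First, I would invoke the bijection supplied by Theorem~\ref{theorem_B}:
\[
\big\{ f \in S_d^* : f \textrm{ has } d-2 \textrm{ double points} \big\}\big/ \mathbb{Z}_{d-1} \; \longleftrightarrow \; \big\{\textrm{rooted binary trees with } d-2 \textrm{ vertices}\big\}.
\]
Here the set on the right, in the convention of Remark~\ref{rooted_binary_isom_rem}, identifies rooted binary trees only up to isomorphism fixing the root and preserving the left/right distinction at every vertex (compare Definitions~\ref{rooted_binary_tree} and~\ref{isom_def_rooted}). Thus proving the corollary amounts to computing the number of such rooted binary trees on $d-2$ vertices.

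Next I would execute the standard Catalan generating-function argument. Let $B(x) = \sum_{n \geq 1} b_n x^n$ be the ordinary generating function where $b_n$ counts rooted binary trees with $n$ vertices in the sense of Definition~\ref{rooted_binary_tree}. Decomposing a non-empty tree at its root, which has a (possibly empty) left subtree and a (possibly empty) right subtree, yields the functional equation
\[
B(x) = x\bigl(1+B(x)\bigr)^2.
\]
Setting $C(x) = 1 + B(x)$ gives $C(x) = 1 + x\,C(x)^2$, which is the defining equation of the Catalan generating function. Hence $C(x) = \sum_{n\geq 0} C_n x^n$ with $C_n = \tfrac{1}{n+1}\binom{2n}{n}$, and consequently $b_n = C_n$ for $n \geq 1$. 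Substituting $n = d-2$ gives
\[
b_{d-2} = \frac{1}{d-1}\binom{2(d-2)}{d-2},
\]
which, combined with the bijection above, establishes the formula. (For $d=2$ the empty tree corresponds to the unique Suffridge polynomial $z + z^2/2$, consistently with $\frac{1}{1}\binom{0}{0}=1$.)

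Since every step is either an invocation of Theorem~\ref{theorem_B} or a standard Catalan computation, there is no substantive obstacle; the only point requiring care is to ensure that the notion of rooted binary tree used in Definition~\ref{rooted_binary_tree} (each vertex independently admits or omits a left child and a right child) matches the species enumerated by $C_n$, which is exactly what the decomposition above verifies.
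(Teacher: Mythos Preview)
Your proposal is correct and follows essentially the same approach as the paper: both reduce the count to rooted binary trees with $d-2$ vertices (you cite Theorem~\ref{theorem_B} directly, the paper cites its constituent pieces Theorems~\ref{mainthm_qd_terminology_bounded}, \ref{rigidity_extremal_bounded_qd_thm} and the $S_d^*$ analogue of Proposition~\ref{suffridge_extremal_qd_equiv_prop}) and then identify this as the Catalan number $C_{d-2}$. Your inclusion of the generating-function derivation is a welcome addition, since the paper simply asserts the Catalan identification without justification.
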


\begin{proof} 

By an analogue of Proposition \ref{suffridge_extremal_qd_equiv_prop} for $S_d^*$, the left-hand side of (\ref{counting_formula}) is the number of bounded extremal quadrature domains (of order $d$) up to affine equivalence. It then follows from Theorems \ref{mainthm_qd_terminology_bounded} and \ref{rigidity_extremal_bounded_qd_thm} that the left-hand side of (\ref{counting_formula}) is the number of rooted binary trees with $d-2$ vertices, and this is given by the sequence of \emph{Catalan numbers} $C_{d-2}$ (given by the right-hand side of (\ref{counting_formula})).
\end{proof}

\bibliographystyle{alpha}
\bibliography{bibfile_qd}

\end{document}